\newcommand{\Z}{\mathbb{Z}}
\newcommand{\N}{\mathbb{N}}
\renewcommand{\P}{\bold{P}}
\newcommand{\RP}{\bold{RP}}
\newcommand{\bQ}{\bm{Q}}
\newcommand{\bx}{{\bold x}}
\newcommand{\by}{{\bold y}}
\newcommand{\bg}{{\bold g}}
\newcommand{\bh}{{\bold h}}
\newcommand{\bt}{{\bold t}}
\newcommand{\bs}{{\bold s}}
\newcommand{\bp}{{\bold p}}
\newcommand{\bq}{{\bold q}}
\newcommand{\bv}{{\bold v}}
\def\scr{\mathscr }
\def\GG{{\mathfrak G}}
\newcommand{\cltau}[1]{\mathrm{cl}_{\uptau}(#1)}
\newcommand{\HK}[1]{\mathcal{HK}^{[#1]}(G)}
\newcommand{\taulim}{\xrightarrow{\uptau}}
\newcommand{\Gtau}[1]{G_{#1}^{\uptau\mathrm{-top}}}
\newcommand{\ilim}[1]{\underset{\leftarrow}{\lim}#1 ~}
\newtheorem{theorem}{Theorem}[section]
\newtheorem{proposition}[theorem]{Proposition}
\newtheorem{lemma}[theorem]{Lemma}
\newtheorem{corollary}[theorem]{Corollary}
\newtheorem*{theorem*}{Theorem}
\theoremstyle{definition}
\newtheorem{definition}[theorem]{Definition}
\newtheorem*{definition*}{Definition}
\newtheorem{question}[theorem]{Question}
\theoremstyle{remark}
\newtheorem{remark}[theorem]{Remark}
\author{Axel \'Alvarez}
\address[Axel \'Alvarez]{Departamento de Ingenier\'{\i}a Matem\'atica, Universidad de Chile, Beauchef 851, Santiago, Chile} \email{aalvarez@dim.uchile.cl}
\author{Sebasti\'an Donoso}
\address[Sebasti{\'a}n Donoso]{Departamento de Ingenier\'{\i}a Matem\'atica and Centro de Modelamiento Matem{\'a}tico, Universidad de Chile \& IRL 2807 - CNRS, Beauchef 851, Santiago, Chile} \email{sdonosof@uchile.cl}
\thanks{The first author was supported by ANID-Subdirección de Capital
Humano/Magíster Nacional/2023-22232091 and ANID-Subdirección de Capital Humano/Doctorado Nacional/2025-21251865. The second author was partially funded by ANID/Fondecyt/1241346. Both authors were partially funded by Centro de Modelamiento Matemático (CMM) FB210005, BASAL funds for centers of excellence from ANID-Chile and ECOS-ANID grant C21E04 (ECOS210033).}
\subjclass[2020]{Primary: 37B05; Secondary: 37B02, 37B20}
\keywords{Topological dynamics, proximal relation, Nilsystems, Ellis group, Enveloping semigroup}
\title{Cube structures of the universal minimal system, nilsystems and applications}
\date{}
\begin{document}

\subjclass[2020]{Primary: 37B05; Secondary: 37B02, 37B20 }

\keywords{Enveloping semigroup, topological dynamics, nilsystems, proximal relation.}

\maketitle
\begin{abstract} 
We propose and develop an approach to study nilsystems and their proximal extensions using cube structures associated with the universal minimal system. We provide alternative proofs for results regarding saturation properties of factor maps to maximal nilfactors in cubes, as well as new results and applications of independent interest to the structural theory of topological systems. In particular, we give a new proof that $\RP^{[d]}$ is an equivalence relation, building upon the distal case, by establishing a description of this relation in algebraic terms. This is new even for $d=1$.   
\end{abstract}

\section{Introduction}

A topological dynamical system is a pair $(X, T)$, where $X$ is a compact metric space and $T$ is a group acting on $X$. In this paper, we mainly focus on abelian group actions.  Minimal topological systems possess a rich and vast theory. This theory provides a classification based on fundamental system types and extensions, allowing one to construct any given minimal system using towers of extensions and factors. For a comprehensive exposition of this theory, we refer the reader to \cite{Auslander_minimal_flows_and_extensions:1988,deVries_elements_topological_dynamics:1993}.
Key concepts of this theory are the equicontinuous systems and equicontinuous/proximal extensions. Minimal abelian equicontinuous systems, that is, where the group forms an abelian equicontinuous family of transformations, can be characterized through the triviality of a closed, invariant, equivalence relation, called the equicontinuous regionally proximal relation, and usually denoted by $\RP$. That is, minimal equicontinuous systems can be characterized as $\RP=\Delta$, where $\Delta$ is the diagonal $\{(x,x):x\in X\}$. Moreover, any minimal topological dynamical system admits a maximal equicontinuous factor, which is, in fact, its quotient by the $\RP$ relation (see \cite[Chapter 9]{Auslander_minimal_flows_and_extensions:1988} for a more in-depth discussion). 

In recent years, there has been much interest in the study of nilsystems and their inverse limits, since they relate to many dynamical properties such as being characteristic and have important applications in number theory and additive combinatorics. (see for example \cite[Chapter 1]{Host_Kra_nilpotent_structures_ergodic_theory:2018} and the references therein). This class, a subclass of distal systems, strictly includes equicontinuous systems while keeping many of the important and useful properties of equicontinuous systems (see for instance \cite[Chapter 11]{Host_Kra_nilpotent_structures_ergodic_theory:2018}). 

In their pioneering work, Host, Kra and Maass in \cite{Host_Kra_Maass_nilstructure:2010} introduced the higher order regionally proximal relations $\RP^{[d]}$ for $\Z$-actions, extending available theory for equicontinuous systems, which allows us to precisely characterize nilsystems and their inverse limits (see \cref{sec:nil_defs} for the precise definitions). They showed, among other things, that in a distal minimal system, $\RP^{[d]}$ is an equivalence relation and that the quotient by it is the maximal system of order $d$ (that is, an inverse limit of $d$-step nilsystems). 

Soon after, Shao and Ye in \cite{Shao_Ye_regionally_prox_orderd:2012} extended these results by proving that $\RP^{[d]}$ is an equivalence relation in any minimal $\Z$-system (and even for a general abelian group action), and that the quotient by this relation is the maximal system of order $d$. \footnote{For general group actions, regional proximality of higher order can be defined \cite{Glasner_Gutman_Ye_higher_regionallyproximal_general_groups:2018}.}  

It is classical that for abelian group actions, equicontinuity can be characterized through algebraic properties of its enveloping semigroup: indeed, they are characterized by having an abelian enveloping semigroup. It is natural to ask if there is any algebraic property, or even characterization, of nilsystems and their inverse limits. This is indeed the case:  it was proven in \cite{Donoso_enveloping_systems_orderd:2014, Qiu_Zhao_topnilpotent_enveloping_nil:2022} that the property of having an enveloping semigroup that is a $d$-step topological semigroup characterizes minimal systems of order $d$ for $\Z$-actions.

All the aforementioned results belong to the class of distal systems, where it is known that the enveloping semigroup is a group. In the structural theory of minimal systems, proximal extensions play a central role and from many aspects, a proximal extension of a system behaves similarly to the factor system. However, a proximal extension of a distal system is never distal (unless the extension is an isomorphism). So, even though the system is dynamically similar to the factor, all the nice algebraic structure of the enveloping semigroup might be lost under a proximal extension. In particular, the enveloping semigroup is no longer a group.

A key aspect of the structural theory of minimal systems is the development of the so-called algebraic theory, or Galois theory, of minimal systems, proposed by Ellis and developed by several authors during several decades \cite{Ellis_Glasner_Shapiro_PI-flows:1975, Ellis_lectures_topological_dynamics:1969, Ellis_the_Veech_struct_thm:1974, Auslander_Ellis_group_quasifactor:2000, Glasner_quasifactor_minimal_systems:2000, Ellis_Glasner_Pure_weak_mixing:1978, Auslander_Ellis_Ellis_Regionally_proximal:1995, Akin_Glasner_topological_descomposition_homogenous:1998, Glasner_top_erg_decomposition:1994}. This theory allows us to use tools of an algebraic nature to study the topological systems which are not necessarily distal. In particular, this theory provides algebraic characterizations of several types of extensions (proximal, RIC, etc.). (See, for instance, \cite[Chapters V and VI]{deVries_elements_topological_dynamics:1993} for an exhaustive exposition).

In this paper, we are interested in understanding proximal extensions of nilsystems from an algebraic point of view. To do so, we study cubic structures, Host-Kra groups and the Ellis group of a pointed minimal system and study which properties of them are related to nilsystems and their inverse limits. 

Our first result is that we can characterize proximal extensions of nilsystems in terms of unique completion of a cubic structure. It is worth noting that the unique completion property of a cube structure appears as a fundamental property in many previous works (for instance in \cite{Host_Kra_nonconventional_averages_nilmanifolds:2005,Host_Kra_Maass_nilstructure:2010,Donoso_Sun_cubes_product_ext:2015,Cabezas_Donoso_Maass_directional_cubes:2020, Gutman_Manners_Varju_nilspaces_III:2020,Glasner_Gutman_Ye_higher_regionallyproximal_general_groups:2018}). For the statement of the next theorem, we refer to \cref{sec:algebric_cubes} for the precise definitions. 
\begin{theorem*}[\cref{thm: unique_tauclosure_iff_prox_ext_nil}]
    Let $(X, T)$ be a minimal topological dynamical system,  where $T$ is an abelian group and $d\geq 1$ an integer. There is a group, denoted \footnote{ The group $G$ is properly defined in \cref{sec:univ_system} through the universal minimal system. The abbreviation $\mathcal{HK}$ stands for {\em Host-Kra} group.} $\HK{d}$, that has the property of unique $\uptau$-closure on $X$ if and only if $(X, T)$ is a proximal extension of a system of order $d$.
\end{theorem*}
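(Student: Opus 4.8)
I would argue by translating the conclusion into the Galois (Ellis) theory of minimal flows. Fix $x_0\in X$ realizing $(X,x_0)$ as a pointed factor of the universal minimal system $\mathbf M$, let $u$ be a minimal idempotent with $ux_0=x_0$, write $\GG=u\mathbf M$ for the Ellis group of $\mathbf M$ and $G=G(X,x_0)\le\GG$ for that of $(X,x_0)$, so that $\HK{d}$ is the explicit subgroup of a finite power of $\GG$ attached to $G$ in \cref{sec:algebric_cubes}. Recall two standard facts (see \cite{deVries_elements_topological_dynamics:1993,Auslander_minimal_flows_and_extensions:1988}): two proximal points fixed by a common minimal idempotent coincide; and, consequently, a factor map $\pi\colon X\to Y$ of minimal systems is a proximal extension if and only if it preserves the Ellis group, $G(X,x_0)=G(Y,\pi x_0)$. (For the nontrivial implication: given $(a,b)\in R_\pi$ write $a=px_0$, $b=p'x_0$ and note that $(up)^{-1}(up')\in\GG$ fixes $\pi x_0$, hence $x_0$, so $ua=ub$ and $(a,b)$ is proximal.) Since systems of order $d$ are closed under factors and inverse limits, $X$ has a maximal factor $Z_d$ of order $d$, and the facts above reduce the theorem to proving: $\HK{d}$ has unique $\uptau$-closure on $X$ if and only if $G(X,x_0)=G(Z_d,z_0)$, that is, the canonical map $X\to Z_d$ is proximal.

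For the ``if'' direction, assume $G:=G(X,x_0)=G(Z_d,z_0)$. Then $\HK{d}$ is literally the same group over $X$ and over $Z_d$, and, $Z_d$ being of order $d$, the cube dictionary of \cref{sec:algebric_cubes} combined with the distal theory of maximal nilfactors \cite{Host_Kra_Maass_nilstructure:2010,Shao_Ye_regionally_prox_orderd:2012} shows that $\HK{d}$ has unique $\uptau$-closure on $Z_d$: over a distal system the enveloping semigroup is a group, the algebraic cube set coincides with the topological one, and this is just the classical unique completion of cubes. It remains to transfer the property upward. Every coordinate of a point of $\cltau{\HK{d}}\cdot(x_0,\dots,x_0)$ lies in $uX$, since $\cltau{\HK{d}}$ sits in a power of $\GG$ and $gx_0\in uX$ for every $g\in\GG$; hence if two such points agree on all vertices but the top one, their images in $Z_d$ agree everywhere by unique completion downstairs, so the two top vertices are related by $R_{X\to Z_d}$, which is contained in the proximal relation, and being also fixed by $u$ they coincide. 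This is unique $\uptau$-closure of $\HK{d}$ on $X$.

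For the ``only if'' direction one runs the dictionary in reverse: if $\HK{d}$ has unique $\uptau$-closure on $X$, then $\cltau{\HK{d}}$ carries an algebraic cube structure with the unique-completion property which, through the algebraic form of the characterization of systems of order $d$ by cubes (again building on the distal case), forces the Ellis group $G(Z_d,z_0)$ of the maximal order-$d$ factor --- which contains $G$ --- to equal $G$. Thus $G(X,x_0)=G(Z_d,z_0)$, the map $X\to Z_d$ is proximal, and $(X,T)$ is a proximal extension of the order-$d$ system $Z_d$.

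The main obstacle is the cube dictionary of \cref{sec:algebric_cubes} itself: identifying $\cltau{\HK{d}}$ --- a subgroup of a power of $\GG$ carrying the compact, non-Hausdorff $\uptau$-topology --- with the topological cube space, and checking that unique completion is faithfully reflected on both sides despite the failure of joint continuity of the enveloping-semigroup action and the subtleties in the definition of $\HK{d}$ through its face groups. Granting it, the ``if'' direction is soft, and the delicate step in the ``only if'' direction is extracting genuine order-$d$ structure from the abstract $\uptau$-closure condition. As a warm-up I would run the whole argument first for $d=1$, where $Z_1$ is the maximal equicontinuous factor and the statement reads: $\HK{1}$ has unique $\uptau$-closure on $X$ if and only if $X$ is a proximal extension of an equicontinuous system.
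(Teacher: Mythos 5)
Your overall architecture is the paper's own: reduce to the maximal order-$d$ factor, translate ``proximal extension'' into the statement that regionally proximal pairs of order $d$ are proximal, and pass between the algebraic cube $\cltau{\HK{d+1}}\bx$ and the topological cube $\bQ^{[d+1]}(X)$ via the identity $\cltau{\HK{d+1}}x^{[d+1]}=u^{[d+1]}\bQ^{[d+1]}(X)$ of \cref{thm: dyn_cube_equal_HK_cube}, which is exactly the dictionary the paper isolates and then cites as the entire proof of \cref{thm: unique_tauclosure_iff_prox_ext_nil}. Your ``if'' direction is essentially correct: unique completion holds in $\bQ^{[d+1]}(Z_d)$ because $Z_d$ is a distal system of order $d$, and it lifts along the proximal extension because every coordinate of a point of $\cltau{\HK{d+1}}\bx$ lies in $uX$ and two proximal points both fixed by $u$ coincide.

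Two concrete problems remain. First, an off-by-one that is not merely notational: detecting $\RP^{[d]}$ requires $(d+1)$-cubes, so the group whose unique $\uptau$-closure characterizes proximal extensions of order-$d$ systems is $\HK{d+1}$, not $\HK{d}$ (this is how \cref{thm: unique_tauclosure_iff_prox_ext_nil} is stated in the body of the paper). Your proposed warm-up is false as written: $\HK{1}=G\times G$ and $\bQ^{[1]}(X)=X\times X$ for minimal $X$, so $\cltau{\HK{1}}x^{[1]}=uX\times uX$, and unique $\uptau$-closure for $\HK{1}$ forces $uX$ to be a single point, i.e.\ $X$ is a proximal extension of the \emph{trivial} system; the equicontinuous case is governed by $\HK{2}$. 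Second, your ``only if'' direction is an assertion rather than an argument: ``running the dictionary in reverse'' has to be replaced by the following short step, which is the actual content. If $(x,y)\in\RP^{[d]}(X)$, then $(x,y,\dots,y)\in\bQ^{[d+1]}(X)$ by \cref{thm: shao_ye_thm}(1), so both $u^{[d+1]}(x,y,\dots,y)=(ux,uy,\dots,uy)$ and $u^{[d+1]}y^{[d+1]}=(uy,\dots,uy)$ lie in $u^{[d+1]}\bQ^{[d+1]}(X)=\cltau{\HK{d+1}}x^{[d+1]}$ and agree in $2^{d+1}-1$ coordinates; unique $\uptau$-closure then gives $ux=uy$, hence $(x,y)\in\P(X)$, hence $\RP^{[d]}(X)\subseteq\P(X)$ and the map $X\to X/\RP^{[d]}(X)$ is proximal. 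Without this step the equivalence does not close, and you yourself flag it as ``the delicate step'' without supplying it.
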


As a byproduct, we give an alternative definition of the regionally proximal relations in terms of algebraic objects, that extends similar definitions in the case of systems of order $d$ introduced in \cite{Donoso_enveloping_systems_orderd:2014} and developed in \cite{Qiu_Zhao_topnilpotent_enveloping_nil:2022}. The precise definitions of the algebraic objects mentioned below are given later. More precisely, see \cref{sec:univ_system} for the definition of $M$, $J(M)$,  \cref{def:tau_commutator} for that of $ \Gtau{d+1}$ and \cref{sec:applic_RP} for the definition of $D$.

\begin{theorem*}[\cref{lemma: Rdis_is_RPd}] \label{lemma:intro_Rdis_is_RPd}    
    Let $(X,\Z)$ be a minimal topological dynamical system and let $d\geq 1$ be an integer. Then $\RP^{[d]}(X)=\{(x,vghx): v\in J(M), g\in \Gtau{d+1},h\in D, x\in X\}.$
\end{theorem*}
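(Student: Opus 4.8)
The plan is to prove the two inclusions $\RP^{[d]}(X) \subseteq \{(x,vgdx) : v\in J(M),\, g\in \Gtau{d+1},\, d\in D,\, x\in X\}$ and the reverse separately, exploiting the algebraic description of $\RP^{[d]}$ already available through the main theorem (\cref{thm: unique_tauclosure_iff_prox_ext_nil}) and the structure of the universal minimal system $M$. Recall that $\RP^{[d]}(X)$ is, by the Host--Kra--Maass / Shao--Ye theory, the relation whose quotient is the maximal factor of order $d$; on the other hand, the quotient of $X$ by the right-hand side relation should be precisely the maximal order-$d$ factor because the group $\Gtau{d+1}$ (the $\uptau$-closure of the relevant Host--Kra group $\HK{d+1}$, or its $(d+1)$-st level) collapses exactly those points that become identified in an order-$d$ nilsystem, while $J(M)$ and $D$ account for the proximal cell. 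So the strategy is: first identify the right-hand set as an equivalence relation via the algebraic apparatus, then show its quotient system has order $d$ and is maximal with this property.

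For the inclusion $\supseteq$, I would start from a pair $(x, vgdx)$ with $v\in J(M)$, $g\in\Gtau{d+1}$, $d\in D$. The idempotent $v\in J(M)$ and the element $d\in D$ of the proximal-generating set are handled by the standard fact that any point is proximal to $vx$ and to $dx$ for such $v,d$ (using minimality and the structure of the Ellis semigroup), and $\RP^{[d]}$ contains the proximal relation $P(X)\subseteq \RP^{[d]}(X)$; so it suffices to show $(x, gx)\in\RP^{[d]}(X)$ for $g\in\Gtau{d+1}$. This is where I would invoke the cube-structure description: an element of $\Gtau{d+1}$ arises as a limit of "face" or "corner" moves on the $(d+1)$-cube, and realizing $gx$ as the image of $x$ under the last coordinate of a point in $\mathbf{Q}^{[d+1]}(X)$ that agrees with the constant cube on all other vertices is exactly the definition of $(x,gx)\in\RP^{[d]}(X)$. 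So this direction reduces to the approximation lemma that $\Gtau{d+1}$ is generated (in the $\uptau$-topology) by such explicit cube operations — which should be part of \cref{sec:univ_system} or follow from the universal property of $M$.

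For the inclusion $\subseteq$, take $(x,y)\in\RP^{[d]}(X)$. Lifting to the universal minimal system $M$ — using that $X$ is a factor of $M$ and that regional proximality of order $d$ lifts and projects along factor maps between minimal systems — I can choose a point $u\in J(M)$ with $ux$-type base point and write $x = \pi(u)$, $y=\pi(w)$ for suitable $u,w\in M$ in the same $\RP^{[d]}$-class of $M$. The heart is then the algebraic analysis \emph{in $M$}: by \cref{thm: unique_tauclosure_iff_prox_ext_nil} and the unique $\uptau$-closure property, the order-$d$ maximal factor of $M$ is $M/\HK{d+1}$-type quotient, and two points of $M$ are $\RP^{[d]}$-related iff they differ by an element of $J(M)\cdot\Gtau{d+1}\cdot D$ (proximal cell times the level-$(d+1)$ $\uptau$-closure group). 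Pushing this down through $\pi$ gives the claimed form $y = vgdx$. The main obstacle I anticipate is precisely this last structural identification in $M$: showing that the $\RP^{[d]}$-class of a point in the universal system is exactly the orbit under $J(M)\,\Gtau{d+1}\,D$ — equivalently, that $\Gtau{d+1}$ is the correct "$\uptau$-level-$(d+1)$" normal subgroup of the Ellis group $G$ whose associated factor is maximal of order $d$. This requires combining the cube characterization of $\RP^{[d]}$ with the Ellis-group Galois correspondence and careful bookkeeping of idempotents and the set $D$; the distal case (where $J(M)$ and $D$ degenerate and $G$ is a group) serves as the model, and the proximal contributions must be shown to be absorbed without enlarging the quotient beyond order $d$. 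I would isolate this as a separate lemma and prove it by a $\uptau$-topology limiting argument mirroring the equicontinuous ($d=1$) case, then bootstrap.
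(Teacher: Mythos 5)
Your overall architecture --- two inclusions, the cube-algebraic description of $\bQ^{[d+1]}(X)$ for one direction, and the Ellis group of the maximal factor of order $d$ for the other --- matches the paper's in outline, but two of the steps you wave through are exactly where the content lies, and one of them is wrong as stated.

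In the inclusion $\supseteq$ you dispose of the factor $d\in D$ by asserting that $x$ and $dx$ are proximal. This is false: $D=\GG(M_{dis})$ is the group \emph{generated} by $\{g\in G:(p,gp)\in\overline{P(M)}\text{ for all }p\}$, so even for a generator one only gets $(x,gx)$ in the \emph{closure} of the proximal relation, and for a general product of generators not even that is immediate. The paper handles this by moving into the $\uptau$-closed group $\cltau{\HK{d+1}}$: for a generator $g$, \cref{lemma: RPd_iff_xaya_in_cube} and $\overline{P(X)}\subseteq\RP^{[d]}(X)$ give $(x,a_*,gx,a_*)\in\bQ^{[d+1]}(X)=\cltau{\HK{d+1}}x_0^{[d+1]}$ (up to applying $u$), and since the set of $h$ with $(ux,u^{2^d-1}a_*,hx,u^{2^d-1}a_*)\in\cltau{\HK{d+1}}x_0^{[d+1]}$ is stable under products and $\uptau$-limits, the conclusion propagates to all of $D$. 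A related defect: you treat $v$, $g$ and $d$ one at a time and then concatenate, which silently invokes transitivity of $\RP^{[d]}$ --- the very fact this theorem is meant to reprove. The paper avoids this by showing each operation preserves the relation itself: $(x,y)\in\RP^{[d]}(X)\Rightarrow(x,gy)\in\RP^{[d]}(X)$ for $g\in\Gtau{d+1}$ via \cref{lemma: HK_invaraint_g_alpha}, and $(vx,wy)\in\RP^{[d]}(X)$ via \cref{lemma: HK_Ellis_contenido_cubo}.

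In the inclusion $\subseteq$ you correctly locate the crux --- that the $\RP^{[d]}$-class of $x_0$ is $J(M)\Gtau{d+1}Dx_0$, equivalently $\GG(X_d)=\GG(X)D\Gtau{d+1}$ --- but you only announce it as ``a separate lemma'' to be proved by ``a $\uptau$-topology limiting argument,'' without supplying the argument. This identification is the main content of the paper's proof: it is established in \cref{prop: Ellis_group_nil_d_minimal_system} and \cref{prop: Ellis_group_nil_d} by constructing the quasi-factor $\Pi(A\Gtau{d+1})$, checking that $\Gtau{d+1}$ acts trivially on it, and combining the Galois correspondence \cref{thm: A_subset_F_Ydistal_if and only if_X_ext_Y} with the maximality statement in \cref{prop: Rdtau_subset_RPd}, the proximal part being absorbed through $\GG(X_{dis})=AD$. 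As written, your second inclusion restates the goal rather than proving it.
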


\subsection{Applications} 
\subsubsection*{$\RP^{[d]}$ is an equivalence relation.}
Regarding the $\RP^{[d]}$ relation, in \cref{sec:Regionally_proximal}, we study it through the lens of the algebraic objects we described, and in \cref{sec:applications_charfac_cube}, we derive a new proof that it is an equivalence relation for an action of $\Z$ (\cref{lemma: Rdis_is_RPd}). Actually, this can be extended to any group action in which having a (topologically) $d$-step nilpotent enveloping semigroup is equivalent to being a system of order $d$. For the group $\Z$, this characterization was established in \cite{Qiu_Zhao_topnilpotent_enveloping_nil:2022}, but intriguingly enough, it is not known if such characterization holds for $\Z^d$, $d>1$, let alone general abelian group actions. Nonetheless, in \cite{Qiu_Zhao_topnilpotent_enveloping_nil:2022}, the authors succeeded in showing that having a (topologically) nilpotent enveloping semigroup implies being a system of order $\infty$.

Using the algebraic description, we also provide a simple and direct proof of the {\em lifting property} of the $\RP^{[d]}$ relation (\cref{cor: lifting_RPd}).

\subsubsection*{Characteristic factors along cubes}
In the study of multiple ergodic averages, characteristic factors play a crucial role. The notion of the characteristic factor was first introduced in a paper by Furstenberg and Weiss \cite{Furstenberg_Weiss_ergodic_thm_double:1996}, and its relevance was solidified in the groundbreaking work of Host and Kra \cite{Host_Kra_nonconventional_averages_nilmanifolds:2005}. A counterpart of the notion of characteristic factors in a topological dynamical system was first studied by Glasner \cite{Glasner_top_erg_decomposition:1994}. Later, Cai and Shao defined the notion of characteristic factors along cubes \cite{Cai_Shao_Topological_characteristic_cubes:2019}.

In \cref{sec:applications_charfac_cube}, we extend \cite[Theorem 1.7]{Cai_Shao_Topological_characteristic_cubes:2019} using our machinery. In particular, we prove that, modulo almost one-to-one extensions, the maximal factor of order $d$ is a characteristic topological factor along cubes of order $d-1$. Furthermore, we extend these results for the maximal factor of order $\infty$. Using this saturation theorem, we prove that the maximal distal factor of a dynamical cube of a system is the dynamical cube of the maximal distal factor of the system.

It is worth noting that the notion of characteristic factors in topological dynamics has been extensively explored in recent years (see \cite{Glasner_Huang_Shao_Weiss_Ye_Topological_characteristic_factors:2020}), and it has given tools to address several problems in topological dynamics (see for instance \cite{Cai_Shao_top_charact_independence:2022,Glasscock_sim_approx_nil_thick:2024, Huang_Shao_Ye_top_induced_poly_comb:2023, Donoso_Koutsogiannis_Sun_joint_trans_linear:2024}).

\subsubsection*{Structure theorems for cubes}
In 1963, Furstenberg in \cite{Furstenberg_structure_distal_flows:1963} established the structure theorem for minimal distal systems. Veech in \cite{Veech_point-distal_flows:1970} established the structure theorem for pointed distal minimal systems. Finally, Ellis, Glasner, and Shapiro in \cite{Ellis_Glasner_Shapiro_PI-flows:1975}, McMahon in \cite{McMahon_Weak_mixing_structure_thm:1976}, and Veech in \cite{Veech_topological_dynamics:1977} gave the structure theorem for general minimal systems. Regarding structures associated with a minimal system, in \cite{Wu_Xu_Ye_Structure_saturated:2023}, the authors showed that, for any $(X,\Z)$ minimal system, the system $N_{d}(X)$ has the same structure theorem as $X$ (see \cite[Section 2]{Wu_Xu_Ye_Structure_saturated:2023} for the definition of the system $N_{d}(X)$). In this paper, we will present a similar result for dynamical cubes using the algebraic properties developed.

\begin{theorem*}[\cref{thm: Structure_thm_cube}]
    Let $(X, T)$ be a minimal topological dynamical system and let $d\geq 1$ be an integer, where $T$ is an abelian group. Then, $(\bQ^{[d]}(X),\mathcal{HK}^{[d]}(T))$ has the same structure theorem as $(X, T)$. In particular, if $(X, T)$ is PI (resp. I, AI), then so is $(\bQ^{[d]}(X),\mathcal{HK}^{[d]}(T))$.
\end{theorem*}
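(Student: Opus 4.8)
The plan is to reduce the statement to the structure theorem for $(X,T)$ itself, transported through the functor $(X,T)\mapsto(\bQ^{[d]}(X),\mathcal{HK}^{[d]}(T))$. Three preliminary facts make this possible: (i) $(\bQ^{[d]}(X),\mathcal{HK}^{[d]}(T))$ is a minimal system whenever $(X,T)$ is minimal; (ii) a factor map $\pi\colon(X,T)\to(Y,T)$ of minimal systems induces, coordinatewise, an $\mathcal{HK}^{[d]}(T)$-equivariant map $\pi^{[d]}\colon\bQ^{[d]}(X)\to\bQ^{[d]}(Y)$, which is automatically onto since its image is a nonempty closed invariant subset of the minimal system $\bQ^{[d]}(Y)$; and (iii) $\bQ^{[d]}$ sends the trivial system to the trivial system and commutes with inverse limits. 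Given these, feeding the canonical PI-tower of $(X,T)$ through the functor yields a transfinite tower above $(\bQ^{[d]}(X),\mathcal{HK}^{[d]}(T))$ whose successive quotient maps are the $\pi^{[d]}$'s and whose limit stages are inverse limits; when $(X,T)$ is I (resp. AI), this tower has only isometric (resp. isometric and almost one-to-one) rungs. The only thing left to verify is that each successive extension keeps its type.

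So the mathematical content is concentrated in a few preservation statements, which I would establish using the algebraic machinery developed above rather than by direct manipulation of cubes. First, if $\pi$ is a proximal extension then so is $\pi^{[d]}$: in the pointed/Ellis picture a proximal extension is exactly one that does not enlarge the Ellis group, and one expresses the Ellis group of $(\bQ^{[d]}(X),\mathcal{HK}^{[d]}(T))$ in terms of that of $(X,T)$ together with the universal $\uptau$-data (the idempotents $J(M)$ of the enveloping semigroup of the universal minimal system, the $\uptau$-groups $\Gtau{j}$ and the Host-Kra groups acting on $\bQ^{[j]}(M)$), so that it too is left unchanged; alternatively one shows directly that the relation $R_{\pi^{[d]}}$ is contained in the proximal relation of $\bQ^{[d]}(X)$, using that over a common base cube all coordinate pairs are proximal in $\pi$ and a single idempotent of $E(X)$ collapsing the base fibre collapses all coordinates simultaneously. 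Second, if $\pi$ is an isometric (equicontinuous) extension then so is $\pi^{[d]}$: a $T$-invariant fibre metric for $X\to Y$ induces a continuous fibre metric on $\bQ^{[d]}(X)\to\bQ^{[d]}(Y)$ by taking the maximum over coordinates, and one checks it is invariant under the generators of $\mathcal{HK}^{[d]}(T)$ — the diagonal and the face transformations — each of which acts on fibres through copies of $T$ acting by fibre isometries; minimality of the cube system then promotes this to an isometric extension. Third, if $\pi$ is almost one-to-one then so is $\pi^{[d]}$: the set of cubes all of whose vertices lie above injectivity points of $\pi$ is a dense $G_\delta$ in the minimal system $\bQ^{[d]}(X)$, and over such cubes $\pi^{[d]}$ is injective. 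Combined with (iii), transfinite induction along the tower of $(X,T)$ then produces a tower of the same ordinal length and with the same sequence of extension types above $(\bQ^{[d]}(X),\mathcal{HK}^{[d]}(T))$.

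This is precisely the assertion that the two systems have the same structure theorem, and the PI/I/AI consequences follow at once, since those classes are characterized by the existence of such towers (the AI case being additionally closed under the almost one-to-one modifications that item three handles). I expect the main obstacle to be the preservation of isometric extensions: one must ensure that the maximum-of-coordinates fibre metric is genuinely continuous and $\mathcal{HK}^{[d]}(T)$-invariant, not merely $T^{[d]}$-invariant, i.e.\ that the cube construction interacts cleanly with the relative maximal equicontinuous structure of $\pi$, and this is where the explicit list of generators of $\mathcal{HK}^{[d]}(T)$ and the algebraic description of equicontinuous extensions must be used carefully. A secondary point, easy to overlook, is to confirm that the tower obtained for $\bQ^{[d]}(X)$ may be taken to be the canonical one — or, what suffices for the statement as phrased, that the mere existence of a tower of the prescribed shape already yields the claimed structural conclusions — together with verifying minimality and the behaviour at limit ordinals throughout the induction.
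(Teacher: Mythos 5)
Your outline correctly handles the ``in particular'' clause: preservation of proximal extensions via the Ellis-group identity $\GG(\bQ^{[d]}(X))=\cltau{\HK{d}}\cap\GG(X)^{[d]}$ is exactly the paper's route (its \cref{prop: Ellis_group_cube} and \cref{prop: pi_prox_pid_prox_in_cubes}), and preservation of equicontinuous, distal and almost one-to-one extensions, plus compatibility with inverse limits, does yield the PI/I/AI consequences from the existence-of-tower characterizations. (Two small remarks there: the equicontinuous case, which you flag as the main obstacle, is in fact immediate from the $\varepsilon$--$\delta$ definition, since every element of $\mathcal{HK}^{[d]}(T)$ acts coordinatewise by elements of $T$ and the sup metric on $R_{\pi^{[d]}}$ is therefore $T^{[d]}$-invariant, hence $\mathcal{HK}^{[d]}(T)$-invariant; and your ``alternative'' direct proof of proximal preservation is not justified as written, because two proximal pairs $(x_\epsilon,y_\epsilon)$ lying over different base points need not admit a \emph{common} collapsing idempotent --- the Ellis-group route is the one that actually works.)

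The genuine gap is that the theorem, as made precise in \cref{thm: Structure_thm_cube}, asserts that the \emph{canonical PI-tower} of $(X,T)$ induces a tower of the same type over $(\bQ^{[d]}(X),\mathcal{HK}^{[d]}(T))$: in particular that each $\pi_\nu^{[d]}$ is RIC and that $\pi_\eta^{[d]}$ is RIC and weakly mixing. Your proposal never addresses RIC or relative weak mixing, and these are precisely where the paper's real work lies: RIC preservation is proved via the characterization $\pi^{-1}(y_0)=u\circ\GG(Y)x_0$ together with a circle-operation computation inside $\cltau{\HK{d}}$, and weak-mixing preservation uses Shao--Ye's results that a RIC weakly mixing extension admits transitive points in $R_\pi^{n}$ lying in a single fibre and that $(\pi^{-1}(y))^{2^d}\subseteq\bQ^{[d]}(X)$. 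Without these two preservation statements you obtain only ``$\bQ^{[d]}(X)$ admits \emph{some} PI-type tower when $X$ does,'' not that the cube system has the same structure theorem (the same canonical tower, rung by rung, including the RIC and weakly mixing extensions). To repair the proof you would need to supply these two arguments, or an equivalent.
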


\section{Background} \label{Sec:background}

\subsection{Topological dynamical systems} A topological dynamical system (or just a system) is a pair $(X, T)$, where $X$ is a compact Hausdorff space and $T$ is a discrete group acting as a group of homeomorphisms of the space $X$. Unless otherwise stated, we will assume in this paper that $T$ is abelian and $X$ is a compact metric space. For convenience, we write $(X,S)$ instead of $(X,T)$ whenever the group $T$ is generated by a single homeomorphism.  If $(X_{i},T)_{i\in I}$ is a family of systems, the action of $T$ on the product space $\prod_{i\in I} X_{i}$ is defined coordinatewise: $t(x_{i})_{i\in I} = (tx_{i})_{i\in I}$ for each $t\in T$ and $(x_{i})_{i\in I} \in\prod_{i\in I} X_{i}$.

The system is {\em transitive} if there is a point $x\in X$ such that its orbit $Tx:=\{tx: t\in T\}$ is dense in $X$. The system is {\em minimal} if the orbit of any point is dense in $X$. A pair of points $(x,y)$ in $X\times X$ is {\em proximal} if there is a net $(t_{\lambda})_{\lambda\in\Lambda}$ in $T$ and a point $z\in X$ such that $\lim t_{\lambda}x = \lim t_{\lambda}y=z$ and $(x,y)$ is a {\em distal} pair if it is not proximal. A point $x\in X$ is a {\em distal point} if it is proximal only to itself. The set of proximal pairs is denoted by $\P(X)$, and called the {\em proximal relation}.

An onto continuous map $\pi\colon (X, T) \to (Y, T)$ between the topological dynamical systems is an {\em extension} (or a {\em factor}) if $t\pi(x)=\pi(tx)$ for every $x\in X$ and $t\in T$. We say that the extension is proximal (distal) if, for every  $y \in Y$, every pair of points in $\pi^{-1}(y)$ is proximal (distal). We say that $\pi$ is {\em almost one-to-one} if there exists a dense $G_{\delta}$ set $X_{0} \subseteq X$ such that $\pi^{-1}(\pi(x))=\{x\}$ for any $x\in X_{0}$. If an extension $\pi\colon (X, T) \to (Y, T)$ between minimal systems is an almost one-to-one extension, then it is also a proximal extension ({\cite[Lemma 2.1]{Donoso_Durand_Maass_Petite_automorphism_low_complexity:2016}}).

An important class of topological dynamical systems is the class of equicontinuous systems. A system $(X, T)$ is {\em equicontinuous} if the collection of maps defined by the group $T$ is an equicontinuous family. These systems can be characterized by the {\em regional proximal relation}. A pair $(x,y)\in X\times X$ is said to be regionally proximal if there are nets $(x_{\lambda})_{\lambda\in\Lambda},(y_{\lambda})_{\lambda\in\Lambda}$ in $X$ and $(t_{\lambda})_{\lambda\in\Lambda}$ in $T$ with $x_{\lambda}\to x$, $y_{\lambda}\to y$ and $(t_{\lambda}x_{\lambda},t_{\lambda}y_{\lambda})\to (z,z)$, for some $z\in X$. The set of regionally proximal pairs is denoted by $\RP(X)$ and is the {\em regionally proximal relation}.

One can relativize the notion of equicontinuity. We say that an extension $\pi\colon (X, T)\to (Y, T)$ is equicontinuous if for any $\varepsilon>0$, there exists $\delta>0$ such that if $(x,y)\in R_{\pi}$ and $d(x,y)<\delta$ then $d(tx,ty)<\varepsilon$ for all $t\in T$. Here, $R_{\pi} = \{(x,y)\in X\times X:\pi(x)=\pi(y)\}$.

\subsection{Nilpotent groups, nilmanifolds and nilsystems} \label{sec:nil_defs} Let $L$ be a group. For $g,h\in L$ we write $[g,h]=g^{-1}h^{-1}gh$ for the commutator of $g$ and $h$ and for $A,B\subseteq L$ we write $[A,B]$ for the subgroup spanned by $\{[a,b]:a\in A,b\in B\}$. The commutator subgroups $L_{j}$, $j\geq 1$, are defined inductively by setting $L_{1}=L$ and $L_{j+1}=[L_{j},L]$. Let $d\geq 1$ be an integer. We say that $L$ is {\em $d$-step nilpotent} if $L_{d+1}$ is the trivial subgroup.

Since we will work with groups that are also topological spaces, we can also consider a topological definition of nilpotency, which will be more suitable for our purposes. Let $L$ be a topological space that has a group structure. (Here, we do not assume the group to be a topological group; in fact, in many cases of interest, it will not be.)  For $A,B \subseteq L$, we define $[A,B]_{\mathrm{top}}$ as the closed subgroup spanned by $\{[a,b] : a \in A, b \in B\}$. Note that $[A,B]\subseteq [A,B]_{\mathrm{top}}$ for all $A,B\subseteq L$. The topological commutator subgroups $L^{\mathrm{top}}_j$, $j \geq 1$, are similarly defined as $L^{\mathrm{top}}_1 = L$ and $L^{\mathrm{top}}_{j+1} = [L^{\mathrm{top}}_j, L]_{\mathrm{top}}$. For $d \geq 1$ an integer, we say that $L$ is {\em $d$-step topologically nilpotent} if $L^{\mathrm{top}}_{d+1}$ is the trivial subgroup. Since $L_j \subseteq L^{\mathrm{top}}_j$ for every $j \geq 1$, we have that if $L$ is $d$-step topologically nilpotent, then $L$ is also $d$-step nilpotent.

Let $L$ be a $d$-step nilpotent Lie group and $\Gamma$ a discrete cocompact subgroup of $L$. The compact manifold $X = L/\Gamma$ is called a {\em $d$-step nilmanifold}. We recall here that since $L$ is a nilpotent Lie group, the commutator subgroups are closed, and then, in this case, the notions of $d$-step nilpotent and $d$-step topologically nilpotent coincide.

Observe that $L$ acts naturally on $X$ by left translation. If $T$ is a topological group and $\phi: T\to L$ is a continuous homomorphism, the induced action $(X, T)$ is called a {\em nilsystem of order $d$}.

\subsection{Host-Kra cube groups associated with a group}

Let $d \geq 1$ be an integer, and write $[d] = \{1, 2, \dots, d\}$. We view an element of $\{0,1\}^{d}$, the Euclidean cube, either as a sequence $\epsilon = (\epsilon_{1}, \dots, \epsilon_{d})$ of 0's and 1's; or as a subset of $[d]$. A subset $\epsilon$ corresponds to the sequence $(\epsilon_{1}, \dots, \epsilon_{d}) \in \{0,1\}^{d}$ such that $i \in \epsilon$ if and only if $\epsilon_{i} = 1$ for $i \in [d]$. For example, $\overrightarrow{0} = (0, \dots, 0) \in \{0,1\}^{d}$ is the same as $\emptyset \subset [d]$ and $\overrightarrow{1} = (1, \dots, 1)$ is the same as $[d]$.

If $X$ is a set, we denote $X^{2^{d}}$ by $X^{[d]}$ and we write a point $\bx \in X^{[d]}$ as $\bx = (x_{\epsilon} : \epsilon \subseteq [d])$. For example, for $d=2$ we have $\bx=(x_{\emptyset},x_{\{1\}},x_{\{2\}},x_{\{1,2\}})$. Sometimes, it is convenient to view an element of $X^{[d]}$ as a map from $\{0,1\}^{d}$ to $X$. For a point $x\in X$ we let $x^{[d]}\in X^{[d]}$ be the diagonal point all of whose coordinates are $x$.

Let $0\leq \ell\leq d$ be an integer. An {\em $\ell$-dimensional face} of $\{0,1\}^{d}$, or equivalently a {\em face of codimension $d-\ell$} of $\{0,1\}^{d}$, is a subset of $\{0,1\}^{d}$ obtained by fixing the values of $d-\ell$ coordinates. We write dim($\alpha$) and codim($\alpha$) for the dimension and codimension of a face $\alpha$. A {\em facet} of $\{0,1\}^{d}$ is defined to be a face of codimension $1$. A face $\alpha$ is an {\em upper face} if $\overrightarrow{1}\in\alpha$.

Let $L$ be a group and $d\geq 1$ be an integer. If $\alpha$ is a face of $\{0,1\}^{d}$, for $g\in L$ we define $g^{(\alpha)}\in L^{[d]}$ by\begin{align*}
    g^{(\alpha)}(\epsilon) = \left\lbrace \begin{matrix}
        g & \text{if } \epsilon\in\alpha\\
        e & \text{otherwise}.
    \end{matrix}\right.
\end{align*}

where $e$ is the identity element of $L$.

We call the subgroup of $L^{[d]}$ generated by all $g^{(\alpha)}$, where $g\in L$ and $\alpha$ is a facet of $\{0,1\}^{d}$, the {\em Host-Kra cube group} (of order $d$) associated with $L$ and denote it by $\mathcal{HK}^{[d]}(L)$. We call the subgroup of $L^{[d]}$ generated by all $g^{(\alpha)}$, where $g\in L$ and $\alpha$ is an upper facet of $\{0,1\}^{d}$, the {\em face cube group} and denote it by $\mathcal{F}^{[d]}(L)$.

The Host-Kra and face cube groups originally appeared in \cite[Section 5]{Host_Kra_nonconventional_averages_nilmanifolds:2005} and coincide with the parallelepiped groups and face groups respectively of {\cite[Definition 3.1]{Host_Kra_Maass_nilstructure:2010}} introduced for abelian actions. See also \cite[Appendix E]{Green_Tao_linear_eq_primes:2010} for treatment of Host-Kra cube groups in nilpotent Lie groups.

The following proposition illustrates the connection between these two groups.

\begin{proposition}[{\cite[Proposition 3.3]{Glasner_Gutman_Ye_higher_regionallyproximal_general_groups:2018}}] 
    Let $L$ be a group and $d\geq 1$ be an integer. Then, $\mathcal{HK}^{[d]}(L)=\mathcal{F}^{[d]}(L)\Delta^{[d]}(L)$, where $\Delta^{[d]}(L) = \{g^{[d]}:g\in L\}$.
\end{proposition}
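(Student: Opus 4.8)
The plan is to exhibit $\mathcal{F}^{[d]}(G)\Delta^{[d]}(G)$ as a subgroup of $G^{[d]}$ that simultaneously contains every generator $g^{(\alpha)}$ of $\HK{d}$ and is contained in $\HK{d}$. The key observation is that the $2d$ facets of $\{0,1\}^{d}$ come in $d$ complementary pairs: for each $i\in[d]$, the lower facet $L_{i}=\{\epsilon:\epsilon_{i}=0\}$ and the upper facet $U_{i}=\{\epsilon:\epsilon_{i}=1\}$ partition $\{0,1\}^{d}$, and $U_{i}$ is exactly the typical generating upper facet of $\mathcal{F}^{[d]}(G)$.

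First I would record that $g^{(L_{i})}$ and $g^{(U_{i})}$ have disjoint supports as maps $[d]\to G$ (the coordinate $\epsilon$ is non-identity for $g^{(L_{i})}$ iff $\epsilon_{i}=0$, and for $g^{(U_{i})}$ iff $\epsilon_{i}=1$); hence they commute, and their product, computed coordinatewise, equals the diagonal element $g^{[d]}$. This immediately gives $\Delta^{[d]}(G)\subseteq\HK{d}$, and since $\mathcal{F}^{[d]}(G)\subseteq\HK{d}$ by definition (upper facets are facets), we get $\mathcal{F}^{[d]}(G)\Delta^{[d]}(G)\subseteq\HK{d}$.

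For the reverse inclusion, I would first check that $\Delta^{[d]}(G)$ normalizes $\mathcal{F}^{[d]}(G)$: for $g,h\in G$ and an upper facet $U_{i}$, a coordinatewise computation yields $h^{[d]}g^{(U_{i})}(h^{[d]})^{-1}=(hgh^{-1})^{(U_{i})}\in\mathcal{F}^{[d]}(G)$, so conjugation by a diagonal element permutes the generators of $\mathcal{F}^{[d]}(G)$ and hence preserves $\mathcal{F}^{[d]}(G)$. Consequently $\mathcal{F}^{[d]}(G)\Delta^{[d]}(G)=\Delta^{[d]}(G)\mathcal{F}^{[d]}(G)$ is a subgroup of $G^{[d]}$. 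It contains every upper facet generator $g^{(U_{i})}$ trivially, and it contains every lower facet generator because $g^{(L_{i})}=g^{[d]}(g^{(U_{i})})^{-1}=g^{[d]}(g^{-1})^{(U_{i})}\in\Delta^{[d]}(G)\mathcal{F}^{[d]}(G)$. Since these $2d$ elements generate $\HK{d}$, we conclude $\HK{d}\subseteq\mathcal{F}^{[d]}(G)\Delta^{[d]}(G)$, which together with the previous inclusion finishes the proof.

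There is essentially no serious obstacle here; the only points requiring care are the non-abelian bookkeeping in the coordinatewise computations (keeping track of which coordinates each $g^{(\alpha)}$ perturbs) and the standard fact that $KH$ is a subgroup whenever $H$ normalizes $K$, which is what upgrades the equality of \emph{generated subgroups} to the stated equality of \emph{product sets}.
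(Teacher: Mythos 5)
Your proof is correct: the decomposition $g^{[d]}=g^{(L_i)}g^{(U_i)}$, the normalization of $\mathcal{F}^{[d]}(G)$ by $\Delta^{[d]}(G)$ via $h^{[d]}g^{(U_i)}(h^{[d]})^{-1}=(hgh^{-1})^{(U_i)}$, and the identity $g^{(L_i)}=g^{[d]}(g^{-1})^{(U_i)}$ all check out coordinatewise, and together they give both inclusions. The paper itself states this as a cited result (Glasner--Gutman--Ye, Proposition 3.3) without reproducing a proof, and your argument is the standard one for it.
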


The following proposition gives a more precise description of the groups $\mathcal{HK}^{[d]}(L)$ by introducing a particular parametrization.

\begin{proposition}[see {\cite[Chapter 6]{Host_Kra_nilpotent_structures_ergodic_theory:2018}}]\label{prop: parametrization_cubes}
    Let $d\geq 1$ and let $L$ be a group. Suppose that $\alpha_{1},\dots,\alpha_{2^{d}}$ is an enumeration of all upper faces of $\{0,1\}^{d}$ such that codim($\alpha_{j}$) does not decrease with $j$. Then the map\begin{align*}
        \Psi:\prod_{j=1}^{2^{d}}L_{\mathrm{codim}(\alpha_{j})}\to L^{[d]}
    \end{align*}

    given by\begin{align*}
        \Psi(\bold{h})=\prod_{j=1}^{2^{d}}\bold{h}_{j}^{(\alpha_{j})},
    \end{align*}

    where $L_{0}=L$ and the product is taken in increasing order of indices, is a bijection of $\prod_{j=1}^{2^{d}}L_{\mathrm{codim}(\alpha_{j})}$ onto $\mathcal{HK}^{[d]}(L)$.
\end{proposition}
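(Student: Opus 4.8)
The plan is to prove the statement by induction on $d$, with the $d=1$ case amounting to the observation that $\HK{1} = \{(g, gh) : g \in G, h \in G\}$, parametrized by $(g,h) \mapsto g^{([1])} h^{(\{1\})}$, where the two upper faces of $\{0,1\}$ are $[1]$ (codimension $0$) and $\{1\}$ (codimension $1$), so $\Psi \colon G_0 \times G_1 = G \times G \to G^{[1]}$ is visibly a bijection onto $\HK{1}$. For the inductive step I would use the standard splitting $\{0,1\}^{d+1} = \{0,1\}^{d} \times \{0\} \sqcup \{0,1\}^{d} \times \{1\}$, writing a point $\bx \in G^{[d+1]}$ as a pair $(\bx', \bx'')$ of elements of $G^{[d]}$. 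An upper face $\alpha$ of $\{0,1\}^{d+1}$ is either (i) of the form $\alpha' \times \{1\}$ with $\alpha'$ an upper face of $\{0,1\}^d$, in which case $\mathrm{codim}_{d+1}(\alpha) = \mathrm{codim}_d(\alpha') + 1$, or (ii) of the form $\alpha' \times \{0,1\}$ with $\alpha'$ an upper face of $\{0,1\}^d$, in which case $\mathrm{codim}_{d+1}(\alpha) = \mathrm{codim}_d(\alpha')$. Thus the enumeration of upper faces of $\{0,1\}^{d+1}$ by nondecreasing codimension can be arranged to be compatible with an enumeration of upper faces of $\{0,1\}^d$, and correspondingly the index set $\prod_{j=1}^{2^{d+1}} G_{\mathrm{codim}(\alpha_j)}$ splits as $\left(\prod_{j} G_{\mathrm{codim}_d(\alpha'_j)}\right) \times \left(\prod_{j} G_{\mathrm{codim}_d(\alpha'_j)+1}\right)$, i.e. the ``type (ii)'' factors reproduce exactly the domain of the order-$d$ map $\Psi_d$, while the ``type (i)'' factors give $\prod_j G_{\mathrm{codim}_d(\alpha'_j)+1}$.

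The key computational step is to identify, for an upper face $\alpha = \alpha' \times \{0,1\}$ of type (ii), the element $g^{(\alpha)} \in G^{[d+1]}$ with the pair $(g^{(\alpha')}, g^{(\alpha')}) \in G^{[d]} \times G^{[d]}$, and for $\alpha = \alpha' \times \{1\}$ of type (i), the element $g^{(\alpha)}$ with $(e, g^{(\alpha')})$. Applying $\Psi_{d+1}$ to a tuple and collecting the two $G^{[d]}$-coordinates, one finds that the first coordinate of $\Psi_{d+1}(h)$ is $\Psi_d$ applied to the type-(ii) subtuple, while the second coordinate is the product (in the correct order) of $\Psi_d$ of the type-(ii) subtuple with $\Psi_d^{\mathrm{face}}$ of the type-(i) subtuple — where one must be careful that the type-(i) faces, having an extra coordinate fixed at $1$, restrict on the ``top'' copy of $\{0,1\}^d$ to upper faces and contribute via the face-group parametrization with commutator subgroups shifted up by one degree. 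This last point is where I expect to lean on the relation $\HK{d} = \mathcal{F}^{[d]}(G)\Delta^{[d]}(G)$ from the preceding proposition, together with the analogous parametrization of $\mathcal{F}^{[d]}(G)$, to recognize the second-coordinate map as a bijection onto the relevant coset once the first coordinate is fixed.

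Given these identifications, surjectivity of $\Psi_{d+1}$ onto $\HK{d+1}$ is immediate since the $g^{(\alpha)}$ over all upper facets generate (upper faces being products of facets, as codimension-$k$ upper faces are intersections of $k$ upper facets), and injectivity follows from a dimension/leading-term argument: reading off coordinates of $\Psi_{d+1}(h)$ in order of increasing codimension of the supporting face, each $h_j$ is determined by the value at the ``new'' vertex $\overrightarrow{1}_{\alpha_j}$ (the vertex of $\alpha_j$ that lies in no $\alpha_i$ with $i<j$), so $\Psi_{d+1}(h) = \Psi_{d+1}(h')$ forces $h_j = h'_j$ inductively in $j$. The main obstacle, and the only genuinely delicate part, is bookkeeping the order of the product and verifying that the commutator-subgroup degrees match up under the splitting — i.e. that a type-(i) face of codimension $k$ in $\{0,1\}^{d+1}$ really does correspond to a face of codimension $k-1$ in the top copy of $\{0,1\}^d$ but contributes a factor from $G_k$, not $G_{k-1}$; this is exactly the degree shift encoded in the face-group description, and it is the reason the naive ``two independent copies of $\Psi_d$'' guess is wrong and the face group must enter. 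I would therefore state and prove a companion lemma giving the parametrization of $\mathcal{F}^{[d]}(G)$ by $\prod_{j} G_{\mathrm{codim}(\alpha_j)+?}$ with the appropriate indexing first, and then deduce the Host-Kra case; alternatively, one can cite \cite[Chapter 6]{Host_Kra_nilpotent_structures_ergodic_theory:2018} directly, as the statement does.
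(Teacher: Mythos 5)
The paper offers no proof of this proposition --- it is quoted directly from \cite[Chapter 6]{Host_Kra_nilpotent_structures_ergodic_theory:2018} --- so your sketch has to stand on its own. Its skeleton (induction on $d$ via the splitting $\{0,1\}^{d+1}=\{0,1\}^{d}\times\{0\}\sqcup\{0,1\}^{d}\times\{1\}$, with the coordinate identifications $g^{(\alpha'\times\{0,1\})}=(g^{(\alpha')},g^{(\alpha')})$ and $g^{(\alpha'\times\{1\})}=(e,g^{(\alpha')})$) is the standard one, but the two places you wave at are where the entire content lives, and one of them does not close as set up. The induction hypothesis is too weak: the type-(i) faces contribute to the top copy factors $h^{(\alpha')}$ with $h\in G_{\mathrm{codim}_d(\alpha')+1}$, so the second coordinate requires the parametrization for the shifted family $(G_{k+1})_{k}$, then $(G_{k+2})_{k}$ at the next step, and so on. These are not the lower central series of any group, so the proposition as stated cannot serve as its own induction hypothesis; you must first generalize it to an arbitrary prefiltration $G=G_{0}\supseteq G_{1}\supseteq\cdots$ with $[G_{i},G_{j}]\subseteq G_{i+j}$ and prove that version. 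Your remark about the ``degree shift'' shows you noticed the phenomenon, but not that it forces a change in the statement being induced on --- which is exactly why Host--Kra (and Green--Tao, Appendix E) work at that generality.

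Second, surjectivity is not ``immediate.'' Within each codimension class the enumeration interleaves type-(i) and type-(ii) faces, so the second coordinate of $\Psi_{d+1}(h)$ is not literally a product of $\Psi_{d}$ of one subtuple with a shifted $\Psi_{d}$ of the other; separating them --- and, more basically, showing that the set of normal-form products is a subgroup containing the generators $g^{(\sigma)}$ for \emph{lower} facets $\sigma$ too (via $g^{(\sigma)}=g^{[d]}(g^{-1})^{(\sigma^{c})}$ with $\sigma^{c}$ the opposite upper facet) --- requires a collection argument resting on $[g^{(\alpha)},h^{(\beta)}]=[g,h]^{(\alpha\cap\beta)}$, $[G_{i},G_{j}]\subseteq G_{i+j}$ and $\mathrm{codim}(\alpha\cap\beta)\le\mathrm{codim}(\alpha)+\mathrm{codim}(\beta)$ for upper faces, plus a termination argument. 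That is the actual proof, and your sketch defers it entirely. A smaller but genuine slip: for $j\ge 2$ there is \emph{no} vertex of $\alpha_{j}$ lying in no $\alpha_{i}$ with $i<j$, since every vertex lies in the codimension-zero face $\alpha_{1}$. The right choice is the minimal vertex of $\alpha_{j}$, which lies in $\alpha_{i}$ only when $\alpha_{i}\supseteq\alpha_{j}$, hence only for $i\le j$; the value of $\Psi(h)$ there is $\bigl(\prod_{i<j,\ \alpha_{i}\supseteq\alpha_{j}}h_{i}\bigr)h_{j}$, and your induction on $j$ then does give injectivity.
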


\subsection{Dynamical cubes} Let $(X,T)$ be a topological dynamical system. The Host-Kra cube of $T$ acts on $X^{[d]}$ by\begin{align*}
    (\bold{t}\bx)(\epsilon)=\bt_{\epsilon}\bx_{\epsilon}
\end{align*}

for $\bt\in \mathcal{HK}^{[d]}(T), \bx\in X^{[d]}$ and $\epsilon\subseteq [d]$.

Let $d\geq 1$ be an integer. Following Host, Kra and Maass {\cite[Definition 1.1]{Host_Kra_Maass_nilstructure:2010}} the set of {\em dynamical cubes of dimension $d$} is \begin{align*}
    \bQ^{[d]}(X)=\overline{\{\bt x^{[d]}:\bt\in\mathcal{HK}^{[d]}(T),x\in X\}}.
\end{align*}

Note that if $(X,T)$ is minimal \begin{align*}
        \bQ^{[d]}(X)= \overline{\{\bt x_{0}^{[d]}:\bt\in\mathcal{HK}^{[d]}(T)\}} = \overline{\{\bt x^{[d]}:\bt\in\mathcal{F}^{[d]}(T),x\in X\}}.
    \end{align*}

for each $x_{0}\in X$.

\begin{theorem}[{\cite[Theorem 1.1]{Glasner_RPd_envelop_proof:2014}, \cite[Lemma 3.2]{Host_Kra_Maass_nilstructure:2010}, \cite[Theorem 3.1]{Shao_Ye_regionally_prox_orderd:2012}}\label{thm: cube_is_minimal}]
    Let $(X, T)$ be a minimal topological dynamical system and $d\geq 1$ be an integer. Let\begin{align*}
        \bQ_{x}^{[d]}(X) &= \bQ^{[d]}(X) \cap (\{x\}\cap X^{2^{d}-1})\\
        Y_{x}^{[d]} &= \overline{\mathcal{F}^{[d]}(T)x^{[d]}},
    \end{align*}

    where $x\in X$. Then,\begin{enumerate}[label=(\arabic*)]
        \item $(\bQ^{[d]}(X),\mathcal{HK}^{[d]}(T))$ is a minimal topological dynamical system.
        \item For each $x\in X$, $(Y_{x}^{[d]},\mathcal{F}^{[d]}(T))$ is a minimal topological dynamical system.
        \item For each $x\in X$, $Y_{x}^{[d]}$ is the unique minimal subsystem of $(\bQ_{x}^{[d]}(X),\mathcal{F}^{[d]}(T))$.
        \item There exists a dense $G_{\delta}$ subset $X_{0}\subseteq X$ such that $Y_{x}^{[d]} = \bQ_{x}^{[d]}(X)$ for every $x\in X_{0}$.
    \end{enumerate}
\end{theorem}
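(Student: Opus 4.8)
The plan is to prove the four assertions in order, with essentially all the work in (1) and (2). Since $(X,T)$ is minimal and $\Delta^{[d]}(T)\subseteq\mathcal{HK}^{[d]}(T)$, the identity $\bQ^{[d]}(X)=\overline{\{tx_0^{[d]}:t\in\mathcal{HK}^{[d]}(T)\}}$ recorded above shows that (1) is equivalent to the diagonal point $x_0^{[d]}$ being an almost periodic point of $(X^{[d]},\mathcal{HK}^{[d]}(T))$; likewise $Y_x^{[d]}=\overline{\mathcal{F}^{[d]}(T)x^{[d]}}$ is an orbit closure, so (2) is equivalent to $x^{[d]}$ being almost periodic for $(X^{[d]},\mathcal{F}^{[d]}(T))$. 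Thus both reduce to a single claim on almost periodicity of diagonal points, which I would prove by induction on $d$.

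For $d=1$, $\mathcal{HK}^{[1]}(T)=T\times T$ and $(X\times X,T\times T)$ is minimal (a product of minimal systems by the product group), so $(x,x)$ is almost periodic, and $\overline{\mathcal{F}^{[1]}(T)(x,x)}=\{x\}\times X\cong(X,T)$ is minimal. For the inductive step, split coordinates along the last one, $X^{[d+1]}=X^{[d]}\times X^{[d]}$; writing $G=\mathcal{HK}^{[d]}(T)$ and $G^{\Delta}=\{(g,g):g\in G\}$, a direct check of the facets of $\{0,1\}^{d+1}$ gives $\mathcal{HK}^{[d+1]}(T)=\langle G^{\Delta},\ \Delta^{[d]}(T)\times\{e\}\rangle$, the second factor acting (by a single element of $T$, diagonally) on the bottom face only. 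As $T$ is abelian, $\Delta^{[d]}(T)$ is central in $G$, so $G^{\Delta}$ and $\Delta^{[d]}(T)\times\{e\}$ commute and $\mathcal{HK}^{[d+1]}(T)$ is their internal direct product. By the inductive hypothesis $M:=\bQ^{[d]}(X)=\overline{Gx^{[d]}}$ is minimal, hence the diagonal $\Delta_M$ is a minimal subsystem of $(M\times M,G^{\Delta})$ and $(x^{[d]},x^{[d]})$ is almost periodic for $G^{\Delta}$; one then has to promote this to almost periodicity for the enlargement by the central complement $\Delta^{[d]}(T)\times\{e\}$, using that $\overline{\Delta^{[d]}(T)x^{[d]}}\cong(X,T)$ is itself minimal.

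I expect this promotion to be the main obstacle. Abstractly it reads: if $(Z,G)$ is minimal, $G_0\leq G$ is central and $\overline{G_0z_0}$ is $G_0$-minimal, then the orbit closure of $(z_0,z_0)$ in $Z\times Z$ under $\langle G^{\Delta},G_0\times\{e\}\rangle$ is minimal. One proves it by a minimal-idempotent argument in the enveloping semigroup: fix a minimal idempotent $u$ of $E(X,T)$ with $ux=x$, follow the idempotent it induces on the successive cube systems, and verify --- the delicate point --- that it stays in a minimal left ideal of $E(\bQ^{[d+1]}(X),\mathcal{HK}^{[d+1]}(T))$, using the recursive description of the cube groups and the centrality of $\Delta^{[d]}(T)$. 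This is exactly where the proofs of Host--Kra--Maass, Shao--Ye and Glasner invest their effort, and I would follow that line (alternatively: first prove the statement for the universal minimal system via its Ellis group, then factor down).

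Granting (1) and (2), I would deduce (3) and (4) from the factor map $\pi_\emptyset\colon\bQ^{[d]}(X)\to X$, $\bx\mapsto x_\emptyset$. No upper facet contains $\emptyset$, so $\mathcal{F}^{[d]}(T)$ fixes the $\emptyset$-coordinate and $\pi_\emptyset$ is an $\mathcal{HK}^{[d]}(T)$-factor map onto $(X,T)$ with fibres $\bQ_x^{[d]}(X)$; moreover $\mathcal{F}^{[d]}(T)$ is precisely the subgroup acting trivially on $X$, hence normal in $\mathcal{HK}^{[d]}(T)$, and $\mathcal{HK}^{[d]}(T)=\mathcal{F}^{[d]}(T)\cdot\Delta^{[d]}(T)$ by the cited proposition with the two factors commuting. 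For (3): using minimality of $(\bQ^{[d]}(X),\mathcal{HK}^{[d]}(T))$ from (1) and the normality of $\mathcal{F}^{[d]}(T)$, one shows (by an enveloping-semigroup/idempotent argument that is more subtle than it first looks) that every point of $\bQ_x^{[d]}(X)$ is $\mathcal{F}^{[d]}(T)$-proximal to a point of $Y_x^{[d]}$; then for any minimal $\mathcal{F}^{[d]}(T)$-subset $N\subseteq\bQ_x^{[d]}(X)$, choosing $\bx\in N$ and $\by\in Y_x^{[d]}$ with $(\bx,\by)$ proximal forces $\overline{\mathcal{F}^{[d]}(T)(\bx,\by)}$ to meet the diagonal, so $N\cap Y_x^{[d]}\neq\emptyset$ and $N=Y_x^{[d]}$ by minimality of both. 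For (4): from (2), the cited proposition and minimality of $(X,T)$ one gets $\bQ^{[d]}(X)=\overline{\bigcup_{x\in X}Y_x^{[d]}}$; the map $x\mapsto Y_x^{[d]}=\overline{\mathcal{F}^{[d]}(T)x^{[d]}}$ is lower semicontinuous into the hyperspace of closed subsets of $X^{[d]}$, so its continuity points form a dense $G_\delta$ set $X_0$, and at each $x\in X_0$ one checks, using the previous identity and continuity of $y\mapsto Y_y^{[d]}$ at $x$, that $\bQ_x^{[d]}(X)\subseteq Y_x^{[d]}$, i.e.\ $\bQ_x^{[d]}(X)=Y_x^{[d]}$.
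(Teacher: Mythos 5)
This statement is \cref{thm: cube_is_minimal} of the paper, which is quoted as background and attributed to Host--Kra--Maass, Shao--Ye and Glasner; the paper gives no proof of its own, so there is nothing internal to compare your argument against. Judged on its own terms, your proposal is an accurate map of how the known proofs are organized, but it is not a proof: at the two places where all of the real difficulty lives, you name the difficulty and then defer to the literature rather than resolving it.

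Concretely: the reductions you do carry out are fine. The base case $d=1$, the identification $\mathcal{HK}^{[d+1]}(T)=\langle G^{\Delta},\Delta^{[d]}(T)\times\{e\}\rangle$ with $G=\mathcal{HK}^{[d]}(T)$, the observation that $\pi_{\emptyset}$ intertwines the actions and that $\mathcal{F}^{[d]}(T)$ fixes the $\emptyset$-coordinate, and the deduction of (4) from (2) by lower semicontinuity of $x\mapsto Y^{[d]}_x$ together with $\bQ^{[d]}(X)=\overline{\{tx^{[d]}:t\in\mathcal{F}^{[d]}(T),\,x\in X\}}$ are all correct. But the inductive step for (1)--(2) rests entirely on the ``promotion'' claim (minimality of the orbit closure of $(z_0,z_0)$ under $\langle G^{\Delta},G_0\times\{e\}\rangle$ when $G_0$ is central and $\overline{G_0z_0}$ is $G_0$-minimal), which you explicitly flag as ``the main obstacle'' and for which you offer only the outline ``fix a minimal idempotent and verify it stays in a minimal left ideal.'' The delicacy is real: minimality of $\overline{G_0z_0}$ at the single point $z_0$ does not propagate to other points of $Z$, and controlling this is exactly what forces the enveloping-semigroup computation in Glasner's note and the combinatorial work in Shao--Ye. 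Likewise, your proof of (3) hinges on the claim that every point of $\bQ^{[d]}_x(X)$ is $\mathcal{F}^{[d]}(T)$-proximal to a point of $Y^{[d]}_x$; the general fact that every point is proximal to an almost periodic point in its own $\mathcal{F}^{[d]}(T)$-orbit closure gives you proximality to \emph{some} minimal set, but identifying that minimal set with $Y^{[d]}_x$ is essentially the uniqueness assertion you are trying to prove, and you again label the needed step ``more subtle than it first looks'' without supplying it. Until those two steps are actually carried out, the argument has genuine gaps at its core.
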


 The proof of \cref{thm: cube_is_minimal} depends on previously defined algebraic objects, especially the enveloping semigroup, and similar arguments can be made using the universal minimal system.

\subsection{Regionally proximal relation of order $d$}

Let $(X, T)$ be a topological dynamical system and $d\geq 1$ be an integer. A pair $(x,y)\in X\times X$ is said to be {\em regionally proximal of order $d$} if  there are nets $(\bold{f}_{\lambda})_{\lambda\in\Lambda}$ in $\mathcal{F}^{[d]}(T)$, $(x_{\lambda})_{\lambda\in\Lambda}, (y_{\lambda})_{\lambda\in\Lambda}$ in $X$ and $\bold{a}_{*}\in X^{2^{d}-1}$ such that\begin{align*} (\bold{f}_{\lambda}x_{\lambda}^{[d]},\bold{f}_{\lambda}y_{\lambda}^{[d]}) \to (x,\bold{a}_{*},y,\bold{a}_{*})
\end{align*}
The set of regionally proximal pairs of order $d$ is denoted by $\RP^{[d]}(X)$ and is called the {\em regionally proximal relation of order $d$}. We say that $(X, T)$ is a {\em system of order $d$} if $\RP^{[d]}(X)$ coincides with the diagonal relation. Note that $\RP^{[1]}(X)$ is the classical regionally proximal relation, so systems of order 1 are exactly the equicontinuous systems. 

The relation $\RP^{[d]}(X)$ is a closed and invariant relation. Moreover, (see \cite[Lemma 3.7]{Shao_Ye_regionally_prox_orderd:2012})\begin{align*}
    \P(X)\subseteq \cdots \subseteq \RP^{[d+1]}(X)\subseteq \RP^{[d]}(X)\subseteq \cdots \subseteq \RP^{[1]}(X). 
\end{align*}

The following theorem shows some properties of the regionally proximal relation of order $d$.\begin{theorem}[{\cite[Theorems 3.2, 3.3, 3.8 and 3.9]{ Shao_Ye_regionally_prox_orderd:2012}}]\label{thm: shao_ye_thm}
    Let $\pi:(X, T)\to (Y, T)$ be an extension of minimal systems and $d\geq 1$ be an integer. Then,\begin{enumerate}[label=(\arabic*)]
        \item $(x,y)\in\RP^{[d]}(X)$ if and only if $(x,y,\dots,y)\in \bQ^{[d+1]}(X)$ if and only if $(x,y,\dots,y)\in \overline{\mathcal{F}^{[d+1]}(T)x^{[d+1]}}$.
        \item $\RP^{[d]}(X)$ is an equivalence relation.
        \item $\pi\times\pi(\RP^{[d]}(X))=\RP^{[d]}(Y)$.
        \item $\RP^{[d]}(X)\subseteq R_{\pi}$ if and only if $(Y, T)$ is a system of order $d$.
    \end{enumerate}

    Furthermore the quotient of $X$ under $\RP^{[d]}(X)$ is the maximal system of order $d$. It follows that every factor of a system of order $d$ is a system of order $d$.
\end{theorem}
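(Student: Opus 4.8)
The plan is to reduce everything to the cube characterization in item~(1), and then derive (2), (3), (4) and the closing remarks from it together with \cref{thm: cube_is_minimal}.

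For~(1), the equivalence $(x,y)\in\RP^{[d]}(X)\iff(x,y,\dots,y)\in\bQ^{[d+1]}(X)$ comes from unwinding the definitions: splitting $[d+1]=[d]\sqcup\{d+1\}$ and noting that $\mathcal{F}^{[d+1]}(T)$ is generated by a diagonally embedded copy of $\mathcal{F}^{[d]}(T)$ (from the upper facets not fixing coordinate $d+1$) together with the translations along the upper facet $\{\epsilon:\epsilon_{d+1}=1\}$, a net $x',y',t$ witnessing $(x,y)\in\RP^{[d]}(X)$ assembles into a net of honest $(d+1)$-cubes converging to $(x,y,\dots,y)$, and conversely any such net produces a witness. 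The second equivalence, replacing $\bQ^{[d+1]}(X)$ by the minimal set $\overline{\mathcal{F}^{[d+1]}(T)x^{[d+1]}}=Y_x^{[d+1]}$, is the delicate point of~(1): since every generator of $\mathcal{F}^{[d+1]}(T)$ fixes the $\emptyset$-coordinate, the point $(x,y,\dots,y)$ lies in $\bQ^{[d+1]}_x(X)$, and one must push it into the unique minimal $\mathcal{F}^{[d+1]}(T)$-subsystem of that set (\cref{thm: cube_is_minimal}(3)) by a limiting argument, the work being to check that the special coordinate pattern survives in the limit; the density statement \cref{thm: cube_is_minimal}(4) enters here as well.

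Granting~(1): symmetry and reflexivity of $\RP^{[d]}(X)$ are immediate from its $\varepsilon$--$\delta$ definition (swap $x'$ and $y'$; take $x'=y'=x$), and closedness and invariance are routine. Transitivity, item~(2), is the main obstacle. Given $(x,y,\dots,y),(y,z,\dots,z)\in\bQ^{[d+1]}(X)$ one must produce $(x,z,\dots,z)\in\bQ^{[d+1]}(X)$, and I would do this with a gluing lemma for cubes: two elements of $\bQ^{[d+1]}(X)$ agreeing on a suitable common face concatenate to an element of $\bQ^{[d+2]}(X)$, a facet of which is again a $(d+1)$-cube; applying this to $(x,y,\dots,y)$, the diagonal $y^{[d+1]}$, and $(y,z,\dots,z)$, and then invoking minimality of $(\bQ^{[d+1]}(X),\mathcal{HK}^{[d+1]}(T))$ (\cref{thm: cube_is_minimal}(1)) to collapse the resulting cube onto the pattern $(x,z,\dots,z)$, yields $(x,z)\in\RP^{[d]}(X)$.

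It remains to treat the lifting statement~(3), from which~(4) and the final paragraph follow formally. The inclusion $\pi\times\pi(\RP^{[d]}(X))\subseteq\RP^{[d]}(Y)$ follows by pushing the defining nets through the (uniformly continuous) factor map. For the reverse inclusion, given $(y_1,y_2)\in\RP^{[d]}(Y)$ pick $x_1\in\pi^{-1}(y_1)$; since the induced map $Y^{[d+1]}_{x_1}\to Y^{[d+1]}_{y_1}$ is onto and $\mathcal{F}^{[d+1]}(T)$ fixes the $\emptyset$-coordinate, the point $(y_1,y_2,\dots,y_2)$ of $Y^{[d+1]}_{y_1}$ (provided by~(1) applied to the minimal system $Y$) lifts to an element of $\bQ^{[d+1]}(X)$ with $\emptyset$-coordinate $x_1$ and all remaining coordinates in the single fibre $\pi^{-1}(y_2)$; a further proximalisation, again using minimality of $Y^{[d+1]}_{x_1}$ together with the structure of the proximal relation, collapses those coordinates to a common $x_2\in\pi^{-1}(y_2)$, so that $(x_1,x_2)\in\RP^{[d]}(X)$ lies over $(y_1,y_2)$. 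Finally, (4) follows since $Y$ of order $d$ forces $\pi\times\pi(\RP^{[d]}(X))=\RP^{[d]}(Y)=\Delta$, hence $\RP^{[d]}(X)\subseteq R_\pi$, and conversely lifting through~(3) forces $\RP^{[d]}(Y)=\Delta$; applying~(3) to the quotient $X\to X/\RP^{[d]}(X)$ shows the quotient is of order $d$, maximality is exactly~(4), and $\RP^{[d]}(X)=\Delta$ gives $\RP^{[d]}(Y)=\pi\times\pi(\Delta)=\Delta$ for any factor.
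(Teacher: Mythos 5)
This theorem is quoted from Shao--Ye and the paper does not reprove it at the point where it is stated; what the paper does supply are new proofs of items (2) and (3), and these go by a completely different, algebraic route: $\RP^{[d]}(X)$ is identified with the relation $R_d^{\mathrm{dis}}(X)$ consisting of pairs $(x, vg\delta x)$ with $v\in J(M)$, $g\in\Gtau{d+1}$ and $\delta$ in the group $D$ (\cref{lemma: Rdis_is_RPd}), after which transitivity follows from normality of $\Gtau{d+1}$ and $D$ in $G$, and the lifting property (\cref{cor: lifting_RPd}) becomes a one-line corollary, since one may apply the same element $vg\delta$ to any preimage $x\in\pi^{-1}(y)$. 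Your proposal instead follows the classical cube-theoretic route, so the relevant comparison is with Shao--Ye's original argument rather than with anything in this paper.

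Within that classical route there is a genuine gap at the crucial step, transitivity. You derive it from a ``gluing lemma for cubes'': two elements of $\bQ^{[d+1]}(X)$ agreeing on a common face concatenate to an element of $\bQ^{[d+2]}(X)$. No such lemma is available for a general minimal system; for topological dynamical cubes such completion and gluing properties are obtained as consequences of the structure of the maximal factor of order $d$ --- that is, of the very theorem you are proving (compare \cref{thm: unique_tauclosure_iff_prox_ext_nil}) --- so invoking it here is circular. The standard, and much shorter, argument uses the strong form of your item (1) that you have already established: if $(y,z,\dots,z)\in\overline{\mathcal{F}^{[d+1]}(T)y^{[d+1]}}$, take a net $F_\lambda\in\mathcal{F}^{[d+1]}(T)$ with $F_\lambda y^{[d+1]}\to(y,z,\dots,z)$; since every generator of $\mathcal{F}^{[d+1]}(T)$ fixes the $\emptyset$-coordinate, $F_\lambda(x,y,\dots,y)\to(x,z,\dots,z)$, and this limit stays in the closed $\mathcal{F}^{[d+1]}(T)$-invariant set $\overline{\mathcal{F}^{[d+1]}(T)x^{[d+1]}}$, giving $(x,z)\in\RP^{[d]}(X)$. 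A similar vagueness affects your proof of (3): after lifting $(y_1,y_2,\dots,y_2)$ to a cube with $\emptyset$-coordinate $x_1$, the remaining coordinates all lie in the fibre $\pi^{-1}(y_2)$ but are in general not pairwise proximal, so the claim that ``a further proximalisation collapses those coordinates to a common $x_2$'' is not justified; this is precisely where Shao--Ye must do real work, and where the algebraic description of \cref{lemma: Rdis_is_RPd} pays off.
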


\subsection{Enveloping semigroups}

The {\em enveloping semigroup} (or {\em Ellis semigroup}) $E(X, T)$ of a system $(X, T)$ is defined as the closure of $T$ in $X^X$ endowed with the product topology. For an enveloping semigroup $E(X, T)$, the applications $E(X, T) \to E(X, T)$, $p \mapsto pq$ and $p \mapsto tp$ are continuous for all $q \in E(X,T)$ and $t\in T$.

Ellis introduced this notion, and it has proved to be a useful tool in studying dynamical systems. The algebraic properties of $E(X,T)$ can be converted into the dynamical properties of $(X, T)$ and the reverse is also true. To exemplify this, we recall the following theorem.

\begin{theorem}[see {\cite[Chapters 3, 4 and 5]{Auslander_minimal_flows_and_extensions:1988}}]\label{thm: distal_equi_chr_enveloping_semigroup}
    Let $(X, T)$ be a topological dynamical system. Then, \begin{enumerate}[label=(\arabic*)]
        \item $(X, T)$ is distal if and only if $E(X, T)$ is a group.
        \item $(X, T)$ is equicontinuous if and only if $E(X, T)$ is an abelian group if and only if $E(X, T)$ is a group of continuous transformations.
    \end{enumerate}
\end{theorem}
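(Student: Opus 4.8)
The plan is to derive both statements from the algebraic structure of the compact right-topological semigroup $E=E(X,T)$: part (1) from the basic structure theory of such semigroups (existence of idempotents and of closed minimal left ideals), and part (2) from Ellis's continuity theorems together with Arzel\`a--Ascoli.

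\textbf{Part (1).} The ``if'' direction is immediate: if $E$ is a group with unit $\mathrm{id}_X$ and $(x,y)$ is proximal, choose $t_\lambda\in T$ with $t_\lambda x,t_\lambda y\to z$ and, by compactness of $E$, a subnet with $t_\lambda\to p$ in $X^X$; then $px=py=z$, and applying $p^{-1}\in E$ gives $x=p^{-1}(px)=p^{-1}(py)=y$, so $(X,T)$ is distal. For ``only if'', I would first show distality forces $\mathrm{id}_X$ to be the \emph{only} idempotent of $E$: if $u=u^2$ and $t_\lambda\to u$ with $t_\lambda\in T$, then for every $x$ one has $t_\lambda x\to ux$ and $t_\lambda(ux)\to u(ux)=ux$, so $(x,ux)$ is proximal and $ux=x$. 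Then I would use that every nonempty closed left ideal of a compact right-topological semigroup contains an idempotent: for $p\in E$, $Ep$ is a closed left ideal, so it contains $\mathrm{id}_X$, i.e.\ $p$ has a left inverse in $E$; since a monoid in which every element has a left inverse is a group, $E$ is a group.

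\textbf{Part (2), from equicontinuity.} An equicontinuous system is distal (if $(x,y)$ were proximal with $x\neq y$, applying equicontinuity with $\varepsilon=\rho(x,y)/2$ to $(t_\lambda x,t_\lambda y)$ for large $\lambda$ and then to $t_\lambda^{-1}\in T$ would give $\rho(x,y)<\rho(x,y)/2$), hence $E$ is a group by part (1). Since $T$ is an equicontinuous family on the compact metric space $X$, Arzel\`a--Ascoli gives that its closure $E$ in $X^X$ is a compact equicontinuous family of continuous transformations on which composition is jointly continuous; thus $E$ is a group of continuous transformations. Passing to the limit in $t_\lambda s_\mu=s_\mu t_\lambda$ along $t_\lambda\to p$, $s_\mu\to q$ (legitimate by joint continuity of composition, using that $T$ is abelian) gives $pq=qp$, so $E$ is abelian as well.

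\textbf{Part (2), the converses.} It suffices to show that if $E$ is a group of continuous transformations then $(X,T)$ is equicontinuous, and that if $E$ is an abelian group then $E$ is a group of continuous transformations. In both cases each left translation $L_p\colon q\mapsto pq$ on $E$ is continuous --- when $E$ is abelian because $L_p=R_p$ and right translations on $E$ are always continuous, and when each $p$ is continuous because composing a pointwise-convergent net on the left with the continuous map $p$ again converges pointwise --- so $E$ is a compact Hausdorff semitopological group and hence, by Ellis's theorem, a topological group. When $E$ consists of continuous maps, the action $E\times X\to X$ is separately continuous (continuous in $q$ by the product topology, in $x$ since each $p$ is continuous), hence jointly continuous because a separately continuous action of a compact Hausdorff group on a compact Hausdorff space is jointly continuous; a finite-subcover argument then turns this into equicontinuity of the family $\{p:p\in E\}\supseteq T$, so $(X,T)$ is equicontinuous: given $x_0$ and $\varepsilon>0$, cover $E$ by finitely many open $U_i$ with $\rho(qx,p_ix_0)<\varepsilon/2$ for $q\in U_i$ and $x$ in a neighbourhood $V_i$ of $x_0$, and take $V=\bigcap_i V_i$. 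When $E$ is abelian one deduces that every $p\in E$ is continuous on $X$ from the fact that the orbit maps $q\mapsto qx$ present the orbit closures $\overline{Tx}$ as quotients of the topological group $E$ by point stabilizers, on which $p$ acts by a continuous translation; one must take a little care, when $(X,T)$ is not point transitive, to see that this orbitwise continuity suffices for the argument above. The idempotent and ideal bookkeeping and the remaining equivalences are formal; the genuine content, and the main obstacle, is the analytic input of Ellis's theorems --- a compact semitopological group is a topological group, and separately continuous actions of compact groups are jointly continuous --- which is exactly what converts the purely algebraic hypotheses on $E$ into the topological conclusion of equicontinuity.
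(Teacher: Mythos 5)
The paper offers no proof of this statement --- it is quoted verbatim from Auslander's book --- so your proposal can only be measured against the standard arguments, and it reproduces them essentially correctly. Part (1) is the usual Ellis--Namakura argument (distality forces $\mathrm{id}_X$ to be the only idempotent, the closed left ideal $Ep$ contains an idempotent, hence every element has a left inverse), and the implications ``equicontinuous $\Rightarrow$ abelian group of continuous maps'' (via Arzel\`a--Ascoli) and ``group of continuous maps $\Rightarrow$ equicontinuous'' (via Ellis's two continuity theorems plus the finite-subcover argument) are sound as written.

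The one point you flag --- whether orbitwise continuity suffices when $(X,T)$ is not point transitive --- is not a removable technicality: the implication ``$E(X,T)$ an abelian group $\Rightarrow (X,T)$ equicontinuous'' is actually \emph{false} without a transitivity hypothesis. Take $X=\T^2$ with $S(x,y)=(x,x+y)$, so $S^n(x,y)=(x,nx+y)$. This system is distal, and $E(X,S)$ consists of the maps $(x,y)\mapsto(x,\chi(x)+y)$ with $\chi$ ranging over the pointwise closure of $\{x\mapsto nx:n\in\Z\}$ in $\T^{\T}$ (the Bohr compactification of $\Z$); this is a compact abelian group, yet it contains discontinuous $\chi$ and the system is not equicontinuous, since for $x-x'$ a small irrational the set $\{n(x-x')\}$ is dense in $\T$. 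So the abelian clause of (2) genuinely requires $(X,T)$ to be point transitive --- as it is in every application in this paper, where the systems are minimal. Under that hypothesis your quotient argument closes completely and should be stated as such: $\phi\colon E\to X$, $q\mapsto qx_0$ is a continuous surjection from a compact space onto a Hausdorff space, hence a quotient map, and $p\circ\phi=\phi\circ L_p$ with $L_p$ continuous because $E$ is a (semi)topological group, so every $p\in E$ is continuous on $X=\phi(E)$ and you are reduced to the case already handled. With that hypothesis made explicit, the proposal is correct and is the standard proof.
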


In particular, a system of order $1$ can be characterized by its enveloping semigroup. It is a natural question to ask whether it is possible to characterize the systems of order $d$ by their enveloping semigroup. The following theorem answers this question in the case of $\Z$-actions.

\begin{theorem}[{\cite[Theorem 1.1]{Donoso_enveloping_systems_orderd:2014},\cite[Theorem 1.3]{Qiu_Zhao_topnilpotent_enveloping_nil:2022}}]\label{thm: nil_iff_enveloping_nil}
    Let $(X, S)$ be a minimal topological dynamical system. Then, $(X, S)$ is a system of order $d$ if and only if $E(X, S)$ is a $d$-step topologically nilpotent group.
\end{theorem}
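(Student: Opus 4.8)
The plan is to prove the two implications by different routes: the forward implication by reducing to nilsystems and computing their enveloping semigroups, and the reverse implication by analysing $\RP^{[d]}$ through dynamical cubes over the enveloping semigroup.

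\textbf{From order $d$ to $d$-step topological nilpotency.} A system of order $d$ is an inverse limit of $d$-step nilsystems, hence distal, so by \cref{thm: distal_equi_chr_enveloping_semigroup} the enveloping semigroup $E = E(X,S)$ is a group. First I would reduce to the case of a single nilsystem. If $X = \varprojlim X_{i}$ with each $X_{i}$ a $d$-step nilsystem, then $E = \varprojlim E(X_{i},S)$: the canonical map $E \to \varprojlim E(X_{i},S)$ is injective because $X$ embeds in $\prod_{i} X_{i}$, and surjective because $E$ is compact and its image is closed and contains the dense image of $S$. Since a continuous homomorphism carries the $j$-th topological commutator subgroup into that of its image, and the projections $E \to E(X_{i},S)$ are continuous surjections onto $d$-step topologically nilpotent groups, $E^{\mathrm{top}}_{d+1}$ is annihilated by every projection and is therefore trivial. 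It then remains to treat a single $d$-step nilsystem $X = G/\Gamma$ with action by a continuous homomorphism $\phi\colon S \to G$; after the standard reduction one may assume $G$ is spanned by its identity component and $\phi(S)$. The heart of the matter is an explicit identification of $E(X,S)$: one shows that every element of $E(X,S)$ acts on $G/\Gamma$ as left translation by an element of a $d$-step nilpotent group $G^{*}$ which is a suitable completion of $G$, that the relevant commutator subgroups of $G^{*}$ are closed, and hence that algebraic and topological $d$-step nilpotency coincide for $E(X,S)$. Carrying out this identification, by tracking limits of the polynomial orbit $n \mapsto \phi(n)\Gamma$, is the main computational obstacle in this direction and is essentially \cite[Theorem 1.1]{Donoso_enveloping_systems_orderd:2014}.

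\textbf{From $d$-step topological nilpotency to order $d$.} Suppose $E = E(X,S)$ is $d$-step topologically nilpotent; being a group, it forces $X$ to be distal. I would show $\RP^{[d]}(X) = \Delta$. Fix $(x,y) \in \RP^{[d]}(X)$. By \cref{thm: shao_ye_thm}(1), $(x,y,\dots,y) \in \overline{\mathcal F^{[d+1]}(S)\,x^{[d+1]}}$, so there is a net $s^{(\lambda)} \in \mathcal F^{[d+1]}(S)$ with $s^{(\lambda)}x^{[d+1]} \to (x,y,\dots,y)$; passing to a subnet we may assume $s^{(\lambda)}$ converges coordinatewise to some $\mathbf s \in E^{[d+1]}$. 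Since $\mathcal F^{[d+1]}(S)$ lies inside the closed subgroup of $E^{[d+1]}$ generated by the elements $p^{(\alpha)}$ with $p \in E$ and $\alpha$ an upper facet, $\mathbf s$ belongs to this topological face cube group, whose elements I claim admit the parametrization of \cref{prop: parametrization_cubes} with each $E_{j}$ replaced by $E^{\mathrm{top}}_{j}$. Writing $\mathbf s = \prod_{F} h_{F}^{(\alpha_{F})}$ over nonempty subsets $F \subseteq [d+1]$ (with $\alpha_{F}$ the corresponding upper face, $h_{F} \in E^{\mathrm{top}}_{|F|}$, and the product in non-decreasing order of $|F|$), the constraint $\mathbf s\,x^{[d+1]} = (x,y,\dots,y)$ becomes $\prod_{F \subseteq \epsilon} h_{F}\,x = y$ for every $\emptyset \neq \epsilon \subseteq [d+1]$. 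Using the hypothesis $E^{\mathrm{top}}_{d+1} = \{e\}$, which kills $h_{[d+1]}$, together with these equalities and an induction on $|\epsilon|$ comparing $\mathbf s_{\epsilon}$ with $\mathbf s_{\epsilon \setminus \{i\}}$, one forces $y = x$; the case $d = 1$ already exhibits the mechanism, since $h_{\{1\}}x = h_{\{2\}}x = h_{\{1\}}h_{\{2\}}x = y$ gives $h_{\{2\}}x = x = y$.

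\textbf{Main obstacle.} The genuinely delicate point, and the reason the correct hypothesis is \emph{topological} rather than merely algebraic $d$-step nilpotency, is that the whole argument must be run with closures of commutator subgroups throughout: both the identification of the enveloping semigroup of a nilsystem as a completion of a nilpotent Lie group and the parametrization of the relevant face cube group over $E$ require controlling these closures. This is precisely where \cite{Qiu_Zhao_topnilpotent_enveloping_nil:2022} refines \cite{Donoso_enveloping_systems_orderd:2014}, and I would follow their treatment for this step; the remaining pieces — the inverse-limit reduction, the soft properties of enveloping semigroups, and the combinatorial descent on the cube — are comparatively routine given \cref{thm: shao_ye_thm} and \cref{prop: parametrization_cubes}.
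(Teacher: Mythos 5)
The paper does not prove this statement: it is imported verbatim as background, with the two directions attributed to \cite[Theorem 1.1]{Donoso_enveloping_systems_orderd:2014} and \cite[Theorem 1.3]{Qiu_Zhao_topnilpotent_enveloping_nil:2022}, so there is no in-paper argument to compare against. Your sketch is a faithful outline of how those references actually proceed: the inverse-limit reduction $E(\varprojlim X_i)\cong\varprojlim E(X_i,S)$ and the fact that continuous surjections carry $E^{\mathrm{top}}_{j}$ into the corresponding subgroup of the image are correct and routine, and your descent on the cube for the converse is the right mechanism (your $d=1$ computation, and its $d=2$ analogue using that $E^{\mathrm{top}}_{d+1}=\{e\}$ makes $E^{\mathrm{top}}_{d}$ act centrally on the relevant products, do close up). You have also correctly located the two genuinely hard points — the identification of $E(G/\Gamma,S)$ with translations by a completed nilpotent group, and the extension of \cref{prop: parametrization_cubes} to the \emph{closure} of the face cube group inside $E^{[d+1]}$ with $E_j$ replaced by $E^{\mathrm{top}}_j$, which is delicate precisely because multiplication in $E$ is only separately continuous — and you defer them to the cited papers, which is consistent with the status of the statement here. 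The only cosmetic slips are that your product constraint should read $\prod_{\emptyset\neq F\subseteq\epsilon}h_F\,x=y$ with the product taken in the fixed non-decreasing order of $|F|$, and that the surjectivity of $E\to\varprojlim E(X_i,S)$ deserves one line on density of the image of $S$ in the inverse limit; neither affects correctness.
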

(Here we recall that we use the notation $(X,S)$ to refer to a $\Z$-action).

For a distal system, we let $(E_{d}^{\mathrm{top}}(X, T))_{d\in\N}$ denote the sequence of topological commutators of $E(X, T)$.

\begin{theorem}[{\cite[Theorem 1.4]{Qiu_Zhao_topnilpotent_enveloping_nil:2022}}]\label{thm: charc_RPd_Rd}
    Let $(X, S)$ be a minimal system of order $d$. Then, for $k=1,\dots ,d$, we have\begin{align*}
        \RP^{[k]}(X)=\{(x,px):x\in X, p\in E_{k+1}^{\mathrm{top}}(X, T)\}.
    \end{align*}
\end{theorem}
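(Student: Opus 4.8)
The plan is to prove the two inclusions of the asserted equality separately: the inclusion $\supseteq$ is a soft argument with induced maps of enveloping semigroups, while $\subseteq$ is reduced, along the tower of maximal order-$k$ factors, to an explicit statement about nilsystems. Write $E=E(X,S)$; by Theorem~\ref{thm: nil_iff_enveloping_nil} this is a ($d$-step topologically nilpotent) group, so $(X,S)$ is distal and the subgroups $E_j^{\mathrm{top}}(X,S)$ are defined. For $1\le k\le d$ let $\pi_k\colon X\to Z_k:=X/\RP^{[k]}(X)$ be the factor onto the maximal system of order $k$ (Theorem~\ref{thm: shao_ye_thm}). I will use the following functoriality repeatedly: a factor map $\phi\colon X\to Y$ of minimal systems induces a continuous surjective group homomorphism $\phi^{*}\colon E(X)\to E(Y)$ with $\phi^{*}|_T=\mathrm{id}$ and $\phi(px)=\phi^{*}(p)\phi(x)$; being a continuous surjection from a compact space to a Hausdorff one, $\phi^{*}$ is closed, so $\phi^{*}(\overline A)=\overline{\phi^{*}(A)}$ for all $A\subseteq E(X)$, and a short induction on $j$ (using also $\phi^{*}([A,B])=[\phi^{*}A,\phi^{*}B]$) gives $\phi^{*}(E_j^{\mathrm{top}}(X,S))=E_j^{\mathrm{top}}(Y,S)$ for all $j\ge1$.

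For $\{(x,px):x\in X,\ p\in E_{k+1}^{\mathrm{top}}(X,S)\}\subseteq\RP^{[k]}(X)$: apply the above to $\phi=\pi_k$. Since $Z_k$ is a system of order $k$, Theorem~\ref{thm: nil_iff_enveloping_nil} gives $E_{k+1}^{\mathrm{top}}(Z_k,S)=\{e\}$, hence $\pi_k^{*}(E_{k+1}^{\mathrm{top}}(X,S))=\{e\}$. So every $p\in E_{k+1}^{\mathrm{top}}(X,S)$ satisfies $\pi_k(px)=\pi_k(x)$ for all $x$, that is, $(x,px)\in R_{\pi_k}=\RP^{[k]}(X)$.

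For the reverse inclusion it is enough to show that each fiber of $\pi_k$ is a single $E_{k+1}^{\mathrm{top}}(X,S)$-orbit; "orbit $\subseteq$ fiber" is what was just proved, and I would prove "fiber $\subseteq$ orbit" by induction on $d$. The cases $d=1$ and, for each $d$, $k=d$ are trivial since then $\pi_k$ is the identity. For $k<d$, put $\pi:=\pi_{d-1}\colon X\to Z_{d-1}$; since $\RP^{[d-1]}(X)=R_\pi\subseteq\RP^{[k]}(X)$ and $\RP^{[k]}$ is an equivalence relation preserved by $\pi\times\pi$ (Theorem~\ref{thm: shao_ye_thm}), one gets the lifting identity $\RP^{[k]}(X)=(\pi\times\pi)^{-1}\RP^{[k]}(Z_{d-1})$. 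Using this together with $\pi^{*}(E_{k+1}^{\mathrm{top}}(X,S))=E_{k+1}^{\mathrm{top}}(Z_{d-1},S)$ and the inductive hypothesis for the order-$(d-1)$ system $Z_{d-1}$, the case $k<d-1$ reduces to the case $k=d-1$: if $\pi_k(y)=\pi_k(x)$, write $\pi(y)=\pi^{*}(p)\pi(x)=\pi(px)$ with $p\in E_{k+1}^{\mathrm{top}}(X,S)$, so $y$ lies in the $\pi$-fiber of $px$, and the $k=d-1$ case applied at $px$ yields $y\in E_d^{\mathrm{top}}(X,S)(px)\subseteq E_{k+1}^{\mathrm{top}}(X,S)\,x$. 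Thus the whole theorem reduces to the single assertion: for a minimal system $X$ of order $d\ge2$, every fiber of $\pi_{d-1}\colon X\to Z_{d-1}$ is a single $E_d^{\mathrm{top}}(X,S)$-orbit.

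This last assertion is the heart of the matter, and I would prove it using the structure theorem for systems of order $d$: $X$ is an inverse limit of minimal $d$-step nilsystems, and, after checking that $E_d^{\mathrm{top}}$, the factor $Z_{d-1}$, and these fibers all behave well under the relevant inverse limits, it suffices to treat a minimal $d$-step nilsystem $X=G/\Gamma$. There one uses the known description $\RP^{[d-1]}(G/\Gamma)=\{(x,gx):g\in G_d\}$, so the $\pi_{d-1}$-fiber of a point is exactly its $G_d$-orbit, and then shows that $E_d^{\mathrm{top}}(G/\Gamma,g_0)$ acts through $G_d$ by translations---hence with the same orbits---via the explicit description of the enveloping semigroup of a minimal nilsystem and of its topological commutator subgroups. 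I expect this reduction to nilsystems (in particular, the compatibility of $E_d^{\mathrm{top}}$ with inverse limits and the identification of the action of $E_d^{\mathrm{top}}(G/\Gamma,g_0)$) to be the main obstacle. Note that once it is carried out, the equality holds on the nose, and in particular its right-hand side is automatically a closed relation, so no separate closedness argument is needed.
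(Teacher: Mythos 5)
This statement is not proved in the paper: it is quoted verbatim from Qiu--Zhao \cite[Theorem 1.4]{Qiu_Zhao_topnilpotent_enveloping_nil:2022} as background, so there is no internal proof to compare against. (Indeed, the paper's own closest machinery runs the other way: \cref{lemma: Gdtau_Gtop_coincide_nil} \emph{uses} this theorem as an input, so one cannot recover it from the paper without circularity.) Judged on its own terms, the soft parts of your argument are fine: the induced surjection $\phi^{*}\colon E(X)\to E(Y)$ does carry $E_j^{\mathrm{top}}(X,S)$ onto $E_j^{\mathrm{top}}(Y,S)$ (image and preimage of a closed subgroup under a continuous closed homomorphism are closed subgroups, so the ``closed subgroup spanned'' is respected), the inclusion $\{(x,px)\}\subseteq\RP^{[k]}(X)$ follows exactly as you say from $E_{k+1}^{\mathrm{top}}(Z_k,S)=\{e\}$, and the induction reducing everything to the single claim that each fiber of $\pi_{d-1}$ is one $E_d^{\mathrm{top}}(X,S)$-orbit is correctly organized.

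The genuine gap is that this single remaining claim is where all of the content of the theorem lives, and you do not prove it: you reduce it to the assertion that for a minimal $d$-step nilsystem $G/\Gamma$ the group $E_d^{\mathrm{top}}(G/\Gamma)$ acts with exactly the $G_d$-orbits, and you flag this, together with the inverse-limit compatibility of $E_d^{\mathrm{top}}$, as ``the main obstacle'' rather than resolving it. That assertion is not a routine computation. The enveloping semigroup of a minimal nilsystem is not the group of translations by $G$ (already for the Heisenberg nilsystem it contains discontinuous maps), so identifying its $d$-th topological commutator subgroup and showing both inclusions --- that every translation by $G_d$ is realized by an element of $E_d^{\mathrm{top}}$, and that $E_d^{\mathrm{top}}$ acts only through such translations --- is essentially the theorem of \cite{Donoso_enveloping_systems_orderd:2014} and \cite{Qiu_Zhao_topnilpotent_enveloping_nil:2022} that you are trying to prove, restricted to nilsystems. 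The passage to inverse limits is also left as an assertion; it is salvageable (the sets $\{p\in E_d^{\mathrm{top}}(X_i):px_i=y_i\}$ form an inverse system of nonempty compact sets, and surjectivity of $E_d^{\mathrm{top}}(X)\to E_d^{\mathrm{top}}(X_i)$ plus compactness produces a lift), but as written neither it nor the nilmanifold computation is carried out, so the proposal is an outline of a plausible strategy rather than a proof.
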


\subsection{The universal system} \label{sec:univ_system}
We refer to \cite[Sections IV, V and VI]{deVries_elements_topological_dynamics:1993} for the material discussed in this section. It is known that the semigroup $\beta T$, the Stone–Čech compactification of the discrete group $T$, is the universal point transitive system.  That is, for every transitive system $(X, T)$ and a point $x\in X$ with dense orbit, there exists an extension of systems $(\beta T, T)\to (X, T)$ which sends $e$, identity element of $T$, onto $x$. Therefore, by universality, there exists a unique extension $\Phi_{X}:(\beta T, T)\to (E(X, T), T)$, which is also a semigroup homomorphism, and we can interpret the $\beta T$ action on $X$ via this homomorphism. We say that two elements $p,q\in \beta T$ {\em coincide} on $X$ if $px=qx$ for all $x\in X$.

The semigroup $\beta T$ admits many minimal left ideals, which coincide with the minimal subsystems. All these ideals are isomorphic to each other both as compact right topological semigroups and as minimal systems. We will fix a minimal left ideal $M$ of $\beta T$.  The universality of $\beta T$ implies that $(M, T)$ is the universal minimal system. For a semigroup, the element $v$ with $v^{2}=v$ is called an {\em idempotent}. By the Ellis Namakura theorem, the set $J(M)$ of idempotents in $M$ is nonempty. Moreover, $vM$ is a group with identity $v$, where $v\in J(M)$. We define an equivalence relation in $J(M)$ by $u\sim v$ if $uv=v$ and $vu=u$. We fix one such idempotent $u\in J(M)$.

If $(X,T)$ is minimal, then $X=Mx$ for every $x\in X$. A necessary and sufficient condition for $x$ to be minimal is that $ux=x$ for some $u\in J(M)$. A minimal system $(X, T)$ is distal if and only if $X=vX$ for every $v\in J(M)$ (see \cite[Chapter 6]{Auslander_minimal_flows_and_extensions:1988}). So, a minimal system $(X,T)$ is distal iff $\Phi_{X}(M)=\Phi_{X}(vM)=E(X,T)$ for every $v\in J(M)$. Also, we can characterize the proximal points: a pair $(x,y)\in X\times X$ is proximal if and only if there exists a minimal left ideal $I$ of $\beta T$ such that $y=vx$ for some $v\in J(I)$. 

Let $2^{X}$ be the collection of nonempty closed subsets of $X$ endowed with the Hausdorff topology. The action of $T$ on $2^{X}$ is given by\begin{align*}
    tA=\{ta:a\in A\}
\end{align*}

for each $t\in T$ and $A\in 2^{X}$. This action induces another action of $\beta T$ on $2^{X}$, and we denote this action by the circle operation: $p\circ A$, where $p\in \beta T$ and $A\subset X$, to distinguish it from the subset $pA$. For $p\in \beta T$ and $A\subseteq X$, we define $p\circ A=p\circ \overline{A}$ and $p\circ \emptyset=\emptyset$, where $\overline{A}$ denotes the closure of $A$ in the usual topology of $X$. It holds that \begin{align*}
    p\circ A = \{x\in X: \exists (t_{\lambda})_{\lambda\in\Lambda}\subseteq T, \exists (x_{\lambda})_{\lambda\in\Lambda}\subseteq A\text{ with } t_{\lambda}\to p, t_{\lambda}x_{\lambda}\to x\}.
\end{align*}

for all $p\in \beta T$ and $A\in 2^{X}$,  where $t_{\lambda}\to p$ denotes the convergence of $(t_{\lambda})_{\lambda\in \Lambda}$ to $p$ in the usual topology of $\beta T$. We use this notation for convergence in the usual topology throughout the paper.

The set $G=uM$ is a subgroup of $M$ with identity $u$. The set $\{vM:v\in J(M)\}$ is a partition of $M$ and each $p\in M$ has a unique representation $p=vg$, where $v\in J(M)$ and $g\in G$. We sometimes write $p^{-1}$ for $vg^{-1}$. The group $G$ plays a central role in the algebraic theory of minimal systems. It carries a $T_{1}$ compact topology, called by Ellis the {\em $\uptau$-topology}, which is weaker than the relative topology induced on $G$ as a subset of $M$. This topology was first introduced by Furstenberg in \cite{Furstenberg_structure_distal_flows:1963}, and developed by Ellis, Glasner and Shapiro in \cite{Ellis_Glasner_Shapiro_PI-flows:1975}. For any subset $A\subseteq G$, the $\uptau$-topology is determined by\begin{align*}
    \cltau{A} = u(u\circ A) = (u\circ A) \cap G.
\end{align*}

Let $(g_{\lambda})_{\lambda\in\Lambda}$ be a net in $G$ and $g\in G$. The convergence of $(g_{\lambda})_{\lambda\in\Lambda}$ to $g$ with respect to the $\uptau$-topology is denoted by $g_{\lambda}\taulim g$. The $\uptau$-topology has many useful properties, some of which we state below for later purposes. 

\begin{proposition}[see {\cite[Chapter V]{deVries_elements_topological_dynamics:1993}}]\label{lemma: conv_prod_tau_top}
    Let $(X, T)$ be a minimal topological dynamical system, $(g_{\lambda})_{\lambda\in \Lambda}$ be a net in $G$ and $p,r\in M$.\begin{enumerate}[label=(\arabic*)]
        \item Both left and right multiplication by a fixed element of $G$ are homeomorphisms with respect to the $\uptau$-topology.
        \item Consider $(g_{\lambda})_{\lambda\in\Lambda}$ a net in $G$ such that $g_{\lambda} \to p$. Then $(g_{\lambda})_{\lambda\in \Lambda}$ has a subnet $(g_{\lambda_{i}})_{i\in I}$ such that $g_{\lambda_{i}}\taulim up$.
        \item Let $(p_{\lambda})_{\lambda\in\Lambda}$ and $(g_{\lambda})_{\lambda\in\Lambda}$ be nets in $\beta T$ and suppose that $p_{\lambda}g_{\lambda} \to r$ and $p_{\lambda}\to p$. Then, $(g_{\lambda})_{\lambda\in \Lambda}$ has a subnet $(g_{\lambda_{i}})_{i\in I}$ such that $g_{\lambda_{i}}\taulim up^{-1}r$.
    \end{enumerate}
\end{proposition}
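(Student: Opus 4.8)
All three items are standard facts about Ellis's $\uptau$-topology, and I would deduce them from the identity $\cltau{A}=u(u\circ A)=(u\circ A)\cap G$ together with the explicit description of the circle operation, using the three available substitutes for the joint continuity of multiplication on $\beta T$, which fails in general: right translation $q\mapsto qp$ is continuous for every $p\in\beta T$; left translation $q\mapsto tq$ is continuous for $t\in T$; and $T$ is central in $\beta T$ (the relations $st=ts$ for $s,t\in T$ pass to $\beta T$ by separate continuity).

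\emph{Item (1).} I would establish the closure identities $\cltau{Ag}=\cltau{A}\,g$ and $\cltau{gA}=g\,\cltau{A}$ for all $g\in G$ and $A\subseteq G$; since right translation by $g$ and by $g^{-1}$ are mutually inverse (and likewise for left translations), each then preserves $\uptau$-closed sets and is therefore a $\uptau$-homeomorphism. For right translation, continuity of $q\mapsto qg$ gives $\overline{Ag}=\overline{A}\,g$; a short chase with the circle-operation formula gives $u\circ(\overline{A}\,g)=(u\circ\overline{A})\,g$; and the relations $au=a$ for $a\in G$ let one commute the intersection with $G$ past right translation, yielding the claim. The left-translation identity is proved similarly, the one point of care being that $q\mapsto gq$ need not be continuous, so one works with the circle operation and keeps $T$-elements (which are central and act continuously) on the left rather than manipulating ordinary closures. (These are classical; see \cite{Ellis_Glasner_Shapiro_PI-flows:1975} or \cite[Ch.~V]{deVries_elements_topological_dynamics:1993}.)

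\emph{Common reduction and Item (2).} Since $(G,\uptau)$ is compact, a net in $G$ has a subnet $\uptau$-converging to $h$ precisely when $h$ is a $\uptau$-cluster point of it, that is, when $h\in\cltau{B}$ for every tail $B=\{g_{\lambda}:\lambda\geq\lambda_{0}\}$. In both (2) and (3) the proposed limit lies in $uM=G$, so by $\cltau{B}=(u\circ B)\cap G$ it suffices, for each such tail, to exhibit the proposed limit as a point of $u\circ B$; by the displayed formula for $u\circ B$ this amounts to producing a net $(t_{\nu})$ in $T$ with $t_{\nu}\to u$ and a net $(b_{\nu})$ in $B$ with $t_{\nu}b_{\nu}\to$ (the proposed limit). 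For (2): fix a tail $B$ (on which $(g_{\lambda})$ still converges to $p$) and choose any net $(t_{\mu})$ in $T$ with $t_{\mu}\to u$. For fixed $\mu$, left translation by $t_{\mu}$ is continuous, so $t_{\mu}g_{\lambda}\to t_{\mu}p$ along the tail; and right translation by $p$ is continuous, so $t_{\mu}p\to up$. Hence $\lim_{\mu}\lim_{\lambda}t_{\mu}g_{\lambda}=up$, and Kelley's theorem on iterated limits assembles a single net $(t_{\mu(\nu)}g_{\lambda(\nu)})_{\nu}$ converging to $up$ with $t_{\mu(\nu)}\to u$ and $\lambda(\nu)\geq\lambda_{0}$, which is exactly what the reduction requires, so $up\in\cltau{B}$.

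\emph{Item (3).} Same scheme, but one first slides $p^{-1}$ across $T$. As $up^{-1}\in uM=G\subseteq\overline{T}$, choose a net $(s_{\gamma})$ in $T$ with $s_{\gamma}\to up^{-1}$. On a fixed tail $B$ of $(g_{\lambda})$: for fixed $\gamma$, left translation by $s_{\gamma}$ is continuous, whence $s_{\gamma}(p_{\lambda}g_{\lambda})\to s_{\gamma}r$ and $s_{\gamma}p_{\lambda}\to s_{\gamma}p$ along the tail; letting $\gamma$ vary and using continuity of right translation (by $r$, resp.\ $p$) gives $\lim_{\gamma}\lim_{\lambda}s_{\gamma}p_{\lambda}g_{\lambda}=up^{-1}r$ and $\lim_{\gamma}\lim_{\lambda}s_{\gamma}p_{\lambda}=up^{-1}p$. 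Writing $s_{\gamma}p_{\lambda}g_{\lambda}=(s_{\gamma}p_{\lambda})g_{\lambda}$ and approximating the non-$T$ factor $s_{\gamma}p_{\lambda}$ from within $T$ (say $t_{\gamma,\lambda,\kappa}\to_{\kappa}s_{\gamma}p_{\lambda}$, so $t_{\gamma,\lambda,\kappa}g_{\lambda}\to_{\kappa}s_{\gamma}p_{\lambda}g_{\lambda}$ by continuity of right translation by $g_{\lambda}$), the multi-index form of Kelley's theorem produces a single net $(t_{\nu}g_{\lambda(\nu)})_{\nu}$ with $t_{\nu}\in T$, $\lambda(\nu)\geq\lambda_{0}$, $t_{\nu}g_{\lambda(\nu)}\to up^{-1}r$ and $t_{\nu}\to up^{-1}p$. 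Granting the algebraic identity $up^{-1}p=u$ — which comes from the definition $p^{-1}=vg^{-1}$ (with $p=vg$, $v\in J(M)$, $g\in G$) and the idempotent calculus in $M$ — this is exactly what the reduction needs.

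\emph{Main obstacle.} The only real difficulty is that multiplication on $\beta T$ is merely separately continuous, so none of the limits above can be handled directly; the argument is organized entirely around approximating from $T$ and then merging several iterated limits into one net via Kelley's theorem. The single step that is delicate rather than routine is, in (3), confirming that the ``left factor'' $s_{\gamma}p_{\lambda}$ tends to the distinguished idempotent $u$ — i.e.\ the identity $up^{-1}p=u$ in $M$ — since this is what forces $up^{-1}r$, and not another translate, to be the correct $\uptau$-limit.
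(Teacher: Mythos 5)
The paper offers no proof of this proposition: it is stated as background, with the material of the section attributed wholesale to \cite[Sections IV, V and VI]{deVries_elements_topological_dynamics:1993} (see also \cite[Chapter 14]{Auslander_minimal_flows_and_extensions:1988} and \cite{Ellis_Glasner_Shapiro_PI-flows:1975}). So there is nothing in the paper to compare against line by line; what you have written is a reconstruction of the classical arguments, and it is correct. Your reduction of (2) and (3) to showing that the proposed limit lies in $\cltau{B}=(u\circ B)\cap G$ for every tail $B$, followed by a Kelley iterated-limit diagonalization applied to the pair $(t_{\nu},t_{\nu}g_{\lambda(\nu)})$ in the product space, is exactly the standard mechanism, and your two algebraic inputs check out: $up\in uM=G$ in (2), and in (3) the identity $up^{-1}p=u$ follows from $p=vg$, $p^{-1}=vg^{-1}$, together with $qw=q$ for all $q\in M$ and $w\in J(M)$, so that $up^{-1}p=ug^{-1}g=u$. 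Two small remarks. First, the one place where your sketch is genuinely thin is the left-translation half of (1): the identity $\cltau{gA}=g\cltau{A}$ does not follow from a closure chase symmetric to the right-translation case, since $q\mapsto gq$ is not continuous on $M$; the standard route is the inclusion $p(u\circ A)\subseteq (pu)\circ A$, proved by the same approximate-$p$-from-$T$-and-diagonalize argument you use in (2) and (3), so it is within your stated toolkit, but it deserves to be run rather than waved at. Second, your appeal to centrality of $T$ in $\beta T$ is legitimate only because the paper assumes $T$ abelian; it is not needed for (2) and (3) as you have organized them, and in (1) it can be avoided, so the argument survives without it.
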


From now on, to lighten the notation, whenever we need to pass to a subnet, we will keep the notation of the original net, and the notation $g_{\lambda}\taulim g$ will implicitly mean that we may pass to a subnet to obtain the limit.

For a $\uptau$-closed subgroup $F$ of $G$ the {\em derived group} $H(F)$ is given by:\begin{align*}
    H(F)=\bigcap\{\cltau{V}:V\text{ a } \uptau\text{-open neighborhood of } u \text{ in } F\}.
\end{align*}

The group $H(F)$ is a $\uptau$-closed normal subgroup and is the smallest $\uptau$-closed subgroup $H$ of $F$ such that $F/H$ is a compact Hausdorff topological group (for the quotient topology induced by the $\uptau$-topology).

For a point $x_{0}\in uX$, the {\em Ellis group} of the pointed system $(X,x_{0})$ is the $\uptau$-closed subgroup \begin{align*}
    \GG(X,x_{0})=\{g\in G:gx_{0}=x_{0}\}.
\end{align*}

When there is no ambiguity, we omit the fixed point $x_{0}$ and denote the Ellis group of $X$ by $\GG(X)$. Whenever $x_0$ is used, we will refer to the element with respect to which the group $\GG(X)$ is defined. For $\pi:(X,T)\to (Y,T)$ an extension of minimal systems we have $\GG(X)\subseteq \GG(Y)$, where the group $\GG(Y)$ is defined by the point $\pi(x_{0})$. The Ellis group is very useful when studying extensions. Below, there are a few applications of this concept. 

\begin{theorem}[see {\cite[Chapters 10 and 14]{Auslander_minimal_flows_and_extensions:1988}}]\label{thm: extensions_ellis_group}
    Let $\pi:(X, T) \to (Y, T)$ be an extension of minimal systems. Then,\begin{enumerate}[label=(\arabic*)]
        \item $\pi$ is proximal if and only if $\GG(X)= \GG(Y)$.
        \item $\pi$ is distal if and only if $\pi^{-1}(py_{0})=p\GG(Y)x_{0}$, for all $p\in M$.
        \item $\pi$ is equicontinuous if and only if $\pi$ is distal and $H(\GG(Y))\subseteq \GG(X)$.
        \item If $\pi$ is open, then $p\circ \pi^{-1}(y)=\pi^{-1}(py)$ for every $p\in M$ and $y\in Y$.
    \end{enumerate} 
\end{theorem}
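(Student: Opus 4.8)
The plan is to derive all four statements from the standard dictionary between the algebra of $G=uM$, the idempotents $J(M)$ and the $\uptau$-topology on one side, and the dynamics of $\pi$ on the other. Fix $x_0\in uX$ and put $y_0=\pi(x_0)$; from $ux_0=x_0$ we get $uy_0=\pi(ux_0)=y_0$, so $y_0\in uY$, and $g x_0 = x_0 \Rightarrow g y_0 = y_0$ gives $\GG(X)\subseteq\GG(Y)$ automatically. Before touching the four items I would record three facts. First, $g\mapsto gx_0$ maps $G$ onto $uX$ with fibres the left cosets of $\GG(X)$, so $uX$ is in bijection with $G/\GG(X)$; the same for $Y$; and consequently $\pi^{-1}(wy_0)\cap uX=w\GG(Y)x_0$ for every $w\in G$. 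Second (the workhorse), $\{q\in M:qx=qx'\}$ is a left ideal of $M$, hence is empty or all of $M$, and it is nonempty precisely when $(x,x')$ is proximal; so \emph{$(x,x')$ is proximal iff $ux=ux'$ iff $qx=qx'$ for every $q\in M$}. Third, $(x,ux)$ is always proximal, and two points of $uX$ that are proximal coincide (if $ux=ux'$ while $ux=x$ and $ux'=x'$, then $x=x'$).

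With this, part (1) is short. If $\GG(X)=\GG(Y)$ and $x,x'\in\pi^{-1}(y)$, then $ux,ux'\in\pi^{-1}(uy)\cap uX$, which by the first fact equals $w\GG(Y)x_0=w\GG(X)x_0=\{wx_0\}$ for the $w\in G$ with $wy_0=uy$; hence $ux=ux'$, so $(x,x')$ is proximal. Conversely, if $\pi$ is proximal, then for $g\in\GG(Y)$ the points $x_0$ and $gx_0$ lie in $\pi^{-1}(y_0)$ and in $uX$, so they are proximal, hence equal, i.e. $g\in\GG(X)$; with $\GG(X)\subseteq\GG(Y)$ this gives equality. Part (4) is also soft: openness of $\pi$ is equivalent to continuity of $y\mapsto\pi^{-1}(y)\colon Y\to 2^X$; for $t_\lambda\to p$ in $\beta T$ one then has $\pi^{-1}(t_\lambda y)=t_\lambda\pi^{-1}(y)=t_\lambda\circ\pi^{-1}(y)$, and passing to the limit along $\lambda$, using continuity of $\pi^{-1}$ at $py$ and of the $\beta T$-action on $2^X$, yields $\pi^{-1}(py)=p\circ\pi^{-1}(y)$.

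For part (2), I would first check the fibre over $y_0$: if $\pi$ is distal then any $x\in\pi^{-1}(y_0)$ satisfies $ux\in\pi^{-1}(uy_0)=\pi^{-1}(y_0)$ and $(x,ux)$ is proximal, so $x=ux$; thus $\pi^{-1}(y)\subseteq uX$ for \emph{every} $y\in uY$, whence $\pi^{-1}(wy_0)=w\GG(Y)x_0$ for all $w\in G$ by the first fact. Now take $x\in\pi^{-1}(py_0)$ with $p\in M$; then $ux\in\pi^{-1}((up)y_0)$ and $up\in G$, so $ux=(up)g_0x_0=u(pg_0x_0)$ for some $g_0\in\GG(Y)$; since $u$ identifies $x$ and $pg_0x_0$, they are proximal and lie in the common fibre $\pi^{-1}(py_0)$, so distality forces $x=pg_0x_0\in p\GG(Y)x_0$; the reverse inclusion $p\GG(Y)x_0\subseteq\pi^{-1}(py_0)$ is immediate from $\beta T$-equivariance of $\pi$. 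Conversely, if $\pi^{-1}(py_0)=p\GG(Y)x_0$ for all $p$ and $(x,x')$ is a proximal pair in the fibre over some $z=py_0$, write $x=pg_1x_0$, $x'=pg_2x_0$; then $ux=ux'$ gives $(up)(g_1x_0)=(up)(g_2x_0)$, and cancelling $(up)^{-1}\in G$ yields $g_1x_0=g_2x_0$, hence $x=x'$; so $\pi$ is distal.

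Part (3) is the one I expect to be the main obstacle (here $F=\GG(Y)$). The ``only if'' direction: $\pi$ equicontinuous is distal, and by part (2) its fibres are cosets parametrized by $\GG(Y)/\GG(X)$; equicontinuity makes this parametrization compatible with the topology, so $\GG(Y)/\GG(X)$ becomes a compact Hausdorff topological group, and then the universal property of the derived group — $H(\GG(Y))$ is the \emph{smallest} $\uptau$-closed subgroup of $\GG(Y)$ with $\GG(Y)/H(\GG(Y))$ a compact Hausdorff topological group — forces $H(\GG(Y))\subseteq\GG(X)$. The ``if'' direction is the hard part: from $\pi$ distal and $H(\GG(Y))\subseteq\GG(X)$ one gets that $\GG(Y)/\GG(X)$ is a quotient of the compact Hausdorff topological group $\GG(Y)/H(\GG(Y))$, hence itself such a group, and one must then \emph{reconstruct} the equicontinuous (isometric) structure of $\pi$ out of this algebraic data — this is precisely the relative equicontinuous structure theorem, where the real work of Ellis--Glasner--Shapiro and Furstenberg with the $\uptau$-topology enters. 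In summary, parts (1) and (4) are routine once the $uX\leftrightarrow G/\GG(X)$ dictionary and the behaviour of the $\beta T$-action on $2^X$ are in place, part (2) is clean using the ``proximal iff all of $M$ identifies'' principle together with that dictionary, and part (3) — especially building the isometric structure from $H(\GG(Y))\subseteq\GG(X)$ — is the genuine obstacle.
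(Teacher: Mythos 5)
This statement is quoted background: the paper gives no proof of it and simply cites Auslander's book (Chapters 10 and 14), so your attempt has to be judged against the standard arguments there rather than against anything in the text. Measured that way, there is one genuine error at the centre of your write-up: the ``workhorse'' claim that $(x,x')$ is proximal if and only if $ux=ux'$, equivalently if and only if $qx=qx'$ for every $q$ in the \emph{fixed} minimal left ideal $M$. The set $L=\{q\in\beta T:qx=qx'\}$ is indeed a left ideal, so $L\cap M$ is empty or all of $M$; but proximality only guarantees that $L$ contains \emph{some} minimal left ideal, not the fixed one. If your version were true, the proximal relation would always be transitive (from $ux=ux'$ and $ux'=ux''$ one gets $ux=ux''$), and it is classical that $P(X)$ fails to be an equivalence relation for some minimal systems; the paper itself records only the existential form (``there exists a minimal left ideal $M$ such that $y=vx$ for some $v\in J(M)$'') and elsewhere treats transitivity of $P$ as a genuinely open issue. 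This false lemma is used essentially in your justification of ``two proximal points of $uX$ coincide'' (which the converse of (1) rests on) and again in the converse of (2), where you pass from proximality directly to $ux=ux'$. Both spots are repairable, because the correct tool is available: a proximal pair that is an almost periodic point of $X\times X$ lies on the diagonal, and every pair you consider has the form $p\cdot(gx_0,g'x_0)$ with $u(gx_0,g'x_0)=(gx_0,g'x_0)$, hence is almost periodic. But as written those steps are unjustified.

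Beyond that: the forward directions of (1) and (2) and all of (4) are essentially correct (the direction $ux=ux'\Rightarrow$ proximal is the true half of your lemma, and (4) is the standard continuity-of-$y\mapsto\pi^{-1}(y)$ argument). Part (3), however, is not proved. You explicitly defer the ``if'' direction to the Ellis--Glasner--Shapiro machinery, which is fair as an admission but leaves the claim unestablished; and even your sketch of the ``only if'' direction has a gap, since the universal property of $H(F)$ concerns $\uptau$-closed \emph{normal} subgroups $H$ with $F/H$ a compact Hausdorff topological group, whereas $\GG(X)$ need not be normal in $\GG(Y)$, so one cannot simply quote minimality of $H(\GG(Y))$ against the coset space $\GG(Y)/\GG(X)$. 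In summary: a sound skeleton for (1), (2) and (4) resting on one false proximality lemma that must be replaced by ``almost periodic plus proximal implies diagonal,'' and no actual proof of (3).
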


\begin{proposition}[see {\cite[Chapter V]{deVries_elements_topological_dynamics:1993}}]\label{prop: pi_u_homeo_if and only if_proximal}
    Let $\pi\colon (X,T)\to (Y,T)$ an extension of minimal systems, and set $\pi_{u}=\pi|_{uX}$. Then, \begin{enumerate}[label=(\arabic*)]
        \item $\pi_{u}:uX\to uY$ is $\uptau$-continuous and $\uptau$-closed.
        \item $\pi_{u}:uX\to uY$ is a $\uptau$-homeomorphism if and only if $\pi$ is a proximal extension.
        \item $\pi_{u}:uX\to uY$ is $\uptau$-open.
    \end{enumerate}
\end{proposition}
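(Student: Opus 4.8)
The plan is to realize $uX$ and $uY$ as $\uptau$-quotients of the group $(G,\uptau)$ and to derive all three assertions formally from this description, together with the compatibility of $\pi$ with the circle operation and with the Ellis group. Fix $x_{0}\in uX$ and set $y_{0}=\pi(x_{0})\in uY$ (well defined, since $u\pi(x_{0})=\pi(ux_{0})=\pi(x_{0})$, so $\pi_{u}$ lands in $uY$). From $X=Mx_{0}$ we get $uX=uMx_{0}=Gx_{0}$, so the orbit map $q_{X}\colon G\to uX$, $g\mapsto gx_{0}$, is onto; likewise $q_{Y}\colon G\to uY$, $g\mapsto gy_{0}$, is onto, and since $\beta T$ acts compatibly with $\pi$ we have $\pi_{u}\circ q_{X}=q_{Y}$. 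The fibres of $q_{X}$ are the left cosets of $\GG(X)=\{g\in G:gx_{0}=x_{0}\}$. The one input I would import verbatim from the general theory of the $\uptau$-topology (\cite[Ch.~V]{deVries_elements_topological_dynamics:1993}; see also \cite[Ch.~10]{Auslander_minimal_flows_and_extensions:1988}) is that the $\uptau$-topology on $uX$ is exactly the quotient topology induced by $q_{X}$ and that $q_{X}$ is $\uptau$-continuous, $\uptau$-open and $\uptau$-closed, and the same for $q_{Y}$; equivalently, that the $\uptau$-closure inside $uX$ is computed by $\cltau{A}=u(u\circ A)$, the analogue of the defining formula on $G$.

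Granting this, parts (1) and (3) are purely formal diagram-chasing with the surjections $q_{X},q_{Y}$ and the relation $q_{Y}=\pi_{u}\circ q_{X}$. For $\uptau$-openness: if $U\subseteq uX$ is $\uptau$-open then $q_{X}^{-1}(U)$ is $\uptau$-open in $G$ (as $q_{X}$ is $\uptau$-continuous), whence $\pi_{u}(U)=\pi_{u}(q_{X}(q_{X}^{-1}(U)))=q_{Y}(q_{X}^{-1}(U))$ is $\uptau$-open because $q_{Y}$ is $\uptau$-open. The identical computation with ``open'' replaced by ``closed'' gives $\pi_{u}$ $\uptau$-closed; and for $\uptau$-continuity one takes $V\subseteq uY$ $\uptau$-closed, observes $q_{X}^{-1}(\pi_{u}^{-1}(V))=q_{Y}^{-1}(V)$ is $\uptau$-closed, and concludes $\pi_{u}^{-1}(V)$ is $\uptau$-closed since $q_{X}$ is a $\uptau$-quotient map. (Alternatively, (1) follows directly from the fact that $\pi$ intertwines the circle operation, $\pi(p\circ A)=p\circ\pi(A)$ for $p\in\beta T$, $A\in 2^{X}$ — a short compactness argument — together with $\pi(ux)=u\pi(x)$; these yield $\pi_{u}(\cltau{A})=\cltau{\pi_{u}(A)}$ for $A\subseteq uX$, and then if $D\subseteq uY$ is $\uptau$-closed one has $\pi_{u}(\cltau{\pi_{u}^{-1}(D)})=\cltau{D}=D$, so $\cltau{\pi_{u}^{-1}(D)}\subseteq\pi_{u}^{-1}(D)$, and similarly for $\uptau$-closedness.)

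For (2): by (1), $\pi_{u}$ is always $\uptau$-continuous, $\uptau$-closed and onto, so it is a $\uptau$-homeomorphism if and only if it is injective. Using $q_{X}$ to translate it suffices to analyse the fibre over $y_{0}$: $\pi_{u}^{-1}(y_{0})=\{gx_{0}:g\in G,\ gy_{0}=y_{0}\}=\GG(Y)x_{0}$, which collapses to $\{x_{0}\}$ exactly when $\GG(Y)\subseteq\GG(X)$, i.e.\ — since always $\GG(X)\subseteq\GG(Y)$ for an extension — exactly when $\GG(X)=\GG(Y)$. By \cref{thm: extensions_ellis_group}(1) this is equivalent to $\pi$ being a proximal extension, which proves (2). (If one wishes to avoid any dependence on \cref{thm: extensions_ellis_group}(1), both implications of (2) can be argued directly from the characterization of proximal pairs, the forward direction by producing from a nontrivial pair in a $\pi$-fibre a nontrivial proximal pair lying in $uX$, contradicting injectivity.)

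The only genuinely non-formal ingredient is the assertion of the first paragraph — that $(uX,\uptau)$ is the quotient of $(G,\uptau)$ by $\GG(X)$ through a $\uptau$-open and $\uptau$-closed map, equivalently that $\cltau{A}=u(u\circ A)$ computes the $\uptau$-closure inside $uX$ (in particular that this closure does not escape $uX$). This is precisely where the machinery of the $\uptau$-topology does its work; I would either cite it directly or record it as a preliminary lemma, after which everything above — all three parts of the proposition — is bookkeeping with quotient maps, surjectivity, and the Ellis group.
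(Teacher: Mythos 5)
The paper does not actually prove this proposition: it is stated as a quoted fact with the reference to \cite[Chapter V]{deVries_elements_topological_dynamics:1993}, so there is no in-paper argument to compare yours against. On its own terms, your reconstruction is correct and is essentially the standard one. The honest caveat — which you state yourself — is that all the substance sits in the imported lemma that $(uX,\uptau)$ is the $\uptau$-quotient of $(G,\uptau)$ by $\GG(X)$ via an orbit map that is $\uptau$-continuous, $\uptau$-open and $\uptau$-closed (equivalently, that $\cltau{A}=u(u\circ A)$ computes the closure in $uX$ and $\cltau{A}x_{0}=\cltau{Ax_{0}}$ for $A\subseteq G$); that is precisely the content of the cited chapter, so citing it is legitimate, but it means your proof and the reference are doing the same work in a different order. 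The diagram chase for (1) and (3) is then routine and correct, as is the alternative route via $\pi(p\circ A)=p\circ\pi(A)$ (which is indeed a short compactness argument). For (2), your computation $\pi_{u}^{-1}(gy_{0})=g\GG(Y)x_{0}$ and the reduction to $\GG(X)=\GG(Y)$ are right; the only thing I would make you check is that invoking \cref{thm: extensions_ellis_group}(1) does not create a circularity (in Auslander and de Vries that equivalence is proved independently of the present proposition, so it does not, but it is worth saying). Your parenthetical direct argument for (2) can in fact be made fully precise in two lines: two distinct points of $uX$ are never proximal (if $px=px'$ for some $p\in M$ then $up\in G$ is invertible and $x=(up)^{-1}upx=(up)^{-1}upx'=x'$), which gives injectivity when $\pi$ is proximal; conversely, if $\pi_{u}$ is injective and $\pi(x)=\pi(x')$ then $ux=ux'$, and $p=u$ already witnesses proximality of $(x,x')$. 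So there is no genuine gap, only a dependence on standard $\uptau$-topology facts that the proposition's citation is meant to cover.
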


Let $\pi:(X, T) \to (Y, T)$ be an extension of minimal systems. We say that $\pi$ is {\em RIC (relatively incontractible)} if for every $p\in M$\begin{align*}
    \pi^{-1}(py_{0}) = p\circ u\pi^{-1}(y_{0})=p \circ Fx_{0},
\end{align*}

where $F=\GG(Y)$ and $y_{0}=\pi(x_{0})$. We will sometimes use the following characterization of a RIC extension proved in \cite[Theorem A.2]{Akin_Glasner_Huang_Shao_Ye_conditions_transitive_chaotic:2010}: an extension $\pi$ is RIC if and only if it is open and for every $n\geq 1$ the minimal points are dense in the relation\begin{align*}
    R_{\pi}^{n} =\{(x_{1},\dots,x_{n})\in X^{n}: \pi(x_{i})=\pi(x_{j}), \forall 1\leq i \leq j\leq n\}.
\end{align*} 

Note that every distal extension of minimal systems is RIC, and every distal extension of minimal systems is open. The following are relative versions of weakly mixing.

We say that $\pi$ is \begin{enumerate}[label=(\arabic*)]
    \item {\em weakly mixing} if $(R_{\pi},T)$ is transitive.
    \item {\em totally weakly mixing} if $(R_{\pi}^{n},T)$ is transitive for every $n\geq 2$.
\end{enumerate}

Note that, for RIC extensions, weak mixing is equivalent to total weak mixing.

\section{Algebraic characterization of dynamical cubes} \label{sec:algebric_cubes}

 This section is devoted to studying the Host-Kra cube groups associated with subgroups of the universal minimal system. We start with generalities and show, among other things, the characterization of proximal extensions of systems of order $d$ announced in the introduction. 

 Recall that throughout this paper, $u$ denotes the idempotent of $M$ fixed in \cref{sec:univ_system} to define the group $G$. 

The proof of the next lemma is similar to that of {\cite[Proposition 1.55]{Glasner_ergodic_theory_joinings:2003}}.

\begin{lemma}
    Let $(X,T)$ be a minimal topological dynamical system, $d\geq 1$ be an integer and $\epsilon\subseteq [d]$. Let $u^{[d]} = (u,\dots,u)\in\bQ^{[d]}(M)$. Then $u^{[d]}$ coincides with a minimal idempotent of $\beta \mathcal{HK}^{[d]}(T)$ on $\bQ^{[d]}(X)$. \label{lem:min_idemp_cubo}
\end{lemma}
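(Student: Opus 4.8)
The plan is to show that $u^{[d]}$, which lies in $\bQ^{[d]}(M)$ by minimality of $(M,T)$ (since $\bQ^{[d]}(M) = \overline{\mathcal{HK}^{[d]}(T) u^{[d]}}$ after noting $u^{[d]}$ is a point of $\bQ^{[d]}(M)$), is first an idempotent for the $\beta\mathcal{HK}^{[d]}(T)$-action on $\bQ^{[d]}(X)$, and then that it acts minimally in the appropriate sense. For the idempotent part, recall that $u\in J(M)$ satisfies $uu=u$ in $\beta T$, hence $ux = ux$ stabilizes points of $uX$; I would transfer this coordinatewise. More precisely, the diagonal embedding $T\hookrightarrow \mathcal{HK}^{[d]}(T)$, $t\mapsto t^{[d]}$, induces a continuous semigroup homomorphism $\beta T \to \beta\mathcal{HK}^{[d]}(T)$, $p\mapsto p^{[d]}$ (this is the Stone–Čech functoriality applied to the diagonal, together with the fact that $\overline{\Delta^{[d]}(T)}$ in $\beta\mathcal{HK}^{[d]}(T)$ is a homomorphic image of $\beta T$). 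Since $u$ is idempotent in $\beta T$, its image $u^{[d]}$ is idempotent in $\beta\mathcal{HK}^{[d]}(T)$, and for $\bx = (x_\epsilon)\in\bQ^{[d]}(X)$ the action is $u^{[d]}\bx = (ux_\epsilon)_\epsilon$. One must check $u^{[d]}$ coincides on $\bQ^{[d]}(X)$ with an \emph{idempotent} of $\beta\mathcal{HK}^{[d]}(T)$ — i.e. $(u^{[d]})^2$ and $u^{[d]}$ agree on $\bQ^{[d]}(X)$ — which is immediate from $(ux_\epsilon)_\epsilon$ being fixed by a further application of $u$ since $uux_\epsilon = ux_\epsilon$.

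Next I would argue minimality of the idempotent. An idempotent $w$ in $\beta\mathcal{HK}^{[d]}(T)$ is minimal iff it lies in some minimal left ideal, equivalently iff $w$ fixes a point of every minimal subsystem, equivalently (Ellis) iff $w\bx$ is an almost periodic point for every $\bx$. By \cref{thm: cube_is_minimal}(1), $(\bQ^{[d]}(X),\mathcal{HK}^{[d]}(T))$ is minimal, so every point of $\bQ^{[d]}(X)$ is almost periodic for the $\mathcal{HK}^{[d]}(T)$-action; in particular $u^{[d]}\bx$ is almost periodic for each $\bx\in\bQ^{[d]}(X)$. This is exactly the condition that $u^{[d]}$ \emph{coincides on $\bQ^{[d]}(X)$ with a minimal idempotent} of $\beta\mathcal{HK}^{[d]}(T)$: pick a minimal left ideal $\mathcal{M}$ of $\beta\mathcal{HK}^{[d]}(T)$ contained in $\overline{\beta\mathcal{HK}^{[d]}(T)\,u^{[d]}}$, so that $u^{[d]}$ and $\mathcal{M}$ act on the minimal system $\bQ^{[d]}(X)$; since $u^{[d]}\bx_0$ is minimal, there is $v\in J(\mathcal{M})$ with $v(u^{[d]}\bx_0) = u^{[d]}\bx_0$, and by minimality and a standard argument one upgrades this to $v$ and $u^{[d]}$ agreeing on all of $\bQ^{[d]}(X)$. (This is the step where the cited analogue, \cite[Proposition 1.55]{Glasner_ergodic_theory_joinings:2003}, is used as a template.)

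The main obstacle I anticipate is the bookkeeping around two different semigroups — $\beta T$ acting on $X$ versus $\beta\mathcal{HK}^{[d]}(T)$ acting on $X^{[d]}$ — and ensuring that ``$u^{[d]}$ coincides with a minimal idempotent on $\bQ^{[d]}(X)$'' is formulated and proved in the right category: one wants an honest idempotent $w\in\beta\mathcal{HK}^{[d]}(T)$ (not merely in a quotient), minimal in $\beta\mathcal{HK}^{[d]}(T)$, whose action on $\bQ^{[d]}(X)$ matches that of the element $u^{[d]}$ obtained diagonally. The clean way to do this is: (i) define $w$ as a minimal idempotent of $\beta\mathcal{HK}^{[d]}(T)$ lying in the closure of the orbit of $u^{[d]}$ appropriately and fixing $u^{[d]}\bx_0$; (ii) use minimality of $\bQ^{[d]}(X)$ plus the fact that on a minimal system any two idempotents that fix a common point and are comparable act identically, to conclude $w$ and $u^{[d]}$ coincide on $\bQ^{[d]}(X)$. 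Everything else — continuity of the induced homomorphism $\beta T\to\beta\mathcal{HK}^{[d]}(T)$, the coordinatewise description of the action, the passage from one fixed point to coincidence on the whole minimal system — is routine given \cref{thm: cube_is_minimal} and the standard Ellis-semigroup facts recalled in \cref{sec:univ_system}.
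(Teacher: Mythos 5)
The idempotency step and the construction of a minimal idempotent $v$ fixing the point $u^{[d]}\bx_0$ are fine, but the heart of your argument --- the ``upgrade'' from $v(u^{[d]}\bx_0)=u^{[d]}\bx_0$ to $v$ and $u^{[d]}$ agreeing on all of $\bQ^{[d]}(X)$ --- is a genuine gap, and the chain of equivalences you invoke (``an idempotent $w$ is minimal iff $w\bx$ is almost periodic for every $\bx$'') is false. Since $(\bQ^{[d]}(X),\mathcal{HK}^{[d]}(T))$ is minimal, \emph{every} element of $\beta\mathcal{HK}^{[d]}(T)$ sends every point to an almost periodic point, so this condition cannot detect minimality of an idempotent; likewise, two elements of $\beta\mathcal{HK}^{[d]}(T)$ that agree at one point of a minimal system need not agree anywhere else. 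The decisive test: your argument uses nothing about $u$ beyond its being an idempotent fixing a point, so it applies verbatim to the identity $e^{[d]}$ of $\mathcal{HK}^{[d]}(T)$ and would ``prove'' that some minimal idempotent acts as the identity on $\bQ^{[d]}(X)$. That would force every proximal pair in $\bQ^{[d]}(X)$, hence in $X$, to be trivial, which fails for any non-distal minimal $X$. So the upgrade step cannot be a formal consequence of minimality of the cube system.

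What is missing is precisely where the minimality of $u$ as an idempotent of $\beta T$ enters. The paper's proof restricts the minimal idempotent $v$ (chosen so that $vu^{[d]}=u^{[d]}$ on $\bQ^{[d]}(M)$) to each coordinate $\epsilon$, obtaining a minimal idempotent $v_{\epsilon}\in\beta T$ with $v_{\epsilon}u=u$; because $u\in J(M)$ is itself minimal, the idempotent $u'\in J(\beta Tv_{\epsilon})$ equivalent to $u$ satisfies $u'=uu'=v_{\epsilon}uu'=v_{\epsilon}u'=v_{\epsilon}$, whence $uv_{\epsilon}=v_{\epsilon}$ and therefore $u^{[d]}v=v$ on $\bQ^{[d]}(M)$. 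Together with $vu^{[d]}=u^{[d]}$, this two-sided relation places the action of $u^{[d]}$ inside the minimal left ideal generated by $v$, which is what actually certifies that $u^{[d]}$ coincides with a minimal idempotent. Your citation of \cite[Proposition 1.55]{Glasner_ergodic_theory_joinings:2003} points at the right template, but the coordinatewise computation that constitutes its content is exactly what your proposal omits; adding it (or an equivalent argument exploiting $u\in J(M)$) would close the gap, and the rest of your outline can stay.
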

\begin{proof} 
Consider $\pi_{\epsilon}:\bQ^{[d]}(X)\to X_{\epsilon}=X$ the projection of $\bQ^{[d]}(X)$ on the $\epsilon$ coordinate. We consider the action of the group $\mathcal{HK}^{[d]}(T)$ on the $\epsilon$ coordinate via the projection $\pi_{\epsilon}$. Note that $\beta T$ coincides with $\beta \mathcal{HK}^{[d]}(T)$ on $X_{\epsilon}$, that is, for any $p\in \beta T$ there is a $\bold{q}\in \beta \mathcal{HK}^{[d]}(T)$ such that $px=\bold{q}x$ for all $x\in X_{\epsilon}$.

 It follows from the fact that $(\bQ^{[d]}(M),\mathcal{HK}^{[d]}(T))$ is minimal that there is a minimal idempotent $\bold{v}$ in $\beta \mathcal{HK}^{[d]}(T)$ such that $\bold{v}u^{[d]}=u^{[d]}$. Consider $\bold{v}_{\epsilon}\in \beta T$ such that $\bold{v}_{\epsilon}\pi_{\epsilon}(\bold{p}) = \pi_{\epsilon}(\bold{vp})$ for all $\bold{p}\in\bQ^{[d]}(M)$. Observe that $\bold{v}_{\epsilon}$ is a minimal idempotent on $M$. Let $u'\in J(\beta T v_{\epsilon})$ be a minimal idempotent equivalent to $u$, then\begin{align*}
u'=uu'=\bold{v}_{\epsilon}uu'=\bold{v}_{\epsilon}u'=\bold{v}_{\epsilon},
\end{align*}

and it follows that $u^{[d]}\bold{v}=\bold{v}$ on $\bQ^{[d]}(M)$. Thus, we have $u^{[d]}\in \beta \mathcal{HK}^{[d]}(T)\bold{v}$ on $\bQ^{[d]}(M)$. Since $\beta \mathcal{HK}^{[d]}(T)\bold{v}$ is a minimal left ideal, we conclude that $u^{[d]}$ coincide with a minimal idempotent of $\beta \mathcal{HK}^{[d]}(T)$ on $\bQ^{[d]}(M)$. Therefore, $u^{[d]}$ coincides with a minimal idempotent on $\bQ^{[d]}(X)$.
\end{proof}
\begin{remark}
One can alternatively  prove  \cref{lem:min_idemp_cubo} using the proof of \cite[Proposition 4.8]{Shao_Ye_regionally_prox_orderd:2012} and the fact that minimal idempotents in $E(\bQ^{[d]}(M))$ can be lifted to minimal idempotents in $\beta \mathcal{HK}^{[d]}(T)$.
\end{remark}

 Now, we define $M(\mathcal{HK}^{[d]}(T))=\beta \mathcal{HK}^{[d]}(T)u^{[d]}$ as the minimal left ideal of $\beta\mathcal{HK}^{[d]}(T)$ that contains $u^{[d]}$, and $G(\mathcal{HK}^{[d]}(T)) = u^{[d]}\beta \mathcal{HK}^{[d]}(T)u^{[d]}$ as the group with identity $u^{[d]}$. Observe that $G(\mathcal{HK}^{[d]}(T))$ coincides with $G$ on $X_{\epsilon}$ for any $\epsilon \subseteq [d]$ and $(X, T)$ minimal system.

\begin{theorem}\label{thm: dyn_cube_equal_HK_cube}
    Let $(X, T)$ be a minimal topological dynamical system and $d\geq 1$ be an integer. Then, $\mathcal{HK}^{[d]}(G)\bQ^{[d]}(X) \subseteq u^{[d]}\bQ^{[d]}(X)$. Moreover, for each $\bx \in \bQ^{[d]}(X)$, we have\begin{align*}
        \cltau{\HK{d}}\bx = u^{[d]}\bQ^{[d]}(X),
    \end{align*}

    where the closure is with respect to the $\uptau$-topology in $G(\mathcal{HK}^{[d]}(T))$.
\end{theorem}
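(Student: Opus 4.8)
I would first check that $\HK{d}$, acting on $\bQ^{[d]}(X)$, is a subgroup of $G(\mathcal{HK}^{[d]}(T))$, so that the $\uptau$-closure in the statement is meaningful. For a generator $g^{(\alpha)}$ of $\HK{d}=\mathcal{HK}^{[d]}(G)$ (with $g\in G$ and $\alpha$ a facet), write $g=\lim_{\lambda}t_{\lambda}$ with $t_{\lambda}\in T$; then $t_{\lambda}^{(\alpha)}\in\mathcal{HK}^{[d]}(T)\subseteq\beta\mathcal{HK}^{[d]}(T)$, so along a subnet $t_{\lambda}^{(\alpha)}\to p$ for some $p\in\beta\mathcal{HK}^{[d]}(T)$. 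A coordinate-by-coordinate computation --- using that $p$ acts on the $\epsilon$-coordinate as $g$ when $\epsilon\in\alpha$ and trivially otherwise, and that $ugu=g$ for $g\in G$ --- shows that $u^{[d]}pu^{[d]}\in G(\mathcal{HK}^{[d]}(T))$ acts on $X^{[d]}$ exactly as $g^{(\alpha)}$. Hence $\HK{d}$, viewed on $\bQ^{[d]}(X)$, is the subgroup of $G(\mathcal{HK}^{[d]}(T))$ generated by such elements. Moreover, since $(\bQ^{[d]}(X),\mathcal{HK}^{[d]}(T))$ is minimal (\cref{thm: cube_is_minimal}) and $u^{[d]}$ coincides on $\bQ^{[d]}(X)$ with a minimal idempotent (\cref{lem:min_idemp_cubo}), one has $M(\mathcal{HK}^{[d]}(T))x^{[d]}=\bQ^{[d]}(X)$, whence
\[
  G(\mathcal{HK}^{[d]}(T))\,x^{[d]}=u^{[d]}M(\mathcal{HK}^{[d]}(T))\,x^{[d]}=u^{[d]}\bQ^{[d]}(X).
\]

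\textbf{The inclusion $\HK{d}x^{[d]}\subseteq u^{[d]}\bQ^{[d]}(X)$.} Since $\HK{d}$ is generated by the $g^{(\alpha)}$, since $(g^{(\alpha)})^{-1}=(g^{-1})^{(\alpha)}$ is again such a generator, and since $u^{[d]}\bQ^{[d]}(X)\subseteq\bQ^{[d]}(X)$, it suffices to prove that $g^{(\alpha)}\mathbf{y}\in u^{[d]}\bQ^{[d]}(X)$ for every $\mathbf{y}\in\bQ^{[d]}(X)$ and every generator $g^{(\alpha)}$; iterating over a word then does it (the empty word giving $u^{[d]}x^{[d]}=(ux)^{[d]}$). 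With $g=\lim t_{\lambda}$, the points $t_{\lambda}^{(\alpha)}\mathbf{y}$ lie in $\bQ^{[d]}(X)$ (which is $\mathcal{HK}^{[d]}(T)$-invariant), and along a subnet $t_{\lambda}^{(\alpha)}\mathbf{y}\to\mathbf{z}\in\bQ^{[d]}(X)$. Comparing coordinates --- for $\epsilon\in\alpha$, $t_{\lambda}y_{\epsilon}\to gy_{\epsilon}$, so $(u^{[d]}\mathbf{z})_{\epsilon}=ugy_{\epsilon}=gy_{\epsilon}=(g^{(\alpha)}\mathbf{y})_{\epsilon}$; for $\epsilon\notin\alpha$, $t_{\lambda}y_{\epsilon}=y_{\epsilon}$, so $(u^{[d]}\mathbf{z})_{\epsilon}=uy_{\epsilon}=(g^{(\alpha)}\mathbf{y})_{\epsilon}$ --- gives $g^{(\alpha)}\mathbf{y}=u^{[d]}\mathbf{z}\in u^{[d]}\bQ^{[d]}(X)$.

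\textbf{Reducing the equality.} The inclusion ``$\subseteq$'' is immediate from the setup: $\cltau{\HK{d}}\subseteq G(\mathcal{HK}^{[d]}(T))$ gives $\cltau{\HK{d}}x^{[d]}\subseteq G(\mathcal{HK}^{[d]}(T))x^{[d]}=u^{[d]}\bQ^{[d]}(X)$. For ``$\supseteq$'' I would use the general identity, valid for any $A\subseteq G(\mathcal{HK}^{[d]}(T))$,
\[
  \cltau{A}\,x^{[d]}=u^{[d]}\bigl(u^{[d]}\circ (A\,x^{[d]})\bigr),
\]
that the orbit of the $\uptau$-closure equals the $\uptau$-closure of the orbit inside $u^{[d]}\bQ^{[d]}(X)$; it follows from $\cltau{A}=u^{[d]}(u^{[d]}\circ A)$, the identity $\overline{A\,x^{[d]}}=\overline{A}\,x^{[d]}$ (continuity of $w\mapsto wx^{[d]}$ on the compact $M(\mathcal{HK}^{[d]}(T))$), and unwinding the circle operation. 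Taking $A=\HK{d}$ and using $\HK{d}x^{[d]}\subseteq u^{[d]}\bQ^{[d]}(X)\subseteq\bQ^{[d]}(X)$, the inclusion ``$\supseteq$'' reduces to the single statement
\[
  u^{[d]}\circ\overline{\HK{d}x^{[d]}}=\bQ^{[d]}(X),
\]
equivalently that $\HK{d}x^{[d]}$ is $\uptau$-dense in $u^{[d]}\bQ^{[d]}(X)$, equivalently that $\cltau{\HK{d}}=G(\mathcal{HK}^{[d]}(T))$.

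\textbf{The density --- the main obstacle.} This is where I expect the real work. Using $\bQ^{[d]}(X)=\overline{\mathcal{HK}^{[d]}(T)x^{[d]}}$ (valid since $X$ is minimal), I would write a given $\mathbf{z}\in\bQ^{[d]}(X)$ as $\mathbf{z}=\lim\mathbf{s}_{\lambda}x^{[d]}$ with $\mathbf{s}_{\lambda}\in\mathcal{HK}^{[d]}(T)$ and pass to a subnet with $\mathbf{s}_{\lambda}\to\mathbf{p}$ in $\beta\mathcal{HK}^{[d]}(T)$; the goal is to produce $t'_{\mu}\in\mathcal{HK}^{[d]}(T)$ with $t'_{\mu}\to u^{[d]}$ and $\mathbf{h}_{\mu}\in\HK{d}$ with $t'_{\mu}(\mathbf{h}_{\mu}x^{[d]})\to\mathbf{z}$, so that $\mathbf{z}\in u^{[d]}\circ\overline{\HK{d}x^{[d]}}$. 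The bridge from $\mathcal{HK}^{[d]}(T)$ to $\HK{d}=\mathcal{HK}^{[d]}(G)$ is the setup step together with \cref{lemma: conv_prod_tau_top}: one feeds the nets $\mathbf{s}_{\lambda}$ and $\mathbf{s}_{\lambda}u^{[d]}$ into parts (2)--(3) of that proposition in order to split the elements $u^{[d]}\mathbf{s}_{\lambda}u^{[d]}$ of $G(\mathcal{HK}^{[d]}(T))$ into a factor converging to $u^{[d]}$ and a factor that $\uptau$-approaches $\HK{d}$, while controlling the idempotents $v\in J(M)$ occurring in the decompositions $\mathbf{s}_{\lambda}u^{[d]}=vg$. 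The genuine obstacle is that the retraction $q\mapsto u^{[d]}q$ is not continuous for the ordinary topology, so $u^{[d]}$ cannot simply be pushed through $\lim\mathbf{s}_{\lambda}x^{[d]}$; taming the resulting idempotent factors via the $\uptau$-topology is the only non-routine ingredient. Once this is carried out the density follows, and with it the theorem.
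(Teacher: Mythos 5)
Your setup and your proof of the containment $\HK{d}x^{[d]}\subseteq u^{[d]}\bQ^{[d]}(X)$ are correct and essentially identical to the paper's argument (you are, if anything, more careful about the coordinatewise comparison), and your reduction of the inclusion ``$\subseteq$'' of the displayed equality to $\cltau{\HK{d}}\subseteq G(\mathcal{HK}^{[d]}(T))$ together with minimality of $(\bQ^{[d]}(X),\mathcal{HK}^{[d]}(T))$ is fine. The problem is the reverse inclusion, which is the actual content of the ``moreover'' part: you correctly reduce it to the $\uptau$-density of $\HK{d}x^{[d]}$ in $u^{[d]}\bQ^{[d]}(X)$ and then stop, labelling this ``the main obstacle'' and sketching a plan (splitting $u^{[d]}\mathbf{s}_{\lambda}u^{[d]}$ into a factor converging to $u^{[d]}$ and a factor $\uptau$-approaching $\HK{d}$, while controlling idempotents) that you never carry out. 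As written this is a genuine gap: the theorem is not proved.

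The single observation you are missing --- and it is exactly how the paper closes the argument --- makes the density a two-line consequence of \cref{lemma: conv_prod_tau_top}. By definition (or by \cref{prop: parametrization_cubes}), every $t\in\mathcal{HK}^{[d]}(T)$ is a finite product $\prod_{i}h_{i}^{(\alpha_{i})}$ with $h_{i}\in T$; since $T$ is abelian, each $h_{i}$ lies in the center of $\beta T$, and $u$ is idempotent with $uh_{i}\in G$, so
\begin{align*}
u^{[d]}t=\prod_{i}(uh_{i})^{(\alpha_{i})}\in\HK{d}.
\end{align*}
Hence, given $\bx=\lim_{\lambda}t_{\lambda}x_{0}^{[d]}$ with $t_{\lambda}\in\mathcal{HK}^{[d]}(T)$, the net $g_{\lambda}:=u^{[d]}t_{\lambda}$ \emph{already lies in} $\HK{d}$ and satisfies $g_{\lambda}x_{0}^{[d]}=t_{\lambda}u^{[d]}x_{0}^{[d]}=t_{\lambda}x_{0}^{[d]}\to\bx$; passing to a subnet with $g_{\lambda}\to\mathbf{p}$ in $\beta\mathcal{HK}^{[d]}(T)$ and applying \cref{lemma: conv_prod_tau_top}(2) (for the $\mathcal{HK}^{[d]}(T)$-action) gives $g_{\lambda}\taulim u^{[d]}\mathbf{p}\in\cltau{\HK{d}}$ with $u^{[d]}\mathbf{p}x_{0}^{[d]}=u^{[d]}\bx$, which is the desired density. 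There is nothing to split and no idempotents to tame: the discontinuity of $q\mapsto u^{[d]}q$ that you flag as the obstacle never enters, because $u^{[d]}$ is applied to each $t_{\lambda}$ individually, where it acts algebraically via the displayed identity, rather than to the limit point $\mathbf{p}$.
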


\begin{proof}

    Let $g\in G$, $\bx\in\bQ^{[d]}(X)$ and $\alpha$ be a facet of $\{0,1\}^{d}$. Let $(t_{\lambda})_{\lambda\in\Lambda}\subseteq T$ be a net such that $t_{\lambda} \to g$. Then, we have $t_{\lambda}^{(\alpha)} \to g^{(\alpha)}$. Let $(\bs_{i})_{i\in I}\subseteq \mathcal{HK}^{[d]}(T)$ be such that $\bs_{i}x_{0}^{[d]} \to \bx$. Since $\mathcal{HK}^{[d]}(T)$ is a group we get that $(t_{\lambda}^{(\alpha)}\bs_{i})_{i\in I}\subseteq \mathcal{HK}^{[d]}(T)$, and then we have $(t_{\lambda}^{(\alpha)}\bx)_{\lambda\in\Lambda}\subseteq \bQ^{[d]}(X)$. Therefore, we deduce that $g^{(\alpha)}\bx \in u^{[d]}\bQ^{[d]}(X)$.  The set $\{ \boldsymbol{g} \in \mathcal{HK}^{[d]}(G): \boldsymbol{g}\bQ^{[d]}(X) \subseteq u^{[d]}\bQ^{[d]}(X)\}$  is clearly a subgroup of $\mathcal{HK}^{[d]}(G)$ and it contains the facets. Hence, it is all of $\mathcal{HK}^{[d]}(G)$. We deduce $\mathcal{HK}^{[d]}(G)\bQ^{[d]}(X) \subseteq u^{[d]}\bQ^{[d]}(X)$.

    Let $\bx\in \bQ^{[d]}(X)$. Then, there is $(\bt_{\lambda})_{\lambda\in\Lambda} \subseteq \mathcal{HK}^{[d]}(T)$ a net such that $\bt_{\lambda}x_{0}^{[d]}$ converge to $\bx$ (recall whenever $x_0$ is used, we will refer to the element with respect to which the group $\GG(X)$ is defined). By \cref{prop: parametrization_cubes}, there is $\bold{h}\in T^{[d]}$ such that \begin{align*}
        \bt_{\lambda} = \prod_{i=1}^{2^{d}}\bold{h}_{i}^{(\alpha_{i})},
    \end{align*}

    where $\alpha_{i}$ are the faces of the \cref{prop: parametrization_cubes}. Therefore, it follows that \begin{align*}
        u^{[d]}\bt_{\lambda} = \prod_{i}^{2^{d}}(u\bold{h}_{i})^{(\alpha_{i})}.
    \end{align*}

    Note that,\begin{align*}
        u^{[d]}\bt_{\lambda}x_{0}^{[d]} = \bt_{\lambda}u^{[d]}x_{0}^{[d]} \to \bx.
    \end{align*}

    Thus, we get that $u^{[d]}\bt_{\lambda}x_{0}^{[d]} \taulim u^{[d]}\bx$. Since $\prod_{i}^{2^{d}}(u\bold{h}_{i})^{(\alpha_{i})}$ is an element of $\HK{d}$, we conclude that $u^{[d]}\bx \in \cltau{\HK{d}}x_{0}^{[d]}$.
\end{proof}

Note that this proof of the theorem also allows us to extend it to general group actions, which are not necessarily abelian, for proximal extensions of distal systems. Also, when $X=M$ and $x_{0}=u$, it can be shown that $\cltau{\HK{d}}=G(\mathcal{HK}^{[d]}(T))$. Moreover, we have the following corollary:\begin{corollary}\label{cor: universal_hk_cube}
    Let $d\geq 1$ be an integer. Then, $\bQ^{[d]}(M)=M(\mathcal{HK}^{[d]}(T))$.
\end{corollary}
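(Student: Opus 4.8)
The plan is to realize $M(\mathcal{HK}^{[d]}(T)) = \beta\mathcal{HK}^{[d]}(T)u^{[d]}$ as the orbit, inside $M^{[d]}$, of the diagonal point $u^{[d]}$ under the natural action of $\beta\mathcal{HK}^{[d]}(T)$ on $M^{[d]}$ (the action induced by the $\mathcal{HK}^{[d]}(T)$-action on the compact space $M^{[d]}$), and to compare it with $\bQ^{[d]}(M)$. Since $(M, T)$ is minimal, the remark following the definition of dynamical cubes gives $\bQ^{[d]}(M) = \overline{\mathcal{HK}^{[d]}(T)u^{[d]}}$, the orbit closure of $u^{[d]}$ under the Host--Kra cube group, so it suffices to show that this orbit closure coincides with $\beta\mathcal{HK}^{[d]}(T)u^{[d]}$.

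For $\bQ^{[d]}(M) \subseteq M(\mathcal{HK}^{[d]}(T))$, I would use that the evaluation map $q \mapsto q u^{[d]}$ from $\beta\mathcal{HK}^{[d]}(T)$ to $M^{[d]}$ is continuous for the product topology. Hence $\beta\mathcal{HK}^{[d]}(T)u^{[d]}$ is the continuous image of a compact set, so it is compact and therefore closed in $M^{[d]}$; as it contains $\mathcal{HK}^{[d]}(T)u^{[d]}$, it contains its closure $\bQ^{[d]}(M)$. For the reverse inclusion, given $q \in \beta\mathcal{HK}^{[d]}(T)$ choose a net $(t_{\lambda}) \subseteq \mathcal{HK}^{[d]}(T)$ with $t_{\lambda} \to q$; then $t_{\lambda}u^{[d]} \to q u^{[d]}$ and each $t_{\lambda}u^{[d]} \in \mathcal{HK}^{[d]}(T)u^{[d]} \subseteq \bQ^{[d]}(M)$, so $q u^{[d]} \in \bQ^{[d]}(M)$ because $\bQ^{[d]}(M)$ is closed. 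This already yields the equality.

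Alternatively, and perhaps more in the spirit of this section, one can deduce the statement from \cref{thm: dyn_cube_equal_HK_cube}: applied with $X = M$ and $x_{0} = u$, together with the remark that $\cltau{\HK{d}} = G(\mathcal{HK}^{[d]}(T))$ in that case, it gives $G(\mathcal{HK}^{[d]}(T))u^{[d]} = u^{[d]}\bQ^{[d]}(M)$. Writing the minimal left ideal $M(\mathcal{HK}^{[d]}(T))$ as the union $\bigcup_{w} w\,G(\mathcal{HK}^{[d]}(T))$ over its idempotents $w$, applying each $w$ to $u^{[d]}$, and using that $u^{[d]}$ (regarded as the minimal idempotent of \cref{lem:min_idemp_cubo}) fixes the diagonal point, one recovers $M(\mathcal{HK}^{[d]}(T))u^{[d]} = \bQ^{[d]}(M)$.

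The only delicate point is bookkeeping about the symbol $u^{[d]}$, which names both the diagonal point of $M^{[d]}$ and the minimal idempotent of $\beta\mathcal{HK}^{[d]}(T)$ furnished by \cref{lem:min_idemp_cubo}; the compatibility of the two roles --- precisely, that this idempotent fixes the diagonal point and acts on $\bQ^{[d]}(M)$ exactly as $u^{[d]}$ does coordinatewise --- is exactly the content of \cref{lem:min_idemp_cubo}, and it guarantees that $M(\mathcal{HK}^{[d]}(T))u^{[d]}$ is all of $\bQ^{[d]}(M)$ rather than merely a proper subsystem. There is no serious obstacle beyond this identification.
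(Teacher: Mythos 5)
Your proof is correct, and your primary argument takes a genuinely more direct route than the paper's. By identifying $M(\mathcal{HK}^{[d]}(T))$, evaluated at the diagonal point, with the orbit $\beta\mathcal{HK}^{[d]}(T)u^{[d]}$ and invoking the standard fact that the $\beta G$-orbit of a point in a compact system equals its $G$-orbit closure (compactness of $\beta\mathcal{HK}^{[d]}(T)$ plus continuity of evaluation gives both inclusions at once), you avoid any appeal to \cref{thm: dyn_cube_equal_HK_cube} and to the internal structure of the minimal left ideal. The paper instead treats only $\bQ^{[d]}(M)\subseteq M(\mathcal{HK}^{[d]}(T))$ as clear and proves the converse by decomposing the minimal left ideal as $\bigcup_{v}vG(\mathcal{HK}^{[d]}(T))$ over its idempotents and applying \cref{thm: dyn_cube_equal_HK_cube} with $X=M$ and $x_{0}=u$ (so that $\cltau{\HK{d}}=G(\mathcal{HK}^{[d]}(T))$) to place each piece inside $\bQ^{[d]}(M)$; this is essentially your alternative second argument. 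What your first route buys is economy and independence from the preceding theorem; what the paper's route buys is that it exhibits the identification $G(\mathcal{HK}^{[d]}(T))u^{[d]}=u^{[d]}\bQ^{[d]}(M)$ explicitly, which is the form reused later in the paper. You are also right that the only delicate point is the double role of the symbol $u^{[d]}$ as a point of $M^{[d]}$ and as a minimal idempotent of $\beta\mathcal{HK}^{[d]}(T)$; \cref{lem:min_idemp_cubo} is exactly what legitimizes passing between $\beta\mathcal{HK}^{[d]}(T)u^{[d]}$ and $M(\mathcal{HK}^{[d]}(T))u^{[d]}$, and you use it correctly.
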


\begin{proof} 
It is clear that $\bQ^{[d]}(M) \subseteq M(\mathcal{HK}^{[d]}(T))$. Conversely, we have \begin{align*}
        M(\mathcal{HK}^{[d]}(T)) =\bigcup_{\bold{v}\in J(M(\mathcal{HK}^{[d]}(T)))}\bold{v}G(\mathcal{HK}^{[d]}(T)).
    \end{align*}
    
    Note that $G(\mathcal{HK}^{[d]}(T))\subseteq \cltau{\HK{d}}$. Indeed, let $\mathbf{g}\in G(\mathcal{HK}^{[d]}(T))$. Then there exists a net $(\bt_{\lambda})_{\lambda\in\Lambda} \subseteq \mathcal{HK}^{[d]}(T)$ such that $\bt_{\lambda} \to \mathbf{g}$. Thus, it follows that $\bt_{\lambda}u^{[d]} \taulim \mathbf{g}$, and hence $\mathbf{g} \in \cltau{\HK{d}}$.
    
    Therefore, by \cref{thm: dyn_cube_equal_HK_cube}, we obtain that \begin{align*}
        \bigcup_{\bold{v}\in J(M(\mathcal{HK}^{[d]}(T)))}\bold{v}\cltau{\HK{d}}=\bigcup_{\bold{v}\in J(M(\mathcal{HK}^{[d]}(T)))}\bold{v} \bQ^{[d]}(M)=\bQ^{[d]}(M).
    \end{align*}

    Hence, we conclude that $M(\mathcal{HK}^{[d]}(T)) \subseteq \bQ^{[d]}(M)$.
\end{proof}

The following lemma was implicitly proved in \cref{thm: dyn_cube_equal_HK_cube}, (using that $\bQ^{[d]}(M)$ is invariant under face transformations and the diagonal) but we state it here for convenience for the subsequent sections.

\begin{lemma}\label{lemma: HK_Ellis_contenido_cubo}
    Let $(X, T)$ be a minimal topological dynamical system and $d\geq 1$ be an integer. Consider $p_{0},\dots,p_{d}\in M$ and $\bx\in \bQ^{[d]}(X)$. Define $\bold{q} = (\bq_{\epsilon}:\epsilon\subseteq [d])$, where $\bq_{\epsilon} = p_{n_{1}}\dots p_{n_{k}}p_{0}$ for $\epsilon = \{n_{1},\dots,n_{k}\}$ and $\bq_{\emptyset} = p_{0}$. Then, $\bold{q}\in \bQ^{[d]}(M)$ and $\bold{q}\bx\in \bQ^{[d]}(X)$.
\end{lemma}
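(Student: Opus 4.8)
The plan is to factor $q$ as a product of elementary tuples coming from single face maps and the diagonal, and then to move the cube $\bQ^{[d]}(X)$ forward under each factor one step at a time. For $j\in[d]$ let $\alpha_{j}=\{\epsilon\subseteq[d]:j\in\epsilon\}$, which is a facet of $\{0,1\}^{d}$, and let $p_{0}^{[d]}$ be the diagonal tuple with all coordinates equal to $p_{0}$. Since multiplication in $M^{[d]}$ is coordinatewise, one reads off, for $\epsilon=\{n_{1}<\dots<n_{k}\}$,
\begin{align*}
    \bigl(p_{1}^{(\alpha_{1})}p_{2}^{(\alpha_{2})}\cdots p_{d}^{(\alpha_{d})}p_{0}^{[d]}\bigr)_{\epsilon}=p_{n_{1}}p_{n_{2}}\cdots p_{n_{k}}p_{0}=q_{\epsilon},
\end{align*}
so $q=p_{1}^{(\alpha_{1})}p_{2}^{(\alpha_{2})}\cdots p_{d}^{(\alpha_{d})}p_{0}^{[d]}$ in $M^{[d]}$; this matches the ordering convention in the statement, and a different convention only changes the order in which the $\alpha_{j}$ are listed.

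First I would prove the one-step fact: if $(Y,T)$ is minimal, $\by\in\bQ^{[d]}(Y)$, $p\in M$, and $\alpha$ is a facet of $\{0,1\}^{d}$ or $\alpha=[d]$, then $p^{(\alpha)}\by\in\bQ^{[d]}(Y)$. Indeed, choose a net $(t_{\lambda})_{\lambda}\subseteq T$ with $t_{\lambda}\to p$ in $\beta T$. If $\alpha$ is a facet then $t_{\lambda}^{(\alpha)}\in\mathcal{HK}^{[d]}(T)$ by definition of the Host--Kra cube group, while if $\alpha=[d]$ then $t_{\lambda}^{[d]}\in\Delta^{[d]}(T)\subseteq\mathcal{HK}^{[d]}(T)$; since $\bQ^{[d]}(Y)$ is closed and invariant under $\mathcal{HK}^{[d]}(T)$ (being the closure of a union of $\mathcal{HK}^{[d]}(T)$-orbits), $t_{\lambda}^{(\alpha)}\by\in\bQ^{[d]}(Y)$ for all $\lambda$. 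Coordinatewise, $(t_{\lambda}^{(\alpha)}\by)_{\epsilon}$ equals $t_{\lambda}\by_{\epsilon}$ when $\epsilon\in\alpha$ and $\by_{\epsilon}$ otherwise, and since $r\mapsto r\by_{\epsilon}$ is continuous on $\beta T$ these converge to $p^{(\alpha)}\by$; closedness of $\bQ^{[d]}(Y)$ finishes the step.

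Then I would iterate. For $q\bx$: starting from $\bx\in\bQ^{[d]}(X)$, apply the one-step fact with $\alpha=[d]$, then with $\alpha=\alpha_{d},\alpha_{d-1},\dots,\alpha_{1}$ in turn; by the factorization above the result is exactly $q\bx$, so $q\bx\in\bQ^{[d]}(X)$. For $q$ itself, take $X=M$: the diagonal tuple $p_{0}^{[d]}$ lies in $\bQ^{[d]}(M)$ (use $t=e^{[d]}$ and the point $p_{0}\in M$ in the definition of $\bQ^{[d]}(M)$), and applying the one-step fact with $\alpha=\alpha_{d},\dots,\alpha_{1}$ gives $q\in\bQ^{[d]}(M)$. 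Alternatively, once $q\in\bQ^{[d]}(M)=\beta\mathcal{HK}^{[d]}(T)u^{[d]}$ by \cref{cor: universal_hk_cube}, one may write $q=Pu^{[d]}$, check $q\bx=P(u^{[d]}\bx)$ coordinatewise, and conclude using \cref{lem:min_idemp_cubo} that $u^{[d]}\bx\in\bQ^{[d]}(X)$ and hence $q\bx\in\bQ^{[d]}(X)$. The only delicate point is the coordinate bookkeeping in the first paragraph, where the noncommutativity of $M$ must be respected; the rest uses only closedness, $\mathcal{HK}^{[d]}(T)$-invariance of $\bQ^{[d]}(\cdot)$, and continuity of the $\beta T$-action in the $\beta T$-variable, which is why this statement is essentially contained in the proof of \cref{thm: dyn_cube_equal_HK_cube}.
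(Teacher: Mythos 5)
Your proof is correct and takes essentially the same route the paper intends: the paper gives no separate argument for this lemma, merely noting that it is implicit in the proof of \cref{thm: dyn_cube_equal_HK_cube} via the invariance of $\bQ^{[d]}$ under face transformations and the diagonal, which is exactly your factorization $q=p_{1}^{(\alpha_{1})}\cdots p_{d}^{(\alpha_{d})}p_{0}^{[d]}$ combined with the one-step closedness-and-invariance argument. The only cosmetic point is that the individual factors $p_{j}^{(\alpha_{j})}$ live in $(\beta T)^{[d]}$ rather than $M^{[d]}$ (since $e\notin M$ in general), though the full product does land in $M^{[d]}$; this does not affect the argument.
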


Using the algebraic characterization of the dynamical cubes, we can show an algebraic characterization of proximal extensions of equicontinuous systems.

\begin{proposition}\label{thm: equi_ext_if and only if_HG_abelian}
    Let $(X, T)$ be a minimal topological dynamical system. Then, $(X, T)$ is a proximal extension of its maximal equicontinuous factor if and only if $H(G)$ is trivial on $X$.
\end{proposition}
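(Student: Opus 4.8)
The plan is to realize the maximal equicontinuous factor algebraically and reduce everything to an inclusion of Ellis groups. I would let $\pi\colon (X,T)\to (X_{\mathrm{eq}},T)$ be the maximal equicontinuous factor; since $T$ is abelian, $\RP(X)=\RP^{[1]}(X)$ is a closed invariant equivalence relation, so $X_{\mathrm{eq}}=X/\RP(X)$ and $R_{\pi}=\RP(X)$. Fixing $x_{0}\in uX$ and writing $A=\GG(X)$, $B=\GG(X_{\mathrm{eq}})$ (at $x_{0}$ and $\pi(x_{0})$), we have $A\subseteq B$, and by \cref{thm: extensions_ellis_group}(1), $\pi$ is proximal iff $A=B$. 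Also, for fixed $h\in G$ the set $\{x\in X: hx=x\}$ is closed and $T$-invariant (as $T$ is central in $E(X)$ because $T$ is abelian), hence is $\emptyset$ or $X$ by minimality; so ``$H(G)$ trivial on $X$'' is equivalent to $H(G)\subseteq A$. Thus the proposition will follow once I show $B=H(G)A$.

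For $H(G)A\subseteq B$: the extension $X_{\mathrm{eq}}\to\{\ast\}$ is equicontinuous, so \cref{thm: extensions_ellis_group}(3) (whose base point has Ellis group $G$) gives $H(G)\subseteq\GG(X_{\mathrm{eq}})=B$, and since $A\subseteq B$ and $B$ is a group, $H(G)A\subseteq B$. For the reverse inclusion I would fix $g\in G$ and note $g\in B$ iff $(x_{0},gx_{0})\in R_{\pi}=\RP(X)$, which by \cref{thm: shao_ye_thm}(1) means $(x_{0},gx_{0},gx_{0},gx_{0})\in\bQ^{[2]}(X)$. As $g\in uM$ we have $u(gx_{0})=gx_{0}$, so this point is $u^{[2]}$-fixed, and hence it lies in $\bQ^{[2]}(X)$ iff it lies in $u^{[2]}\bQ^{[2]}(X)=\cltau{\HK{2}}x_{0}^{[2]}$ by \cref{thm: dyn_cube_equal_HK_cube}.

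The final step is to use the parametrization of \cref{prop: parametrization_cubes} with $d=2$: the upper faces of $\{0,1\}^{2}$ have codimensions $0,1,1,2$, so every element of $\HK{2}$ is uniquely $\bigl(h_{1},\,h_{1}h_{3},\,h_{1}h_{2},\,h_{1}h_{2}h_{3}h_{4}\bigr)$ with $h_{1},h_{2},h_{3}\in G$, $h_{4}\in G_{2}=[G,G]$, and the $\uptau$-topology is compatible with this bijection, so $\cltau{\HK{2}}$ is the set of such tuples with $h_{1},h_{2},h_{3}\in G$ and $h_{4}$ now ranging over $H(G)$. Feeding $g\in B$ into $\bigl(h_{1},h_{1}h_{3},h_{1}h_{2},h_{1}h_{2}h_{3}h_{4}\bigr)x_{0}^{[2]}=(x_{0},gx_{0},gx_{0},gx_{0})$ gives $h_{1}\in A$, $h_{1}h_{2},h_{1}h_{3}\in gA$ and $h_{1}h_{2}h_{3}h_{4}\in gA$; writing $h_{2}=h_{1}^{-1}ga_{2}$, $h_{3}=h_{1}^{-1}ga_{3}$ with $a_{i}\in A$ and substituting into the last relation (using $h_{1}\in A$) yields $ga_{3}h_{4}\in A$, so $g\in Ah_{4}^{-1}a_{3}^{-1}\subseteq AH(G)A=H(G)A$ by normality of $H(G)$. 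Hence $B=H(G)A$, which finishes the argument. I expect the main obstacle to be the description of $\cltau{\HK{2}}$ through the Host--Kra parametrization, and in particular the claim that $\uptau$-closure replaces the top coordinate $G_{2}$ by $H(G)$; one could instead invoke the classical fact that the maximal equicontinuous factor of a pointed minimal system has Ellis group $H(G)\GG(X)$ and skip this computation.
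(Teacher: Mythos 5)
Your overall architecture is sound and in fact runs parallel to the paper's: both arguments ultimately rest on \cref{thm: dyn_cube_equal_HK_cube} together with the fact that multiplication in $G$ becomes jointly continuous modulo $H(G)$. But there is a genuine gap at your central step, namely the assertion that $\cltau{\HK{2}}$ consists of the tuples $(h_1,h_1h_3,h_1h_2,h_1h_2h_3h_4)$ with $h_1,h_2,h_3\in G$ and $h_4\in H(G)$. The inclusion you actually use --- that every $\bm{k}\in\cltau{\HK{2}}$ has $k_{\{2\}}^{-1}k_{\emptyset}k_{\{1\}}^{-1}k_{\{1,2\}}\in H(G)$ --- is not a formality: it needs (i) that $\uptau$-convergence in $G(\mathcal{HK}^{[2]}(T))$ implies coordinatewise $\uptau$-convergence, (ii) that products of $\uptau$-convergent nets converge, after passing to subnets, to the product of the limits modulo $H(G)$, and (iii) that $\cltau{[G,G]}\subseteq H(G)$, so that the $G_2$-coordinate of \cref{prop: parametrization_cubes} is absorbed. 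Item (iii) does hold for abelian $T$ (because $uT$ is a $\uptau$-dense abelian subgroup of $G$, so the Hausdorff topological group $G/H(G)$ is abelian), but none of this appears in your write-up, and you yourself flag this step as the obstacle. The reverse inclusion of your parametrization claim (that $h_4$ may range over \emph{all} of $H(G)$) is essentially the classical equicontinuous structure theorem; you do not use it, but presenting it as ``compatibility of the bijection with the $\uptau$-topology'' hides where the real work lies. Your fallback of citing $\GG(X_{\mathrm{eq}})=H(G)\GG(X)$ is legitimate as a reference to Auslander, but inside this paper that identity is \cref{prop: Ellis_group_equi}, which the authors \emph{derive from} the proposition you are proving, so it would be circular here.

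A second, smaller error: your justification that ``$H(G)$ trivial on $X$'' is equivalent to $H(G)\subseteq\GG(X)$ does not work as written. Elements of $G$ act on $X$ through the enveloping semigroup and need not be continuous, so $\{x\in X: hx=x\}$ need not be closed; moreover the relevant set is $\{x: hx=ux\}$, and $\{x:hx=x\}$ is typically a proper (dense, non-closed) subset of $X$ when $X$ is not distal. The equivalence is nevertheless true: writing $x=px_0$ with $p\in M$, one has $hp=h(up)$ (since $pv=p$ for every $v\in J(M)$), and normality of $H(G)$ gives $hx=(up)h'x_0=(up)x_0=ux$. For comparison, the paper avoids the Ellis-group identity altogether: it shows directly that triviality of $H(G)$ forces $\RP(X)=\P(X)$ by taking the nets furnished by \cref{thm: dyn_cube_equal_HK_cube} for a regionally proximal pair, upgrading them to $\uptau$-limits via \cref{lemma: conv_prod_tau_top}, and multiplying in $G/H(G)$, where triviality on $X$ collapses the cube to $uy=ux$; the converse is handled by continuity of $G$ on an equicontinuous system plus \cref{prop: pi_u_homeo_if and only if_proximal}. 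Your Ellis-group reformulation is cleaner to state, but the analytic content you would still have to supply is exactly the computation the paper performs.
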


\begin{proof}
    Suppose $(X, T)$ is an equicontinuous system. By \cref{thm: extensions_ellis_group}, we get that $H(G) \subseteq \GG(X)$. Since every element $g\in G$ is continuous in $X$, we conclude that $H(G)$ is trivial on $X$. By \cref{prop: pi_u_homeo_if and only if_proximal}, we deduce that $H(G)$ is trivial on proximal extensions of an equicontinuous system.

Conversely, suppose that $H(G)$ is trivial on $X$. Let $(x, y)\in \RP(X)$. By \cref{thm: dyn_cube_equal_HK_cube}, there exist $(g_{0}^{\lambda})_{\lambda\in\Lambda},(g_{1}^{\lambda})_{\lambda\in\Lambda},(g_{2}^{\lambda})_{\lambda\in\Lambda} \subseteq G$ nets such that\begin{align*}
    g_{0}^{\lambda}x &\taulim ux\\
    g_{1}^{\lambda}g_{0}^{\lambda}x &\taulim ux\\
    g_{2}^{\lambda}g_{0}^{\lambda}x &\taulim ux\\
    g_{2}^{\lambda}g_{1}^{\lambda}g_{0}^{\lambda}x &\taulim uy.
\end{align*}

Let $p_{0},p_{1},p_{2}\in M$ be such that $g_{i}^{\lambda} \to p_{i}$ for each $i=0,1,2$. By \cref{lemma: conv_prod_tau_top}, we have $g_{i}^{\lambda} \taulim up_{i}$ for each $i=0,1,2$. Then, as $G/H(G)$ is a compact Hausdorff topological group for the quotient topology induced by the $\uptau$-topology, we obtain
\begin{align*}
    up_{0}H(G)x = up_{1}H(G)x= up_{2}H(G) = H(G)x = ux.
\end{align*}
where in the last equality, we use that $H(G)$ is trivial on $X$. 

Therefore, we get that\begin{align*}
    uy=H(G)y=up_{0}p_{1}p_{2}H(G)x = ux.
\end{align*}

Hence, we conclude that $\RP(X)=\P(X)$.
\end{proof}

Let $(X, T)$ be a minimal topological dynamical system and $d\geq 1$ be an integer. We say that $\HK{d}$ has the {\em property of unique $\uptau$-closure on $X$} if, for any $\bx\in \bQ^{[d]}(X)$, two elements of $\cltau{\HK{d}}\bx$ are equal whenever they have $2^{d}-1$ coordinates in common. By \cref{thm: dyn_cube_equal_HK_cube} and \cref{thm: shao_ye_thm}, we conclude the following theorem:

\begin{theorem}\label{thm: unique_tauclosure_iff_prox_ext_nil}
    Let $(X, T)$ be a minimal topological dynamical system and $d\geq 1$ be an integer. Then, $\HK{d+1}$ has the property of unique $\uptau$-closure on $X$ if and only if $(X, T)$ is a proximal extension of its maximal factor of order $d$.
\end{theorem}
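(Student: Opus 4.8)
The plan is to show that each side of the stated equivalence is in turn equivalent to the single condition $\RP^{[d]}(X)\cap(uX\times uX)=\Delta_{uX}$, i.e.\ to injectivity of the restriction $\pi_{u}:=\pi|_{uX}$ of the factor map $\pi\colon(X,T)\to(Z,T):=(X/\RP^{[d]}(X),T)$ to $uX$; recall $(Z,T)$ is the maximal factor of order $d$ by \cref{thm: shao_ye_thm}.

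\textbf{The cube side.} By \cref{thm: dyn_cube_equal_HK_cube}, $\cltau{\HK{d+1}}x_{0}^{[d+1]}=u^{[d+1]}\bQ^{[d+1]}(X)$. Since $\cltau{\HK{d+1}}$ is a group inside $G(\mathcal{HK}^{[d+1]}(T))=u^{[d+1]}\beta\mathcal{HK}^{[d+1]}(T)u^{[d+1]}$, every $g\in\cltau{\HK{d+1}}$ has $gu^{[d+1]}=g$, so $g\bx=g(u^{[d+1]}\bx)$ for $\bx\in\bQ^{[d+1]}(X)$, whence $\cltau{\HK{d+1}}\bx=\cltau{\HK{d+1}}(u^{[d+1]}\bx)=\cltau{\HK{d+1}}x_{0}^{[d+1]}$ (the last step because $u^{[d+1]}\bx\in u^{[d+1]}\bQ^{[d+1]}(X)=\cltau{\HK{d+1}}x_{0}^{[d+1]}$, a single orbit of the group $\cltau{\HK{d+1}}$). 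Hence $\HK{d+1}$ has the property of unique $\uptau$-closure on $X$ if and only if any two points of $u^{[d+1]}\bQ^{[d+1]}(X)$ sharing $2^{d+1}-1$ coordinates coincide. I will combine this with the following lemma, whose proof I sketch below: \emph{if $\bx,\bx'\in\bQ^{[d+1]}(X)$ agree on every coordinate except $\epsilon_{0}$, then $(x_{\epsilon_{0}},x'_{\epsilon_{0}})\in\RP^{[d]}(X)$; conversely, for $(a,b)\in\RP^{[d]}(X)$ both $(a,b,\dots,b)$ and $b^{[d+1]}$ lie in $\bQ^{[d+1]}(X)$ and agree off $\emptyset$.} Since the coordinates of a point of $u^{[d+1]}\bQ^{[d+1]}(X)$ lie in $uX$, and since for $a,b\in uX$ the two cubes in the lemma's converse lie in $u^{[d+1]}\bQ^{[d+1]}(X)$, the cube side of the theorem is exactly $\RP^{[d]}(X)\cap(uX\times uX)=\Delta_{uX}$.

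\textbf{The lemma.} For the first assertion, after applying coordinate-flip automorphisms of $\{0,1\}^{d+1}$ (which preserve $\bQ^{[d+1]}(X)$, as they permute facets and fix the diagonal) we may assume $\epsilon_{0}=\emptyset$; put $a=x_{\emptyset}$, $b=x'_{\emptyset}$, $c_{\epsilon}=x_{\epsilon}=x'_{\epsilon}$ for $\epsilon\neq\emptyset$. Since $\mathcal{F}^{[d+1]}(T)$ fixes the $\emptyset$-coordinate, $\overline{\mathcal{F}^{[d+1]}(T)\bx}$ is a nonempty closed invariant subset of $(\bQ_{a}^{[d+1]}(X),\mathcal{F}^{[d+1]}(T))$, so it contains its unique minimal subsystem $Y_{a}^{[d+1]}=\overline{\mathcal{F}^{[d+1]}(T)a^{[d+1]}}$ by \cref{thm: cube_is_minimal}(3), and hence the point $a^{[d+1]}$. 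Thus there is a net $(t^{\lambda})\subseteq\mathcal{F}^{[d+1]}(T)$ with $t^{\lambda}\bx\to a^{[d+1]}$, i.e.\ $t^{\lambda}_{\epsilon}c_{\epsilon}\to a$ for $\epsilon\neq\emptyset$; applying the same net to $\bx'$ gives $t^{\lambda}\bx'\to(b,a,\dots,a)\in\overline{\mathcal{F}^{[d+1]}(T)\bx'}\subseteq\bQ^{[d+1]}(X)$, so $(b,a)\in\RP^{[d]}(X)$ by \cref{thm: shao_ye_thm}(1), and hence $(x_{\epsilon_{0}},x'_{\epsilon_{0}})\in\RP^{[d]}(X)$ by symmetry. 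The converse is immediate from \cref{thm: shao_ye_thm}(1) and the fact that diagonal points belong to $\bQ^{[d+1]}(X)$.

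\textbf{The extension side.} By \cref{prop: pi_u_homeo_if and only if_proximal}, $\pi_{u}\colon uX\to uZ$ is $\uptau$-continuous and $\uptau$-closed, it maps onto $uZ$ (given $z\in uZ$, pick $x$ with $\pi(x)=z$; then $ux\in uX$ and $\pi(ux)=uz=z$), and it is a $\uptau$-homeomorphism if and only if $\pi$ is a proximal extension. Since a continuous closed bijection is a homeomorphism, $\pi_{u}$ is a $\uptau$-homeomorphism iff it is injective; and $\pi_{u}(x)=\pi_{u}(y)\iff(x,y)\in\RP^{[d]}(X)$, so injectivity of $\pi_{u}$ is exactly $\RP^{[d]}(X)\cap(uX\times uX)=\Delta_{uX}$. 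Chaining these equivalences proves the theorem. I expect the main difficulty to be the first assertion of the lemma: it is the only step beyond formal manipulation of the $\uptau$-topology, and it genuinely needs the minimal-subsystem structure of dynamical cubes over a fixed coordinate (\cref{thm: cube_is_minimal}(3)) together with Shao--Ye's cube characterization of $\RP^{[d]}$.
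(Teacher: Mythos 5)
Your proof is correct and follows the same route the paper intends: the paper states this theorem as an immediate consequence of \cref{thm: dyn_cube_equal_HK_cube}, and your argument supplies precisely the details left implicit there. In particular, your reduction of both sides to the single condition $\RP^{[d]}(X)\cap(uX\times uX)=\Delta_{uX}$ --- via the one-coordinate cube lemma (resting on \cref{thm: cube_is_minimal}(3) and \cref{thm: shao_ye_thm}(1)) and the injectivity criterion for $\pi_{u}$ from \cref{prop: pi_u_homeo_if and only if_proximal} --- is a valid and complete fleshing-out of the paper's one-line derivation.
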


\begin{proof}
    Suppose that $(X,T)$ is a system of order $d$. Then, by \cref{thm: shao_ye_thm} and \cref{thm: dyn_cube_equal_HK_cube}, we have that $\HK{d+1}$ has the property of unique $\uptau$-closure on $X$. Hence, it follows from \cref{prop: pi_u_homeo_if and only if_proximal} that any proximal extension of $(X,T)$ also satisfies the unique $\uptau$-closure property of $\HK{d+1}$.
    
    Conversely, suppose that $\HK{d+1}$ has the property of unique $\uptau$-closure on $X$. Let $(x,y)\in \RP^{[d]}(X)$. In particular, by \cref{thm: shao_ye_thm}, we have that $(x,y,\dots,y)\in\bQ^{[d+1]}(X)$. Therefore, by \cref{thm: dyn_cube_equal_HK_cube}, it follows that $(ux,uy,\dots,uy)\in \cltau{\HK{d+1}}x^{[d+1]}$. Since both $u^{[d+1]}x^{[d+1]}$ and $(ux,uy,\dots,uy)$ are elements of $\cltau{\HK{d+1}}x^{[d+1]}$, the property of unique $\uptau$-closure of $\HK{d+1}$ on $X$ implies that $u^{[d+1]}x^{[d+1]}=(ux,uy,\dots,uy)$. Thus, we obtain that $ux=uy$, which shows that $\RP^{[d]}(X)= \P(X)$. Hence, $(X,T)$ is a proximal extension of its maximal factor of order $d$.
\end{proof}

\begin{definition} \label{def:tau_commutator}
    Let $T$ be a topological group, $M$ be a minimal ideal of $\beta T$ and $u\in J(M)$. Let $G=uM$. The $\uptau$-topological commutator subgroups $\Gtau{j}$, $j\geq 1$, are defined by setting $\Gtau{1}=G$ and $\Gtau{j+1} = \cltau{[\Gtau{j},G]}$.
\end{definition}

This definition corresponds to the topological commutators when considering the $\uptau$-topology.

\begin{lemma}\label{lemma: Gdtau_normal_subgroup}
For each integer $d\geq 1$, the set  $\Gtau{d}$ is a normal subgroup of $G$. 
\end{lemma}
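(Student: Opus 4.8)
The plan is to prove both claims simultaneously by induction on $d$. For $d=1$ we have $\Gtau{1}=G$, which is trivially a normal subgroup of itself. For the inductive step, assume $\Gtau{d}$ is a normal subgroup of $G$; we must show $\Gtau{d+1}=\cltau{[\Gtau{d},G]}$ is as well. The first thing I would record is that $[\Gtau{d},G]$ is itself a normal subgroup of $G$: this is a purely algebraic fact, using that $\Gtau{d}$ is normal so that for $x\in G$ the conjugation $x^{-1}[g,h]x$ can be rewritten, via the standard commutator identities, as a product of commutators of elements of $\Gtau{d}$ with elements of $G$ (here one uses $x^{-1}gx\in\Gtau{d}$). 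Hence $[\Gtau{d},G]$ is a (not necessarily $\uptau$-closed) normal subgroup of $G$.

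The key point is then that the $\uptau$-closure of a normal subgroup is again a normal subgroup. For this I would invoke \cref{lemma: conv_prod_tau_top}(1): for a fixed $x\in G$, the map $g\mapsto x^{-1}gx$ is a composition of a left multiplication by $x^{-1}$ and a right multiplication by $x$, both of which are $\uptau$-homeomorphisms of $G$. Therefore conjugation by $x$ is a $\uptau$-homeomorphism, so it maps $\cltau{[\Gtau{d},G]}$ onto $\cltau{x^{-1}[\Gtau{d},G]x}=\cltau{[\Gtau{d},G]}$, the last equality because $[\Gtau{d},G]$ is conjugation-invariant. This shows $\Gtau{d+1}$ is invariant under conjugation by every $x\in G$, i.e.\ it is normal. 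That $\Gtau{d+1}$ is a subgroup is immediate from the fact (part of the standing description of the $\uptau$-topology, and again a consequence of \cref{lemma: conv_prod_tau_top}(1)) that the $\uptau$-closure of a subgroup is a subgroup: if $A$ is a subgroup then $\cltau{A}\cdot\cltau{A}\subseteq\cltau{A}$ because multiplication by a fixed element is $\uptau$-continuous, and inversion preserves $\cltau{A}$ for the analogous reason, so $\cltau{[\Gtau{d},G]}$ is a $\uptau$-closed subgroup.

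I expect the only genuinely delicate point to be the interaction between $\uptau$-continuity and multiplication: the $\uptau$-topology does not make $G$ a topological group, so one cannot argue with joint continuity of the group operations. The argument must therefore be carried out strictly through separate continuity — multiplication and conjugation by a \emph{fixed} element — exactly as supplied by \cref{lemma: conv_prod_tau_top}(1), taking $\uptau$-closures one factor at a time. Everything else is routine: the algebraic normality of $[\Gtau{d},G]$ follows from the commutator identities $[ab,c]=[a,c]^{b}[b,c]$ and $[a,bc]=[a,c][a,b]^{c}$ together with the inductive hypothesis that $\Gtau{d}\trianglelefteq G$.
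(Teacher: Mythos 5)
Your proposal is correct and uses essentially the same ingredients as the paper's proof: induction on $d$, the purely algebraic normality of $[\Gtau{d},G]$ from the inductive hypothesis, and the fact that left/right translations (hence conjugation) are $\uptau$-homeomorphisms (\cref{lemma: conv_prod_tau_top}(1)) together with $\uptau$-continuity of inversion, applied one factor at a time because multiplication is only separately continuous. If anything, your formulation via ``the $\uptau$-closure of a normal subgroup is normal'' is a cleaner packaging of the net manipulation the paper carries out explicitly.
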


\begin{proof}
    We proceed by induction on $d$. For $d=1$ the statement is obvious. Let $d\geq 1$ be an integer, and suppose the statement is true for $d$.

    Let $g,h\in \Gtau{d+1}$. Then there are nets $(g_{\lambda})_{\lambda\in\Lambda},(h_{\lambda})_{\lambda\in\Lambda}$  in $[\Gtau{d},G]$ such that\begin{align*}
        g_{\lambda} &\taulim g\\
        h_{\lambda} &\taulim h.
    \end{align*}

      Let $p\in [\Gtau{d},G]$. Note that $h_{\lambda}p\to hp$, and so we obtain that $hp \in \Gtau{d+1}$ for every $p\in [\Gtau{d},G]$. Since $\Gtau{d}\unlhd G$ implies $[\Gtau{d},G]\unlhd G$, we get that $g_{\lambda}[g_{\lambda},h]\in [\Gtau{d},G]$.  Therefore, it follows that $
        g_{\lambda}h = hg_{\lambda}[g_{\lambda},h] \in \Gtau{d+1}$. As  $ g_{\lambda}h \taulim gh$, we deduce that $gh\in\Gtau{d+1}$.

    Since the inversion is continuous with respect to the $\uptau$-topology, we get that $\Gtau{d+1}$ is closed under inversion. Since multiplication is separately continuous and since $[\Gtau{d},G]$ is a normal subgroup of $G$, we get that $\Gtau{d+1}$ is a normal subgroup of $G$ too.
\end{proof}

 Note that by \cref{lemma: Gdtau_normal_subgroup}, $\Gtau{j+1}$ is the smallest $\uptau$-closed subgroup that contains $[\Gtau{j},G]$.

\begin{lemma}\label{lemma: HK_invaraint_g_alpha}
    Let $(X, T)$ be a minimal topological dynamical system and $d\geq 1$ be an integer. Then, for every $j\in\N$ with $1\leq j\leq d$ and $g\in G_{j}^{\uptau\mathrm{-top}}$, we have that $\cltau{\HK{d}}$ is invariant under $g^{(\alpha)}$ for every face $\alpha$ of codimension $j$. 
\end{lemma}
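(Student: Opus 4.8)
The plan is to prove the statement by induction on $j$, using the base case $j=1$ (which is essentially built into the definition of $\HK{d}$) and then leveraging the parametrization of cube groups together with the $\uptau$-topology machinery to pass from codimension $j$ to codimension $j+1$. The key observation is that by \cref{thm: dyn_cube_equal_HK_cube}, $\cltau{\HK{d}}x_0^{[d]} = u^{[d]}\bQ^{[d]}(X)$, and since $u^{[d]}\bQ^{[d]}(X)$ is visibly invariant under the face transformations $g^{(\alpha)}$ with $g\in G$ and $\alpha$ a facet (as shown inside that proof), it suffices to understand how elements $g^{(\alpha)}$ with $g\in \Gtau{j}$ and $\mathrm{codim}(\alpha)=j$ are built out of facet transformations.

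First I would treat the base case $j=1$: here $\Gtau{1}=G$ and $\alpha$ is a facet, so $g^{(\alpha)}\in\HK{d}\subseteq\cltau{\HK{d}}$ by definition, and invariance of $\cltau{\HK{d}}$ under $g^{(\alpha)}$ is immediate since $\cltau{\HK{d}}$ is a group in the $\uptau$-topology on $G(\mathcal{HK}^{[d]}(T))$ and $g^{(\alpha)}$ lies in it (one must check $g^{(\alpha)}$ is in the $\uptau$-closed subgroup, which follows because if $t_\lambda\to g$ in $T$ then $t_\lambda^{(\alpha)}\to g^{(\alpha)}$ and $u^{[d]}t_\lambda^{(\alpha)} = (ut_\lambda)^{(\alpha)}$ converges $\uptau$-ly to $g^{(\alpha)}$ as in the cited proof). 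For the inductive step, suppose the claim holds for all faces of codimension $\leq j$ and let $g\in\Gtau{j+1}$ and $\alpha$ a face of codimension $j+1$. Write $\alpha = \beta\cap\gamma$ where $\beta$ is a face of codimension $j$ and $\gamma$ a facet chosen so that $\alpha$ is their intersection; then the commutator identity $[g_1,g_2]^{(\alpha)} = [g_1^{(\beta)}, g_2^{(\gamma)}]$ (a standard computation: the commutator of face transformations supported on $\beta$ and $\gamma$ is supported exactly on $\beta\cap\gamma$) shows that for $g_1\in\Gtau{j}$, $g_2\in G$ the element $[g_1,g_2]^{(\alpha)}$ is a commutator of an element preserving $\cltau{\HK{d}}$ (codim $j$, by induction) and an element preserving it (a facet, by the base case), hence preserves $\cltau{\HK{d}}$, since the set of $\uptau$-homeomorphisms of $G(\mathcal{HK}^{[d]}(T))$ preserving the $\uptau$-closed subgroup $\cltau{\HK{d}}$ is itself a group.

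Then I would pass to $\uptau$-limits: a general $g\in\Gtau{j+1}=\cltau{[\Gtau{j},G]}$ is a $\uptau$-limit of products of elements $[g_1,g_2]^{\pm 1}$ with $g_1\in\Gtau{j}$, $g_2\in G$; since left and right multiplication and inversion are $\uptau$-homeomorphisms on $G(\mathcal{HK}^{[d]}(T))$ (\cref{lemma: conv_prod_tau_top}), and $\cltau{\HK{d}}$ is $\uptau$-closed, the property of leaving $\cltau{\HK{d}}$ invariant is preserved under taking such products and $\uptau$-limits — one should argue that if $g_\lambda^{(\alpha)}$ preserves $\cltau{\HK{d}}$ and $g_\lambda\taulim g$, then $g_\lambda^{(\alpha)}\bx\taulim g^{(\alpha)}\bx$ for $\bx\in\cltau{\HK{d}}x_0^{[d]}$, so $g^{(\alpha)}\bx$ lies in the $\uptau$-closure of $\cltau{\HK{d}}x_0^{[d]}$, which is itself by \cref{thm: dyn_cube_equal_HK_cube}. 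The main obstacle I anticipate is handling the interaction between the $\uptau$-topology (in which limits are taken, and in which the relevant group is $\uptau$-closed) and the fact that face transformations act on a different group $G^{[d]}$; one needs the precise compatibility — namely that $u^{[d]}$ coincides with a minimal idempotent (\cref{lem:min_idemp_cubo}), that $G(\mathcal{HK}^{[d]}(T))$ restricts to $G$ on each coordinate, and that $\uptau$-convergence $g_\lambda\taulim g$ in $G$ forces the corresponding coordinatewise $\uptau$-convergence in $G(\mathcal{HK}^{[d]}(T))$ of $g_\lambda^{(\alpha)}$ to $g^{(\alpha)}$ — so that the limit arguments are actually valid. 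Carefully bookkeeping which group each object lives in, and checking the commutator support identity $[g_1^{(\beta)},g_2^{(\gamma)}]=[g_1,g_2]^{(\beta\cap\gamma)}$ directly on coordinates, is the technical heart; once that is in place the induction runs smoothly.
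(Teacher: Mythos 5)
Your proposal follows essentially the same route as the paper's proof: the base case via $g^{(\alpha)}\in\HK{d}$ for facets plus $\uptau$-continuity of left multiplication, the inductive step via writing a codimension-$j$ face as an intersection of a lower-codimension face with a facet and the support identity $[g_1,g_2]^{(\beta\cap\gamma)}=[g_1^{(\beta)},g_2^{(\gamma)}]$, and finally passing from $[\Gtau{j},G]$ to its $\uptau$-closure using that $\cltau{\HK{d}}$ is $\uptau$-closed. You are in fact somewhat more explicit than the paper about the one genuinely delicate point — that $\uptau$-convergence in $G$ transfers to the corresponding face elements in $G(\mathcal{HK}^{[d]}(T))$ — which the paper's proof leaves implicit.
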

\begin{proof}
    Let $g\in \Gtau{1}=G$ and $\alpha$ be a face of codimension $1$. By the definition of $\HK{d}$, we have $g^{(\alpha)}\in \HK{d}$, then we deduce that $\HK{d}$ is invariant under $g^{(\alpha)}$.

    Consider $\bh\in \cltau{\HK{d}}$ and $(\bh_{\lambda})_{\lambda\in\Lambda}$ be a net in $\HK{d}$ such that $\bh_{\lambda}\taulim \bh$. Since $\HK{d}$ is invariant under $g^{(\alpha)}$, we get that\begin{align*}
       \HK{d} \ni g^{(\alpha)}\bh_{\lambda} \taulim g^{(\alpha)}\bh.
    \end{align*}

    Therefore, we conclude that $\cltau{\HK{d}}$ is invariant under $g^{(\alpha)}$.

    Let $2\leq j \leq d$ and suppose that the statement is true for every $i<j$. Let $\alpha$ be a face of codimension $j$. We can see $\alpha$ as the intersection of a face $\beta$ of codimension $j-1$ and a face $\gamma$ of codimension $1$. Note that\begin{align*}
        [g,h]^{(\alpha)} = [g^{(\gamma)},h^{(\beta)}].
    \end{align*}

    Since $\cltau{\HK{d}}$ is invariant under $g^{(\gamma)},(g^{-1})^{(\gamma)}, h^{(\beta)}$ and $(\beta^{-1})^{(\beta)}$, we get that $[g,h]^{(\alpha)}$ leaves $\cltau{\HK{d}}$ invariant. Then, we have that $\cltau{\HK{d}}$ is invariant under $r^{(\alpha)}$ for each $r\in G_{j}$. Since $\cltau{\HK{d}}$ is $\uptau$-closed, we conclude that $r^{(\alpha)}$ leaves invariant $\cltau{\HK{d}}$ for each $r\in \Gtau{j}$.
\end{proof}

Let $(X, T)$ be a minimal topological dynamical system and $d\geq 1$ be an integer. We say $G$ is {\em $d$-step $\uptau$-topologically nilpotent} on $X$ if $\Gtau{d+1}x=\{ux\}$ for all $x\in X$.

\begin{proposition}\label{prop: unique_tauclosure_then_G_nil}
    Let $(X, T)$ be a minimal topological dynamical system and $d\geq 0$ be an integer such that $\HK{d+1}$ has the property of unique $\uptau$-closure on $X$. Then, $G$ is $d$-step $\uptau$-topologically nilpotent on $X$.
\end{proposition}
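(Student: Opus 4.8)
The plan is to fix an arbitrary point $x\in X$ and an element $g\in\Gtau{d+1}$ and to prove directly that $gx=ux$. Since $u\in\Gtau{j}$ for every $j\geq 1$ (and in particular $u\in\Gtau{d+1}$), once this is done we obtain $\Gtau{d+1}x=\{ux\}$, which is the assertion. The strategy is to exhibit, inside $\cltau{\HK{d+1}}x^{[d+1]}$, two points that agree in all coordinates except the one indexed by the top vertex $\overrightarrow{1}$ — one carrying the value $ux$ there, the other the value $gx$ — and then invoke the hypothesis of unique $\uptau$-closure on $X$.

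To carry this out, I would take $\alpha=\{\overrightarrow{1}\}$, the face of $\{0,1\}^{d+1}$ consisting of the single top vertex, which has codimension $d+1$. Applying \cref{lemma: HK_invaraint_g_alpha} with the integer $d+1$ in place of $d$ (the relevant index $j=d+1$ being admissible), $\cltau{\HK{d+1}}$ is invariant under $g^{(\alpha)}$; since $u^{[d+1]}$ is the identity of $G(\mathcal{HK}^{[d+1]}(T))$ and $u^{[d+1]}\in\HK{d+1}\subseteq\cltau{\HK{d+1}}$, invariance gives $g^{(\alpha)}u^{[d+1]}\in\cltau{\HK{d+1}}$. Since $x^{[d+1]}\in\bQ^{[d+1]}(X)$, the points $u^{[d+1]}x^{[d+1]}$ and $(g^{(\alpha)}u^{[d+1]})x^{[d+1]}=g^{(\alpha)}(ux)^{[d+1]}$ both lie in $\cltau{\HK{d+1}}x^{[d+1]}$; reading the action coordinatewise, and using that $u$ is the idempotent identity of $G$, one finds $u^{[d+1]}x^{[d+1]}=(ux)^{[d+1]}$ while $g^{(\alpha)}(ux)^{[d+1]}$ equals $ux$ in every coordinate $\epsilon\neq\overrightarrow{1}$ and equals $gx$ in the coordinate $\overrightarrow{1}$.

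To conclude, $(ux)^{[d+1]}$ and $g^{(\alpha)}(ux)^{[d+1]}$ are two elements of $\cltau{\HK{d+1}}x^{[d+1]}$ sharing $2^{d+1}-1$ coordinates, so the property of unique $\uptau$-closure on $X$ forces them to be equal, and comparing $\overrightarrow{1}$-coordinates yields $gx=ux$. I do not expect a real obstacle here once the earlier lemmas are in place; the only point needing a moment's thought is the choice of face, and the top vertex is exactly the right one: its codimension $d+1$ is the largest for which \cref{lemma: HK_invaraint_g_alpha} still applies when working with $\HK{d+1}$ — hence it reaches all of $\Gtau{d+1}$ — and it perturbs exactly one coordinate of the cube, which is precisely what the ``$2^{d+1}-1$ coordinates in common'' clause of the hypothesis is tailored to. A minor technical care is that $x$ need not lie in $uX$, which is why one compares with $ux$ (that is, with $u^{[d+1]}x^{[d+1]}$) rather than with $x$ itself.
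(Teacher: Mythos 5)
Your proposal is correct and follows essentially the same route as the paper's proof: both take the codimension-$(d+1)$ face $\alpha=\{\overrightarrow{1}\}$, use \cref{lemma: HK_invaraint_g_alpha} to place $g^{(\alpha)}u^{[d+1]}$ in $\cltau{\HK{d+1}}$, and compare it with $u^{[d+1]}$ on the diagonal point $x^{[d+1]}$ via the unique $\uptau$-closure hypothesis. Your write-up merely makes explicit the evaluation at $x^{[d+1]}$ and the comparison with $ux$ that the paper leaves implicit.
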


\begin{proof}
    Let $g \in \Gtau{d+1}$ and $\alpha=\{\overrightarrow{1}\}$. By \cref{lemma: HK_invaraint_g_alpha}, we get that $g^{(\alpha)}u^{[d+1]}\in\cltau{\HK{d+1}}$. Note that $g^{(\alpha)}u^{[d+1]}$ and $u^{[d+1]}$ have $2^{d+1}-1$ coordinates in common, so we have $gx=ux$ for each $x\in X$. That is, $G$ is $d$-step $\uptau$-topologically nilpotent on $X$.
\end{proof}

By \cref{thm: unique_tauclosure_iff_prox_ext_nil} and \cref{prop: unique_tauclosure_then_G_nil}, we conclude the following result.

\begin{theorem}\label{thm: G_tau_top_nil_proximal_ext_nil}
    Let $(X, T)$ be a minimal topological dynamical system such that it is a proximal extension of its maximal factor of order $d$. Then, $G$ is $d$-step $\uptau$-topologically nilpotent on $X$.
\end{theorem}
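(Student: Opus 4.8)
The plan is to read the statement off the two results just established, so the argument is a two-line deduction. Since $(X,T)$ is by hypothesis a proximal extension of its maximal factor of order $d$, \cref{thm: unique_tauclosure_iff_prox_ext_nil}, applied with cube order $d+1$, gives that $\HK{d+1}$ has the property of unique $\uptau$-closure on $X$. Feeding this into \cref{prop: unique_tauclosure_then_G_nil} immediately yields that $G$ is $d$-step $\uptau$-topologically nilpotent on $X$, i.e.\ $\Gtau{d+1}x=\{ux\}$ for every $x\in X$. Thus the whole argument is the chain: \emph{$(X,T)$ is a proximal extension of its order-$d$ factor} $\;\Rightarrow\;$ \emph{$\HK{d+1}$ has unique $\uptau$-closure on $X$} $\;\Rightarrow\;$ \emph{$G$ is $d$-step $\uptau$-topologically nilpotent on $X$}.

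To make clear that this is not circular, I would point the reader to where the content of each arrow sits. The first implication is the nontrivial direction of \cref{thm: unique_tauclosure_iff_prox_ext_nil}, whose engine is \cref{thm: dyn_cube_equal_HK_cube} identifying $\cltau{\HK{d+1}}x^{[d+1]}$ with $u^{[d+1]}\bQ^{[d+1]}(X)$, together with the cube description of $\RP^{[d]}$ from \cref{thm: shao_ye_thm}(1): the unique $\uptau$-closure property of $\HK{d+1}$ on $X$ says precisely that no two distinct points of $u^{[d+1]}\bQ^{[d+1]}(X)$ share $2^{d+1}-1$ coordinates, which unwinds to $(X,T)$ being a proximal extension of $X/\RP^{[d]}(X)$. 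The second implication is exactly \cref{prop: unique_tauclosure_then_G_nil}: for $g\in\Gtau{d+1}$ and the codimension-$(d+1)$ face $\alpha=\{\overrightarrow{1}\}$ of $\{0,1\}^{d+1}$, \cref{lemma: HK_invaraint_g_alpha} places $g^{(\alpha)}u^{[d+1]}$ inside $\cltau{\HK{d+1}}$, and since $g^{(\alpha)}u^{[d+1]}$ and $u^{[d+1]}$ agree in all $2^{d+1}-1$ coordinates except the top one, unique $\uptau$-closure forces $gx=ux$ for all $x\in X$.

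I do not expect any genuine obstacle here, since the statement is a formal corollary of the two preceding results. The only points demanding a little care are the index bookkeeping --- matching the cube order $d+1$ with the nilpotency step $d$ --- and verifying that \emph{proximal extension of its maximal factor of order $d$} is literally the hypothesis of \cref{thm: unique_tauclosure_iff_prox_ext_nil}. Accordingly, the written proof would be the single sentence: by \cref{thm: unique_tauclosure_iff_prox_ext_nil}, $\HK{d+1}$ has the property of unique $\uptau$-closure on $X$, hence by \cref{prop: unique_tauclosure_then_G_nil}, $G$ is $d$-step $\uptau$-topologically nilpotent on $X$.
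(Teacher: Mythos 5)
Your proposal is correct and is exactly the paper's argument: the paper itself derives this theorem with the single sentence ``By \cref{thm: unique_tauclosure_iff_prox_ext_nil} and \cref{prop: unique_tauclosure_then_G_nil}, we conclude the following result,'' which is precisely your two-step chain with the same index bookkeeping (cube order $d+1$ for nilpotency step $d$).
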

This theorem raises the following question:

\begin{question} \label{ques:G-nil}
    If $G$ is $d$-step $\uptau$-topologically nilpotent on a minimal system $(X, T)$, is $(X, T)$ a proximal extension of a system of order $d$?
\end{question}
We suspect that the answer to \cref{ques:G-nil} is negative, but currently we do not have a counterexample, even when $G$ is abelian on $X$. Note that the proximal relation of a proximal extension of a system of order $d$ is an equivalence relation (because in such a case $\P=\RP^{[d]}$).  We still do not know whether the action of $G$ being abelian on $X$ implies that $\P$ is an equivalence relation.

\begin{remark}
    Using \cref{thm: G_tau_top_nil_proximal_ext_nil} it is straightforward to prove that the virtual automorphism group, as defined in \cite{Auslander_Glasner_virtual_automorphism:2021}, of a proximal extension of a system of order $d$ is a $d$-step nilpotent group. This generalizes what was proven in \cite[Theorem 5.5
]{Donoso_Durand_Maass_Petite_automorphism_low_complexity:2016} for the automorphism group of a system of order $d$ (note that in the minimal distal case, the automorphism group and the virtual automorphism group coincide by  \cite[Theorem 5.1]{Auslander_Glasner_virtual_automorphism:2021}).
\end{remark}

In \cite{Qiu_Zhao_topnilpotent_enveloping_nil:2022}, it was proven that if a minimal system has a topologically nilpotent enveloping semigroup, then the enveloping semigroup of its dynamical cube also has the same property. The following proposition generalizes their result.

\begin{proposition}\label{thm: G_nil_iff_G_nil_in_cube}
    Let $(X, T)$ be a minimal topological dynamical system and $d\geq 1$ be an integer. If $G$ is a group $d$-step $\uptau$-topologically nilpotent on $X$, then $G(\mathcal{HK}^{[d]}(T))$ is a group $d$-step $\uptau$-topologically nilpotent on $\bQ^{[d]}(X)$.
\end{proposition}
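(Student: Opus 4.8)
Write $\mathcal{G}:=G(\mathcal{HK}^{[d]}(T))$ and let $\mathcal{G}_{j}^{\uptau\mathrm{-top}}$, $j\ge 1$, denote its $\uptau$-topological lower central series (\cref{def:tau_commutator} applied to the acting group $\mathcal{HK}^{[d]}(T)$); recall from \cref{lem:min_idemp_cubo} that $u^{[d]}$ represents the identity idempotent of $\mathcal{G}$ on $\bQ^{[d]}(X)$, so that the goal is to show $\mathcal{G}_{d+1}^{\uptau\mathrm{-top}}\bx=\{u^{[d]}\bx\}$ for every $\bx\in\bQ^{[d]}(X)$. The plan is to control $\mathcal{G}_{d+1}^{\uptau\mathrm{-top}}$ coordinate by coordinate. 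For each $\epsilon\subseteq[d]$ let $\rho_{\epsilon}\colon\mathcal{G}\to G$ be the homomorphism recording how $h\in\mathcal{G}$ acts on the coordinate $X_{\epsilon}$; it is onto since $\mathcal{G}$ coincides with $G$ on $X_{\epsilon}$. The key point I would establish is that $\rho_{\epsilon}$ is $\uptau$-continuous.

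Granting this, I would show by induction on $j$ that $\rho_{\epsilon}(\mathcal{G}_{j}^{\uptau\mathrm{-top}})\subseteq\Gtau{j}$. For $j=1$ this is $\rho_{\epsilon}(\mathcal{G})=G=\Gtau{1}$. For the inductive step, since $\rho_{\epsilon}$ is a homomorphism it maps the subgroup $[\mathcal{G}_{j}^{\uptau\mathrm{-top}},\mathcal{G}]$ onto $[\rho_{\epsilon}(\mathcal{G}_{j}^{\uptau\mathrm{-top}}),G]\subseteq[\Gtau{j},G]$, and since it is $\uptau$-continuous it sends $\uptau$-closures into $\uptau$-closures, whence
\[
\rho_{\epsilon}\big(\mathcal{G}_{j+1}^{\uptau\mathrm{-top}}\big)=\rho_{\epsilon}\big(\cltau{[\mathcal{G}_{j}^{\uptau\mathrm{-top}},\mathcal{G}]}\big)\subseteq\cltau{[\Gtau{j},G]}=\Gtau{j+1}.
\]
In particular $\rho_{\epsilon}(\mathcal{G}_{d+1}^{\uptau\mathrm{-top}})\subseteq\Gtau{d+1}$. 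Now if $h\in\mathcal{G}_{d+1}^{\uptau\mathrm{-top}}$ and $\bx\in\bQ^{[d]}(X)$, the $\epsilon$-th coordinate of $h\bx$ is $\rho_{\epsilon}(h)\bx_{\epsilon}$, and since $\rho_{\epsilon}(h)\in\Gtau{d+1}$ and $G$ is $d$-step $\uptau$-topologically nilpotent on $X$ this equals $u\bx_{\epsilon}$; hence $h\bx=u^{[d]}\bx$, as wanted. This last deduction uses only that $\rho_{\epsilon}$ is a homomorphism and is continuous as a map of topological spaces, so the fact that the $\uptau$-topology makes multiplication merely separately continuous is not an obstruction here, nor in the induction above.

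The hard part is the $\uptau$-continuity of $\rho_{\epsilon}$; the subtlety is that $\rho_{\epsilon}$ connects the Ellis-type groups of two \emph{different} acting groups, $\mathcal{HK}^{[d]}(T)$ and $T$, so it is not the automatic $\uptau$-continuity of an inclusion of Ellis groups inside a fixed $G$. I would proceed as follows. The coordinate projection $\mathcal{HK}^{[d]}(T)\to T$ extends to a continuous surjective semigroup homomorphism $\widehat{\pi}_{\epsilon}\colon\beta\mathcal{HK}^{[d]}(T)\to\beta T$, and via the $\epsilon$-th coordinate factor map $\pi_{\epsilon}\colon\bQ^{[d]}(M)\to M$ (with $M$ viewed as an $\mathcal{HK}^{[d]}(T)$-system through $\mathcal{HK}^{[d]}(T)\to T$) it intertwines both the actions of the two semigroups and, by the net description of the circle operation, the circle operations, i.e.\ $\pi_{\epsilon}(p\circ A)\subseteq\widehat{\pi}_{\epsilon}(p)\circ\pi_{\epsilon}(A)$. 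By \cref{lem:min_idemp_cubo}, $\widehat{\pi}_{\epsilon}$ carries the minimal idempotent representing $u^{[d]}$ to an idempotent coinciding with $u$ on every minimal system, so restricting $\widehat{\pi}_{\epsilon}$ to $\mathcal{G}=u^{[d]}\beta\mathcal{HK}^{[d]}(T)u^{[d]}$ recovers $\rho_{\epsilon}\colon\mathcal{G}\to u\beta Tu=G$. Finally, for $A\subseteq\mathcal{G}$, unwinding $\cltau{A}=u^{[d]}(u^{[d]}\circ A)$, applying $\widehat{\pi}_{\epsilon}$, and using the intertwining just recorded gives $\rho_{\epsilon}(\cltau{A})\subseteq u(u\circ\rho_{\epsilon}(A))=\cltau{\rho_{\epsilon}(A)}$, which is the desired $\uptau$-continuity.
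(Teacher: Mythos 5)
Your proof is correct and follows essentially the same route as the paper's: both argue coordinate by coordinate, using that the projection onto the $\epsilon$-th coordinate intertwines the circle operation and carries the idempotent representing $u^{[d]}$ to (an idempotent acting as) $u$ via \cref{lem:min_idemp_cubo}, and then induct on the $\uptau$-topological commutator series. The only cosmetic difference is that you establish the inclusion $\rho_{\epsilon}(\mathcal{G}_{j}^{\uptau\mathrm{-top}})\subseteq \Gtau{j}$, which suffices for the conclusion, whereas the paper asserts the corresponding equality of the actions on $X_{\epsilon}$.
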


\begin{proof}
    Let $\epsilon\subseteq [d]$ and $\bx\in\bQ^{[d]}(X)$. Then,\begin{align*}
        G_{2}^{\uptau\mathrm{-top}}(\mathcal{HK}^{[d]}(T))\bx_{\epsilon}&=\mathrm{cl}_{\uptau}^{[d]}([G(\mathcal{HK}^{[d]}(T)),G(\mathcal{HK}^{[d]}(T))])\bx_{\epsilon}\\
        &= \pi_{\epsilon}(u^{[d]}(u^{[d]}\circ [G(\mathcal{HK}^{[d]}(T)),G(\mathcal{HK}^{[d]}(T))]))\bx_{\epsilon}\\
        &= u(u\circ \pi_{\epsilon}([G(\mathcal{HK}^{[d]}(T)),G(\mathcal{HK}^{[d]}(T))])\bx_{\epsilon}\\
        &= u(u \circ [G(T),G(T)])x_{\epsilon}\\
        &= G_{2}^{\uptau\mathrm{-top}}(T)\bx_{\epsilon},
    \end{align*}
    
    where $\mathrm{cl}_{\uptau}^{[d]}$ is the closure with respect to the $\uptau$-topology in $G(\mathcal{HK}^{[d]}(T))$ and $\pi_{\epsilon}: M^{[d]} \to M$ is the projection onto the $\epsilon$-coordinate.  Now, let $d\geq 1$ and suppose that $G_{d}^{\uptau\mathrm{-top}}(\mathcal{HK}^{[d]}(T))$ coincides with $G_{d}^{\uptau\mathrm{-top}}$ on $X_{\epsilon}$. As before, we have\begin{align*}
        G_{d+1}^{\uptau\mathrm{-top}}(\mathcal{HK}^{[d]}(T))\bx_{\epsilon}&=\mathrm{cl}_{\uptau}^{[d]}([G_{d}^{\uptau\mathrm{-top}}(\mathcal{HK}^{[d]}(T)),G(\mathcal{HK}^{[d]}(T))])\bx_{\epsilon}\\
        &= u(u\circ \pi_{\epsilon}([G_{d}^{\uptau\mathrm{-top}}(\mathcal{HK}^{[d]}(T)),G(\mathcal{HK}^{[d]}(T))])\bx_{\epsilon}\\
        &= u(u \circ [G_{d}^{\uptau\mathrm{-top}}(T),G(T)])\bx_{\epsilon}\\
        &= G_{d+1}^{\uptau\mathrm{-top}}(T)\bx_{\epsilon}.
    \end{align*}    
    
    Thus, by induction, we obtain that $G_{d}^{\uptau\mathrm{-top}}(\mathcal{HK}^{[d]}(T))$ coincide with $G_{d}^{\uptau\mathrm{-top}}$ on $X_{\epsilon}$, for each $d\geq 1$ being an integer. Therefore, if $G$ is a group $d$-step $\uptau$-topologically nilpotent on $X$, then $G(\mathcal{HK}^{[d]}(T))$ is a group $d$-step $\uptau$-topologically nilpotent on $\bQ^{[d]}(X)$.
\end{proof}

\section{Regionally proximal relation for distal systems and their proximal extensions} \label{sec:Regionally_proximal}
In this section, we provide an algebraic characterization of the regionally proximal relation for distal systems and their proximal extensions.  Next, we extend this characterization to any minimal system. We begin by outlining Qiu and Zhao's proof of the algebraic characterization of $\RP^{[j]}$ for systems of order $d$, where $d$ and $j$ are integers with $d, j \geq 1$, extending their proof to proximal extensions of distal systems using the Galois theory of minimal systems.

\begin{definition}
    Let $(X, T)$ be a minimal topological dynamical system and let $d\geq 1$ be an integer. We define\begin{align*}
        R_{d}^{\uptau}(X)= \{(x,vgx):x\in X, M'\subseteq \beta T \text{ minimal left ideal}, v,u'\in J(M')\text{ s.t }u\sim u', g\in G(u',M')_{d+1}^{\uptau\mathrm{-top}} \}.
    \end{align*}
where for a left minimal ideal $M'$ and $v\in J(M')$,  $G(v, M')\coloneqq vM'$.
\end{definition}

We will see later that this relation generalizes the definition given by Qiu and Zhao for systems of order $d$ in \cite{Qiu_Zhao_topnilpotent_enveloping_nil:2022}.  Note that if $G$ is a group $d$-step $\uptau$-topologically nilpotent on a minimal system $(X,T)$, we have $R_{d}^{\uptau}(X)=\P(X)$. Consequently, if $R_{d}^{\uptau}(X)=\RP^{[d]}(X)$, then \cref{ques:G-nil} has a positive answer. It is therefore useful to identify sufficient conditions for which $R_{d}^{\uptau}(X)=\RP^{[d]}(X)$. Later, we will show that a sufficient condition is that $X$ is a proximal extension of a distal system.

Observe that\begin{align*}
    \P(X) \subseteq \cdots \subseteq R_{d+1}^{\uptau}(X) \subseteq R_{d}^{\uptau}(X) \subseteq \cdots \subseteq R_{1}^{\uptau}(X).
\end{align*}

Note that $R^{*}_{d}(X)$, the smallest closed equivalence relation which contains $R_{d}^{\uptau}(X)$, for a minimal system $(X,T)$ and $d\geq 1$ an integer satisfies the following property: $X/R^{*}_{d}(X)$ is the maximal distal factor of $X$ such that $\Gtau{d+1}$ is trivial on it. Moreover, the relation $R_{d}^{*}$ coincides with the regionally proximal relation of order $d$, as can be seen below.

 \begin{proposition}\label{prop: Rdtau_subset_RPd}
     Let $(X, S)$ be a minimal topological dynamical system and $d\geq 1$ be an integer. Then, $R_{d}^{\uptau}(X)\subseteq \RP^{[d]}(X)$. Moreover, $R_{d}^{*}(X)=\RP^{[d]}(X)$.
 \end{proposition}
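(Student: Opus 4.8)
The plan is to establish the inclusion $R_d^{\uptau}(X) \subseteq \RP^{[d]}(X)$ first, then upgrade it to the equality $R_d^*(X) = \RP^{[d]}(X)$ by squeezing $\RP^{[d]}(X)$ between $R_d^*(X)$ and the closed equivalence relation generated by $R_d^{\uptau}(X)$. For the inclusion, I would take a pair $(x, vgx)$ with $v, u' \in J(M')$, $u \sim u'$, and $g \in G(u', M')_{d+1}^{\uptau\text{-top}}$, and show $(x, vgx) \in \RP^{[d]}(X)$. By \cref{thm: shao_ye_thm}(1), it suffices to show $(x, vgx, \dots, vgx) \in \bQ^{[d+1]}(X)$, or equivalently that the point lies in $\overline{\mathcal{F}^{[d+1]}(T)x^{[d+1]}}$. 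The key mechanism is \cref{lemma: HK_invaraint_g_alpha}: since $g \in \Gtau{d+1}$ (working in the ideal $M'$, but everything is conjugate/isomorphic to the fixed $M$), the element $g^{(\alpha)}$ with $\alpha = \{\overrightarrow{1}\}$ the single-vertex upper face of codimension $d+1$ leaves $\cltau{\HK{d+1}}$ invariant. Combined with \cref{thm: dyn_cube_equal_HK_cube}, which identifies $\cltau{\HK{d+1}}x^{[d+1]}$ with $u^{[d+1]}\bQ^{[d+1]}(X)$, this puts $g^{(\alpha)}u^{[d+1]}x^{[d+1]}$ — a point that agrees with $(x, \dots, x)$ in all coordinates except the top one, where it reads $gx$ — inside $\bQ^{[d+1]}(X)$. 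Then I must account for the idempotent $v$ and the equivalence $u \sim u'$: using that proximal pairs are obtained via idempotents of minimal left ideals and that $\bQ^{[d+1]}(X)$ is closed and invariant under the diagonal $\beta T$-action (\cref{lemma: HK_Ellis_contenido_cubo}), I can pass from $g^{(\alpha)}u^{[d+1]}x^{[d+1]}$ to $(x, vgx, \dots, vgx)$ while staying in $\bQ^{[d+1]}(X)$, after possibly adjusting $x$ within its proximal cell (here the point must be chosen so that $u'x = x$, i.e.\ $x \in u'X$; minimality lets us do this up to replacing $x$ by a proximal point, which does not affect membership in the closed relation $\RP^{[d]}$).

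For the equality $R_d^*(X) = \RP^{[d]}(X)$: the inclusion $R_d^*(X) \subseteq \RP^{[d]}(X)$ follows from the first part together with the fact (\cref{thm: shao_ye_thm}(2)) that $\RP^{[d]}(X)$ is already a closed equivalence relation, so it contains the smallest one containing $R_d^{\uptau}(X)$. For the reverse inclusion $\RP^{[d]}(X) \subseteq R_d^*(X)$, I would invoke the characterization of $R_d^*(X)$ stated just before the proposition: $X/R_d^*(X)$ is the maximal distal factor of $X$ on which $\Gtau{d+1}$ acts trivially. On the one hand, by \cref{thm: G_tau_top_nil_proximal_ext_nil} (or rather its proof via \cref{prop: unique_tauclosure_then_G_nil}), the maximal factor of order $d$ has $\Gtau{d+1}$ trivial, so $X/R_d^*(X)$ maps onto $X_d := X/\RP^{[d]}(X)$; on the other hand $X/R_d^*(X)$ is distal with $\Gtau{d+1}$ trivial, and I need to argue that such a distal system is itself a system of order $d$ — this is where the distal theory of Host--Kra--Maass enters, identifying distal systems with trivial $\Gtau{d+1}$ (equivalently trivial higher regionally proximal relation, via \cref{thm: charc_RPd_Rd}) as systems of order $d$. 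That forces $X/R_d^*(X)$ to factor through $X_d$ as well, hence $R_d^*(X) \supseteq \RP^{[d]}(X)$.

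The main obstacle I anticipate is bookkeeping around the choice of minimal left ideal $M'$ and the idempotents $v, u'$ in the definition of $R_d^{\uptau}(X)$: the groups $\HK{\cdot}$, the $\uptau$-topology, and \cref{thm: dyn_cube_equal_HK_cube} were all set up relative to the single fixed ideal $M$ and idempotent $u$, whereas $R_d^{\uptau}$ quantifies over all ideals $M'$ and equivalent idempotents $u'$. I would handle this by using that any two minimal left ideals are isomorphic as right topological semigroups and that the $\uptau$-topologies correspond under such isomorphisms, so $g \in G(u', M')_{d+1}^{\uptau\text{-top}}$ can be transported to an element of $\Gtau{d+1}$ coinciding with $g$ on $X$ (since $u \sim u'$ means $u$ and $u'$ act compatibly and the actions of $uM$ and $u'M'$ on the minimal system $X$ are intertwined). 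Getting this transfer exactly right — ensuring $vgx$ is computed consistently and lands where claimed — is the delicate point; the rest is an application of \cref{lemma: HK_invaraint_g_alpha,thm: dyn_cube_equal_HK_cube,thm: shao_ye_thm}.
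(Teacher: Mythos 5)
Your overall strategy is sound and both halves land on correct conclusions, but the first half takes a genuinely different route from the paper's. For $R_{d}^{\uptau}(X)\subseteq \RP^{[d]}(X)$ the paper does not work inside the cube at all: it pushes the pair forward under the quotient map $\pi\colon X\to X/\RP^{[d]}(X)$, observes that $\pi\times\pi(R_{d}^{\uptau}(X))\subseteq R_{d}^{\uptau}(X/\RP^{[d]}(X))$, and invokes \cref{thm: G_tau_top_nil_proximal_ext_nil} to see that $\Gtau{d+1}$ is trivial on the (distal) maximal factor of order $d$, so the image relation is the diagonal and $(x,y)\in R_{\pi}=\RP^{[d]}(X)$. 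This sidesteps entirely the bookkeeping you rightly identify as the delicate point --- the quantification over minimal left ideals $M'$ and idempotents $v,u'$ disappears because every idempotent acts as the identity on the distal factor. Your direct cube argument (via \cref{lemma: HK_invaraint_g_alpha}, \cref{thm: dyn_cube_equal_HK_cube} and \cref{lemma: HK_Ellis_contenido_cubo}) is essentially the one the paper deploys later for the harder \cref{lemma: Rdis_is_RPd}, so it can be made to work, but note one slip: the point $g^{(\alpha)}u^{[d+1]}x^{[d+1]}$ with $\alpha=\{\overrightarrow{1}\}$ has the shape ``all coordinates $ux$ except the top one, which is $gx$'', which is \emph{not} the shape $(x,y,\dots,y)$ required by \cref{thm: shao_ye_thm}(1); you need \cref{lemma: RPd_iff_xaya_in_cube} (the $(x,a_{*},y,a_{*})$ criterion, together with invariance of $\bQ^{[d+1]}(X)$ under Euclidean permutations) to conclude $(ux,gx)\in\RP^{[d]}(X)$ from it. The second half of your argument coincides with the paper's: $R_{d}^{*}(X)\subseteq\RP^{[d]}(X)$ because $\RP^{[d]}(X)$ is a closed equivalence relation containing $R_{d}^{\uptau}(X)$, and the reverse inclusion follows because $X/R_{d}^{*}(X)$ is distal with $\Gtau{d+1}$ trivial, hence a system of order $d$ --- though the correct reference for that last step is \cref{thm: nil_iff_enveloping_nil} rather than \cref{thm: charc_RPd_Rd}. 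What the paper's route buys is brevity and a clean escape from the idempotent issues; what yours buys is a self-contained cube-level argument that foreshadows (and is subsumed by) the proof of \cref{lemma: Rdis_is_RPd}.
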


 \begin{proof}
     Consider $\pi:X\to X/\RP^{[d]}(X)$ the quotient map. Let $(x,y)\in R_{d}^{\uptau}(X)$, note that $\pi\times\pi(x,y)\in R_{d}^{\uptau}(X/\RP^{[d]}(X))$. It follows from \cref{thm: G_tau_top_nil_proximal_ext_nil} that $R_{d}^{\uptau}(X/\RP^{[d]}(X))=\Delta$. Therefore, we have $\pi(x)=\pi(y)$, that is, $(x,y)\in \RP^{[d]}(X)$.

     Since $R_{d}^{\uptau}(X)\subseteq \RP^{[d]}(X)$, we get that $X/R_{d}^{*}(X)$ is an extension of $X/\RP^{[d]}(X)$. Note that $X/R_{d}^{*}(X)$ is a distal system such that its enveloping semigroup is a $d$-step topologically nilpotent, so we have that $X/R_{d}^{*}(X)$ is a system of order $d$. Therefore, by \cref{thm: nil_iff_enveloping_nil}, we conclude that $R_{d}^{*}(X)=\RP^{[d]}(X)$.
 \end{proof}

 By the following results, we can show that $R_{d}^{\uptau}(X)$ is a closed relation for proximal extensions of distal systems.

 \begin{theorem}[see {\cite[Chapter 4]{Auslander_minimal_flows_and_extensions:1988}}]\label{thm: Ellis_joint_cont}
     Let $T$ be a locally compact Hausdorff space with a group structure for which group multiplication is separately continuous, and suppose $T$ acts on the compact Hausdorff space $X$ in a separately continuous manner. Then, the action of $T$ on $X$ is jointly continuous.
 \end{theorem}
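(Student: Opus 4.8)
The statement is the classical Ellis joint continuity theorem; I would prove it by combining a Baire category (Namioka--type) argument with the homogeneity coming from the group structure. Write $a\colon T\times X\to X$, $a(t,x)=tx$, for the action map. Since group multiplication on $T$ is separately continuous, for each $s\in T$ the translations $t\mapsto st$ and $t\mapsto ts$ are homeomorphisms of $T$ (with continuous inverses $t\mapsto s^{-1}t$, $t\mapsto ts^{-1}$), and each map $x\mapsto sx$ is a homeomorphism of $X$; moreover $a(t,y)=a(ts,\,s^{-1}y)$ for all $t,s,y$. Using this I would first record two elementary reductions. \emph{(i)} If $a$ is jointly continuous at every point of $\{t_0\}\times X$ for some fixed $t_0\in T$, then it is jointly continuous at every point of $\{e\}\times X$: given $t_\alpha\to e$ and $z_\alpha\to z$, the identity above with $s=t_0$ gives $a(t_\alpha,z_\alpha)=a(t_\alpha t_0,\,t_0^{-1}z_\alpha)$, and $t_\alpha t_0\to t_0$, $t_0^{-1}z_\alpha\to t_0^{-1}z$, so $a(t_\alpha,z_\alpha)\to a(t_0,t_0^{-1}z)=z$. \emph{(ii)} If $a$ is jointly continuous at every point of $\{e\}\times X$, then it is jointly continuous everywhere: given $t_\alpha\to t_0$ and $y_\alpha\to y_0$, one has $t_0^{-1}t_\alpha\to e$, hence $(t_0^{-1}t_\alpha)y_\alpha\to y_0$, and applying the continuous map $w\mapsto t_0 w$ gives $t_\alpha y_\alpha=t_0\bigl((t_0^{-1}t_\alpha)y_\alpha\bigr)\to t_0y_0$.

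It then suffices to produce a single $t_0\in T$ that is ``good'' for all second coordinates. Here I would use that $T$, being locally compact Hausdorff, is a Baire space, and that $a$ is separately continuous with $X$ compact; a Namioka--type theorem then yields a dense $G_\delta$ set $A\subseteq T$ such that $a$ is jointly continuous at $(t_0,x)$ for every $t_0\in A$ and every $x\in X$. Concretely, after fixing a compatible metric on $X$ (in the non-metrizable case one works instead with the scalar maps $g\circ a$, $g\in C(X)$, and the unique uniformity of $X$), one shows for each $n\in\N$ that the set of $t_0\in T$ at which the oscillation of $a$ at $(t_0,x)$ is less than $1/n$ for all $x\in X$ contains a dense open subset of $T$ --- this is the place where local compactness of $T$ is genuinely used --- and intersects these sets over $n$.

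Combining the pieces finishes the proof: choosing any $t_0\in A$, the Namioka step makes $a$ jointly continuous at $(t_0,t_0^{-1}z)$ for every $z\in X$, reduction \emph{(i)} then gives joint continuity at every point of $\{e\}\times X$, and reduction \emph{(ii)} gives it everywhere. I expect the main obstacle to be the Baire category estimate of the middle paragraph --- showing that the ``small oscillation'' sets are dense and open, which is the technical core of Namioka's theorem and relies essentially on local compactness of $T$ (and, in the metrizable reduction, on separability of $C(X)$). Since this is carried out in full in \cite[Chapter 4]{Auslander_minimal_flows_and_extensions:1988}, in the paper I would simply cite it rather than reproduce the argument.
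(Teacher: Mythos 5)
Your proposal is correct and follows the standard route: the paper itself gives no proof of this statement, quoting it directly from \cite[Chapter 4]{Auslander_minimal_flows_and_extensions:1988}, and the argument there is exactly your combination of a Namioka-type Baire category step (producing a dense $G_\delta$ of first coordinates $t_0$ at which joint continuity holds along the whole fibre $\{t_0\}\times X$) with the two translation reductions you describe. Both reductions are verified correctly --- in particular you only use separate continuity of multiplication and of the action, never continuity of inversion --- and deferring the oscillation estimate to the cited reference is consistent with what the paper does.
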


 \begin{lemma}
     Let $F\leq G$ be a $\uptau$-closed subgroup and $K\unlhd F$ such that $H(F)\leq K$. Then, $\cltau{K}\unlhd F$.
 \end{lemma}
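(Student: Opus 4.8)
The plan is to reduce the question to the genuine topological group $F/H(F)$ and then invoke the elementary fact that the closure of a normal subgroup of a topological group is normal. First I would record that $\cltau{K}$ is a $\uptau$-closed subgroup of $F$ with $H(F)\le K\le\cltau{K}$, and introduce the canonical projection $q\colon F\to\bar F:=F/H(F)$. By the defining property of the derived group, $\bar F$ equipped with the quotient of the $\uptau$-topology is a compact Hausdorff topological group; in particular $q$ is $\uptau$-continuous, and, being a continuous map from a $\uptau$-compact space (recall $F$ is $\uptau$-closed in the $\uptau$-compact group $G$) to a Hausdorff space, it is also $\uptau$-closed.

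Second, I would identify $q(\cltau{K})$ with the topological closure $\overline{q(K)}$ in $\bar F$. One inclusion is immediate: $q(\cltau{K})$ is closed and contains $q(K)$, hence contains $\overline{q(K)}$. For the reverse inclusion, $q^{-1}(\overline{q(K)})$ is $\uptau$-closed and contains $K$, hence contains $\cltau{K}$, so $q(\cltau{K})\subseteq\overline{q(K)}$.

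Third, from $K\unlhd F$ I get $q(K)\unlhd\bar F$, and since in the topological group $\bar F$ the closure of a normal subgroup is normal, $\overline{q(K)}=q(\cltau{K})\unlhd\bar F$. Pulling this back through $q$ and using $\ker q=H(F)\subseteq\cltau{K}$, so that $q^{-1}(q(\cltau{K}))=\cltau{K}$, yields $\cltau{K}\unlhd F$, as desired.

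The only step requiring genuine care is the compatibility between the abstract $\uptau$-topology on $F$ and the honest group topology on $F/H(F)$ — concretely, that $q$ sends $\uptau$-closed sets to closed sets, which is where the hypothesis $H(F)\le K$ is actually used (it guarantees $\cltau{K}$ is saturated for $q$). As a sanity check, I would also note the shortcut that bypasses the quotient: for $f\in F$ the conjugation $x\mapsto fxf^{-1}$ is a composition of a left and a right $\uptau$-translation, hence a $\uptau$-homeomorphism of $G$, so $f\cltau{K}f^{-1}=\cltau{fKf^{-1}}=\cltau{K}$ by normality of $K$ in $F$; the quotient argument is the version I would write up since it fits the hypothesis and extends to show $\cltau{K}/H(F)$ is a closed normal subgroup of $F/H(F)$.
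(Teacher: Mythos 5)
Your argument is correct and rests on the same pivot as the paper's proof --- namely that $F/H(F)$ is a compact Hausdorff topological group, which is precisely where the hypothesis $H(F)\leq K$ enters --- but the execution is genuinely different. The paper works element-by-element with nets: for nets $g_{\lambda},h_{\lambda}$ in $K$ with $g_{\lambda}\taulim g$ and $h_{\lambda}\taulim h$, the Hausdorff topological group structure of $F/H(F)$ forces $g_{\lambda}h_{\lambda}\taulim ghr$ for some $r\in H(F)$, and since $r^{-1}\in H(F)\leq K$ the corrected net $g_{\lambda}h_{\lambda}r^{-1}$ stays in $K$ and $\uptau$-converges to $gh$; closure under inversion and normality are then checked separately using continuity of inversion, separate continuity of multiplication, and normality of $K$. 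You instead transport the whole problem through the quotient $q\colon F\to F/H(F)$, identify $q(\cltau{K})$ with $\overline{q(K)}$, and pull back the standard fact that the closure of a normal subgroup of a topological group is normal; this settles ``subgroup'' and ``normal'' in one stroke and is arguably cleaner. The one step you should spell out is the saturation identity $q^{-1}(q(\cltau{K}))=\cltau{K}$: the bare containment $H(F)\subseteq\cltau{K}$ is not by itself enough for an arbitrary subset (you need $\cltau{K}\cdot H(F)=\cltau{K}$, and you cannot yet invoke that $\cltau{K}$ is a subgroup, since that is part of what is being proved). It does follow, though, from $K\cdot H(F)=K$ together with the fact that right translation by a fixed element is a $\uptau$-homeomorphism, whence $\cltau{K}h=\cltau{Kh}=\cltau{K}$ for every $h\in H(F)$. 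Note also that your closing ``shortcut'' via conjugation only yields normality of $\cltau{K}$ as a set, not closure under multiplication, which is the part that genuinely needs $H(F)\leq K$; so the quotient argument (or the paper's net argument) is indeed the one to keep.
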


 \begin{proof}
     Let $g,h\in \cltau{K}$. Let $(g_{\lambda})_{\lambda\in\Lambda}$ and $(h_{\lambda})_{\lambda\in\Lambda}$ be nets in $K$ such that $g_{\lambda} \taulim g$ and $h_{\lambda} \taulim h$.

     Since $F/H(F)$ is a Hausdorff topological group, therefore there is a $r\in H(F)$ such that\begin{align*}
         g_{\lambda}h_{\lambda} \taulim ghr.
     \end{align*}

     Note that $(g_{\lambda}h_{\lambda}r^{-1})_{\lambda\in\Lambda}$ converges to $gh$ with respect to $\uptau$-topology, so we have $gh\in \cltau{K}$.

    It follows from the facts that $K$ is a normal subgroup and that inversion is continuous and multiplication is separately continuous with respect to the $\uptau$-topology that $\cltau{K}$ is closed under inversion and is a normal subgroup of $G$.
 \end{proof}

\begin{lemma}\label{lemma: joint_cont_in_distal}
    Let $(X, T)$ be a distal minimal topological dynamical system and $d\geq 1$ be an integer. Let $(g_{\lambda})_{\lambda\in\Lambda} \subseteq G$ be a net, $(h_{\lambda})_{\lambda\in\Lambda} \subseteq G_{d}^{\uptau\mathrm{-top}}$ such that $g_{\lambda} \to p \in M$ and $h_{\lambda} \taulim h\in G$. Then, every limit point of $(h_{\lambda}g_{\lambda}x)_{\lambda\in\Lambda} \subseteq G$ belong to $hpG_{d}^{\uptau\mathrm{-top}}x$ for each $x\in X$.
\end{lemma}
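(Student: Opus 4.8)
The plan is to reduce the statement to a joint-continuity argument in the $\uptau$-topology, using that $G_d^{\uptau\mathrm{-top}}$ is a $\uptau$-closed normal subgroup of $G$ (\cref{lemma: Gdtau_normal_subgroup}) so that the quotient $G/G_d^{\uptau\mathrm{-top}}$ is a compact Hausdorff topological group; in fact $H(G)\subseteq G_d^{\uptau\mathrm{-top}}$ once $d\ge 1$ (using the lemma just proved, since $[\Gtau{d-1},G]$ contains $H(G)$ when... more carefully, one observes $G_d^{\uptau\mathrm{-top}}\supseteq H(G)$ for $d\ge 2$, and for $d=1$ the statement is the trivial identity, so we may assume $d\ge 2$). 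The point of passing to this quotient is that multiplication there is jointly continuous, which lets us control products of $\uptau$-convergent nets.

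First I would work in the distal setting, where $E(X,T)$ is a group, $X=vX$ for every $v\in J(M)$, and the evaluation map $G\to X$, $g\mapsto gx$ is $\uptau$-continuous (this is standard from the Galois theory recalled in \cref{sec:univ_system}). Write $\bar\cdot$ for the image in $\bar G:=G/G_d^{\uptau\mathrm{-top}}$, a compact Hausdorff topological group. By \cref{lemma: conv_prod_tau_top}(2), after passing to a subnet we may assume $g_\lambda \taulim up\in G$ in the $\uptau$-topology (note $up\in G$ since in the distal case $X=vX$ forces the relevant element into $G$; more precisely $up$ is the $G$-component). Then in $\bar G$ we have $\bar g_\lambda\to \overline{up}$ and $\bar h_\lambda\to \bar h$, hence by joint continuity of multiplication in $\bar G$, $\overline{h_\lambda g_\lambda}\to \bar h\,\overline{up}$. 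This says precisely that the net $h_\lambda g_\lambda$ is eventually $\uptau$-close, modulo $G_d^{\uptau\mathrm{-top}}$, to $h\,up$; equivalently every $\uptau$-limit point of $(h_\lambda g_\lambda)$ lies in $h\,up\,G_d^{\uptau\mathrm{-top}} = hpG_d^{\uptau\mathrm{-top}}$ (the last equality because $up$ and $p$ act the same on $X$).

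Next I would transfer this from $G$ to the action on $X$. Since $g\mapsto gx$ is $\uptau$-continuous on $G$ for the distal system, and $h_\lambda g_\lambda$ lies in $G$, any limit point $z$ of $(h_\lambda g_\lambda x)_\lambda$ in $X$ is of the form $z = \lim h_{\lambda_i} g_{\lambda_i} x$ along a subnet; passing to a further subnet so that $h_{\lambda_i}g_{\lambda_i}\taulim w$ for some $w\in G$ (using $\uptau$-compactness of $G$), $\uptau$-continuity of the evaluation gives $z = wx$, and by the previous paragraph $w\in hpG_d^{\uptau\mathrm{-top}}$, so $z\in hpG_d^{\uptau\mathrm{-top}}x$, as desired.

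The main obstacle I anticipate is the careful bookkeeping of subnets together with the fact that a priori $g_\lambda$ converges (in the ordinary topology) only to an element $p\in M$, not to an element of $G$; the device of \cref{lemma: conv_prod_tau_top}(2) to replace $g_\lambda$ by a subnet $\uptau$-converging to $up\in G$, combined with the identity $up\,x = px$ on $X$, is what makes everything land inside $G$ where the $\uptau$-topology and its continuity properties are available. A secondary point requiring care is justifying $H(G)\subseteq G_d^{\uptau\mathrm{-top}}$ (equivalently that the quotient $G/G_d^{\uptau\mathrm{-top}}$ is genuinely a Hausdorff topological group, not merely $T_1$); this follows because $G_d^{\uptau\mathrm{-top}}$ is $\uptau$-closed and normal and, for $d\ge 2$, contains all $\uptau$-commutators, hence contains the derived group $H(G)$, so that $G/G_d^{\uptau\mathrm{-top}}$ is a compact Hausdorff topological group and joint continuity of its multiplication is available.
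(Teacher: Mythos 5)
Your overall strategy --- reduce to Ellis's joint continuity theorem (\cref{thm: Ellis_joint_cont}) after passing to a Hausdorff quotient --- is the same as the paper's, but the quotient you choose does not work, and the reason you give for why it should rests on a terminological confusion. You claim $G/\Gtau{d}$ is a compact Hausdorff topological group because $\Gtau{d}$ ``contains all $\uptau$-commutators, hence contains the derived group $H(G)$.'' But $H(G)$ in this theory is \emph{not} the commutator subgroup: it is $\bigcap\{\cltau{V}: V \text{ a } \uptau\text{-open neighborhood of } u\}$, the obstruction to Hausdorffness of quotients, and there is no reason it should be contained in $\cltau{[G,G]}$. (If $H(G)\subseteq\Gtau{2}$ always held, then $G$ acting abelianly on $X$ would force $X$ to be a proximal extension of an equicontinuous system, which is essentially the $d=1$ case of \cref{ques:G-nil} --- a question the paper leaves open and expects to have a negative answer.) Without $H(G)\subseteq\Gtau{d}$, the quotient $G/\Gtau{d}$ is only a compact $T_1$ space with separately continuous multiplication; Ellis's theorem does not apply, $\uptau$-limits in it need not be unique, and your key step $\overline{h_\lambda g_\lambda}\to \bar h\,\overline{up}$ collapses. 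The same non-Hausdorffness undermines your transfer step: the evaluation map $g\mapsto gx$ is not $\uptau$-continuous on a general distal system (a $\uptau$-limit and a pointwise limit of the same net agree only modulo the relevant derived group), so from $h_{\lambda_i}g_{\lambda_i}\taulim w$ and $h_{\lambda_i}g_{\lambda_i}x\to z$ you cannot conclude $z=wx$.

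The paper's proof repairs exactly this point. It does not try to make all of $G/\Gtau{d}$ a topological group; instead it sets $H_d=\cltau{\langle H(\Gtau{d}),\Gtau{d+1}\rangle}$, which by construction contains the derived group $H(\Gtau{d})$, so that the \emph{subgroup} quotient $\Gtau{d}/H_d$ is a genuine compact Hausdorff topological group, while the second factor is the compact Hausdorff \emph{space} $G/H_d x$ (a quotient of $X$, using distality), not a group. It then verifies by hand that the action $(hH_d,gH_dx)\mapsto hgH_dx$ is separately continuous --- the verification in the first variable is precisely where Hausdorffness of $\Gtau{d}/H_d$ is used to identify the $\uptau$-limit of $h_\lambda$ with its pointwise limit modulo $H_d$ --- and only then applies \cref{thm: Ellis_joint_cont}. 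Since $H_d\subseteq\Gtau{d}$, the conclusion modulo $H_d$ still places every limit point in $hp\Gtau{d}x$. To salvage your write-up you should replace $G/\Gtau{d}$ by this two-space setup (or else prove $H(G)\subseteq\Gtau{d}$, which you should not expect to be able to do).
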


\begin{proof}
    Let $x\in X$. Consider $H_{d} = \cltau{\langle H(\Gtau{d}),\Gtau{d+1} \rangle}$, it follows from the last lemma that $H_{d} \unlhd \Gtau{d}$. We define the following map:\begin{align*}
        \Gtau{d}/H_{d} \times G/H_{d} x &\to G/H_{d} x\\
        (hH_{d},gH_{d}x) &\mapsto hgH_{d}x,
    \end{align*}

    where $\Gtau{d}/H_{d}$ is endowed with the $\uptau$-topology and $G/H_{d} x$ is endowed with the usual topology. Since $H(\Gtau{d})\subseteq H_{d}$, we get that $\Gtau{d}/H_{d}$ is a Hausdorff compact group. It follows from $(X, T)$ being distal that $G/H_{d} x$ is a compact Hausdorff space. Note that for $g\in G$ and $h\in \Gtau{d}$ we have the following:\begin{align*}
        hgH_{d}x=ghH_{d}x.
    \end{align*}

    We will show that this map is separately continuous. Observe that this map is continuous for any translation by an element of $G/H_{d}x$. Let $(h_{\lambda})_{\lambda\in\Lambda}$ be a net in $\Gtau{d}$ and $h\in \Gtau{d}$ such that $h_{\lambda} \taulim h$. Consider $r\in M$ such that $h_{\lambda}\to r$, then we have $h_{\lambda} \taulim ur \in \Gtau{d}$. Since $\Gtau{d}/H_{d}$ is a Hausdorff space, we get that $hH_{d}=urH_{d}$. Therefore, for any $g\in G$ we have the following:\begin{align*}
        h_{\lambda}gH_{d}x \to rgH_{d}x = urgH_{d}x = grH_{d}x = ghH_{d}x = hgH_{d}x.
    \end{align*}

    Thus, this map is separately continuous. By \cref{thm: Ellis_joint_cont}, we deduce its continuity, completing the proof.
\end{proof}

\begin{theorem}\label{thm: Rdtau_closed}
    Let $(X, T)$ be a topological dynamical system such that it is a proximal extension of a distal system and $d\geq 1$ be an integer. Then, $R_{d}^{\uptau}(X)$ is an invariant equivalence relation and closed with respect to the usual topology of $X\times X$.
\end{theorem}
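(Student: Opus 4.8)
The plan is to prove the three assertions in order: $R_d^{\uptau}(X)$ is invariant, then that it is closed, and finally that it is an equivalence relation. Invariance is essentially immediate from the definition, since for $t\in T$ and a pair $(x,vgx)$ with $g\in G(u',M')^{\uptau\mathrm{-top}}_{d+1}$ we have $t(x,vgx)=(tx,vg(tx))$ because $t$ commutes with elements of $\beta T$ acting on $X$ (more precisely $tvgx = vg tx$ as $T$ is abelian and $tx$ is again a point of $X$), so the pair $(tx, vg\,tx)$ is again of the required form. The real content is closedness, and I would handle it by first reducing to the case where $(X,T)$ is itself distal: if $\pi\colon (X,T)\to (Y,T)$ is a proximal extension with $(Y,T)$ distal, then by \cref{prop: pi_u_homeo_if and only if_proximal} the map $\pi_u\colon uX\to uY$ is a $\uptau$-homeomorphism and $\GG(X)=\GG(Y)$ by \cref{thm: extensions_ellis_group}, so the relation $R_d^{\uptau}(X)$ is the preimage under $\pi\times\pi$ of $R_d^{\uptau}(Y)$ intersected with the proximal-saturation data; one has to check that $R_d^{\uptau}(X) = (\pi\times\pi)^{-1}(R_d^{\uptau}(Y))$, using that all fibers of $\pi$ are proximal classes and the description of proximal pairs as $\{(x,vx):v\in J(M')\}$. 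Granting this reduction, it suffices to prove $R_d^{\uptau}(X)$ is closed when $(X,T)$ is distal.

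In the distal case, $E(X,T)$ is a group, $X=vX$ for every idempotent $v$, and every minimal left ideal $M'$ acts the same way; so $R_d^{\uptau}(X)$ simplifies to $\{(x,gx): x\in X,\ g\in \Gtau{d+1}\}$ (identifying $G(u',M')^{\uptau\mathrm{-top}}_{d+1}$ with $\Gtau{d+1}$ via the relevant idempotent, using distality to discard the $v$). Now suppose $(x_n, g_n x_n)\to (x,y)$ in $X\times X$ with $g_n\in\Gtau{d+1}$. Passing to a subnet, $x_n\to x$ and $g_n\to p$ for some $p\in M$; by \cref{lemma: conv_prod_tau_top}(2) (after a further subnet) $g_n\taulim up$, and since $\Gtau{d+1}$ is $\uptau$-closed, $up\in\Gtau{d+1}$. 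The point is then to show $y = (up)x$, i.e. that the limit of $g_n x_n$ equals $(up)x$. This is exactly where \cref{lemma: joint_cont_in_distal} enters: apply it with the net $(g_n)\subseteq \Gtau{d}$ (indeed $\Gtau{d+1}\subseteq\Gtau{d}$) playing the role of $(h_\lambda)$ with $h_\lambda\taulim up$, and a trivial net (say the constant net at $u$, or rather one needs $x_n\to x$ built into a second net) — more carefully, I would write $g_n x_n$ and use a version of \cref{lemma: joint_cont_in_distal} / \cref{thm: Ellis_joint_cont} giving joint continuity of $(h, z)\mapsto hz$ on $\Gtau{d}/H_d \times X$ (with the $\uptau$-topology on the first factor and the usual topology on $X$, which is Hausdorff compact by distality) to conclude $g_n x_n \to (up)x$ modulo $H_d$; then one upgrades from $H_d$-cosets to genuine equality using that $\Gtau{d+1}x$ is determined and $H_d\subseteq$ the relevant stabilizer data, or simply re-runs the argument noting $g_n \in \Gtau{d+1}$ not just $\Gtau{d}$. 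Hence $y\in (up)\,x$ with $up\in \Gtau{d+1}$, so $(x,y)\in R_d^{\uptau}(X)$, proving closedness.

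For the equivalence-relation claim, reflexivity is clear ($g=u$), and symmetry follows because $\Gtau{d+1}$ is a group (replace $g$ by $g^{-1}$, noting $x = u x = g^{-1}(gx)$ using that $gx$ is a point we can act on). Transitivity is the place to be slightly careful: given $(x, g_1 x)$ and $(g_1 x, g_2 g_1 x)$ with $g_1\in \Gtau{d+1}$ arising from $x$ and $g_2\in\Gtau{d+1}$ arising from the point $g_1x$ — in the distal case all these $\uptau$-topological commutator subgroups coincide (they are computed in $G=uM$ independently of the base point) and $\Gtau{d+1}$ is a subgroup by \cref{lemma: Gdtau_normal_subgroup}, so $g_2 g_1\in\Gtau{d+1}$ and $(x, g_2g_1 x)\in R_d^{\uptau}(X)$. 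In the general proximal-extension case transitivity is inherited from the distal factor via the equality $R_d^{\uptau}(X)=(\pi\times\pi)^{-1}(R_d^{\uptau}(Y))$ established above together with the fact that fibers of a proximal extension are $R_d^{\uptau}$-classes (since $\P(X)\subseteq R_d^{\uptau}(X)$).

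The main obstacle I anticipate is the passage from "$g_n x_n$ converges mod $H_d$" to genuine equality $y=(up)x$ in the closedness argument — i.e. correctly invoking the joint-continuity lemma with the base point varying ($x_n\to x$, not a fixed base point) and keeping track of which normal subgroup ($H_d$ versus the trivial group) one is working modulo. One clean way around this is to not let the base point vary: since $X$ is a single minimal orbit under $G$ (as $X=Gx_0$ in the distal case), write $x_n = a_n x_0$ with $a_n\in G$, reduce to $a_n$ constant after absorbing into $g_n$ — but $a_n$ need not converge in the $\uptau$-topology, so instead one keeps $x_n$ as points of the compact Hausdorff space $X$ and applies the joint-continuity statement for the action of $\Gtau{d}/H_d$ on $X/(\text{something})$, then uses distality once more to lift. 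I would allocate most of the write-up to making this last point airtight.
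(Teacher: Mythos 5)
Your proposal follows essentially the same route as the paper's proof: reduce to the distal factor, invoke \cref{lemma: joint_cont_in_distal} for closedness, and use normality of $\Gtau{d+1}$ (\cref{lemma: Gdtau_normal_subgroup}) together with the group structure for the equivalence-relation part. The only packaging difference is that you reduce globally via the identity $R_{d}^{\uptau}(X)=(\pi\times\pi)^{-1}(R_{d}^{\uptau}(Y))$, whereas the paper keeps the net inside $X$ and projects to $Y$ in the middle of the limit argument; your version is slightly cleaner, but the reverse inclusion you leave as ``one has to check'' rests on exactly the fact the paper also needs, namely that closedness of $\P(X)=R_{\pi}$ forces every proximal cell to be of the form $J(M)z$ for the single fixed ideal $M$. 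The obstacle you anticipate at the end is not actually there: you do not need $y=(up)x$ on the nose, only $y\in J(M)\Gtau{d+1}x$, and \cref{lemma: joint_cont_in_distal} applied at level $d+1$ --- with the second net taken to be $p_{\lambda}\in G$ defined by $x_{\lambda}=p_{\lambda}x$, which is precisely the varying-base-point device the lemma is built to handle --- places every limit point of $g_{\lambda}x_{\lambda}=g_{\lambda}p_{\lambda}x$ inside $(uq)p'\Gtau{d+1}x=\Gtau{d+1}p'x=\Gtau{d+1}x$, using $p'x=x$, normality, and $uq\in\Gtau{d+1}$ (which holds since $\Gtau{d+1}$ is $\uptau$-closed and \cref{lemma: conv_prod_tau_top} gives $g_{\lambda}\taulim uq$ along a subnet). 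So the coset membership the lemma delivers is already the exact statement required, and no upgrade from $H_{d}$-cosets to equality is needed.
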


\begin{proof}
    Since $\P(X)$ is an equivalence relation, then\begin{align*}
        R_{d}^{\uptau}(X)=\{(x,vgx):v\in J(M),g\in G_{d+1}^{\uptau\mathrm{-top}}, x\in X\}.
    \end{align*}

    It is easy to see that $R_{d}^{\uptau}(X)$ is an invariant relation.

    Let $(x,vgx),(x,whx)\in R_{d}^{\uptau}(X)$, where $v,w\in J(M)$ and $g,h\in G_{d+1}^{\uptau\mathrm{-top}}$. By \cref{lemma: Gdtau_normal_subgroup} we get that $hg^{-1} \in \Gtau{d+1}$. Then, we have\begin{align*}
        (vgx,whx)=(vgx,w(hg^{-1})vgx) \in R_{d}^{\uptau}(X).
    \end{align*}

    Therefore, $R_{d}^{\uptau}(X)$ is an equivalence relation.

    Let $(x_{\lambda},v_{\lambda}g_{\lambda}x_{\lambda})_{\lambda\in\Lambda}\subseteq R_{d}^{\uptau}(X)$ be a net convergent to $(x,y)$. Let $(p_{\lambda})_{\lambda\in \Lambda}$ be a net in $M$ such that $x_{\lambda}=p_{\lambda}x$ for each $\lambda\in \Lambda$.

    Consider $v \in M$ such that $v_{\lambda} \to v$. Since $\P(X)$ is closed, there exists a $v_{z}\in J(M)$ for every $z\in X$ such that \begin{align*}
        vz=v_{z}z.
    \end{align*}

    Thus, we get that $vz\in J(M)z$. In particular, for any $z\in X$, we have $uz=uv^{-1}z=uvz$. Let $s\in M$ satisfy $v_{\lambda}g_{\lambda}\to s$. By \cref{lemma: conv_prod_tau_top}, we have \begin{align*}
        g_{\lambda}z \taulim uv^{-1}sz = usz.
    \end{align*}

    Consequently, $sz\in J(M)G_{d+1}^{\uptau\mathrm{-top}}z$ for each $z\in X$. 
    
    Consider $w\in J(M)$, $g\in G_{d+1}^{\uptau\mathrm{-top}}$ and $p,q\in M$ such that\begin{align*}
        p_{\lambda} &\to p\\
        v_{\lambda}g_{\lambda} &\to wg\\
        v_{\lambda}g_{\lambda}p_{\lambda} &\to q,
    \end{align*}

    where the limits are in $X^{X}$. Note that $g_{\lambda} \taulim g$ on $X$.

    Now, we will prove that $qx\in J(M)gG_{d+1}^{\uptau\mathrm{-top}}px$. Consider $\pi:X \to Y$ a proximal extension, where $Y$ is a minimal distal system. Note that\begin{align*}
        \pi(v_{\lambda}g_{\lambda}p_{\lambda}x) = g_{\lambda}p_{\lambda}\pi(x).
    \end{align*}
    
    It follows from \cref{lemma: joint_cont_in_distal} that $q\pi(x)\in gpG_{d+1}^{\uptau\mathrm{-top}}\pi(x)$. Since $\pi$ is a proximal extension, $qx\in J(M)gpG_{d+1}x$. By \cref{lemma: Gdtau_normal_subgroup}, $G_{d+1}^{\uptau\mathrm{-top}}$ is a normal subgroup, so $upG_{d+1}=G_{d+1}p$.

    Let $r\in G_{d+1}^{\uptau\mathrm{-top}}$ be such that $qx=wgrpx$. Then, \begin{align*}
        (x,y)=(x,wgrpx)=(x,wgrx)\in R_{d}^{\uptau}(X).
    \end{align*}

    Hence, we conclude that $R_{d}^{\uptau}(X)$ is closed.
\end{proof}

By \cref{prop: Rdtau_subset_RPd} and \cref{thm: Rdtau_closed}, we can conclude the following result.

\begin{corollary}\label{cor: RPd_is_Rdtau}
    Let $(X, S)$ be a topological dynamical system such that it is a proximal extension of a distal system and $d\geq 1$ be an integer. Then, $R_{d}^{\uptau}(X)=\RP^{[d]}(X)$.
\end{corollary}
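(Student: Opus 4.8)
The plan is to deduce \cref{cor: RPd_is_Rdtau} as an immediate consequence of the two preceding results. By \cref{thm: Rdtau_closed}, since $(X,S)$ is a proximal extension of a distal system, the relation $R_{d}^{\uptau}(X)$ is a closed invariant equivalence relation. By \cref{prop: Rdtau_subset_RPd}, we already know $R_{d}^{\uptau}(X)\subseteq \RP^{[d]}(X)$, together with the equality $R_{d}^{*}(X)=\RP^{[d]}(X)$, where $R_{d}^{*}(X)$ denotes the smallest closed equivalence relation containing $R_{d}^{\uptau}(X)$.

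The key observation is that when $R_{d}^{\uptau}(X)$ is itself already a closed equivalence relation, it must coincide with its closed-equivalence-relation closure: that is, $R_{d}^{*}(X)=R_{d}^{\uptau}(X)$. Indeed, $R_{d}^{*}(X)$ is by definition the smallest closed equivalence relation containing $R_{d}^{\uptau}(X)$, and $R_{d}^{\uptau}(X)$ is a closed equivalence relation containing itself, so $R_{d}^{*}(X)\subseteq R_{d}^{\uptau}(X)$; the reverse inclusion is trivial. Combining this with \cref{prop: Rdtau_subset_RPd} gives
\begin{align*}
    R_{d}^{\uptau}(X) = R_{d}^{*}(X) = \RP^{[d]}(X),
\end{align*}
which is exactly the claim.

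I do not expect any real obstacle here: the corollary is a formal bookkeeping step that packages \cref{thm: Rdtau_closed} and \cref{prop: Rdtau_subset_RPd}. The only point worth stating explicitly is why $R_d^\uptau(X)$ being a closed equivalence relation forces $R_d^*(X) = R_d^\uptau(X)$, which is simply the minimality in the definition of $R_d^*(X)$. All the genuine work — establishing closedness via the joint-continuity argument of \cref{lemma: joint_cont_in_distal} and the inclusion into $\RP^{[d]}$ via the order-$d$ characterization — has already been carried out in the lemmas and propositions above.
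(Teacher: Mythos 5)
Your proposal is correct and matches the paper's intended derivation exactly: the paper states the corollary as an immediate consequence of \cref{prop: Rdtau_subset_RPd} together with \cref{thm: Rdtau_closed}, which is precisely the combination you spell out (closedness forces $R_d^{*}(X)=R_d^{\uptau}(X)$, and the proposition identifies $R_d^{*}(X)$ with $\RP^{[d]}(X)$). Your explicit remark about why minimality of $R_d^{*}(X)$ gives the collapse is the only step the paper leaves implicit, and it is the right one.
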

\begin{remark}
If we did not know a priori that $\RP^{[d]}$ is an equivalence relation for all systems, we may still be able to deduce \cref{cor: RPd_is_Rdtau} if $\RP^{[d]}$ is known to be an equivalence relation for distal systems. 
\end{remark}

By \cref{cor: RPd_is_Rdtau}, we have the following natural question:

\begin{question}
    Let $(X, T)$ be a minimal topological dynamical system. Is it true that $R_{d}^{\uptau}(X)=\RP^{[d]}(X)$?
\end{question}

Now, we will show that $R_{d}^{\tau}$ coincide with the relation defined by Qiu and Zhao in \cite{Qiu_Zhao_topnilpotent_enveloping_nil:2022} for systems of order $j$, where $d,j\geq 1$ are integers.

\begin{lemma}\label{lemma: Gdtau_Gtop_coincide_nil}
    Let $(X, S)$ be a minimal system of order $d$. Then, $G_{k}^{\mathrm{top}}x=G_{k}^{\uptau\mathrm{-top}}x$ for each $x\in X$ and $k=1,\dots,d+1$.
\end{lemma}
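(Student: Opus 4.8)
The plan is to establish the stronger, group-level identity $\Phi_X(\Gtau{k})=E_k^{\mathrm{top}}(X,S)$ for every integer $k\geq 1$, where $\Phi_X\colon(\beta T,T)\to(E(X,S),T)$ is the canonical semigroup homomorphism and $E_k^{\mathrm{top}}(X,S)$ is the $k$-th topological commutator of the enveloping semigroup; evaluating this identity at a point then yields $G_k^{\uptau\mathrm{-top}}x=\Gtau{k}x=E_k^{\mathrm{top}}(X,S)x=G_k^{\mathrm{top}}x$. Since a system of order $d$ is distal, $E(X,S)$ is a group and $\Phi_X(G)=\Phi_X(uM)=E(X,S)$; in particular $\Phi_X(u)$ is an idempotent of a group, so $\Phi_X(u)=\mathrm{id}_X$, equivalently $ux=x$ for all $x\in X$.

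The key ingredient is the transfer principle: for every subgroup $B\leq G$ one has $\Phi_X(\cltau{B})=\overline{\Phi_X(B)}$, the closure on the right taken in $E(X,S)$. The inclusion $\subseteq$ holds because, as $uX=X$, each point evaluation $g\mapsto gx_0$ ($x_0\in X$) is $\uptau$-continuous — a standard fact in the Ellis theory of the $\uptau$-topology, see \cite[Chapter 10]{Auslander_minimal_flows_and_extensions:1988} — so $\Phi_X\colon(G,\uptau)\to E(X,S)$ is continuous and hence maps $\uptau$-closures into closures. For $\supseteq$, take $q\in\overline{\Phi_X(B)}$ and a net $(b_\lambda)\subseteq B$ with $\Phi_X(b_\lambda)\to q$; passing to a subnet with $b_\lambda\to p$ in $\beta T$ we get $p\in M$ and $q=\Phi_X(p)$ (continuity of $\Phi_X$ and Hausdorffness of $E(X,S)$), and by \cref{lemma: conv_prod_tau_top}(2) a further subnet satisfies $b_\lambda\taulim up$, so $up\in\cltau{B}$ and $\Phi_X(up)=\Phi_X(u)\Phi_X(p)=\Phi_X(p)=q$.

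I would then induct on $k$. The base case $k=1$ is $\Phi_X(\Gtau{1})=\Phi_X(G)=E(X,S)=E_1^{\mathrm{top}}(X,S)$. Assuming $\Phi_X(\Gtau{k})=E_k^{\mathrm{top}}(X,S)$: since $\Phi_X$ is a homomorphism, $\Phi_X([\Gtau{k},G])=[E_k^{\mathrm{top}}(X,S),E(X,S)]$ (the abstract commutator subgroup), so by the transfer principle $\Phi_X(\Gtau{k+1})=\Phi_X(\cltau{[\Gtau{k},G]})=\overline{[E_k^{\mathrm{top}}(X,S),E(X,S)]}$. This set is a closed subgroup of $E(X,S)$ (it is the image under $\Phi_X$ of the subgroup $\Gtau{k+1}$, and it is a topological closure) containing every commutator $[a,b]$ with $a\in E_k^{\mathrm{top}}(X,S)$, $b\in E(X,S)$; hence it contains $E_{k+1}^{\mathrm{top}}(X,S)=[E_k^{\mathrm{top}}(X,S),E(X,S)]_{\mathrm{top}}$, while conversely $E_{k+1}^{\mathrm{top}}(X,S)$, being closed and containing those commutators, contains $\overline{[E_k^{\mathrm{top}}(X,S),E(X,S)]}$. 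So the two coincide, which closes the induction and in particular gives the claim for $k=1,\dots,d+1$ (for $k=d+1$ it recovers, via \cref{thm: nil_iff_enveloping_nil}, that $\Gtau{d+1}$ is trivial on $X$).

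The only genuinely non-formal step is the transfer principle, and inside it the interaction of the $\uptau$-topology with the topology of $E(X,S)$: the $\uptau$-continuity of point evaluations for one inclusion, and \cref{lemma: conv_prod_tau_top}(2) together with $\Phi_X(u)=\mathrm{id}_X$ for the other. A shorter but less self-contained alternative avoids this: since $(X,S)$ is distal, \cref{cor: RPd_is_Rdtau} gives $R_k^{\uptau}(X)=\RP^{[k]}(X)$ and the idempotents act trivially, so $R_k^{\uptau}(X)=\{(x,gx):g\in\Gtau{k+1},\,x\in X\}$; comparing with $\RP^{[k]}(X)=\{(x,px):x\in X,\,p\in E_{k+1}^{\mathrm{top}}(X,S)\}$ from \cref{thm: charc_RPd_Rd} (valid for $1\le k\le d$) yields $\Gtau{k+1}x=E_{k+1}^{\mathrm{top}}(X,S)x$ for those $k$, and $k=1$ is immediate from minimality.
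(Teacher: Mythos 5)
Your main route breaks at the ``transfer principle.'' The inclusion $\Phi_{X}(\cltau{B})\subseteq\overline{\Phi_{X}(B)}$ is justified by the assertion that each evaluation $g\mapsto gx_{0}$ is continuous from $(G,\uptau)$ into $X$ with its original compact topology; this is not a standard fact, and it is false precisely in the regime where the lemma has content. Indeed, suppose every evaluation on a distal minimal system $X$ were $\uptau$-continuous, and let $h\in H(G)$. For every $\uptau$-open $V\ni u$ we have $h\in\cltau{V}$, so every $\uptau$-open $W\ni h$ meets $V$; choosing $g_{(V,W)}\in V\cap W$ and directing the pairs $(V,W)$ by reverse inclusion produces a single net that $\uptau$-converges simultaneously to $u$ and to $h$, whence $hx=ux=x$ for every $x$ by Hausdorffness of $X$. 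By \cref{thm: equi_ext_if and only if_HG_abelian} (together with distality) this would force $X$ to be equicontinuous, so for a genuine system of order $d\ge 2$ the continuity you invoke must fail. What is true (cf.\ \cref{prop: pi_u_homeo_if and only if_proximal}) is continuity into the $\uptau$-topology of $uX$, which is coarser than the topology of $X^{X}$ in which the closure $\overline{\Phi_{X}(B)}\subseteq E(X,S)$ is taken. The direction you do establish correctly via \cref{lemma: conv_prod_tau_top} yields only $\overline{[E_{k}^{\mathrm{top}},E]}\subseteq\Phi_{X}(\Gtau{k+1})$, i.e.\ $G_{k+1}^{\mathrm{top}}x\subseteq\Gtau{k+1}x$; the missing inclusion is exactly the substantive half of the lemma.

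Your closing alternative, by contrast, is a complete and non-circular proof: \cref{cor: RPd_is_Rdtau} is proved before this lemma and does not depend on it, so for a distal system of order $d$ you may equate, fiber by fiber over each $x$, the relation $R_{k}^{\uptau}(X)=\{(x,gx):g\in\Gtau{k+1},\,x\in X\}$ (idempotents act trivially by distality) with $\RP^{[k]}(X)=\{(x,px):p\in E_{k+1}^{\mathrm{top}}(X,S)\}$ from \cref{thm: charc_RPd_Rd}; this gives $\Gtau{j}x=G_{j}^{\mathrm{top}}x$ for $j=2,\dots,d+1$, and $j=1$ is immediate from minimality. The paper reaches the same conclusion by a different organization: it inducts on the order of the system, pushing $g\in\Gtau{k}$ down to $X/\RP^{[d-1]}(X)$, finding $g'\in G_{k}^{\mathrm{top}}$ that agrees with $g$ there, and correcting by an element of $G_{d}^{\mathrm{top}}$ via the same Qiu--Zhao theorem; that version avoids the closedness machinery behind \cref{cor: RPd_is_Rdtau} (notably \cref{thm: Rdtau_closed}) at the cost of the induction. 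Either way the essential external input is \cref{thm: charc_RPd_Rd}. I suggest you discard the transfer-principle argument and promote the alternative to the main proof.
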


\begin{proof}
    We show it by induction on $d$. Note that the statement is true for $d=1$.

    Let $d>1$ be an integer and $x\in X$; suppose the statement is true for each $j\leq d$. By \cref{thm: G_tau_top_nil_proximal_ext_nil}, we have $G_{d+1}^{\mathrm{top}}x = G_{d+1}^{\uptau-\mathrm{top}}x=\{x\}$, so it suffices to prove the statement for $k=1,\dots,d$. Let $g\in \Gtau{k}$, $k\in\{1,\dots, d\}$ and $\pi:X\to X/\RP^{[d-1]}(X)$ be the quotient map. Since $X/\RP^{[d-1]}(X)$ is a system of order $d-1$, then, by the induction hypothesis, there is a $g'\in G_{k}^{\mathrm{top}}$ such that\begin{align*}
        \pi(gx)=g\pi(x)=g'\pi(x)=\pi(g'x)
    \end{align*}

    Therefore, we get that $(gx,g'x)\in \RP^{[d-1]}(X)$. It follows from \cref{thm: charc_RPd_Rd} that there is a $r\in G_{d}^{\mathrm{top}}$ such that $gx=g'rx$. Thus, we have $G_{k}^{\mathrm{top}}x=\Gtau{k}x$. By \cref{thm: G_tau_top_nil_proximal_ext_nil}, we conclude that the statement is true for each $k=1,\dots,d+1$.
\end{proof}

\begin{proposition}
    Let $(X, S)$ be a minimal topological dynamical system, such that it is a proximal extension of a system of order $d$. Then,\begin{align*}
        \RP^{[j]}(X) = R_{j}^{\uptau}(X) =\{(x,vgx):v\in J(M), g\in G_{j+1}^{\mathrm{top}}\}.
    \end{align*}
    for each $j=1,\dots,d+1$. 
\end{proposition}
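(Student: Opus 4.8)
The plan is to bootstrap from the results already established in this section --- \cref{cor: RPd_is_Rdtau}, the distal lemma \cref{lemma: Gdtau_Gtop_coincide_nil}, and the Galois-theoretic facts about proximal extensions. Fix a proximal extension $\pi\colon(X,S)\to(Y,S)$ with $(Y,S)$ of order $d$. Since systems of order $d$ are distal, the first equality $\RP^{[j]}(X)=R_j^{\uptau}(X)$ is exactly \cref{cor: RPd_is_Rdtau}; moreover $\P(Y)=\Delta$, so $\P(X)=R_\pi$ is a closed equivalence relation, and therefore (by the reduction carried out at the start of the proof of \cref{thm: Rdtau_closed}) $R_j^{\uptau}(X)=\{(x,vgx):v\in J(M),\,g\in\Gtau{j+1},\,x\in X\}$. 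So the remaining task is to show that in this description $\Gtau{j+1}$ may be replaced by $G_{j+1}^{\mathrm{top}}$.

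For the inclusion ``$\supseteq$'' I would first check that $G_{j+1}^{\mathrm{top}}\subseteq\Gtau{j+1}$ by induction on $j$: since every $\uptau$-closed set is closed in the relative topology of $G\subseteq M$, the subgroup $\Gtau{j+1}=\cltau{[\Gtau{j},G]}$ is relatively closed and contains $[G_j^{\mathrm{top}},G]$ (using $G_j^{\mathrm{top}}\subseteq\Gtau{j}$), hence contains the smallest relatively closed subgroup with that property, namely $G_{j+1}^{\mathrm{top}}$. For ``$\subseteq$'' --- the heart of the argument --- take $v\in J(M)$, $g\in\Gtau{j+1}$ and $x\in X$. Because $G=uM$, the point $gx$ lies in $uX$, and $\pi(gx)=g\pi(x)\in\Gtau{j+1}\pi(x)$; as $(Y,S)$ has order $d$ and $j+1\le d+1$, \cref{lemma: Gdtau_Gtop_coincide_nil} gives $\Gtau{j+1}\pi(x)=G_{j+1}^{\mathrm{top}}\pi(x)$, so there is $g'\in G_{j+1}^{\mathrm{top}}$ with $g\pi(x)=g'\pi(x)$. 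Then $g'x\in uX$ as well and $\pi_u(gx)=\pi(gx)=\pi(g'x)=\pi_u(g'x)$; since $\pi$ is proximal, $\pi_u\colon uX\to uY$ is injective (\cref{prop: pi_u_homeo_if and only if_proximal}), whence $gx=g'x$ and so $vgx=vg'x$. This yields the identity for $1\le j\le d$. The case $j=d+1$ is handled separately: by \cref{thm: G_tau_top_nil_proximal_ext_nil} one has $\Gtau{d+1}x=\{ux\}$ for all $x$, so both $\Gtau{d+2}$ and $G_{d+2}^{\mathrm{top}}$ --- each contained in $\Gtau{d+1}$ by the induction above --- act as $u$ on $X$, and both sides of the asserted identity collapse to $\{(x,vux):v\in J(M),\,x\in X\}$.

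The only genuinely delicate point is the cited reduction $R_j^{\uptau}(X)=\{(x,vgx):v\in J(M),g\in\Gtau{j+1}\}$, that is, replacing the union over all minimal left ideals in the definition of $R_j^{\uptau}$ by a single fixed ideal $M$; this is where $\P(X)$ being an equivalence relation is used, and it is exactly the reduction already made in the proof of \cref{thm: Rdtau_closed} (it relies on the isomorphism between the groups attached to $\sim$-equivalent idempotents, which also carries $\mathrm{top}$-commutator subgroups to $\mathrm{top}$-commutator subgroups, so it applies verbatim to $G_{j+1}^{\mathrm{top}}$). It is worth stressing that $\Gtau{j+1}$ and $G_{j+1}^{\mathrm{top}}$ really are distinct subgroups of $G$ in general --- only their actions on $X$, up to proximality, coincide --- so the statement is about the relations rather than the groups, and the upgrade from ``proximal'' to ``equal'' in the step above crucially uses that the relevant points lie in $uX$, where $\pi$ is one-to-one.
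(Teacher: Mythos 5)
Your proposal is correct and follows exactly the route the paper intends: the paper states this proposition without a written proof, deriving it (as it does explicitly for the order-$\infty$ analogue) from the combination of \cref{cor: RPd_is_Rdtau}, \cref{lemma: Gdtau_Gtop_coincide_nil} and the injectivity of $\pi_u$ from \cref{prop: pi_u_homeo_if and only if_proximal}, which is precisely your argument. Your write-up in fact supplies details the paper leaves implicit, notably the inclusion $G_{j+1}^{\mathrm{top}}\subseteq\Gtau{j+1}$ for the reverse containment and the separate treatment of the boundary case $j=d+1$ via \cref{thm: G_tau_top_nil_proximal_ext_nil}, both of which are handled correctly.
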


\begin{proof}
    Let $j=1,\dots, d+1$. By \cref{cor: RPd_is_Rdtau}, we have $\RP^{[j]}(X) = R_{j}^{\uptau}(X)$. Furthermore, it follows from \cref{lemma: Gdtau_Gtop_coincide_nil} that \begin{align*}
         R_{j}^{\uptau}(X) =\{(x,vgx):v\in J(M), g\in G_{j+1}^{\mathrm{top}}\},
    \end{align*}

    which completes the proof.
\end{proof}

We can extend the results of systems of order $d$ to systems of order $\infty$. Let $(X, T)$ be a topological dynamical system. A pair $(x,y)\in X\times X$ is said to be {\em regionally proximal of order $\infty$} if it is regionally proximal of order $d$ for all $d\geq 1$. The set of regionally proximal pairs of order $\infty$ is denoted by $\RP^{[\infty]}(X)$ and is called the {\em regionally proximal relation of order $\infty$}. We say that $(X, T)$ is a {\em system of order $\infty$} if $\RP^{[\infty]}(X)$ coincides with the diagonal relation.

Dong, Donoso, Maass, Shao and Ye studied this relation \cite{Dong_Donoso_Maass_Shao_Ye_infinite_step_nil:2013} and proved that a system of order $\infty$ is an inverse limit of nilsystems for $\Z$-actions. It is easy to prove that a system of order $\infty$ is an inverse limit of its maximal factor systems of order $d$, for $d\geq 1$.

\begin{lemma}\label{lemma: Gj_Gjtau_coincide_nil_infty}
    Let $(X, S)$ be a minimal system of order $\infty$. Then, $G_{j}^{\mathrm{top}}x=G_{j}^{\uptau\mathrm{-top}}x$ for each $x\in X$ and $j\geq 1$.
\end{lemma}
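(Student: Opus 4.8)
The plan is to mimic the proof of Lemma~\ref{lemma: Gdtau_Gtop_coincide_nil}, but replace the appeal to the characterization for finite-order systems (Theorem~\ref{thm: charc_RPd_Rd}) with a limiting argument over $d$, using that a system of order $\infty$ is the inverse limit of its maximal factors of order $d$. First I would fix $x\in X$ and $j\geq 1$, and note that the containment $G_{j}^{\mathrm{top}}x\subseteq \Gtau{j}x$ is immediate from $G_{j}^{\mathrm{top}}\subseteq\Gtau{j}$ (since $[A,B]\subseteq[A,B]_{\mathrm{top}}$ and $\uptau$-closure is coarser, one checks $G_{j}^{\mathrm{top}}\subseteq\Gtau{j}$ by induction on $j$). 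So the content is the reverse inclusion $\Gtau{j}x\subseteq G_{j}^{\mathrm{top}}x$.

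For the reverse inclusion, let $g\in\Gtau{j}$. For each $d\geq j$, let $\pi_{d}\colon X\to X_{d}:=X/\RP^{[d]}(X)$ be the quotient map onto the maximal factor of order $d$; this is a \emph{distal} factor (it is a system of order $d$), and by Lemma~\ref{lemma: Gdtau_Gtop_coincide_nil} applied to $X_{d}$ we have $\Gtau{j}\pi_{d}(x)=G_{j}^{\mathrm{top}}\pi_{d}(x)$ in $X_{d}$, so there exists $g'_{d}\in G_{j}^{\mathrm{top}}$ with $g\pi_{d}(x)=g'_{d}\pi_{d}(x)$, i.e.\ $\pi_{d}(gx)=\pi_{d}(g'_{d}x)$. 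Thus $(gx,g'_{d}x)\in\RP^{[d]}(X)$ for every $d\geq j$. Now I would pass to a subnet (over $d\to\infty$, using compactness of the orbit closure of $g'_{d}x$, or rather of $X$ together with the fact that each $g'_{d}x$ lies in $G_{j}^{\mathrm{top}}x$, whose closure $\overline{G_{j}^{\mathrm{top}}x}$ is compact): choose a convergent subnet $g'_{d}x\to z$ with $z\in\overline{G_{j}^{\mathrm{top}}x}$. Since the relations $\RP^{[d]}(X)$ are closed and nested decreasingly, and $(gx,g'_{d}x)\in\RP^{[d']}(X)$ for all $d'\leq d$, taking the limit gives $(gx,z)\in\RP^{[d']}(X)$ for every $d'\geq 1$, hence $(gx,z)\in\RP^{[\infty]}(X)=\Delta$ because $X$ is a system of order $\infty$. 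Therefore $gx=z\in\overline{G_{j}^{\mathrm{top}}x}$.

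This shows $\Gtau{j}x\subseteq\overline{G_{j}^{\mathrm{top}}x}$, which is almost the claim but with a topological closure. To remove it, I would observe that $X$ being a system of order $\infty$ is in particular distal, so $G$ acts on $X$ as a group of homeomorphisms and $G_{j}^{\mathrm{top}}x$ should already be closed; more carefully, one argues that in the distal setting the orbit $G_{j}^{\mathrm{top}}x$ is itself compact — indeed $G_{j}^{\mathrm{top}}$ is $\uptau$-closed hence $\overline{G_{j}^{\mathrm{top}}}$ (ordinary closure in $M$) maps onto $\overline{G_{j}^{\mathrm{top}}x}$, and for distal systems $vX=X$ for all $v\in J(M)$, from which $\overline{G_{j}^{\mathrm{top}}}x=J(M)G_{j}^{\mathrm{top}}x$; combined with $\Gtau{j}x\subseteq\overline{G_{j}^{\mathrm{top}}x}$ and $vgx=g'x$ for $g'\in G_j^{\mathrm{top}}$ (as $vx$ ranges over proximal images which collapse under distality), one gets $\Gtau{j}x\subseteq G_{j}^{\mathrm{top}}x$. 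Alternatively, and more cleanly, I would avoid closures entirely by iterating the $X_d$-argument at the level of the $\RP^{[d]}$ description: for each $d$, from $(gx,g'_dx)\in\RP^{[d]}(X)$ and Theorem~\ref{thm: charc_RPd_Rd}/Corollary~\ref{cor: RPd_is_Rdtau} applied in $X$ we would get $gx=g'_d r_d x$ for some $r_d\in G_{d+1}^{\mathrm{top}}$, and then show the tail correction $r_d$ can be absorbed in the limit since $\bigcap_{d}G_{d+1}^{\mathrm{top}}$ acts trivially on $X$ (this intersection being trivial on $X$ is exactly the order-$\infty$ hypothesis reformulated via Corollary~\ref{cor: RPd_is_Rdtau}).

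The main obstacle I anticipate is precisely this last point: justifying that the error terms $r_d\in G_{d+1}^{\mathrm{top}}$ (or the passage from $\overline{G_{j}^{\mathrm{top}}x}$ back to $G_{j}^{\mathrm{top}}x$) disappear in the limit. This requires knowing not just that $\RP^{[\infty]}(X)=\Delta$ pointwise but that it interacts well with the algebraic description — concretely, that $\bigcap_{d\geq 1} R_{d}^{\uptau}(X)=\Delta$ and that $R_d^{\uptau}(X)=\RP^{[d]}(X)$ (Corollary~\ref{cor: RPd_is_Rdtau}, available since $X$ is distal). Once that identification is in hand, the nested-closed-relation limiting argument closes the gap cleanly, and the lemma follows; everything else is the routine induction and factor-map bookkeeping already rehearsed in the proof of Lemma~\ref{lemma: Gdtau_Gtop_coincide_nil}.
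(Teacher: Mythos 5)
Your proposal is correct and rests on the same two pillars as the paper's own proof --- \cref{lemma: Gdtau_Gtop_coincide_nil} applied to the maximal factors $X_d=X/\RP^{[d]}(X)$, together with the hypothesis $\RP^{[\infty]}(X)=\Delta$ --- but it assembles the level-$d$ information differently. The paper works in inverse-limit coordinates $(x_d)_{d\geq 1}$, produces a \emph{single} $g'\in G_j^{\mathrm{top}}$ with $gx_j=g'x_j$, and propagates the identity $gx_d=g'x_d$ downward through the bonding maps; you instead produce one approximant $g'_d\in G_j^{\mathrm{top}}$ per level $d\geq j$, record that $(gx,g'_dx)\in\RP^{[d]}(X)$, and pass to a limit point $z$ of the net $(g'_dx)_d$, using that the relations $\RP^{[d]}(X)$ are closed and nested to conclude $(gx,z)\in\RP^{[\infty]}(X)=\Delta$. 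Your version has the advantage of treating all coordinates uniformly (the paper's downward induction only addresses the coordinates $x_d$ with $d\leq j$ explicitly), at the cost of a compactness argument. Two points deserve attention. First, your closing ``remove the closure'' discussion is both over-engineered and slightly off (it conflates $\uptau$-closedness of $\Gtau{j}$ with closedness of $G_j^{\mathrm{top}}$ in $M$): no closure need ever appear, because $G_j^{\mathrm{top}}x=E_j^{\mathrm{top}}(X,S)x$ is the image under the continuous evaluation map $p\mapsto px$ of a closed, hence compact, subgroup of $E(X,S)$, so it is already compact and the limit $z$ of your subnet lies in $G_j^{\mathrm{top}}x$ directly. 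Second, when you invoke \cref{lemma: Gdtau_Gtop_coincide_nil} on $X_d$ you obtain an element of $E_j^{\mathrm{top}}(X_d,S)$, and a word is needed to lift it back to $E_j^{\mathrm{top}}(X,S)$; this is routine, since the induced map $E(X,S)\to E(X_d,S)$ is a continuous surjective homomorphism of compact groups and therefore carries $E_j^{\mathrm{top}}(X,S)$ onto $E_j^{\mathrm{top}}(X_d,S)$. With these touch-ups your argument is complete and, if anything, more self-contained than the one in the paper.
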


\begin{proof}
    Since $(X, S)$ is system of order $\infty$, then, without loss of generality, $X=\ilim{X/\RP^{[d]}(X)}$, that is, there are $\pi_{d}: X/\RP^{[d+1]}(X)\to X/\RP^{[d]}(X)$ extensions, such that\begin{align*}
        X=\left\lbrace (x_{d})_{d\geq 1} \in \prod_{d\geq 1}X/\RP^{[d]}(X): \pi_{d}(x_{d+1})=x_{d} \right\rbrace.
    \end{align*}

    Let $j\geq 1$ be an integer and let $g\in \Gtau{j}$. Note that for $(x_{d})_{d\geq 1} \in X$ we have $g(x_{d})_{d} = (gx_{d})_{d\geq 1}$. By \cref{thm: G_tau_top_nil_proximal_ext_nil}, we get that $\Gtau{j}x_{d}=x_{d}$ for each $d>j$. Moreover, by \cref{lemma: Gdtau_Gtop_coincide_nil}, there is a $g'\in G_{j}^{\mathrm{top}}$ such that $gx_{j}=g'x_{j}$. Then,\begin{align*}
        gx_{j-1}=\pi_{j-1}(gx_{j})=\pi_{j-1}(g'x_{j})=g'x_{j-1}.
    \end{align*}

Inductively, we can conclude that $g(x_d)_{d\geq 1} = g'(x_d)_{d\geq 1}$.\end{proof}

Therefore, by \cref{prop: pi_u_homeo_if and only if_proximal}, \cref{lemma: Gj_Gjtau_coincide_nil_infty} and \cref{cor: RPd_is_Rdtau}, we can conclude the following result.

\begin{proposition}\label{prop:RPd_Rd_nil_infty}
    Let $(X, S)$ be a minimal topological dynamical system such that it is a proximal extension of a system of order $\infty$. Then,\begin{align*}
        \RP^{[d]}(X)=\{(x,vgx):v\in J(M), g\in G_{d+1}^{\mathrm{top}}\}.
    \end{align*}
    for every $d\geq 1$. 
\end{proposition}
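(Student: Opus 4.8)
The plan is to reduce \cref{prop:RPd_Rd_nil_infty} to the distal (order $\infty$) case via the proximal extension $\pi\colon (X,S)\to(Y,S)$ where $Y$ is a system of order $\infty$, and then transport the algebraic description through $\pi$. By \cref{thm: extensions_ellis_group}(1) and \cref{prop: pi_u_homeo_if and only if_proximal}(2), a proximal extension does not change the Ellis group, and $\pi_u\colon uX\to uY$ is a $\uptau$-homeomorphism; this is the device that will let us move between $X$ and $Y$ freely.

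\textbf{Step 1: the distal case.} First I would establish the formula for $Y$ itself. Since $Y$ is a system of order $\infty$, \cref{lemma: Gj_Gjtau_coincide_nil_infty} gives $G_{d+1}^{\uptau\text{-top}}y = G_{d+1}^{\mathrm{top}}y$ for every $y\in Y$, so the right-hand sets $\{(y,vgy):v\in J(M),\,g\in G_{d+1}^{\uptau\text{-top}}\}$ and $\{(y,vgy):v\in J(M),\,g\in G_{d+1}^{\mathrm{top}}\}$ coincide. Combining this with \cref{cor: RPd_is_Rdtau} applied to $Y$ (noting $Y$ is in particular a proximal — indeed trivial — extension of a distal system, so $R_d^{\uptau}(Y)=\RP^{[d]}(Y)$), and recalling $R_d^{\uptau}(Y)=\{(y,vgy):v\in J(M),\,g\in G_{d+1}^{\uptau\text{-top}}\}$ by the reduction at the start of the proof of \cref{thm: Rdtau_closed} (which uses only that $\P(Y)$ is an equivalence relation, valid since $Y$ is distal), we obtain
\begin{align*}
\RP^{[d]}(Y)=\{(y,vgy):v\in J(M),\,g\in G_{d+1}^{\mathrm{top}}\}.
\end{align*}

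\textbf{Step 2: lift to $X$.} Now I would use \cref{cor: RPd_is_Rdtau} again, this time for $X$: since $X$ is a proximal extension of the order-$\infty$ (hence distal) system $Y$, we get $\RP^{[d]}(X)=R_d^{\uptau}(X)=\{(x,vgx):v\in J(M),\,g\in G_{d+1}^{\uptau\text{-top}}\}$, using once more that $\P(X)$ is an equivalence relation (true because $\P(X)=\RP^{[\infty]}(X)$ here, or directly because $X$ is a proximal extension of a distal system so all fibers collapse). It remains to replace $G_{d+1}^{\uptau\text{-top}}$ by $G_{d+1}^{\mathrm{top}}$ in the action on $X$, i.e.\ to show $G_{d+1}^{\uptau\text{-top}}x$ and the $\P$-class of $G_{d+1}^{\mathrm{top}}x$ agree. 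For this I would push forward through $\pi$: given $g\in G_{d+1}^{\uptau\text{-top}}$, by Step 1 applied in $Y$ there is $g'\in G_{d+1}^{\mathrm{top}}$ and $v\in J(M)$ with $g\pi(x)=vg'\pi(x)=\pi(vg'x)$; since $\pi$ is proximal this forces $(gx, vg'x)\in\P(X)$, and as $G_{d+1}^{\mathrm{top}}\subseteq G_{d+1}^{\uptau\text{-top}}$ the reverse containment is immediate. Feeding this back into the description of $R_d^{\uptau}(X)$ and using that $\P(X)$ is an equivalence relation yields exactly $\RP^{[d]}(X)=\{(x,vgx):v\in J(M),\,g\in G_{d+1}^{\mathrm{top}}\}$.

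\textbf{Main obstacle.} The one subtle point is making sure \cref{cor: RPd_is_Rdtau} is genuinely available for $X$ here — it is stated for proximal extensions of distal systems, and a system of order $\infty$ is distal, so this is fine, but one must also confirm that $\P(X)$ being an equivalence relation is legitimately invoked (as it is in the proof of \cref{thm: Rdtau_closed}); this holds since $X$ is a proximal extension of a distal system. A secondary bookkeeping issue is that $J(M)$ and $G$ are defined relative to a fixed minimal ideal $M$, and the various idempotents $v$ appearing when we pass through $\pi$ must be chosen in this same $M$ — but this is exactly what \cref{prop: pi_u_homeo_if and only if_proximal} and the characterization of proximal pairs via $J(M)$ in \cref{sec:univ_system} guarantee. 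No step requires new ideas beyond assembling \cref{cor: RPd_is_Rdtau}, \cref{lemma: Gj_Gjtau_coincide_nil_infty}, and the proximality of $\pi$.
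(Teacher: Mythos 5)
Your proposal is correct and follows essentially the same route as the paper, which derives this proposition by combining precisely the three ingredients you assemble: \cref{cor: RPd_is_Rdtau} (applicable since a system of order $\infty$ is distal), \cref{lemma: Gj_Gjtau_coincide_nil_infty} to replace $G_{d+1}^{\uptau\mathrm{-top}}$ by $G_{d+1}^{\mathrm{top}}$ on the order-$\infty$ factor, and \cref{prop: pi_u_homeo_if and only if_proximal} to transport the identification through the proximal extension. Your explicit handling of the bookkeeping (that $\P(X)$ is an equivalence relation and that the idempotents can be taken in the fixed ideal $M$) fills in details the paper leaves implicit, but introduces no new ideas.
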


Using the enveloping semigroup, we show an algebraic characterization of systems of order $\infty$.

\begin{theorem}
    Let $(X, S)$ be a distal minimal topological dynamical system. Then, $(X, S)$ is a system of order $\infty$ if and only if $\bigcap_{d\in\N}E_{d}^{\mathrm{top}}(X, S)$ is trivial.
\end{theorem}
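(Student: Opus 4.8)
The plan is to prove both implications, using the already-established characterizations of systems of order $d$ via topological nilpotency of the enveloping semigroup (\cref{thm: nil_iff_enveloping_nil}) and the characterization of $\RP^{[k]}$ for systems of order $d$ in algebraic terms (\cref{thm: charc_RPd_Rd}), together with the fact that a system of order $\infty$ is the inverse limit of its maximal factors of order $d$.

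\medskip

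\textbf{Necessity.} Suppose $(X,S)$ is a distal system of order $\infty$. Then $X = \ilim{X/\RP^{[d]}(X)}$, and each $X_d := X/\RP^{[d]}(X)$ is a system of order $d$, hence by \cref{thm: nil_iff_enveloping_nil} its enveloping semigroup $E(X_d, S)$ is $d$-step topologically nilpotent, i.e. $E_{d+1}^{\mathrm{top}}(X_d,S)$ is trivial. Let $\rho_d\colon E(X,S)\to E(X_d,S)$ be the natural (continuous, surjective, semigroup) homomorphism induced by the factor map $X\to X_d$. Since topological commutators are taken by closed subgroups and $\rho_d$ is a continuous surjective homomorphism of groups (the systems are distal, so the enveloping semigroups are groups), $\rho_d(E_{j}^{\mathrm{top}}(X,S)) = E_{j}^{\mathrm{top}}(X_d,S)$ for every $j$; in particular $\rho_d\big(E_{d+1}^{\mathrm{top}}(X,S)\big)$ is trivial. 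Now if $p\in\bigcap_{d\in\N}E_{d}^{\mathrm{top}}(X,S)$, then for every $d$ we have $p\in E_{d+1}^{\mathrm{top}}(X,S)$, so $\rho_d(p)$ acts trivially on $X_d$; since $X$ is the inverse limit of the $X_d$, $p$ acts trivially on $X$, i.e. $p$ is the identity of $E(X,S)$. Hence $\bigcap_{d\in\N}E_{d}^{\mathrm{top}}(X,S)$ is trivial.

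\medskip

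\textbf{Sufficiency.} Conversely, suppose $(X,S)$ is distal and $\bigcap_{d\in\N}E_{d}^{\mathrm{top}}(X,S)$ is trivial. We must show $\RP^{[\infty]}(X) = \bigcap_{d\geq 1}\RP^{[d]}(X)$ is the diagonal. Let $(x,y)\in\RP^{[d]}(X)$ for all $d$. Since $X$ is distal, the quotient $Z_d := X/\RP^{[d]}(X)$ is a distal system of order $d$, so by \cref{thm: charc_RPd_Rd} applied to $Z_d$ (and the saturation $\RP^{[d]}(X)\subseteq R_{\pi_d}$, equivalently $\pi_d\times\pi_d(\RP^{[d]}(X))=\RP^{[d]}(Z_d)=\Delta$) one gets, pulling back via \cref{thm: shao_ye_thm}(3), a description of $\RP^{[d]}(X)$ in the form $\{(z, pz): z\in X,\ p\in E_{d+1}^{\mathrm{top}}(X,S)\}$ — this uses that in the distal case one may work with the enveloping-semigroup commutators of $X$ itself and that $\pi_d\times\pi_d$ collapses $E_{d+1}^{\mathrm{top}}(X,S)$-orbits to points. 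Thus for each $d$ there is $p_d\in E_{d+1}^{\mathrm{top}}(X,S)$ with $y = p_d x$. By compactness of $E(X,S)$, pass to a subnet so that $p_d \to p \in E(X,S)$; since the $E_{j}^{\mathrm{top}}(X,S)$ are closed and nested, $p \in \bigcap_{j}E_{j}^{\mathrm{top}}(X,S)$, which is trivial by hypothesis, so $p = \mathrm{id}$ and hence $y = px = x$. Therefore $\RP^{[\infty]}(X)=\Delta$ and $(X,S)$ is a system of order $\infty$.

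\medskip

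\textbf{Main obstacle.} The delicate point is the sufficiency direction, specifically obtaining the clean algebraic description $\RP^{[d]}(X)=\{(z,pz): p\in E_{d+1}^{\mathrm{top}}(X,S)\}$ for a general distal $X$ (not assumed to be of order $d$): \cref{thm: charc_RPd_Rd} as stated applies to systems of order $d$, so one must transfer it to $X$ via the factor map $\pi_d\colon X\to Z_d$ using \cref{thm: shao_ye_thm}(3) and the fact that $\pi_d$ induces a surjection $E_{d+1}^{\mathrm{top}}(X,S)\to E_{d+1}^{\mathrm{top}}(Z_d,S)=\{\mathrm{id}\}$ whose kernel-orbits are exactly the $\RP^{[d]}$-classes. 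Making this identification precise — in particular checking that every $\RP^{[d]}$-pair is realized by an element of $E_{d+1}^{\mathrm{top}}(X,S)$ and not merely by an element of $E(X,S)$ projecting into the trivial commutator — is where the real work lies; the compactness argument at the end and the necessity direction are then straightforward. Alternatively, one can route the whole argument through $R_d^{\uptau}$ and \cref{cor: RPd_is_Rdtau}, replacing $E_{d+1}^{\mathrm{top}}$ by $\Gtau{d+1}$ throughout, which for distal $X$ is interchangeable with $G_{d+1}^{\mathrm{top}}$ on $X$ by \cref{lemma: Gdtau_Gtop_coincide_nil}.
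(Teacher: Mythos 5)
Your necessity argument is sound and genuinely different from the paper's: the paper deduces triviality of the intersection from $\RP^{[\infty]}(X)=\bigcap_{d}R_{d}^{\uptau}(X)=\Delta$ together with \cref{lemma: Gj_Gjtau_coincide_nil_infty}, whereas you push the topological lower central series through the continuous surjective homomorphisms $E(X,S)\to E(X/\RP^{[d]}(X),S)$ and use the inverse limit structure. This works (continuous images and preimages of closed subgroups behave as you claim in compact right-topological groups, so $\rho_d(E_{j}^{\mathrm{top}}(X,S))=E_{j}^{\mathrm{top}}(X_d,S)$ by induction) and is arguably more self-contained, relying only on \cref{thm: nil_iff_enveloping_nil}.

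The genuine gap is in the sufficiency direction, and you have located it correctly: everything hinges on producing, for each $d$, an element $p_d\in E_{d+1}^{\mathrm{top}}(X,S)$ with $y=p_dx$, i.e. on the identity $\RP^{[d]}(X)=\{(z,pz):z\in X,\ p\in E_{d+1}^{\mathrm{top}}(X,S)\}$ for a \emph{general} distal $X$. Your first route does not close it: from \cref{thm: shao_ye_thm}(3) and \cref{thm: charc_RPd_Rd} applied to $Z_d=X/\RP^{[d]}(X)$ you only learn that $E_{d+1}^{\mathrm{top}}(Z_d,S)$ is trivial and that $\RP^{[d]}(X)=R_{\pi_d}$; a pair in $R_{\pi_d}$ is realized by some $q\in E(X,S)$ whose image merely stabilizes $\pi_d(x)$, and nothing forces $q$ to lie in, or be replaceable by an element of, $E_{d+1}^{\mathrm{top}}(X,S)$ — that is precisely the "group of the extension" statement you would need to prove. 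The paper takes your fallback route: by \cref{cor: RPd_is_Rdtau}, $\RP^{[d]}(X)=R_{d}^{\uptau}(X)$, which in the distal case yields $y=g_dx$ with $g_d\in\Gtau{d+1}$, and then a limit point of $(g_dx)_d$ is taken in the nested closed sets $\Gtau{j}x$ before invoking the triviality hypothesis. Be aware, however, that even on this route one must still pass from $\bigcap_{j}\Gtau{j}x$ to $\bigcap_{j}E_{j}^{\mathrm{top}}(X,S)x$ at the last step, and \cref{lemma: Gdtau_Gtop_coincide_nil}, which you cite for this interchange, is stated only for systems of order $d$ (while \cref{lemma: Gj_Gjtau_coincide_nil_infty} assumes order $\infty$, which would be circular here); so this comparison must be argued separately for a general distal system. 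Your closing compactness step — $p_d\to p\in\bigcap_{j}E_{j}^{\mathrm{top}}(X,S)$, hence $y=px=x$ — is correct once the $p_d$ are actually produced, and mirrors the paper's.
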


\begin{proof}
    Suppose that $(X, S)$ is a system of order $\infty$. Then,\begin{align*}
        \RP^{[\infty]}(X)=\bigcap_{d\geq 1}\RP^{[d]}(X)=\bigcap_{d\geq 1}R_{d}^{\uptau}(X)=\Delta.
    \end{align*}

    Let $p\in \bigcap_{d\geq 1}E_{d}^{\mathrm{top}}(X,S)$. By \cref{lemma: Gj_Gjtau_coincide_nil_infty} and distality of $(X,S)$, we get that $(x,px)\in R_{d}^{\uptau}(X)$ for each $x\in X$ and $d\geq 1$. Thus, we have $px=x$ for all $x\in X$. Therefore, we deduce that $\bigcap_{d\geq 1}E_{d}^{\mathrm{top}}(X, S)$ is trivial.

    Suppose that $\bigcap_{d\geq 1}E_{d}^{\mathrm{top}}(X, S)$ is trivial. Let $x\in X$ and let $(x,px) \in \RP^{[\infty]}(X)$, where $p\in E(X, S)$. In particular, we have $(x,px)\in R_{d}^{\uptau}(X)$ for each $d\geq 1$. Therefore, there is $g_{d}\in \Gtau{d+1}$ such that $px=g_{d}x$.

    Note that $\Gtau{d+1} = \bigcap_{j=1}^{d+1}\Gtau{j}$ and $\Gtau{d+1}$ is a closed set. Then, there is a $y\in \bigcap_{j\geq 1}\Gtau{j}x$ such that $g_{d}x\to x$. Since $\bigcap_{d\geq 1}E_{d}^{\mathrm{top}}(X, S)$ is trivial, we get that $y=x$. Therefore, we have\begin{align*}
        px=g_{d}x\to y=x.
    \end{align*}

    Hence, it follows that $px=x$, and we conclude that $\RP^{[\infty]}(X)=\Delta$.
\end{proof}

\section{Applications}

\subsection{$\RP^{[d]}$ is an equivalence relation} \label{sec:applic_RP}
In this subsection, we give a novel proof that $\RP^{[d]}$ is an equivalence relation for any minimal system establishing an algebraic characterization of regionally proximal relations for arbitrary minimal systems, not necessarily proximal extensions of distal systems. To prove this, we need to describe the Ellis group of the maximal factor of order $d$. It is also worth noting that throughout this section, we will only assume that $\RP^{[d]}$ is an equivalence relation on distal systems.

For a topological group $T$, let $M_{dis}$ be the associated universal minimal distal system and let $D=\GG(M_{dis})$. The group $D$ is a $\uptau$-closed normal subgroup of $G$ and is generated by $\{g\in G: (p,gp)\in \overline{\P(M)} \text{ for each }p\in M\}$. This group has the property that a minimal topological dynamical system $(X, T)$ is a proximal extension of a distal system if $D\subseteq\GG(X)$. (For more details on this group, see \cite{Auslander_Glasner_distal_order:2002}). By the following results, we can explicitly describe the Ellis group of the maximal factor of order $d$ for a minimal system. 

\begin{lemma}[see {\cite[Chapter 14]{Auslander_minimal_flows_and_extensions:1988}}]\label{lemma: AF_closed_subgroup}
    Let $A,F$ be $\uptau$-closed subgroups of $G$ with $F$ normal in $G$. Then,\begin{enumerate}[label=(\arabic*)]
        \item $AF$ is a $\uptau$-closed subgroup of $G$.
        \item If $g\in G$, $g\circ A = u\circ A$ if and only if $g\in A$.
    \end{enumerate}
\end{lemma}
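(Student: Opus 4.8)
The plan is to prove the two assertions separately. For (1) I will reduce the $\uptau$-closedness of $AF$ to the behaviour of the map $\pi_u$ for an auxiliary factor of $M$; for (2) I will use only formal properties of the circle operation together with the fact that $A$ is $\uptau$-closed.

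\textbf{Part (1).} That $AF$ is a subgroup is elementary: $F\unlhd G$ forces $AF=FA$. For $\uptau$-closedness I deliberately avoid a direct argument — a net $(a_\lambda f_\lambda)$ that $\uptau$-converges cannot be split into $\uptau$-convergent factors, because $\uptau$-multiplication is only separately continuous. Instead, recall that $\GG(M,u)$ is trivial, so by the Galois correspondence between $\uptau$-closed subgroups of $G$ and factors of $M$ there is an extension $\pi\colon M\to Y$ of minimal systems with $\GG(Y,y_0)=F$, where $y_0=\pi(u)$. Put $\pi_u=\pi|_{uM}\colon G\to uY$. For $g,a\in G$ one has $\pi_u(g)=\pi_u(a)$ iff $a^{-1}g\in\GG(Y)=F$, so $AF=\pi_u^{-1}(\pi_u(A))$. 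By \cref{prop: pi_u_homeo_if and only if_proximal}, $\pi_u$ is $\uptau$-continuous and $\uptau$-closed, so $\pi_u(A)$ is $\uptau$-closed (the $\uptau$-closed image of a $\uptau$-compact set) and hence $AF=\pi_u^{-1}(\pi_u(A))$ is $\uptau$-closed.

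\textbf{Part (2).} I will use two standard facts about the circle operation: it extends to an action of $\beta T$ on $2^X$, so that $(pq)\circ C=p\circ(q\circ C)$ and $C\mapsto p\circ C$ is monotone; and $pC\subseteq p\circ C$ for every $C\subseteq X$ (since $pc=\lim_\lambda t_\lambda c\in p\circ C$ whenever $t_\lambda\to p$ in $T$). Suppose $g\in A$. Then $gA=A$, so $A\subseteq g\circ A$, and applying $u\circ(-)$ and associativity gives $u\circ A\subseteq u\circ(g\circ A)=(ug)\circ A=g\circ A$. Conversely, given $x\in g\circ A$, after the routine reduction to nets inside $A$ one has $t_\lambda\to g$ in $T$ and $a_\lambda\in A$ with $t_\lambda a_\lambda\to x$; writing $t_\lambda a_\lambda=(t_\lambda g^{-1})(g a_\lambda)$ with $t_\lambda g^{-1}\to g g^{-1}=u$ (continuity of right multiplication) and $g a_\lambda\in gA=A$ shows $x\in u\circ A$. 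Hence $g\circ A=u\circ A$. For the converse implication, assume $g\circ A=u\circ A$. Since $u\in A$ and $g\in G$, $g=gu\in gA\subseteq g\circ A=u\circ A$, and therefore $g\in(u\circ A)\cap G=\cltau{A}=A$, the last equality because $A$ is $\uptau$-closed.

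\textbf{Anticipated difficulty.} The only point that is not purely formal is the $\uptau$-closedness in part (1): it resists a direct net argument exactly because $\uptau$-multiplication is not jointly continuous, and the way around it is the translation into the factor $\pi\colon M\to Y$ followed by \cref{prop: pi_u_homeo_if and only if_proximal}. In part (2) the work is entirely bookkeeping — passing between nets in $T$ and in $\beta T$ in the definition of $\circ$, and inserting the identity $(pq)\circ C=p\circ(q\circ C)$ where needed — and presents no real obstruction.
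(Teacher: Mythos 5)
The paper does not prove this lemma; it is quoted from \cite[Chapter 14]{Auslander_minimal_flows_and_extensions:1988}, so there is no internal proof to compare against. Your argument is sound in substance, but two points need tightening. First, in part (1) the phrase ``Galois correspondence'' hides the real content: a general $\uptau$-closed subgroup $F$ is realized as an Ellis group only through the quasifactor $\Pi(F)=\{p\circ F: p\in M\}\subseteq 2^{M}$, with factor map $\pi(p)=p\circ F$ and base point $y_{0}=u\circ F$; the identification $\GG(\Pi(F),y_{0})=\{g\in G: g\circ F=u\circ F\}=F$ is precisely part (2) of the lemma applied to $F$. So part (1) depends on part (2) and should be proved after it (this is not circular, since your proof of (2) is independent of (1), but the dependence should be made explicit). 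Once this is in place, the rest of part (1) --- the identity $AF=\pi_{u}^{-1}(\pi_{u}(A))$ together with \cref{prop: pi_u_homeo_if and only if_proximal} --- is correct, and it is a clean way to sidestep the failure of joint $\uptau$-continuity of multiplication.

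Second, in part (2) the inclusion $g\circ A\subseteq u\circ A$ is the one weak step: the net $t_{\lambda}g^{-1}$ lives in $\beta T$, not in $T$, so it does not fit the paper's net description of $u\circ A$, which quantifies over nets in $T$. One can repair this (using that $p_{\lambda}\circ \overline{A}\to u\circ A$ in $2^{M}$ and $p_{\lambda}(ga_{\lambda})\in p_{\lambda}\circ \overline{A}$), but it is cleaner to argue symmetrically with the tools you already invoked: $g^{-1}\in A$ gives $A=g^{-1}A\subseteq g^{-1}\circ A$, hence by monotonicity and associativity $g\circ A\subseteq g\circ(g^{-1}\circ A)=(gg^{-1})\circ A=u\circ A$. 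With these repairs the proof is complete; the remaining steps ($AF=FA$ by normality of $F$, $g=gu\in gA\subseteq g\circ A=u\circ A$, and $(u\circ A)\cap G=\cltau{A}=A$) are all correct.
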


\begin{theorem}[see {\cite[Chapter 10]{Auslander_minimal_flows_and_extensions:1988}}]\label{thm: A_subset_F_Ydistal_if and only if_X_ext_Y}
    Let $(X,T)$ and $(Y,T)$ be minimal systems, with $Y$ distal. Then $\GG(X)\subseteq \GG(Y)$ if and only if there is an extension $\pi:X\to Y$.
\end{theorem}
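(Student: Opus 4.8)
The plan is to obtain both directions directly from the structure of the universal minimal system recalled in \cref{sec:univ_system}, the one genuinely useful input being that on a distal minimal system every idempotent acts trivially: if $Y$ is distal minimal and $v\in J(M)$, then for every $y\in Y$ the pair $(y,vy)$ is proximal (it is witnessed by $v$ itself, since $vy=v\cdot y$), so distality forces $vy=y$. In particular $py=(up)y$ for all $p\in M$ and $y\in Y$, and $u$ is the identity map on $Y$.

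For the implication ``extension $\Rightarrow$ inclusion'', take the extension $\pi\colon X\to Y$ to be pointed, $\pi(x_0)=y_0$, so that $\GG(Y)$ is computed at $y_0$ (this is the convention under which $\GG(X)\subseteq\GG(Y)$ is meaningful). A continuous $T$-equivariant surjection is automatically $\beta T$-equivariant, so for $g\in\GG(X)$ we get $gy_0=g\pi(x_0)=\pi(gx_0)=\pi(x_0)=y_0$, i.e. $g\in\GG(Y)$.

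For the converse, assume $\GG(X)\subseteq\GG(Y)$ and consider $\Gamma=\overline{T(x_0,y_0)}\subseteq X\times Y$. Since $ux_0=x_0$ and $uy_0=y_0$, the point $(x_0,y_0)$ is a minimal point, hence $\Gamma$ is a minimal subsystem and $\Gamma=M(x_0,y_0)=\{(px_0,py_0):p\in M\}$; being minimal it is closed, and it projects onto $Mx_0=X$ and onto $My_0=Y$. The key claim is that $\Gamma$ is the graph of a single-valued map. If $px_0=qx_0$ with $p,q\in M$, applying $u$ gives $(up)x_0=(uq)x_0$; writing $a=up,\ b=uq\in G$, associativity of the $\beta T$-action yields $(b^{-1}a)x_0=b^{-1}(bx_0)=(b^{-1}b)x_0=ux_0=x_0$, so $b^{-1}a\in\GG(X)\subseteq\GG(Y)$ and hence $(b^{-1}a)y_0=y_0$; applying $b$ to both sides gives $ay_0=by_0$. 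Using now that $u$ is the identity on $Y$, we conclude $py_0=(up)y_0=ay_0=by_0=(uq)y_0=qy_0$. Thus $\Gamma$ is the graph of a well-defined surjection $\pi\colon X\to Y$ with $\pi(x_0)=y_0$; since $\Gamma$ is closed and $Y$ is compact Hausdorff, $\pi$ is continuous, and $T$-invariance of $\Gamma$ makes $\pi$ a factor map.

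I expect the only steps needing care to be the well-definedness argument, where distality of $Y$ is precisely what permits replacing $up$ by $p$ when acting on $Y$, and the standard closed-graph argument upgrading a closed graph in $X\times Y$ to a continuous map because $Y$ is compact Hausdorff; everything else is routine bookkeeping with the decomposition $M=\bigcup_{v\in J(M)}vG$ and the identification of minimal subsystems with sets of the form $M z$.
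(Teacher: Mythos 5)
Your proof is correct. The paper does not prove this statement --- it is quoted directly from Auslander's book (Chapter 10) --- and your argument is essentially the classical one given there: the forward direction via $\beta T$-equivariance of factor maps, and the converse by showing that the minimal orbit closure $\overline{T(x_0,y_0)}$ is the graph of a map, where distality of $Y$ enters exactly where you say it does, namely to guarantee that idempotents act trivially on $Y$ so that $py_0=(up)y_0$ and the well-definedness reduces to the inclusion $\GG(X)\subseteq\GG(Y)$ of subgroups of $G=uM$. The closed-graph and basepoint bookkeeping are handled correctly.
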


\begin{theorem}\label{prop: Ellis_group_nil_d_minimal_system}
        Let $(X, S)$ be a minimal distal topological dynamical system with $A=\GG(X)$ and let $d\geq 1$ be an integer. Then, $\GG(X/\RP^{[d]}(X))=A\Gtau{d+1}$.
\end{theorem}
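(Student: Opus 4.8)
The plan is to show the two inclusions $\GG(X/\RP^{[d]}(X)) \subseteq A\Gtau{d+1}$ and $A\Gtau{d+1} \subseteq \GG(X/\RP^{[d]}(X))$, using the fact (available to us since $(X,S)$ is distal) that $\RP^{[d]}$ is already an equivalence relation on distal systems, so that $Y := X/\RP^{[d]}(X)$ is a well-defined distal minimal system and the quotient map $\pi\colon X \to Y$ is a distal extension. Write $F := \GG(Y)$, computed at the point $\pi(x_0)$, so that $A \subseteq F$ by functoriality of the Ellis group. By \cref{lemma: AF_closed_subgroup}, $A\Gtau{d+1}$ is a $\uptau$-closed subgroup of $G$ (using that $\Gtau{d+1}$ is $\uptau$-closed and normal in $G$ by \cref{lemma: Gdtau_normal_subgroup}), so it makes sense to compare it with $F$.

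For the inclusion $A\Gtau{d+1}\subseteq F$: clearly $A \subseteq F$. For $\Gtau{d+1} \subseteq F$, I would argue that $\Gtau{d+1}$ acts trivially on $Y$. Indeed, by \cref{cor: RPd_is_Rdtau} applied to the distal system $X$ we have $\RP^{[d]}(X) = R_d^{\uptau}(X) = \{(x, vgx) : v \in J(M), g \in \Gtau{d+1}, x \in X\}$; since $X$ is distal the idempotent $v$ can be dropped (take $v = u$), so for every $g \in \Gtau{d+1}$ and every $x \in X$ the pair $(x, gx)$ lies in $\RP^{[d]}(X)$, i.e.\ $\pi(gx) = \pi(x)$. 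Thus $g$ fixes $\pi(x_0)$, giving $g \in F$, hence $\Gtau{d+1} \subseteq F$ and therefore $A\Gtau{d+1} \subseteq F$.

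For the reverse inclusion $F \subseteq A\Gtau{d+1}$: since $A\Gtau{d+1}$ is a $\uptau$-closed subgroup of $G$ containing $A = \GG(X)$, and since $X$ is distal, the factor $Z := X/A\Gtau{d+1}$ (the maximal factor of $X$ whose Ellis group is $A\Gtau{d+1}$, well-defined in the distal category — here one uses \cref{lemma: AF_closed_subgroup}(2) to see $A\Gtau{d+1}$ really is the stabilizer of the corresponding point) is a distal factor of $X$. I claim $Z$ is a system of order $d$: its Ellis group contains $\Gtau{d+1}$, so $\Gtau{d+1}$ acts trivially on $Z$, i.e.\ $\Gtau{d+1}(X)$ pushes to the trivial group on $Z$, and the $\uptau$-topological commutator structure of the Ellis group passes to quotients, so $\Gtau{d+1}(Z)$ is trivial on $Z$; by \cref{thm: G_tau_top_nil_proximal_ext_nil} (or directly, since in the distal case $\Gtau{j}$ and $G_j^{\mathrm{top}}$ agree on the system by \cref{lemma: Gdtau_Gtop_coincide_nil}, combined with \cref{thm: nil_iff_enveloping_nil}) this forces $Z$ to be a system of order $d$. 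Now $Z$ is a factor of $X$ that is a system of order $d$, and $Y = X/\RP^{[d]}(X)$ is the \emph{maximal} such factor by \cref{thm: shao_ye_thm}, so $Z$ is a factor of $Y$; by \cref{thm: A_subset_F_Ydistal_if and only if_X_ext_Y} this gives $F = \GG(Y) \subseteq \GG(Z) = A\Gtau{d+1}$, completing the proof.

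The main obstacle I anticipate is the middle claim that $Z = X/A\Gtau{d+1}$ is genuinely a system of order $d$ — specifically, checking that triviality of $\Gtau{d+1}(X)$ on $Z$ implies $\Gtau{d+1}(Z) = \{u_Z\}$, since one must verify that the $\uptau$-topological lower central series behaves well under the quotient map $G \to G/\!\!\sim_Z$ (the analogue of \cref{thm: G_nil_iff_G_nil_in_cube} / \cref{lemma: Gdtau_Gtop_coincide_nil} in this setting), together with invoking \cref{thm: nil_iff_enveloping_nil} only for $\Z$-actions as the statement is phrased for $(X,S)$. Once that bookkeeping is in place, the rest is a clean application of the Galois correspondence for distal minimal systems.
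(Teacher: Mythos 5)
Your overall strategy is the same as the paper's: prove the easy inclusion $A\Gtau{d+1}\subseteq \GG(X/\RP^{[d]}(X))$ directly, then for the reverse inclusion manufacture a distal factor of $X$ whose Ellis group is exactly $A\Gtau{d+1}$ and appeal to the maximality of $X/\RP^{[d]}(X)$. The first inclusion is fine. The problem is that the two steps you flag as routine in the second half are precisely where the content of the paper's proof lies, and as written they have gaps. First, the existence of a distal factor $Z$ of $X$ with $\GG(Z)=A\Gtau{d+1}$ \emph{exactly} is not supplied by \cref{lemma: AF_closed_subgroup}(2) or by \cref{thm: A_subset_F_Ydistal_if and only if_X_ext_Y}; the Galois correspondence for distal flows does not come for free. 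The paper builds this factor explicitly as the maximal distal factor of the quasifactor $\Pi(A\Gtau{d+1})=\{p\circ A\Gtau{d+1}:p\in M\}$, using that this quasifactor has closed proximal relation so that its maximal distal factor has the same Ellis group (this is the Auslander--Glasner mechanism). You need to either reproduce that construction or cite the fact that every $\uptau$-closed subgroup containing $D$ (hence containing $\GG(X)\supseteq D$, as $X$ is distal) is the Ellis group of a distal minimal flow.

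Second, your justification that $Z$ is a system of order $d$ cites \cref{thm: G_tau_top_nil_proximal_ext_nil} in the wrong direction: that theorem derives $\uptau$-topological nilpotency \emph{from} being a proximal extension of a system of order $d$, and the converse is exactly the paper's \cref{ques:G-nil}, which the authors suspect is false in general. Your fallback via \cref{lemma: Gdtau_Gtop_coincide_nil} is circular, since that lemma presupposes the system is already of order $d$. The implication you need does hold for distal $\Z$-systems, but the correct route is the one in \cref{prop: Rdtau_subset_RPd}: triviality of $\Gtau{d+1}$ on the distal system $Z$ forces $E(Z,S)$ to be $d$-step topologically nilpotent (one only needs the easy inclusion $E^{\mathrm{top}}_{j}(Z)\subseteq \Phi_{Z}(\Gtau{j})$), whence $Z$ is of order $d$ by \cref{thm: nil_iff_enveloping_nil}; equivalently, just quote the characterization of $X/\RP^{[d]}(X)$ as the maximal distal factor of $X$ on which $\Gtau{d+1}$ is trivial, which is how the paper closes the argument. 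A smaller omission: ``$\Gtau{d+1}\subseteq \GG(Z)$ implies $\Gtau{d+1}$ acts trivially on all of $Z$'' requires the normality of $\Gtau{d+1}$ (\cref{lemma: Gdtau_Gtau_normal_subgroup} is not the exact label --- it is \cref{lemma: Gdtau_normal_subgroup}) together with distality, via the computation $gpz_{0}=upg'z_{0}=pz_{0}$ that the paper writes out; your worry about the lower central series ``passing to quotients'' dissolves once you work throughout with the universal group $G$ rather than with an intrinsic group of $Z$.
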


\begin{proof}
    Since $\Gtau{d+1}$ is trivial on $X/\RP^{[d]}(X)$, we get that $\Gtau{d+1} \subseteq \GG(X/\RP^{[d]}(X))$. Since $X/\RP^{[d]}(X)$ is a factor of $X$, then we have $A\subseteq \GG(X/\RP^{[d]}(X))$.

    It follows from \cref{lemma: AF_closed_subgroup} that $A\Gtau{d+1}$ is a $\uptau$-closed subgroup of $G$. Consider $\Pi(A\Gtau{d+1})=\{p\circ A\Gtau{d+1}:p\in M\}$. Note that $(\Pi(A\Gtau{d+1}), S)$ has a closed proximal relation, so its maximal distal factor $(Y, S)$ satisfies that $\GG(Y)=A\Gtau{d+1}$. Moreover, by \cref{lemma: AF_closed_subgroup}, for each $p\in M$, we have\begin{align*}
        gp \circ A\Gtau{d+1} = upg' \circ A\Gtau{d+1} = up (g' \circ A\Gtau{d+1}) = up\circ A\Gtau{d+1},
    \end{align*}
    where $g'\in \Gtau{d+1}$ satisfies that $gp=upg'$. Then, we get that $\Gtau{d+1}$ is trivial on this system.

    It follows from \cref{thm: A_subset_F_Ydistal_if and only if_X_ext_Y} that $Y$ is an extension of $X/\RP^{[d]}(X)$ and is a factor of $X$. By \cref{prop: Rdtau_subset_RPd}, we have that $X/\RP^{[d]}(X)$ is the maximal distal factor of $X$ with $\Gtau{d+1}$ trivial on it. Hence, we conclude that $Y=X/\RP^{[d]}(X)$.
\end{proof}

Note that by \cref{thm: equi_ext_if and only if_HG_abelian}, using a similar proof to that of \cref{prop: Ellis_group_nil_d_minimal_system}, we can derive \cref{prop: Ellis_group_equi} and \cref{prop: Ellis_group_distal_factor} below. Note that \cref{prop: Ellis_group_equi} appears as an ingredient in the proof of the structure theorem for minimal systems (see \cite[Chapter 14, page 213]{Auslander_minimal_flows_and_extensions:1988}, while \cref{prop: Ellis_group_distal_factor} can be deduced combining \cite[Theorem 4.7, Lemma 5.2 and Theorem 5.3]{Auslander_Glasner_distal_order:2002}.

\begin{proposition}\label{prop: Ellis_group_equi}
    Let $(X, T)$ be a minimal topological dynamical system with $A=\GG(X)$. Then, $\GG(X/\RP(X))=AH(G)$.
\end{proposition}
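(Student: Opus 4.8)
The plan is to mimic the proof of \cref{thm: Ellis_group_nil_d_minimal_system}, replacing the role of $\Gtau{d+1}$ (which is trivial on systems of order $d$) with $H(G)$ (which is trivial on equicontinuous systems, or more precisely on proximal extensions of equicontinuous systems by \cref{thm: equi_ext_if and only if_HG_abelian}). Throughout, $A = \GG(X)$ and the ambient system $(X,T)$ is minimal. Since $X/\RP(X)$ is equicontinuous and $H(G)$ is $\uptau$-closed and normal in $G$, the first task is to show $H(G)\subseteq \GG(X/\RP(X))$: this follows from \cref{thm: equi_ext_if and only if_HG_abelian}, since the maximal equicontinuous factor is in particular a proximal extension of itself (indeed an isomorphism onto itself), so $H(G)$ is trivial on it. Also $A \subseteq \GG(X/\RP(X))$ because $X/\RP(X)$ is a factor of $X$, giving the inclusion $AH(G)\subseteq \GG(X/\RP(X))$.

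For the reverse inclusion, I would observe that by \cref{lemma: AF_closed_subgroup}(1), $AH(G)$ is a $\uptau$-closed subgroup of $G$. Form the quasifactor-type space $\Pi(AH(G)) = \{p\circ AH(G) : p\in M\}$; this is a minimal system with closed proximal relation, so its maximal distal factor $(Y,S)$ has Ellis group $\GG(Y) = AH(G)$. Using \cref{lemma: AF_closed_subgroup}(2) exactly as in the proof of \cref{thm: Ellis_group_nil_d_minimal_system} — writing $gp = upg'$ with $g'\in H(G)$ and computing $gp\circ AH(G) = up(g'\circ AH(G)) = up\circ AH(G)$ — one sees that $H(G)$ is trivial on $Y$. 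Since $H(G)\subseteq \GG(Y)$ and $H(G)$ acts trivially, $Y$ is (a proximal extension of) an equicontinuous system by \cref{thm: equi_ext_if and only if_HG_abelian}, and in fact being distal it is equicontinuous; moreover $\GG(Y) = AH(G) \supseteq A$ so by \cref{thm: A_subset_F_Ydistal_if and only if_X_ext_Y} there is a factor map $X \to Y$, i.e. $Y$ is an equicontinuous factor of $X$. By maximality of $X/\RP(X)$ among equicontinuous factors, $Y$ is a factor of... wait, it is the other way: $X/\RP(X)$ being the \emph{maximal} equicontinuous factor, any equicontinuous factor $Y$ of $X$ is a factor of $X/\RP(X)$, hence $\GG(X/\RP(X)) \subseteq \GG(Y) = AH(G)$. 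Combined with the first inclusion, $\GG(X/\RP(X)) = AH(G)$.

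The main subtlety — and the step I would be most careful with — is the non-distal case: unlike in \cref{thm: Ellis_group_nil_d_minimal_system}, here $(X,T)$ is an \emph{arbitrary} minimal system, not assumed distal, so one cannot directly invoke the distal toolkit on $X$ itself. The resolution is that all the work happens on the quasifactor $\Pi(AH(G))$ and its \emph{maximal distal factor} $Y$, which is genuinely distal, so \cref{thm: A_subset_F_Ydistal_if and only if_X_ext_Y}, \cref{thm: equi_ext_if and only if_HG_abelian} and \cref{lemma: AF_closed_subgroup} all apply there; one only needs that $\GG$ of a pointed system and the factor relations behave correctly, which is standard Galois theory. A second point to verify is that $\Pi(AH(G))$ indeed has closed proximal relation so that its maximal distal factor is well-defined with the expected Ellis group — this is the same ingredient used (without further comment) in the proof of \cref{thm: Ellis_group_nil_d_minimal_system}, and I would cite the corresponding place in \cite{Auslander_minimal_flows_and_extensions:1988}. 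Once these are in place the computation with $g\circ A$ versus $u\circ A$ is routine.
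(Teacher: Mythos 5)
Your proof is correct and is essentially the argument the paper intends: the paper gives no written proof of this proposition, saying only that it follows from \cref{thm: equi_ext_if and only if_HG_abelian} ``using a similar proof to that of'' the $\RP^{[d]}$ statement \cref{prop: Ellis_group_nil_d_minimal_system}, which is exactly the adaptation you carry out (your shortcut at the end, invoking maximality of the equicontinuous factor directly rather than showing $Y$ is an extension of $X/\RP(X)$, is a harmless simplification). The one point you rightly flag --- that the maximal distal factor of the quasifactor $\Pi(AH(G))$ has Ellis group $AH(G)$ even though $X$ is not assumed distal --- reduces to $D\subseteq AH(G)$, which holds since $D\subseteq H(G)$ (the quasifactor with group $H(G)$ is equicontinuous, hence distal); with that noted, the argument is complete and matches the level of detail of the paper's own treatment.
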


\begin{proposition}\label{prop: Ellis_group_distal_factor}
    Let $(X, T)$ be a minimal topological dynamical system with $A=\GG(X)$. Then, $\GG(X_{dis})=AD$, where $X_{dis}$ is the maximal distal factor of $X$.
\end{proposition}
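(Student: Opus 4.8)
The plan is to follow the proof of \cref{prop: Ellis_group_nil_d_minimal_system} almost verbatim, with the $\uptau$-closed normal subgroup $\Gtau{d+1}$ replaced by $D$, and with the characterization of $X/\RP^{[d]}(X)$ as ``the maximal distal factor of $X$ on which $\Gtau{d+1}$ is trivial'' replaced by the plain universal property of the maximal distal factor $X_{dis}$. Throughout we fix $x_{0}\in uX$, write $A=\GG(X)$, and take Ellis groups of factors with respect to the induced base points.

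First I would verify $AD\subseteq\GG(X_{dis})$. Since $X_{dis}$ is a factor of $X$ we have $A=\GG(X)\subseteq\GG(X_{dis})$, and since $X_{dis}$ is a minimal distal system it is a factor of the universal minimal distal system $M_{dis}$, so $D=\GG(M_{dis})\subseteq\GG(X_{dis})$. As $D$ is a $\uptau$-closed normal subgroup of $G$ and $A$ is $\uptau$-closed, \cref{lemma: AF_closed_subgroup} shows $AD$ is a $\uptau$-closed subgroup of $G$; combined with the two inclusions above this gives $AD\subseteq\GG(X_{dis})$.

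For the reverse inclusion I would argue exactly as in \cref{prop: Ellis_group_nil_d_minimal_system}: consider the quasifactor $\Pi(AD)=\{p\circ AD:p\in M\}$. Since $AD$ is $\uptau$-closed, $(\Pi(AD),T)$ is a minimal system with closed proximal relation, and its maximal distal factor $(Y,T)$ satisfies $\GG(Y)=AD$. Because $A\subseteq AD=\GG(Y)$ and $Y$ is distal, \cref{thm: A_subset_F_Ydistal_if and only if_X_ext_Y} produces an extension $X\to Y$, so $Y$ is a distal factor of $X$; by the universal property of the maximal distal factor, $Y$ is then a factor of $X_{dis}$, whence $\GG(X_{dis})\subseteq\GG(Y)=AD$. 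Together the two inclusions give $\GG(X_{dis})=AD$. (The companion \cref{prop: Ellis_group_equi} follows by the same recipe with $D$ replaced by $H(G)$ and $X_{dis}$ by $X/\RP(X)$, additionally invoking \cref{thm: equi_ext_if and only if_HG_abelian} to identify the distal factor $Y$ coming out of the quasifactor with the maximal equicontinuous factor.)

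The step I expect to be the main obstacle is the ``black box'' already used in the proof of \cref{prop: Ellis_group_nil_d_minimal_system}: that for a $\uptau$-closed subgroup $F$ of $G$ containing $A$, the quasifactor $\Pi(F)$ is a minimal system with closed proximal relation whose maximal distal factor has Ellis group precisely $F$. Granting this ingredient from the Galois/quasifactor theory, the rest is routine bookkeeping with \cref{lemma: AF_closed_subgroup} and \cref{thm: A_subset_F_Ydistal_if and only if_X_ext_Y}; and, in contrast to the order-$d$ case, no separate check that ``$D$ is trivial on $Y$'' is needed, since $X_{dis}$ is singled out among the distal factors of $X$ by maximality alone.
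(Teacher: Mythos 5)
Your proposal is correct and matches the paper's intended argument: the paper does not actually write out a proof of this proposition, stating only that it follows ``using a similar proof'' to the quasifactor argument of \cref{prop: Ellis_group_nil_d_minimal_system} (or by combining results of Auslander--Glasner), and your write-up supplies exactly those details with $\Gtau{d+1}$ replaced by $D$, correctly observing that the triviality check needed in the order-$d$ case is here replaced by the bare universal property of $X_{dis}$. The ``black box'' you isolate --- that $\Pi(F)$ is minimal with closed proximal relation and that its maximal distal factor has Ellis group exactly $F$ --- is precisely the ingredient the paper itself invokes without further justification in the proof of \cref{prop: Ellis_group_nil_d_minimal_system}, so your proof is at the same level of rigor as, and follows the same route as, the paper's.
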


Since $X$ has the same maximal factor of order $d$ as its maximal distal factor, we get the following result.

\begin{corollary}\label{prop: Ellis_group_nil_d}
        Let $(X, S)$ be a minimal topological dynamical system with $A=\GG(X)$. Then, $\GG(X_{d})=AD\Gtau{d+1}$, where $X_{d}$ is the maximal factor of order $d$.
\end{corollary}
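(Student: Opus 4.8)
The plan is to reduce the statement to the distal case treated in \cref{prop: Ellis_group_nil_d_minimal_system}, exploiting that $X$ and its maximal distal factor $X_{dis}$ share the same maximal factor of order $d$. First I would invoke \cref{prop: Ellis_group_distal_factor} to record that $\GG(X_{dis}) = AD$, the Ellis group being taken with respect to the image of the base point $x_{0}$. Since $X_{dis}$ is a minimal distal system, \cref{prop: Ellis_group_nil_d_minimal_system} applies to it with $A$ replaced by $\GG(X_{dis})=AD$, and gives
\[
\GG\bigl(X_{dis}/\RP^{[d]}(X_{dis})\bigr) = AD\,\Gtau{d+1}.
\]
That $AD\Gtau{d+1}$ is a genuine $\uptau$-closed subgroup of $G$ follows from \cref{lemma: AF_closed_subgroup}(1) applied twice, using that $D$ and $\Gtau{d+1}$ are $\uptau$-closed normal subgroups of $G$ (the latter by \cref{lemma: Gdtau_normal_subgroup}).

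Next I would verify that $X_{dis}/\RP^{[d]}(X_{dis})$ is precisely the maximal factor of order $d$ of $X$. On the one hand it is a factor of $X_{dis}$, hence of $X$, and it is a system of order $d$. On the other hand, if $\pi\colon X\to Y$ is any factor with $Y$ of order $d$, then $\P(Y)\subseteq\RP^{[d]}(Y)=\Delta$, so $Y$ is distal; by maximality of $X_{dis}$ among distal factors of $X$, the map $\pi$ factors as $X\to X_{dis}\to Y$, and by \cref{thm: shao_ye_thm}(4) the second arrow is constant on the classes of $\RP^{[d]}(X_{dis})$, hence $Y$ is a factor of $X_{dis}/\RP^{[d]}(X_{dis})$. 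Thus $X_{dis}/\RP^{[d]}(X_{dis})$ dominates every order-$d$ factor of $X$ and is itself one, so it is the maximal factor of order $d$ of $X$; moreover, all the factor maps involved emanate from $x_{0}$ and commute, so the identification $X_{d}\cong X_{dis}/\RP^{[d]}(X_{dis})$ is base-point preserving and the two systems have the same Ellis group inside $G$. Combining with the previous display yields $\GG(X_{d}) = AD\Gtau{d+1}$, as claimed.

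I do not expect a genuine obstacle here: the content of the corollary is carried entirely by \cref{prop: Ellis_group_distal_factor} and \cref{prop: Ellis_group_nil_d_minimal_system}. The only point that needs a little care is that the identification of $X_{d}$ with $X_{dis}/\RP^{[d]}(X_{dis})$ must respect base points, which is what makes the two Ellis-group computations refer to the same subgroup $AD\Gtau{d+1}$ of the fixed group $G=uM$. Note also that $\RP^{[d]}$ being an equivalence relation is used only on distal systems, through \cref{prop: Ellis_group_nil_d_minimal_system}, so no circularity arises with the general equivalence-relation statement proved later in the paper.
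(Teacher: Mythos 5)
Your proposal is correct and follows exactly the paper's intended route: the paper derives the corollary in one line from the observation that $X$ and $X_{dis}$ share the same maximal factor of order $d$, combining \cref{prop: Ellis_group_distal_factor} with \cref{prop: Ellis_group_nil_d_minimal_system}, which is precisely the reduction you carry out. Your additional verifications (that every order-$d$ factor is distal and hence factors through $X_{dis}$, and that the identification respects base points) are correct fillings-in of details the paper leaves implicit.
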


 Note that \cref{prop: Ellis_group_nil_d} can be extended to any group for which having a $d$-step topologically nilpotent enveloping semigroup is equivalent to being a system of order $d$. In particular, for $d=1$, this characterization is already known in the case of abelian actions on Hausdorff compact spaces (which are not necessarily metric), implying that \cref{prop: Ellis_group_nil_d} also holds in this context. In particular, considering the case where $X=M$, we obtain that $\Gtau{2}\subseteq H(G)$. We do not know whether the reverse inclusion holds. If this inclusion were true, then by \cref{thm: equi_ext_if and only if_HG_abelian} we would obtain a positive answer to \cref{ques:G-nil} in the case $d=1$.

Since $X/\RP^{[\infty]}(X)$ is the inverse limit of $(X/\RP^{[d]}(X))_{d\geq 1}$ for a minimal system $(X,T)$, we can deduce the following results using \cref{prop: Ellis_group_nil_d_minimal_system}.

\begin{proposition}
    Let $(X, S)$ be a minimal topological dynamical system with $\GG(X)=A$. Then, $\GG(X/\RP^{[\infty]}(X))=AD\bigcap_{d\geq 1}G_{d}^{\uptau\mathrm{-top}}$.
\end{proposition}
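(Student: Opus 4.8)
The plan is to combine the inverse-limit description of $X/\RP^{[\infty]}(X)$ with the Ellis-group formula for the finite-order factors (\cref{prop: Ellis_group_nil_d}), and then to factor a nested intersection of $\uptau$-closed cosets using the compactness of the $\uptau$-topology.

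First I would reduce the computation to an intersection over $d$. Since $\RP^{[\infty]}(X)=\bigcap_{d\geq 1}\RP^{[d]}(X)$, the system $X/\RP^{[\infty]}(X)$ is the inverse limit of the tower $(X/\RP^{[d]}(X))_{d\geq 1}$ of factors of $X$, with natural equivariant factor maps $\rho_{d}\colon X/\RP^{[\infty]}(X)\to X/\RP^{[d]}(X)$. Fixing the base point $x_{0}\in uX$ and choosing base points of all these factors compatibly as images of $x_{0}$, an element $g\in G$ fixes the image of $x_{0}$ in $X/\RP^{[\infty]}(X)$ if and only if it fixes its image in $X/\RP^{[d]}(X)$ for every $d$, because a point of an inverse limit is determined by its coordinates (the forward implication uses equivariance of $\rho_{d}$, the backward one the embedding of $X/\RP^{[\infty]}(X)$ into $\prod_{d}X/\RP^{[d]}(X)$). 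Hence
\[
\GG(X/\RP^{[\infty]}(X))=\bigcap_{d\geq 1}\GG(X/\RP^{[d]}(X))=\bigcap_{d\geq 1}AD\,\Gtau{d+1},
\]
the last equality by \cref{prop: Ellis_group_nil_d}, since $X/\RP^{[d]}(X)$ is the maximal factor of order $d$.

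It remains to prove the set identity $\bigcap_{d\geq 1}AD\,\Gtau{d+1}=AD\bigcap_{d\geq 1}\Gtau{d+1}$; note the right-hand side equals $AD\bigcap_{d\geq 1}G_{d}^{\uptau\mathrm{-top}}$ because $\Gtau{1}=G$. The inclusion $\supseteq$ is immediate. For $\subseteq$, recall that $D$ is a $\uptau$-closed normal subgroup of $G$ and, by \cref{lemma: Gdtau_normal_subgroup}, so is each $\Gtau{d+1}$; hence by \cref{lemma: AF_closed_subgroup} the product $AD$ is a $\uptau$-closed subgroup and each $AD\,\Gtau{d+1}$ is a $\uptau$-closed subgroup. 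Fix $g\in\bigcap_{d}AD\,\Gtau{d+1}$ and put
\[
C_{d}=(AD)g\cap\Gtau{d+1}.
\]
Right translation by $g$ is a $\uptau$-homeomorphism (\cref{lemma: conv_prod_tau_top}), so $(AD)g$ is $\uptau$-closed and therefore $C_{d}$ is $\uptau$-closed; $C_{d}$ is nonempty since writing $g=wh$ with $w\in AD$ and $h\in\Gtau{d+1}$ gives $h\in(AD)g\cap\Gtau{d+1}$; and the family $(C_{d})_{d}$ is decreasing because $\Gtau{d+2}\subseteq\Gtau{d+1}$. As the $\uptau$-topology on $G$ is compact, $\bigcap_{d}C_{d}\neq\emptyset$; any $h$ in this intersection satisfies $h\in\bigcap_{d}\Gtau{d+1}$ and $gh^{-1}\in AD$, whence $g=(gh^{-1})h\in AD\bigcap_{d}\Gtau{d+1}$, as required.

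The only genuinely non-formal ingredient is the last compactness argument: it is what prevents the nested intersection $\bigcap_{d}AD\,\Gtau{d+1}$ from being strictly larger than $AD\bigcap_{d}\Gtau{d+1}$, and it rests on the compactness of the $\uptau$-topology together with the fact (\cref{lemma: AF_closed_subgroup}, \cref{lemma: conv_prod_tau_top}) that all the relevant products and cosets are $\uptau$-closed. The remaining steps are bookkeeping with factor maps and the previously established Ellis-group formulas.
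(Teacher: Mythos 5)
Your proof is correct and follows the same route the paper intends (the paper itself gives only the one-line remark that the result follows from the inverse-limit description of $X/\RP^{[\infty]}(X)$ together with the formula $\GG(X_{d})=AD\,\Gtau{d+1}$ of \cref{prop: Ellis_group_nil_d}). The one step the paper leaves implicit --- that $\bigcap_{d\geq 1}AD\,\Gtau{d+1}=AD\bigcap_{d\geq 1}\Gtau{d+1}$ --- you supply correctly via the nested-closed-sets/compactness argument in the $\uptau$-topology, using that $AD$ and each $\Gtau{d+1}$ are $\uptau$-closed and that right translation is a $\uptau$-homeomorphism.
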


\begin{corollary}\label{prop: Ellis_groups_nil_infty}
    Let $(X, S)$ be a distal minimal topological dynamical system with $\GG(X)=A$. Then, $\GG(X/\RP^{[\infty]}(X))=A\bigcap_{d\geq 1}G_{d}^{\uptau\mathrm{-top}}$.
\end{corollary}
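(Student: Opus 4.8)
The plan is to read this off from the preceding proposition, using only that the group $D$ sits inside the Ellis group of every distal system. First I would note that a distal minimal system is its own maximal distal factor, i.e.\ $X_{dis}=X$; hence \cref{prop: Ellis_group_distal_factor} gives $A=\GG(X)=\GG(X_{dis})=AD$, so that $D\subseteq A$ and $AD=A$. (Equivalently, the universal minimal distal system $M_{dis}$ factors onto $X$, and monotonicity of the Ellis group under extensions yields $D=\GG(M_{dis})\subseteq\GG(X)=A$.)

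Then I would substitute $AD=A$ into the formula of the preceding proposition, which for an arbitrary minimal system reads $\GG(X/\RP^{[\infty]}(X))=AD\bigcap_{d\geq 1}\Gtau{d}$; in the distal case this collapses to $\GG(X/\RP^{[\infty]}(X))=A\bigcap_{d\geq 1}\Gtau{d}$, which is exactly the assertion. Along the way I would record that the right-hand side is a bona fide $\uptau$-closed subgroup of $G$: by \cref{lemma: Gdtau_normal_subgroup} each $\Gtau{d}$ is $\uptau$-closed and normal in $G$, hence so is $\bigcap_{d\geq 1}\Gtau{d}$, and then \cref{lemma: AF_closed_subgroup} gives that $A\bigcap_{d\geq 1}\Gtau{d}$ is $\uptau$-closed.

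I do not expect a genuine obstacle, since the preceding proposition (which itself rests on \cref{prop: Ellis_group_nil_d_minimal_system} together with the inverse-limit description of $X/\RP^{[\infty]}(X)$) does the real work. If one wanted an argument not invoking that proposition, the route would be: each factor $X/\RP^{[d]}(X)$ is distal, so \cref{prop: Ellis_group_nil_d_minimal_system} gives $\GG(X/\RP^{[d]}(X))=A\Gtau{d+1}$; the Ellis group of an inverse limit equals the intersection of the Ellis groups of the factors, whence $\GG(X/\RP^{[\infty]}(X))=\bigcap_{d\geq 1}A\Gtau{d+1}$; and finally $\bigcap_{d\geq 1}A\Gtau{d+1}=A\bigcap_{d\geq 1}\Gtau{d+1}=A\bigcap_{d\geq 1}\Gtau{d}$ (using $\Gtau{1}=G$). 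The only delicate point is the middle equality: one inclusion is immediate, and for the other, given $x$ in the left-hand side, the sets $Ax\cap\Gtau{d+1}$ form a nested family of nonempty $\uptau$-closed subsets of the $\uptau$-compact group $G$, so their intersection $Ax\cap\bigcap_{d\geq 1}\Gtau{d+1}$ is nonempty, and any element of it exhibits $x$ as a product of an element of $A$ with an element of $\bigcap_{d\geq 1}\Gtau{d+1}$. The use of $\uptau$-compactness to commute the intersection past the product is the one step that is essential there.
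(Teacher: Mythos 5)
Your proposal is correct and follows essentially the same route as the paper, which states this corollary without proof as an immediate consequence of the preceding proposition (equivalently, of \cref{prop: Ellis_group_nil_d_minimal_system} together with the inverse-limit description of $X/\RP^{[\infty]}(X)$); your observation that $D\subseteq A$ for distal $X$ is exactly the reduction intended. Your explicit compactness argument showing $\bigcap_{d\geq 1}A\Gtau{d+1}=A\bigcap_{d\geq 1}\Gtau{d+1}$ is a detail the paper leaves implicit, and it is a worthwhile addition since that interchange is the only nontrivial step.
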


Note that the group $D$ allows us to characterize the maximal factor of order $d$ (modulo proximal extensions), so it is expected to have a connection to the regionally proximal relation of order $d$. We can establish a characterization of this relation in terms of the group $D$. To prove this characterization, we will not use the strong property that the face transformations acting on the diagonal points are the unique minimal subsystem of the dynamical cube (\cref{thm: cube_is_minimal}, item $(3)$). We believe that our proof strategy could be applied in different contexts, in which the aforementioned condition $(3)$ may not hold. For example, in the case of directional dynamical cubes \cite{Donoso_Sun_cubes_product_ext:2015, Cabezas_Donoso_Maass_directional_cubes:2020}. By using the following result, we can give an algebraic characterization of $\RP^{[d]}$.

\begin{lemma}[{\cite[Lemma 3.3]{Host_Kra_Maass_nilstructure:2010}}]\label{lemma: RPd_iff_xaya_in_cube}
    Let $(X, S)$ be a minimal topological dynamical system and $d\geq 1$ be an integer. Then $(x,y)\in \RP^{[d]}(X)$ if and only if there exists $a_{*}\in X^{2^{d}-1}$ such that $(x,a_{*},y,a_{*})\in \bQ^{[d+1]}(X)$.
\end{lemma}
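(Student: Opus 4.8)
The plan is to derive both implications from the cube description of $\RP^{[d]}$ in \cref{thm: shao_ye_thm}(1) together with the minimality statements in \cref{thm: cube_is_minimal}. The ``only if'' direction is immediate: if $(x,y)\in\RP^{[d]}(X)$, then by \cref{thm: shao_ye_thm}(1) the point $(x,y,\dots,y)$, with $x$ at the coordinate $\emptyset$ and $y$ at every other coordinate, lies in $\bQ^{[d+1]}(X)$; taking $a_{*}=(y,\dots,y)\in X^{2^{d}-1}$, this point is exactly $(x,a_{*},y,a_{*})$.

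For the converse, assume $(x,a_{*},y,a_{*})\in\bQ^{[d+1]}(X)$; by \cref{thm: shao_ye_thm}(1) it suffices to show $(x,y,\dots,y)\in\bQ^{[d+1]}(X)$. Restricting $(x,a_{*},y,a_{*})$ to the facet $\{\epsilon:\epsilon_{d+1}=1\}$ gives its top face $(y,a_{*})$, which lies in $\bQ^{[d]}(X)$ since restrictions of dynamical cubes to facets are again dynamical cubes; as its coordinate at $\emptyset$ is $y$, in fact $(y,a_{*})\in\bQ_{y}^{[d]}(X)$. By \cref{thm: cube_is_minimal}, $Y_{y}^{[d]}=\overline{\mathcal{F}^{[d]}(T)y^{[d]}}$ is the unique minimal subsystem of $(\bQ_{y}^{[d]}(X),\mathcal{F}^{[d]}(T))$; since $\overline{\mathcal{F}^{[d]}(T)(y,a_{*})}$ is a nonempty closed $\mathcal{F}^{[d]}(T)$-invariant subset of $\bQ_{y}^{[d]}(X)$, it contains $Y_{y}^{[d]}$ and hence contains $y^{[d]}$. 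So I fix a net $(g_{\lambda})$ in $\mathcal{F}^{[d]}(T)$ with $g_{\lambda}(y,a_{*})\to y^{[d]}$.

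The heart of the argument is to make this collapse act on both faces of the $(d+1)$-cube at once. Writing $g_{\lambda}$ as a product of face transformations $t^{(\gamma)}$ associated to upper facets $\gamma=\{\eta\subseteq[d]:i\in\eta\}$ of $\{0,1\}^{d}$, I replace each such $\gamma$ by the facet $\tilde\gamma=\{\zeta\subseteq[d+1]:i\in\zeta\}$ of $\{0,1\}^{d+1}$, producing $\tilde g_{\lambda}\in\mathcal{F}^{[d+1]}(T)\subseteq\mathcal{HK}^{[d+1]}(T)$. Since every index $i$ occurring here lies in $[d]$, the element $\tilde g_{\lambda}$ fixes the coordinates $\emptyset$ and $\{d+1\}$, and on any coordinate $\eta$ or $\eta\cup\{d+1\}$ with $\emptyset\neq\eta\subseteq[d]$ it acts exactly as $g_{\lambda}$ acts on the coordinate $\eta$ of $\{0,1\}^{d}$. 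Hence the bottom face of $\tilde g_{\lambda}(x,a_{*},y,a_{*})$ is $g_{\lambda}(x,a_{*})$ and its top face is $g_{\lambda}(y,a_{*})$, and the non-$\emptyset$ coordinates of these two faces are literally the same tuple; as $g_{\lambda}(y,a_{*})\to y^{[d]}$, that tuple tends to $(y,\dots,y)$, so $\tilde g_{\lambda}(x,a_{*},y,a_{*})\to(x,y,\dots,y)$. Each $\tilde g_{\lambda}(x,a_{*},y,a_{*})$ belongs to the closed set $\bQ^{[d+1]}(X)$, so $(x,y,\dots,y)\in\bQ^{[d+1]}(X)$, and \cref{thm: shao_ye_thm}(1) gives $(x,y)\in\RP^{[d]}(X)$.

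The step I expect to require the most care --- and the observation that makes the proof short --- is exactly this simultaneous collapse: because $a_{*}$ occupies the same relative positions with the same values in the bottom and top face, one collapse applied to both at once (which is what $\tilde g_{\lambda}$ does, using crucially that $\gamma$ lifts to a genuine facet $\tilde\gamma$ of $\{0,1\}^{d+1}$) eliminates both copies of $a_{*}$ while automatically pinning the vertices $\emptyset\mapsto x$ and $\{d+1\}\mapsto y$. A one-sided collapse of only the top face would either disturb the vertex carrying $x$ or, when $T$ is abelian, call for codimension-$\geq 2$ face transformations that are not available in $\mathcal{HK}^{[d+1]}(T)$; so the genuinely delicate bookkeeping is the identification of $\tilde g_{\lambda}$ and the verification of which coordinates it leaves fixed.
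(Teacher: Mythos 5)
Your proof is correct. The paper does not actually prove this lemma --- it is quoted from Host--Kra--Maass --- and your argument (restricting to the top face to get $(y,a_{*})\in\bQ_{y}^{[d]}(X)$, using the uniqueness of the minimal subset of $(\bQ_{y}^{[d]}(X),\mathcal{F}^{[d]}(T))$ from \cref{thm: cube_is_minimal} to collapse it to $y^{[d]}$, and lifting each face transformation over $\{0,1\}^{d}$ to the corresponding facet of $\{0,1\}^{d+1}$ so that both copies of $a_{*}$ collapse simultaneously while the vertices carrying $x$ and $y$ are fixed) is essentially the standard one from that source; the only ingredient you use that is not recorded in this paper is the routine fact that projections of $\bQ^{[d+1]}(X)$ onto $d$-dimensional faces land in $\bQ^{[d]}(X)$.
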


\begin{theorem}\label{lemma: Rdis_is_RPd}    
    Let $(X, S)$ be a minimal topological dynamical system and let $d\geq 1$ be an integer. Then, the relation  $R_{d}^{\mathrm{dis}}(X)$ defined as
    \begin{align*}
        R_{d}^{\mathrm{dis}}(X):=\{(x,vghx): v\in J(M), g\in \Gtau{d+1},h\in D, x\in X\},
    \end{align*}
equals $\RP^{[d]}(X)$. Moreover, $\RP^{[d]}(X)$ is an equivalence relation.
\end{theorem}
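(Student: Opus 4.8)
The plan is to prove the two inclusions $R_d^{\mathrm{dis}}(X)\subseteq\RP^{[d]}(X)$ and $\RP^{[d]}(X)\subseteq R_d^{\mathrm{dis}}(X)$ separately, and to read off the equivalence‑relation statement by identifying $R_d^{\mathrm{dis}}(X)$ with the fibre relation of the factor map onto the maximal factor of order $d$. (I write $d'$ for a generic element of $D$, which is called $d$ in the statement; note also that $\RP^{[d]}(X)$ is automatically reflexive and symmetric, since its $\varepsilon$–$\delta$ definition is symmetric in the two points, so only transitivity is in question.) Let $\pi_d\colon X\to X_d$ be the factor map onto the maximal factor of order $d$, put $R_{\pi_d}=\{(x,y):\pi_d(x)=\pi_d(y)\}$ and $A=\GG(X)$, and recall from \cref{prop: Ellis_group_nil_d} that $\GG(X_d)=AD\Gtau{d+1}$. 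The first step is the purely algebraic identity $R_d^{\mathrm{dis}}(X)=R_{\pi_d}$. For ``$\subseteq$'': $D$ and $\Gtau{d+1}$ are $\uptau$-closed normal subgroups of $G$ (the latter by \cref{lemma: Gdtau_normal_subgroup}) contained in $\GG(X_d)$, and a normal subgroup of $G$ contained in $\GG(X_d)$ fixes \emph{every} point of $X_d$, not only the base point; since each $v\in J(M)$ also fixes every point of the distal system $X_d$, we get $\pi_d(vgd'x)=\pi_d(x)$. For ``$\supseteq$'': if $(x,x')\in R_{\pi_d}$, write $x'=px$ with $p=v_pg_p$, $v_p\in J(M)$, $g_p\in G$; since $v_p$ fixes $\pi_d(x)$, the condition $\pi_d(x')=\pi_d(x)$ puts $g_p$ in a conjugate of $\GG(X_d)=\Gtau{d+1}DA$, and, using that $\Gtau{d+1}D$ is normal, one can write $x'=v_pgd'x$ with $g\in\Gtau{d+1}$, $d'\in D$ after absorbing the $A$-part into the stabiliser of $x$ (this is done first at a base point $x_0\in uX$ and transferred along the $M$-action). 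Granting this, $\RP^{[d]}(X)\subseteq R_{\pi_d}=R_d^{\mathrm{dis}}(X)$ is immediate: $X_d$ is a system of order $d$, and regional proximality of order $d$ is monotone under factor maps (directly from the definition and uniform continuity of $\pi_d$), so $\pi_d\times\pi_d(\RP^{[d]}(X))\subseteq\RP^{[d]}(X_d)=\Delta$.

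The substantial part is the reverse inclusion $R_d^{\mathrm{dis}}(X)\subseteq\RP^{[d]}(X)$, i.e.\ that $(x,vgd'x)\in\RP^{[d]}(X)$ for all $v\in J(M)$, $g\in\Gtau{d+1}$, $d'\in D$. After reducing to a base point $x_0\in uX$ (and transferring along the $M$-action), it suffices by \cref{lemma: RPd_iff_xaya_in_cube} to exhibit $a_*\in X^{2^{d}-1}$ with $(x_0,a_*,vgd'x_0,a_*)\in\bQ^{[d+1]}(X)$; I take $a_*=(x_0,\dots,x_0)$, i.e.\ aim for the configuration that equals $x_0$ at every vertex of $\{0,1\}^{d+1}$ except the vertex $\{d+1\}$, where it is $vgd'x_0$. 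By \cref{thm: dyn_cube_equal_HK_cube} applied to $X=M$ (so that $\cltau{\HK{d+1}}=G(\mathcal{HK}^{[d+1]}(T))\subseteq\bQ^{[d+1]}(M)$), \cref{lemma: HK_invaraint_g_alpha} gives that $\cltau{\HK{d+1}}$ is invariant under $g^{(\alpha)}$ for $g\in\Gtau{d+1}$ and $\alpha$ the codimension‑$(d+1)$ face $\{\{d+1\}\}$; applying $g^{(\alpha)}$ to the identity $u^{[d+1]}$ and projecting along $M\to X$ yields the configuration with $x_0$ at every vertex except $gx_0$ at the vertex $\{d+1\}$, so $(x_0,\dots,x_0,gx_0)\in\bQ^{[d+1]}(X)$. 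It remains to move that corner coordinate from $gx_0$ to $vgd'x_0=v(gd'g^{-1})(gx_0)$ inside $\bQ^{[d+1]}(X)$, which one does with a saturation property of the dynamical cube: $\bQ^{[d+1]}(X)$ is stable under replacing the $\{d+1\}$-coordinate of a configuration by a point of its closed proximal cell, and this stability persists under finite composition and $\uptau$-limits. Since $D$ is generated, as a $\uptau$-closed subgroup, by the elements $h$ with $(p,hp)\in\overline{\P(M)}$ for all $p\in M$—and a conjugate $ghg^{-1}$ of such an $h$ has the same property, $\overline{\P(M)}$ being $E(M)$-invariant—the translation by $gd'g^{-1}$ in the $\{d+1\}$-coordinate is a $\uptau$-limit of finite composites of closed‑proximal modifications; one further modification by the idempotent $v$ brings us to the configuration with $vgd'x_0$ at the vertex $\{d+1\}$ and $x_0$ elsewhere, whence $(x_0,vgd'x_0)\in\RP^{[d]}(X)$ by \cref{lemma: RPd_iff_xaya_in_cube}.

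Combining the inclusions gives $\RP^{[d]}(X)=R_d^{\mathrm{dis}}(X)=R_{\pi_d}$, and $R_{\pi_d}$ is a closed $T$-invariant equivalence relation (the fibre relation of a factor map onto a compact metric system), so $\RP^{[d]}(X)$ is one as well. The crux—and the point at which using the cube description of \cref{lemma: RPd_iff_xaya_in_cube} rather than the minimality of the face action (\cref{thm: cube_is_minimal}(3)) is what makes the argument go through—is the construction in the second paragraph: one must establish the corner‑saturation of $\bQ^{[d+1]}(X)$ under closed proximal modifications and track the parametrization of $\cltau{\HK{d+1}}$ (\cref{prop: parametrization_cubes}) to legitimize the iterated modifications; the reduction to the base point and the transfer along the $M$-action are routine but require care because $A=\GG(X)$ is not normal in $G$.
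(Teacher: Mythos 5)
Your overall architecture is sound and, in two of its three pieces, matches the paper's: the identification of $R_d^{\mathrm{dis}}(X)$ with the fibre relation of $\pi_d\colon X\to X_d$ via $\GG(X_d)=AD\Gtau{d+1}$ and normality (which also yields the equivalence-relation conclusion), and the treatment of the $\Gtau{d+1}$-part by applying \cref{lemma: HK_invaraint_g_alpha} to a codimension-$(d+1)$ face (a single vertex) inside $\cltau{\HK{d+1}}$. The gap is in the step you yourself flag as the crux: the ``corner-saturation'' of $\bQ^{[d+1]}(X)$, i.e.\ that replacing the value at a single vertex by a point of its $\overline{\P(X)}$-cell keeps a configuration in $\bQ^{[d+1]}(X)$, and that this stability survives finite composition and $\uptau$-limits. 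You assert this but do not prove it, and it does not follow from anything established in the paper. Worse, in the only instance you actually need it --- modifying the configuration that is $x_0$ at every vertex except one --- the claim is, via \cref{thm: shao_ye_thm}(1) and Euclidean symmetry, \emph{equivalent} to the partial transitivity statement ``$(x_0,z)\in\RP^{[d]}(X)$ and $(z,z')\in\overline{\P(X)}$ imply $(x_0,z')\in\RP^{[d]}(X)$''. That is a statement of exactly the kind the theorem is meant to establish (transitivity of $\RP^{[d]}$ against $\overline{\P}$), so invoking it here is essentially circular. A secondary problem is your ``persists under $\uptau$-limits'': $h_\lambda\taulim h$ does not give $h_\lambda z\to hz$ in $X$, so the limit argument must be run inside $\cltau{\HK{d+1}}$ rather than coordinatewise on points of $X$; likewise the ``transfer along the $M$-action'' from $x_0$ to a general $x$ is not routine precisely because transitivity of $\RP^{[d]}$ is not yet available.

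The paper circumvents all of this by not insisting on the diagonal witness $a_*=x_0^{2^d-1}$. For a generator $h$ of $D$ (so $(p,hp)\in\overline{\P(M)}$ for all $p$), it uses the already-known facts $\P(X)\subseteq\RP^{[d]}(X)$ and closedness of $\RP^{[d]}(X)$ to get $(x,hx)\in\RP^{[d]}(X)$ outright, then extracts from \cref{lemma: RPd_iff_xaya_in_cube} a cube configuration $(x,a_*,hx,a_*)$ with an \emph{arbitrary} $a_*$, lifts it into $\cltau{\HK{d+1}}x_0^{[d+1]}$ by \cref{thm: dyn_cube_equal_HK_cube}, and passes from generators to all of $D$ by a group-closure argument at the level of $\cltau{\HK{d+1}}$. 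The idempotents $v,w\in J(M)$ are then handled by \cref{lemma: HK_Ellis_contenido_cubo}, which modifies coordinates in a lower-set pattern rather than at a single vertex and therefore never requires the corner-saturation property. If you want to keep your single-vertex strategy, you must first prove the corner-saturation lemma independently (and for $\overline{\P}$, not just $\P$); as written, the central inclusion $R_d^{\mathrm{dis}}(X)\subseteq\RP^{[d]}(X)$ is not established.
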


\begin{proof}
    Let $x\in X$ and let $g\in G$ such that $(p,gp)\in\overline{\P(M)}$ for every $p\in M$. In particular, we have $(x,gx)\in \RP^{[d]}(X)$. It follows from \cref{lemma: RPd_iff_xaya_in_cube} that there exists $\bold{a}_{*}\in X^{2^{d}-1}$ such that $(x,\bold{a}_{*},gx,\bold{a}_{*})\in\bQ^{[d+1]}(X)$. Therefore, by \cref{thm: dyn_cube_equal_HK_cube}, we have $(ux,\bold{u}\bold{a}_{*},ugx,\bold{u}\bold{a}_{*})\in\cltau{\HK{d+1}}x_{0}^{[d+1]}$, where $\bold{u} = (u,\dots,u)\in G^{2^{d}-1}$. Since $D$ is generated by $\{g\in G: (p,gp)\in\overline{\P(M)}\text{ for every }p\in M\}$, we get that $(ux,\bold{u}\bold{a}_{*},hx,\bold{u}\bold{a}_{*})\in\cltau{\HK{d+1}}x_{0}^{[d+1]}$ for each $h\in D$, so $(ux,hx)\in \RP^{[d]}(X)$ for every $h\in D$.
    
     Let $g\in \Gtau{d+1}$ and  let $(x,y)\in \RP^{[d]}(X)$. By \cref{lemma: RPd_iff_xaya_in_cube}, there exists $\bold{a}_{*}\in X^{2^{d}-1}$ such that $(x,\bold{a}_{*},y,\bold{a}_{*})\in \bQ^{[d+1]}(X)$. Since $g\in \Gtau{d+1}$, by \cref{lemma: HK_invaraint_g_alpha} and \cref{thm: dyn_cube_equal_HK_cube}, it follows that $(ux,\bold{g}\bold{a}_{*},gy,\bold{g}\bold{a}_{*})\in \bQ^{[d+1]}(X)$, where $\bold{g} =(g,\dots,g)\in G^{2^{d}-1}$. Therefore, \cref{lemma: RPd_iff_xaya_in_cube} implies that $(ux,gy)\in \RP^{[d]}(X)$. Thus, by a preceding argument, we deduce that $(ux,ghx)\in \RP^{[d]}(X)$ for each $x\in X$, $h\in D$ and $g\in \Gtau{d+1}$.

    Now, let $(x,y)\in \RP^{[d]}(X)$ and $v,w\in J(M)$. By \cref{lemma: RPd_iff_xaya_in_cube}, there exists $\bold{a}_{*}\in X^{2^{d}-1}$ such that $(x,\bold{a}_{*},y,\bold{a}_{*})\in \bQ^{[d+1]}(X)$. It follows from \cref{lemma: HK_Ellis_contenido_cubo} that $(vx,\bold{w}\bold{a}_{*},wy,\bold{w}\bold{a}_{*})\in \bQ^{[d+1]}(X)$, where $\bold{w} =(w,\dots,w)\in M^{2^{d}-1}$. Therefore, \cref{lemma: RPd_iff_xaya_in_cube} implies that $(vx,wy)\in \RP^{[d]}(X)$. Thus, we obtain that $(x,vghx)\in \RP^{[d]}(X)$ for each $x\in X$, $v\in J(M)$, $h\in D$ and $g\in \Gtau{d+1}$, and hence we conclude that $R_{d}^{dis}(X)\subseteq \RP^{[d]}(X)$.

    Let $(x,y)\in \RP^{[d]}(X)$ and let $\pi: X\to X_{d}$ the factor map to the maximal factor of order $d$. Then, we have\begin{align*}
        \pi\times\pi(x,y)\in \RP^{[d]}(X_{d})=R_{d}^{\uptau}(X_{d})=\Delta.
    \end{align*}

    Let $p,q\in M$ be such that $y=px_{0}$ and $x=qx_{0}$. Since $\pi(y)=\pi(x)$, we obtain that $up^{-1}q\pi(x_{0})=\pi(x_{0})$. It follows from \cref{prop: Ellis_group_nil_d_minimal_system} that $up^{-1}q \in \GG(X_{d})= \Gtau{d+1}DA$. In particular, we have $up^{-1}qx_{0} \in \Gtau{d+1}Dx_{0}$. Thus, there exists $g\in D$ and $h\in \Gtau{d+1}$ such that $upx_{0}=hgqx_{0}$. Let $v\in J(M)$ be such that $vy=y$.  Hence, we have\begin{align*}
        y = vupx_{0} = vhgqx_{0} = vhgx,
    \end{align*}
    
    completing the proof.

To conclude that $\RP^{[d]}(X)$ is an equivalence relation, it suffices to show that  $R_{d}^{dis}$ is an equivalence relation. The proof of this is identical to that in \cref{thm: Rdtau_closed}, but we include it here for completeness. Let $(x,vgx),(x,whx)\in R_{d}^{dis}(X)$, where $v,w\in J(M)$ and $g,h\in G_{d+1}^{\uptau\mathrm{-top}}D$. Note that $hg^{-1} \in \Gtau{d+1}D$. Then, we have\begin{align*}
        (vgx,whx)=(vgx,w(hg^{-1})vgx) \in R_{d}^{dis}(X).
    \end{align*}

Therefore, $R_{d}^{dis}(X)$ is an equivalence relation.
\end{proof}

Let us note that, for the characterization of the above theorem, we only use the regionally proximal relations in distal systems and then extend them to an arbitrary minimal system using the group $D$. Observe that \cref{lemma: Rdis_is_RPd} can be extended to any group such that in which having a $d$-step topologically nilpotent enveloping semigroup is equivalent to being a system of order $d$.

Using the algebraic characterization of the regionally proximal relation, we can give a new and short proof of the lifting property of $\RP^{[d]}$.

\begin{corollary}\label{cor: lifting_RPd}
    Let $\pi\colon (X,S)\to (Y, S)$ be an extension of minimal systems and $d\geq1$ be an integer. Then $\pi\times\pi(\RP^{[d]}(X))=\RP^{[d]}(Y)$.
\end{corollary}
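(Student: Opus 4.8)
The plan is to exploit the algebraic description $\RP^{[d]}(X) = R_d^{\mathrm{dis}}(X)$ just established in \cref{lemma: Rdis_is_RPd}, together with the fact (\cref{thm: extensions_ellis_group}) that an extension $\pi\colon(X,S)\to(Y,S)$ satisfies $\GG(X)\subseteq\GG(Y)$ (with $\GG(Y)$ computed at $\pi(x_0)$), while the objects $J(M)$, $\Gtau{d+1}$, $D$ and the group $G$ itself do not depend on the system at all. The inclusion $\pi\times\pi(\RP^{[d]}(X))\subseteq\RP^{[d]}(Y)$ is immediate and classical: if $(x,y)\in\RP^{[d]}(X)$ then $(\pi(x),\pi(y))$ trivially satisfies the defining $\delta$-approximation property in $Y$ (push forward the witnessing points and face element and use uniform continuity of $\pi$), so only the reverse inclusion $\RP^{[d]}(Y)\subseteq \pi\times\pi(\RP^{[d]}(X))$ requires work.

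For the reverse inclusion, let $(y,y')\in\RP^{[d]}(Y)$. By \cref{lemma: Rdis_is_RPd} applied to $Y$, write $y' = v g e\, y$ with $v\in J(M)$, $g\in\Gtau{d+1}$, $e\in D$. Now choose any $x\in X$ with $\pi(x)=y$; then $\pi(vge\,x) = vge\,\pi(x) = vge\,y = y'$, since $\pi$ intertwines the $\beta T$-action (and hence the action of $G$, $D$, $J(M)$) with the factor map. So if I set $x' := vge\,x$, then $\pi(x')=y'$, and by \cref{lemma: Rdis_is_RPd} applied to $X$ we get $(x,x')\in R_d^{\mathrm{dis}}(X) = \RP^{[d]}(X)$. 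Hence $(y,y') = \pi\times\pi(x,x')\in\pi\times\pi(\RP^{[d]}(X))$, as desired. The one subtlety is that the definition of $R_d^{\mathrm{dis}}$ requires $x$ to be in the relevant domain for the expression $vge\,x$ to again lie in $X$ and for the pair $(x,vge\,x)$ to be of the stated form; this is fine because $vge\in \beta T$ (indeed in $M\cdot G$) acts on all of $X$, and $R_d^{\mathrm{dis}}(X)$ as written already ranges over all $x\in X$ and all $v\in J(M)$, $g\in\Gtau{d+1}$, $e\in D$.

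The only genuine obstacle is making sure the base point conventions line up: $\RP^{[d]}(Y) = R_d^{\mathrm{dis}}(Y)$ uses the idempotent-group data attached to $M$, which is the same $M$ for $X$ and $Y$, and the parametrization $y' = vge\,y$ holds for \emph{every} $y\in uY$ (by minimality of $Y$, every point is of the form $p y_0$), and then the pair $(y, vge\,y)$ being in $\RP^{[d]}(Y)$ for arbitrary $y$ follows from invariance of $\RP^{[d]}$ under the $\beta T$-action used in the proof of \cref{lemma: Rdis_is_RPd}. Since the same group elements $v, g, e$ act on $X$ compatibly via $\pi$, lifting any witnessing point $x\in\pi^{-1}(y)$ produces the required preimage pair. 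I would write this up in about half a page, the bulk of it being the easy forward inclusion and the bookkeeping that $vge$ acts on both systems compatibly with $\pi$.

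\begin{proof}
The inclusion $\pi\times\pi(\RP^{[d]}(X))\subseteq \RP^{[d]}(Y)$ is standard: if $(x,y)\in\RP^{[d]}(X)$ and $\delta>0$, pick (using uniform continuity of $\pi$) some $\delta'>0$ with $\rho_X(a,b)<\delta'\Rightarrow \rho_Y(\pi(a),\pi(b))<\delta$, and apply the definition of $\RP^{[d]}(X)$ with $\delta'$ to obtain $x',y'\in X$ and $t\in\mathcal{F}^{[d]}(T)$ witnessing $\delta'$-closeness; then $\pi(x'),\pi(y')$ and the same $t$ witness $\delta$-closeness for $(\pi(x),\pi(y))$ in $Y$. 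Hence $(\pi(x),\pi(y))\in\RP^{[d]}(Y)$.

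For the reverse inclusion, let $(y_1,y_2)\in\RP^{[d]}(Y)$. By \cref{lemma: Rdis_is_RPd} applied to $(Y,S)$, there are $v\in J(M)$, $g\in\Gtau{d+1}$, $e\in D$ with $y_2 = vge\,y_1$. Choose $x_1\in X$ with $\pi(x_1)=y_1$ and set $x_2 := vge\,x_1$. Since $\pi$ is an extension of minimal systems, it intertwines the induced $\beta T$-action, and in particular the action of the elements $v,g,e\in \beta T$; therefore $\pi(x_2) = vge\,\pi(x_1) = vge\,y_1 = y_2$. By \cref{lemma: Rdis_is_RPd} applied to $(X,S)$, the pair $(x_1,x_2)=(x_1,vge\,x_1)$ lies in $R_d^{\mathrm{dis}}(X)=\RP^{[d]}(X)$. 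Thus $(y_1,y_2)=\pi\times\pi(x_1,x_2)\in \pi\times\pi(\RP^{[d]}(X))$, which completes the proof.
\end{proof}
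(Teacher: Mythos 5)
Your proof is correct and follows essentially the same route as the paper: both establish the easy forward inclusion and then use the algebraic description of \cref{lemma: Rdis_is_RPd} to write $y'=vge\,y$, lift a preimage $x\in\pi^{-1}(y)$, and observe that $(x,vge\,x)\in R_d^{\mathrm{dis}}(X)=\RP^{[d]}(X)$ maps onto $(y,y')$. The paper's version is just terser, declaring the forward inclusion ``clear'' where you spell out the uniform-continuity argument.
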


\begin{proof}
    It is clear that $\pi\times\pi(\RP^{[d]})(X) \subseteq \RP^{[d]}(Y)$, so we only need to prove the reverse inclusion. Let $(y,y')\in \RP^{[d]}(Y)$. By \cref{lemma: Rdis_is_RPd}, there exists $v\in J(M)$, $d\in D$ and $g\in \Gtau{d+1}$ such that $y'=vdgy$. Consider $x\in X$ such that $y=\pi(x)$. Then, by \cref{lemma: Rdis_is_RPd} we have $(x,vdgx)\in \RP^{[d]}(X)$, and $\pi(vdgx)=y'$. This finishes the proof.
\end{proof}

\subsection{Topological characteristic factor along cubes} \label{sec:applications_charfac_cube}

The counterpart of characteristic factors in topological dynamics was first studied by Glasner in \cite{Glasner_top_erg_decomposition:1994}. In the recent work by Cai and Shao in \cite{Cai_Shao_Topological_characteristic_cubes:2019}, they introduced the concept of the topological cubic characteristic factor of order $d$.

\begin{definition}
    Let $(X, T)$ be a topological dynamical system and let $\pi:(X, T)\to (Y, T)$ be an extension. A subset $L$ of $X$ is called {\em $\pi$-saturated} if $L=\pi^{-1}(\pi(L))$.
\end{definition}

The definition of the topological cubic characteristic factor of order $d$ is as follows:

\begin{definition}
    Let $(X, T)$ be a topological dynamical system and let $\pi:(X, T)\to (Y, T)$ be an extension. The system $(Y, T)$ is said to be a {\em topological cubic characteristic factor of order $d$} if there exists a dense $G_{\delta}$ set $X_{0}$ of $X$ such that for each $x\in X_{0}$ the set $\rho_{*}(\overline{\mathcal{F}^{[d]}(T)x^{[d]}})$ is $\pi^{2^{d}-1}$-saturated, where $\rho_{*}: X^{[d]}\to X^{2^{d}-1}$ is the projection on the latter $2^{d}-1$ coordinates.
\end{definition}

Cai and Shao \cite{Cai_Shao_Topological_characteristic_cubes:2019} established that, up to proximal extensions, the maximal factor of order $d-1$ is the topological cubic characteristic factor of order $d$. In this subsection, we will extend these results to almost one-to-one extensions, as well as to systems of order $\infty$. Before, we need the following result.

\begin{lemma}\label{lemma: Ellis_distal_group_cube_universal}
    Let $(X, T)$ be a minimal topological dynamical system and let $d\geq 1$ be an integer. Then,  
    $$D^{[d]} \subseteq \cltau{\HK{d}}.$$ 
In particular, $D^{[d]}\bx \subseteq \bQ^{[d]}(X)$ for all $\bx\in \bQ^{[d]}(X)$.
\end{lemma}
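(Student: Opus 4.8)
The plan is to prove $D^{[d]}\subseteq\cltau{\HK{d}}$ (as subgroups of $G(\mathcal{HK}^{[d]}(T))$, where the $\uptau$-closure is taken there) by an induction that mirrors the definition of $D$ as the $\uptau$-closed group generated by $\{g\in G : (p,gp)\in\overline{P(M)}\text{ for all }p\in M\}$. Since $\cltau{\HK{d}}$ is a $\uptau$-closed subgroup of $G(\mathcal{HK}^{[d]}(T))$ by \cref{thm: dyn_cube_equal_HK_cube}, and since $G^{[d]}(\mathcal{HK}^{[d]}(T))$ coincides with $G$ on each coordinate $X_\epsilon$, it suffices to show $g^{[d]}\in\cltau{\HK{d}}$ for every generator $g$ of $D$; the $\uptau$-closure and the group operations then propagate this to all of $D^{[d]}$.

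So fix $g\in G$ with $(p,gp)\in\overline{P(M)}$ for every $p\in M$; I must show $g^{[d]}=(g,g,\dots,g)\in\cltau{\HK{d}}$. First I would record that $(x,gx)\in P(M)^{-}$ applied coordinatewise means, roughly, that $g^{[d]}$ sits in the closure of the proximal cell of the diagonal in $\bQ^{[d]}(M)$. Concretely: since $D\subseteq\GG(Y)$ for any proximal extension $Y$ of a distal system, and in particular $g$ acts trivially on $M_{dis}$, for any $p\in M$ the pair $(up,gp)$ is proximal. Using \cref{lemma: HK_Ellis_contenido_cubo} (or \cref{thm: dyn_cube_equal_HK_cube} directly, which tells us $\cltau{\HK{d}}u^{[d]} = u^{[d]}\bQ^{[d]}(M) = \bQ^{[d]}(M)$), write $g^{[d]}u^{[d]}$ as a limit: take a net $t_\lambda\to g$ in $T$, so $t_\lambda^{[d]}\in\Delta^{[d]}(T)\subseteq\HK{d}$ and $t_\lambda^{[d]}u^{[d]}\to g^{[d]}u^{[d]}$ in $X^X$. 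By \cref{lemma: conv_prod_tau_top}(2) a subnet converges in the $\uptau$-topology of $G(\mathcal{HK}^{[d]}(T))$ to $u^{[d]}g^{[d]}u^{[d]}$, and since $u^{[d]}$ is a minimal idempotent coinciding with the identity of $G(\mathcal{HK}^{[d]}(T))$ (\cref{lem:min_idemp_cubo}), this limit is just $g^{[d]}$ acting on $\bQ^{[d]}(M)$. Hence $g^{[d]}\in\cltau{\HK{d}}$.

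Once $g^{[d]}\in\cltau{\HK{d}}$ for each generator $g$, the fact that $\cltau{\HK{d}}$ is a $\uptau$-closed subgroup and that the map $g\mapsto g^{[d]}$ is a group homomorphism $G\to G(\mathcal{HK}^{[d]}(T))$ which is $\uptau$-continuous (being the coordinatewise diagonal, and using that left/right multiplication are $\uptau$-homeomorphisms, \cref{lemma: conv_prod_tau_top}(1)) gives $\langle\{g^{[d]}: g\text{ a generator of }D\}\rangle^{-\uptau}\subseteq\cltau{\HK{d}}$, and the left-hand side contains $D^{[d]}$ since $D$ is generated, as a $\uptau$-closed subgroup, by those generators. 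This yields $D^{[d]}\subseteq\cltau{\HK{d}}$. For the ``in particular'' clause: given $\bx\in\bQ^{[d]}(X)$, \cref{thm: dyn_cube_equal_HK_cube} gives $u^{[d]}\bx\in\cltau{\HK{d}}x_0^{[d]}$, hence $D^{[d]}u^{[d]}\bx\subseteq\cltau{\HK{d}}x_0^{[d]}\subseteq\bQ^{[d]}(X)$; and since each $d\in D$ acts on $X$ with $d$ and $ud = d$ agreeing (as $u$ is the identity of $G$ acting on $X$... more precisely $udx = dx$ for all $x$ since $u\in J(M)$ fixes minimal points and $dx$ is minimal), we get $D^{[d]}\bx\subseteq\bQ^{[d]}(X)$.

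\textbf{Main obstacle.} The delicate point is the passage from ``$(p,gp)$ is a proximal pair for each $p$'' to ``$g^{[d]}$ genuinely lies in $\cltau{\HK{d}}$ as computed in $G(\mathcal{HK}^{[d]}(T))$'' — i.e., controlling the interaction between the $\uptau$-topology upstairs on the cube group and the diagonal embedding, and making sure the subnet extraction in \cref{lemma: conv_prod_tau_top} lands on exactly $g^{[d]}$ rather than some $u^{[d]}$-translate that only coincides with $g^{[d]}$ on certain coordinates. I expect this is handled cleanly precisely because $u^{[d]}$ coincides with a minimal idempotent of $\beta\mathcal{HK}^{[d]}(T)$ on $\bQ^{[d]}(X)$ (\cref{lem:min_idemp_cubo}) and $G(\mathcal{HK}^{[d]}(T))$ restricts to $G$ on every coordinate, so ``coinciding on all $2^d$ coordinates'' is the same as ``equal in $G(\mathcal{HK}^{[d]}(T))$ acting on $\bQ^{[d]}(X)$''; nonetheless this bookkeeping is where care is needed.
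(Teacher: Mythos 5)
There is a genuine gap, and it comes from a misreading of the notation. In this paper $X^{[d]}$ denotes $X^{2^d}$, so $D^{[d]}$ is the \emph{full} $2^d$-fold Cartesian power of $D$: a typical element has $2^d$ independent entries from $D$. Your reduction ``it suffices to show $g^{[d]}\in\cltau{\HK{d}}$ for every generator $g$ of $D$, and group operations propagate this to all of $D^{[d]}$'' fails at exactly this point: products and $\uptau$-limits of diagonal elements $g^{[d]}=(g,\dots,g)$ remain diagonal, so at best you reach $\cltau{\Delta^{[d]}(D)}$, never a point of $D^{2^d}$ with distinct coordinates. Worse, the statement you do prove is vacuous: by the decomposition $\HK{d}=\mathcal{F}^{[d]}(G)\Delta^{[d]}(G)$ quoted in the paper (or directly, since a facet and its opposite facet partition $\{0,1\}^d$), the diagonal $g^{[d]}$ lies in $\HK{d}$ for \emph{every} $g\in G$, with no appeal to proximality, to $D$, or to subnet extractions. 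So the entire machinery you deploy establishes something already true by definition, and the actual content of the lemma is never touched.

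That content is the claim that for $g\in D$ the element $g^{(\{\vec 1\})}=(u,\dots,u,g)$, with $g$ in a \emph{single} coordinate and the identity elsewhere, lies in $\cltau{\HK{d}}$ --- this is far from automatic, since such elements are not in $\HK{d}$ for general $g\in G$. The paper's route: $D$ is trivial on $M_{dis}$, hence on the factor $M/\RP^{[d-1]}(M)$, so $(u,g)\in\RP^{[d-1]}(M)$; by the cube characterization of $\RP^{[d-1]}$ (\cref{thm: shao_ye_thm}) this gives $(u,\dots,u,g)\in u^{[d]}\bQ^{[d]}(M)$, which equals $\cltau{\HK{d}}$ by \cref{thm: dyn_cube_equal_HK_cube} applied to $X=M$, $x_0=u$. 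Euclidean permutations of the cube (under which $\cltau{\HK{d}}$ is invariant) then place a generator of $D$ in any prescribed single coordinate, and multiplying $2^d$ such elements yields an arbitrary point of $D^{2^d}$. Your proposal is missing the $\RP^{[d-1]}$ step entirely, which is the only nontrivial input. The ``in particular'' clause is handled correctly and as in the paper.
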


\begin{proof}
    Since $\mathcal{HK}^{[1]}(G)=G\times G$, we immediately get the conclusion for $d=1$.
    Now suppose that $d\geq 2$. Let $g\in D$. Since $D$ is a normal subgroup of $G$, similar to the proof of \cref{prop: Ellis_group_nil_d_minimal_system}, we have that $D$ is trivial on $M_{dis}$, where $M_{dis}$ is the maximal distal factor of $M$. So, $D$ is also trivial on $M/\RP^{[d-1]}(M)$, meaning that $g=u$ on $M/\RP^{[d-1]}(M)$. Therefore, we get that $(u,g)\in \RP^{[d-1]}(M)$. Thus, we have\begin{align*}
        (u,\dots,u,g) \in u^{[d]}\bQ^{[d]}(M)=\cltau{\HK{d}}.
    \end{align*}

   Using Euclidean permutations, and the fact that $\cltau{\HK{d}}$ is invariant under them, we get $D^{[d]} \subseteq \cltau{\HK{d}}$.  By \cref{thm: dyn_cube_equal_HK_cube}, we conclude that $D^{[d]}\bx\subseteq \bQ^{[d]}(X)$ for all $\bx\in \bQ^{[d]}(X)$.
\end{proof}

The next proposition improves \cite[Proposition 3.3]{Cai_Shao_Topological_characteristic_cubes:2019} from RIC to open extensions.  

\begin{proposition}\label{prop: cube_saturated_open_ext}
     Let $\pi: (X, S)\to (Y, S)$ be an extension of minimal systems and $d\geq 2$ be an integer. If $\pi$ is open and $X_{d-1}$ is a factor of $Y$, where $X_{d-1}$ is the maximal factor of order $d-1$ of $X$, then $\bQ^{[d]}(X)$ is $\pi^{[d]}$-saturated.
\end{proposition}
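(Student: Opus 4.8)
The plan is to show that $\bQ^{[d]}(X)$ is $\pi^{[d]}$-saturated by testing on an arbitrary cube and showing that changing any point in a $\pi$-fiber keeps us inside $\bQ^{[d]}(X)$. Concretely, let $\bx = (x_\epsilon : \epsilon\subseteq[d]) \in \bQ^{[d]}(X)$ and suppose $\by = (y_\epsilon : \epsilon\subseteq[d])$ satisfies $\pi(y_\epsilon) = \pi(x_\epsilon)$ for all $\epsilon$. I want to conclude $\by \in \bQ^{[d]}(X)$. Since the enveloping-semigroup action lets us move freely (every point of $X$ is $p x_0$ for some $p\in M$), and by \cref{thm: dyn_cube_equal_HK_cube} we may work inside $\cltau{\HK{d}}x_0^{[d]} = u^{[d]}\bQ^{[d]}(X)$, it suffices to produce, for each coordinate $\epsilon$, an element of $G$ (or of $\beta T$) that moves $x_\epsilon$ to $y_\epsilon$ while respecting the cube structure — and the natural candidates are the elements coming from the proximal relation and from the group $D$, via \cref{lemma: Ellis_distal_group_cube_universal}.

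The key steps, in order, are as follows. First, reduce to the universal situation: using \cref{thm: dyn_cube_equal_HK_cube} and \cref{cor: universal_hk_cube}, it is enough to prove the saturation statement for $X = M$, or more precisely to show that for $\bx \in \bQ^{[d]}(M)$ and any $\by$ with $\pi(y_\epsilon) = \pi(x_\epsilon)$ one has $\by \in \bQ^{[d]}(M)$; the general case follows by pushing through the factor map $M\to X$. Second, use the hypothesis that $X_{d-1}$ is a factor of $Y$: this means $\GG(Y)\subseteq \GG(X_{d-1}) = AD\Gtau{d}$ (by \cref{prop: Ellis_group_nil_d}), so any two points in the same $\pi$-fiber differ by an element of $\GG(Y)$, hence by an element of the form $v g h$ with $v\in J(M)$, $g\in\Gtau{d}$, $h\in D$ (using that $\pi$ is open, so $p\circ\pi^{-1}(y) = \pi^{-1}(py)$ and fibers are controlled by $\GG(Y)$ acting). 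Third, show each such generator preserves $\bQ^{[d]}(M)$ coordinatewise-consistently: elements of $D$ are handled by \cref{lemma: Ellis_distal_group_cube_universal} (which gives $D^{[d]}\bx\subseteq\bQ^{[d]}(X)$); elements $g\in\Gtau{d}$ acting in a single coordinate — actually $\Gtau{d} = \Gtau{((d-1)+1)}$, and by \cref{lemma: HK_invaraint_g_alpha} we know $\cltau{\HK{d}}$ is invariant under $g^{(\alpha)}$ for $g\in\Gtau{j}$ and $\alpha$ a face of codimension $j$, and a single coordinate $\{\epsilon\}$ is a face of codimension $d \geq d-1 = j$... here I need to be a little careful, since I want codimension exactly matching, but $\Gtau{d}\subseteq\Gtau{d-1}\subseteq\cdots$, so taking $j$ appropriately and noting $g\in\Gtau{d}$ sits in all lower $\Gtau{j}$, invariance under $g^{(\alpha)}$ for the relevant faces follows; and idempotents $v\in J(M)$ via \cref{lemma: HK_Ellis_contenido_cubo} / \cref{lem:min_idemp_cubo}. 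Fourth, assemble: iterating over coordinates, starting from $\bx\in\bQ^{[d]}(M)$ one reaches $\by$ by a finite sequence of such moves, so $\by\in\bQ^{[d]}(M)$; openness of $\pi$ is used to guarantee the fiber description is uniform (so that the moves we make in distinct coordinates are genuinely independent and stay minimal).

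The main obstacle I expect is the coordinatewise bookkeeping in the third step: a priori the "difference'' element between $\bx$ and $\by$ lives in $\GG(Y)^{[d]}$ only up to the constraint that it must itself belong to $\cltau{\HK{d}}$ when we want to stay in the cube, and it is not obvious that an arbitrary fiberwise modification decomposes as a product of the allowed generators $g^{(\alpha)}$ (with $g\in\Gtau{j}$, $\alpha$ of codimension $j$) together with a $D$-part and idempotents. The clean way around this is to exploit that $\pi$ is open so that $\pi^{[d]}\colon \bQ^{[d]}(X)\to\bQ^{[d]}(Y)$ is also open (or at least that fibers of $\pi^{[d]}$ are orbit closures under $\cltau{\HK{d}(\GG(Y))}$), and to phrase the whole argument at the level of the Ellis groups: $\GG(\bQ^{[d]}(X))$ versus $\GG(\bQ^{[d]}(Y))$, showing the inclusion $\GG(Y)\subseteq\GG(X_{d-1})$ lifts to an inclusion of the corresponding Host–Kra cube Ellis groups, which is exactly the saturation statement. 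If that reduction goes through, the rest is routine; if not, one falls back on the explicit net arguments à la \cref{thm: dyn_cube_equal_HK_cube}, perturbing one coordinate at a time and invoking \cref{lemma: HK_invaraint_g_alpha} and \cref{lemma: Ellis_distal_group_cube_universal} at each step.
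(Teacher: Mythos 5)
Your outline assembles the same ingredients as the paper's proof (\cref{lemma: HK_invaraint_g_alpha}, \cref{lemma: Ellis_distal_group_cube_universal}, the Ellis group of $X_{d-1}$, openness of $\pi$), but the step you flag as the ``main obstacle'' is exactly where the proof lives, and neither of your two proposed resolutions closes it. First, the fiber of $\pi^{[d]}$ is not controlled by $\GG(Y)$ alone: points of a fiber need not lie in $uX$, so ``two points in the same $\pi$-fiber differ by an element of $\GG(Y)$'' only holds after applying $u$, and the idempotent corrections must be tracked. The paper's way around this is not the Ellis-group computation $\GG(X_{d-1})=AD\Gtau{d}$ but the pointwise identification $\RP^{[d-1]}(X)=R_{d-1}^{\mathrm{dis}}(X)$ from \cref{lemma: Rdis_is_RPd}: since $X_{d-1}$ is a factor of $Y$, the $\pi^{[d]}$-fiber sits inside the $\pi_{d-1}^{[d]}$-fiber, which coordinatewise is exactly $J(M)\Gtau{d}Dx_{0}$, giving $(\pi_{d-1}^{[d]})^{-1}(\pi_{d-1}^{[d]}(x_{0}^{[d]}))=J(M)^{[d]}(\Gtau{d})^{[d]}D^{[d]}x_{0}^{[d]}$. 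This is what makes an arbitrary fiberwise modification decompose into allowed generators: $(\Gtau{d})^{[d]}$ is generated by vertex-supported elements $g^{(\{\epsilon\})}$ with $g\in\Gtau{d}$, and a vertex has codimension exactly $d$, so \cref{lemma: HK_invaraint_g_alpha} applies with $j=d$ (your ``$d\geq d-1=j$'' hedging is off --- the codimension matches on the nose), while $D^{[d]}$ is handled by \cref{lemma: Ellis_distal_group_cube_universal}.

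Second, this only describes the fiber over the base point $x_{0}^{[d]}$. For a general fiber one must compute $g\circ\bigl(J(M)^{[d]}(\Gtau{d})^{[d]}D^{[d]}x_{0}^{[d]}\bigr)$ for $g\in\cltau{\HK{d}}$, using openness via $p\circ\pi^{-1}(y)=\pi^{-1}(py)$. The paper does this with a net argument showing $u^{[d]}\circ J(M)^{[d]}(\Gtau{d})^{[d]}D^{[d]}\subseteq J(M(\mathcal{HK}^{[d]}(S)))D^{[d]}(\Gtau{d})^{[d]}H(G(\mathcal{HK}^{[d]}(S)))$, which in turn requires the preliminary inclusion $\overline{J(M)}\subseteq J(M)D$. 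None of this is routine: your first fallback (lifting $\GG(Y)\subseteq\GG(X_{d-1})$ to an inclusion of cube Ellis groups) is not equivalent to saturation of the closed set $\bQ^{[d]}(X)$ --- Ellis-group inclusions detect factor relationships for distal targets, not saturation of a prescribed subset --- and your second fallback is precisely the computation you have not carried out. So the skeleton is right, but as written the proof has a genuine gap at its central step.
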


\begin{proof}
        First, we will show that $\overline{J(M)}\subseteq J(M)D$. Let $\psi: M\to M_{dis}$ be the quotient map, where $M_{dis}$ is the maximal distal factor of $M$. Let $p\in \overline{J(M)}$, then there exists $v\in J(M)$ and $g\in G$ such that $p=vg$. Since $\psi$ is continuous, we have $\psi(\overline{J(M)}) \subseteq \overline{\psi(J(M))} = \{\psi(u)\}$. Therefore, we get that\begin{align*}
        \psi(u)=\psi(p)=vg\psi(u) =g\psi(u).
        \end{align*}

    Thus, we have $g\in D$, and we conclude that $\overline{J(M)}\subseteq J(M)D$.

     It follows from \cref{thm: dyn_cube_equal_HK_cube} that\begin{align*}
        (\pi^{[d]})^{-1}(\pi^{[d]} (\bQ^{[d]}(X)) ) &= (\pi^{[d]})^{-1}(\pi^{[d]} (\cltau{\HK{d}}x_{0}^{[d]})\\
        &= (\pi^{[d]})^{-1}(\cltau{\HK{d}}\pi^{[d]} (x_{0}^{[d]})).     \end{align*} 
        
    Let $\bx\in (\pi^{[d]})^{-1}(\cltau{\HK{d}}\pi^{[d]} (x_{0}^{[d]}))$. Consider $\bg\in \cltau{\HK{d}}$ such that $\pi^{[d]}(\bx)=\pi^{[d]}(\bg x_{0}^{[d]})$. Since $\pi$ is open, by \cref{thm: extensions_ellis_group}, we have \begin{align*}
        (\pi^{[d]})^{-1}(\bg\pi^{[d]}(x_{0}^{[d]}))= \bg \circ (\pi^{[d]})^{-1}(\pi^{[d]}(x_{0}^{[d]})).
    \end{align*}
Let $\pi_{d-1}\colon X\to X_{d-1}$ and $\phi\colon Y \to X_{d-1}$ be the natural factor maps. Note that\begin{align*}
        \bg \circ (\pi^{[d]})^{-1}(\pi^{[d]}(x_{0}^{[d]})) \subseteq \bg \circ (\pi^{[d]})^{-1}((\phi^{[d]})^{-1}( \phi^{[d]}(\pi^{[d]}(x_{0}^{[d]})))) = \bg \circ (\pi_{d-1}^{[d]})^{-1}(\pi_{d-1}^{[d]}(x_{0}^{[d]})).
    \end{align*}

    By \cref{lemma: Rdis_is_RPd}, we have $(\pi_{d-1}^{[d]})^{-1}(\pi_{d-1}^{[d]}(x_{0}^{[d]})) = J(M)^{[d]}(\Gtau{d})^{[d]}D^{[d]}x_{0}^{[d]}$.
    
    Let $\bp\in u^{{[d]}} \circ J(M)^{[d]}(\Gtau{d})^{[d]}D^{[d]}$. Then, there exist $(\bt_{\lambda})_{\lambda\in\Lambda}$, $(\bv_{\lambda})_{\lambda\in\Lambda}$, $(\bold{d}_{\lambda})_{\lambda\in\Lambda}$ and $(\bg_{\lambda})_{\lambda\in\Lambda}$ nets in $\mathcal{HK}^{[d]}(S)$, $J(M)^{[d]}$, $D^{[d]}$ and $(\Gtau{d})^{[d]}$ respectively, such that $\bt_{\lambda} \to u^{[d]}$ and $\bt_{\lambda}\bv_{\lambda}\bold{d}_{\lambda}\bg_{\lambda} \to \bp$. Consider $\bv\in J(M)^{[d]}$ and $\bold{d}\in D^{[d]}$ such that $\bv_{\lambda} \to \bv \bold{d}$. By \cref{lemma: conv_prod_tau_top}, we get that\begin{align*}
        \bt_{\lambda}\bold{dd}_{\lambda}\bg_{\lambda} \taulim u^{[d]}\bp.
    \end{align*}

    Let $\bold{d}'\in D^{[d]}$ and $\bg\in (\Gtau{d})^{[d]}$ be such that $\bold{d}_{\lambda} \taulim \bold{d}'$ and $\bg_{\lambda} \taulim \bg$. Since $\bt_{\lambda}\to u^{[d]}$, we have that\begin{align*}
        \bt_{\lambda}\bold{dd}_{\lambda}\bg_{\lambda}H(G(\mathcal{HK}^{[d]}(S))) \taulim \bold{dd}'\bg H(G(\mathcal{HK}^{[d]}(S))) =  u^{[d]}\bp H(G(\mathcal{HK}^{[d]}(S))).
    \end{align*}

    Therefore, there is a $\bh\in H(G(\mathcal{HK}^{[d]}(S)))$ such that $u^{[d]}\bp = \bold{dd'}\bg\bh$. Thus,\begin{align*}
        u^{{[d]}} \circ J(M)^{[d]}(\Gtau{d})^{[d]}D^{[d]} \subseteq J(M(\mathcal{HK}^{[d]}(S))D^{[d]}(\Gtau{d})^{[d]}H(G(\mathcal{HK}^{[d]}(S))). 
    \end{align*}

    By \cref{lemma: HK_invaraint_g_alpha} and  \cref{lemma: Ellis_distal_group_cube_universal}, we get that $D^{[d]}(\Gtau{d})^{[d]}x_{0}^{[d]} \subseteq \cltau{\HK{d}}$. Therefore, from \cref{thm: dyn_cube_equal_HK_cube}, it follows that
    \begin{align*}
        J(M(\mathcal{HK}^{[d]}(S))D^{[d]}(\Gtau{d})^{[d]}H(G(\mathcal{HK}^{[d]}(S)))x_{0}^{[d]} \subseteq J(M(\mathcal{HK}^{[d]}(S))\cltau{\HK{d}}x_{0}^{[d]} = \bQ^{[d]}(X).
    \end{align*}
    
    Thus, \begin{align*}
        \bg \circ (\pi^{[d]})^{-1}(\pi^{[d]}(x_{0}^{[d]})) = \bg\circ (u \circ (\pi^{[d]})^{-1}(\pi^{[d]}(x_{0}^{[d]}))) \subseteq \bg\circ \cltau{\HK{d}}x_{0}^{[d]} \subseteq \bg \circ \bQ^{[d]}(X).
    \end{align*}

    Since $\bQ^{[d]}(X)$ is an $\mathcal{HK}^{[d]}(S)$-invariant closed set, we have that $\bg \circ \bQ^{[d]}(X) = \bQ^{[d]}(X)$. Hence, we conclude that $\bQ^{[d]}(X)$ is $\pi^{[d]}$-saturated.
\end{proof}

Note that for all $d \geq 1$, we have $\bigcap_{d\geq 1} \Gtau{d} \subseteq \Gtau{j}$ for $j\geq 1$. Using the same proof as in the \cref{prop: cube_saturated_open_ext} and using \cref{prop: Ellis_groups_nil_infty}, we can conclude the following result.

\begin{proposition}\label{prop: cube_saturated_open_ext_nil_infty}
     Let $\pi: (X, S)\to (Y, S)$ be an extension of minimal systems. If $\pi$ is open and $X_{\infty}$ is a factor of\, $Y$, where $X_{\infty}$ is the maximal factor of order $\infty$ of $X$, then $\bQ^{[d]}(X)$ is $\pi^{[d]}$-saturated for each $d\geq 1$.
\end{proposition}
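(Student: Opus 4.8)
The plan is to dispose of $d=1$ directly and to reduce the cases $d\ge 2$ to \cref{prop: cube_saturated_open_ext}. For $d=1$ one has $\mathcal{HK}^{[1]}(T)=T\times T$, so $\bQ^{[1]}(X)=\overline{\{(t_{0}x,t_{1}x):t_{0},t_{1}\in T,\ x\in X\}}=X\times X$ by minimality of $(X,S)$, and $X\times X$ is trivially $\pi^{[1]}$-saturated. For $d\ge 2$, the key observation is that $X_{d-1}$ is a factor of $X_{\infty}$: since $\RP^{[\infty]}(X)=\bigcap_{k\ge 1}\RP^{[k]}(X)\subseteq\RP^{[d-1]}(X)$ and both relations are equivalence relations by \cref{lemma: Rdis_is_RPd}, the factor map $X\to X_{d-1}$ factors through $X\to X_{\infty}$. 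Since $X_{\infty}$ is by hypothesis a factor of $Y$, it follows that $X_{d-1}$ is a factor of $Y$, and, $\pi$ being open, \cref{prop: cube_saturated_open_ext} applies and yields that $\bQ^{[d]}(X)$ is $\pi^{[d]}$-saturated.

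Alternatively, in the spirit of the discussion preceding the statement, one may argue directly by replaying the proof of \cref{prop: cube_saturated_open_ext} with the maximal factor of order $\infty$ in place of the maximal factor of order $d-1$. There, the order-$(d-1)$ factor enters only through the fiber identity $(\pi_{d-1}^{[d]})^{-1}(\pi_{d-1}^{[d]}(x_{0}^{[d]}))=J(M)^{[d]}(\Gtau{d})^{[d]}D^{[d]}x_{0}^{[d]}$, which rests on the Ellis group description $\GG(X_{d-1})=AD\,\Gtau{d}$ (\cref{prop: Ellis_group_nil_d}) together with \cref{lemma: Rdis_is_RPd}. Replacing $\pi_{d-1}\colon X\to X_{d-1}$ by $\pi_{\infty}\colon X\to X_{\infty}$ and $\phi$ by the corresponding factor map $Y\to X_{\infty}$, and invoking \cref{prop: Ellis_groups_nil_infty} together with its non-distal counterpart $\GG(X_{\infty})=AD\bigcap_{j\ge 1}\Gtau{j}$, one gets $(\pi_{\infty}^{[d]})^{-1}(\pi_{\infty}^{[d]}(x_{0}^{[d]}))=J(M)^{[d]}\bigl(\bigcap_{j\ge 1}\Gtau{j}\bigr)^{[d]}D^{[d]}x_{0}^{[d]}$. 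As $\bigcap_{j\ge 1}\Gtau{j}\subseteq\Gtau{d}$, \cref{lemma: HK_invaraint_g_alpha} and \cref{lemma: Ellis_distal_group_cube_universal} still give $D^{[d]}\bigl(\bigcap_{j\ge 1}\Gtau{j}\bigr)^{[d]}\subseteq\cltau{\HK{d}}$; from that point on, the openness step via \cref{thm: extensions_ellis_group}, the manipulation with the circle operation and the Hausdorff group $G(\mathcal{HK}^{[d]}(S))/H(G(\mathcal{HK}^{[d]}(S)))$, and the final appeal to \cref{thm: dyn_cube_equal_HK_cube} are identical to the original argument.

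I do not expect a genuine obstacle along the reduction route: its only nontrivial input, the inclusion $\RP^{[\infty]}(X)\subseteq\RP^{[d-1]}(X)$, is immediate from the definition once these relations are known to be equivalence relations, which is \cref{lemma: Rdis_is_RPd}. On the direct route, the one point needing a word of care is the fiber description for $\pi_{\infty}$: since $X_{\infty}=X/\RP^{[\infty]}(X)$ and $\RP^{[\infty]}(X)$ is an equivalence relation, $R_{\pi_{\infty}}=\RP^{[\infty]}(X)$, and the algebraic description of $\RP^{[\infty]}(X)$ analogous to \cref{lemma: Rdis_is_RPd}, with $\bigcap_{j\ge 1}\Gtau{j}$ playing the role of $\Gtau{d+1}$, follows from \cref{prop: Ellis_groups_nil_infty} in the same way \cref{lemma: Rdis_is_RPd} follows from its finite-order analogue.
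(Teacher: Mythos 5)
Your proposal is correct, and your second route is exactly the paper's proof: the paper dispatches this proposition in one sentence by noting that $\bigcap_{j\geq 1}\Gtau{j}\subseteq\Gtau{d}$ and rerunning the argument of \cref{prop: cube_saturated_open_ext} with \cref{prop: Ellis_groups_nil_infty} playing the role of \cref{prop: Ellis_group_nil_d}. Your first route, however, is a genuinely different and shorter argument that the paper does not take: since $\RP^{[\infty]}(X)=\bigcap_{k\geq 1}\RP^{[k]}(X)\subseteq\RP^{[d-1]}(X)$ and these are closed invariant equivalence relations, the canonical map $X\to X_{d-1}$ factors through $X_{\infty}$, so $X_{d-1}$ is a factor of $Y$ and \cref{prop: cube_saturated_open_ext} applies verbatim for every $d\geq 2$; the case $d=1$ is handled by the trivial observation that $\mathcal{HK}^{[1]}(T)=T\times T$ forces $\bQ^{[1]}(X)=X\times X$, which is automatically $\pi^{[1]}$-saturated. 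This reduction buys economy (no reworking of the circle-operation computation or of the Ellis-group identity for the order-$\infty$ factor) and has the additional merit of making explicit the $d=1$ case, which \cref{prop: cube_saturated_open_ext} excludes while the present statement includes. The only point deserving a word is that ``$X_{d-1}$ is a factor of $Y$'' must be understood compatibly with the canonical projections from $X$, but the composite $Y\to X_{\infty}\to X_{d-1}$ inherits this compatibility from the hypothesis on $X_{\infty}$, so no new difficulty arises; both of your routes are sound.
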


Every extension of minimal systems can be lifted to an open extension.

\begin{theorem}[see {\cite[Chapter VI]{deVries_elements_topological_dynamics:1993}}]\label{thm: AG_diagram}
    Given $\pi: (X, T) \to (Y, T)$ an extension of minimal systems, there exists a commutative diagram of extensions (called AG-diagram)
    \[\begin{tikzcd}
	X && {X'} \\
	\\
	{Y} && {Y'}
	\arrow["\pi"', from=1-1, to=3-1]
	\arrow["{\theta'}"', from=1-3, to=1-1]
	\arrow["{\pi'}", from=1-3, to=3-3]
	\arrow["\theta", from=3-3, to=3-1]
\end{tikzcd}\]

    such that\begin{enumerate}[label=(\arabic*)]
        \item $\theta$ and $\theta'$ are almost one-to-one extensions,
        \item $\pi'$ is an open extension,
        \item $X'$ is the unique minimal set in $R_{\pi\theta} = \{(x,y)\in X\times Y': \pi(x)=\theta(y)\}$ and $\theta'$ and $\pi'$ are the restriction to $X'$ of the projections of $X\times Y'$ onto $X$ and $Y'$ respectively.
    \end{enumerate}
\end{theorem}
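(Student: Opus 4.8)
The plan is to realize $Y'$ inside the hyperspace $2^{X}$ of $X$, using the set‑valued fibre map $y\mapsto\pi^{-1}(y)$, and then to produce $X'$ from the associated fibre product. Concretely, work with the induced $T$‑action on $2^{X}$ described in \cref{sec:univ_system}. The map $\pi_{*}\colon Y\to 2^{X}$, $\pi_{*}(y)=\pi^{-1}(y)$, is $T$‑equivariant and upper semicontinuous (and continuous exactly when $\pi$ is open). Put $y_{0}=\pi(x_{0})$ and let $Z=\overline{\{\pi^{-1}(ty_{0}):t\in T\}}\subseteq 2^{X}$. Every $A\in Z$ lies in a \emph{unique} fibre: uniqueness because fibres are disjoint and $A\neq\emptyset$, existence because a Hausdorff limit of fibres $\pi^{-1}(t_{\lambda}y_{0})$ with $t_{\lambda}y_{0}\to y$ is contained in $\pi^{-1}(y)$ by upper semicontinuity. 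Writing $q(A)$ for that $y$ gives a continuous $T$‑equivariant surjection $q\colon Z\to Y$. Let $Y'$ be a minimal subset of $Z$ and $\theta=q|_{Y'}$. If $Y_{c}\subseteq Y$ is the dense $G_{\delta}$ of continuity points of $\pi_{*}$, then for $y\in Y_{c}$ any $A\in Z$ over $y$ satisfies $A=\pi^{-1}(y)$, so $\theta^{-1}(y)=\{\pi^{-1}(y)\}$; since factor maps between minimal systems are semi‑open, $\theta^{-1}(Y_{c})$ is residual in $Y'$, and hence $\theta$ is almost one‑to‑one, which is part of (1).

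Next, set $R_{\pi\theta}=\{(x,y')\in X\times Y':\pi(x)=\theta(y')\}$, a closed invariant set, and look at its projection onto $X$. Its fibre over $x$ is $\{x\}\times\theta^{-1}(\pi(x))$; since $\theta$ is almost one‑to‑one it is a proximal extension, so $\theta^{-1}(\pi(x))$ is a proximal set and therefore $R_{\pi\theta}\to X$ is a proximal extension. If $X_{1},X_{2}$ are minimal subsets of $R_{\pi\theta}$, both project onto $X$; choosing $(x,y'_{1})\in X_{1}$ and $(x,y'_{2})\in X_{2}$ over a common $x$ yields a proximal pair, and a proximal pair whose coordinates lie in minimal sets has them in the same minimal set, so $X_{1}=X_{2}$. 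Call this unique minimal set $X'$ and let $\theta',\pi'$ be the restrictions to $X'$ of the two coordinate projections; then $\pi\theta'=\theta\pi'$, which is (3). Exactly as before, $\theta'$ is proximal (a restriction of the proximal extension $R_{\pi\theta}\to X$) and is injective over $\pi^{-1}(Y_{c})$, which is residual in $X$ by semi‑openness of $\pi$; so $\theta'$ is almost one‑to‑one and (1) is complete.

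It remains to prove that $\pi'$ is open, which is the heart of the matter. Inside $X\times 2^{X}$ consider the incidence set $\widehat{X}=\{(x,A):A\in Y',\ x\in A\}$: it is closed, $T$‑invariant, and contained in $R_{\pi\theta}$ (since $A\subseteq\pi^{-1}(q(A))$), and the projection $\widehat{X}\to Y'$ is manifestly open, because $A_{\lambda}\to A$ in $Y'$ forces every $x\in A$ to be a limit of points $x_{\lambda}\in A_{\lambda}$, so that $(x_{\lambda},A_{\lambda})\to(x,A)$. The key point — which I expect to be the main obstacle — is to identify $X'$ with $\widehat{X}$, equivalently to show that the fibres of $\pi'$ are the full incidence fibres $A\times\{A\}$. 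Here one must bring in the $\uptau$‑topology and circle‑operation machinery of \cref{sec:univ_system}: one arranges the points $A$ of $Y'$ to be ``saturated'' in the sense that $A=u\circ\pi^{-1}(q(A))$, from which it follows that $\widehat{X}$ is minimal (hence equals $X'$) and that the projection $\widehat{X}\to Y'$ is in fact RIC — using the characterization recalled in \cref{sec:univ_system} (openness together with density of minimal points in each relation $R_{\pi'}^{n}$) — so in particular $\pi'$ is open, which is (2). The commutativity of the diagram and the identification of $\theta'$ and $\pi'$ with the asserted restrictions are then immediate from the construction. This is exactly the ``AG'' (shadow) diagram of \cite[Chapter~VI]{deVries_elements_topological_dynamics:1993}.
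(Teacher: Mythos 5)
The paper does not prove \cref{thm: AG_diagram}; it imports it from \cite[Chapter VI]{deVries_elements_topological_dynamics:1993}, so I can only assess your argument on its own terms. Your treatment of items (1) and (3) is correct and essentially complete, and it is the standard construction: realize $Y'$ as a minimal subset of the orbit closure of the fibre $\pi^{-1}(y_{0})$ in $2^{X}$, use that the upper semicontinuous map $y\mapsto\pi^{-1}(y)$ has a residual set of continuity points together with semi-openness of factor maps of minimal systems to get that $\theta$ and $\theta'$ are almost one-to-one, and use proximality of the fibres of $R_{\pi\theta}\to X$ to get a unique minimal set $X'$.

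The genuine gap is item (2), and you flag it yourself: you reduce openness of $\pi'$ to the identification $X'=\widehat{X}$ with the incidence set, but you do not prove that identification; you replace it with an appeal to unspecified ``$\uptau$-topology and circle-operation machinery.'' Moreover, the two specific claims you make there are off. First, you cannot ``arrange'' $A=u\circ\pi^{-1}(q(A))$ for every $A\in Y'$ by fiat; what is true (and is the actual key) is that this saturation holds automatically over continuity points. Second, the assertion that $\widehat{X}\to Y'$ is RIC is false in general for the AG-diagram: that is the content of the EGS-diagram, which requires proximal rather than almost one-to-one modifications (already $\pi'\colon X\to\{\ast\}$ is always open but is RIC only when $X$ is incontractible). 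Fortunately only openness is needed, and the missing step can be closed without heavy machinery, as follows. Choose the base point $y_{0}$ to be a continuity point of $y\mapsto\pi^{-1}(y)$ and set $A_{0}=\pi^{-1}(y_{0})$. For any $x\in A_{0}$ pick a minimal idempotent $w$ with $wx=x$; then $wy_{0}=y_{0}$, and if $t_{\lambda}\to w$ then $t_{\lambda}A_{0}=\pi^{-1}(t_{\lambda}y_{0})\to\pi^{-1}(y_{0})=A_{0}$ by continuity of the fibre map at $y_{0}$, so $w\circ A_{0}=A_{0}$ and $(x,A_{0})$ is an almost periodic point of $R_{\pi\theta}$, hence lies in the unique minimal set $X'$. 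Thus $A_{0}\times\{A_{0}\}\subseteq X'$, and since every $A\in Y'$ has the form $p\circ A_{0}$ with $p\in M$, every $(x,A)\in\widehat{X}$ is a limit of points $t_{\lambda}(x_{\lambda},A_{0})$ with $x_{\lambda}\in A_{0}$, giving $\widehat{X}\subseteq X'$ and hence $\widehat{X}=X'$. Openness of $\pi'$ then follows from the lower semicontinuity of $A\mapsto A$ exactly as you state. With this paragraph supplied, your proof is a correct and self-contained derivation of the cited theorem.
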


Therefore, using the AG-diagram, we can conclude that modulo almost one-to-one extensions $\bQ^{[d]}(X)$ is $\pi^{[d]}$-saturated.

\begin{theorem}\label{thm: saturation_cube_minimal_system}
        Let $(X, S)$ be a minimal topological system and $d\geq 2$ be an integer. Let $\pi:(X, S)\to (X_{d-1}, S)$ be the extension to the maximal factor of order $d-1$. Then, there is a commutative diagram of extensions of minimal systems

        \[\begin{tikzcd}
	X && {X'} \\
	\\
	{X_{d-1}} && {X_{d-1}'}
	\arrow["\pi"', from=1-1, to=3-1]
	\arrow["{\theta'}"', from=1-3, to=1-1]
	\arrow["{\pi'}", from=1-3, to=3-3]
	\arrow["\theta", from=3-3, to=3-1]
    \end{tikzcd}\]

    such that $\bQ^{[d]}(X') = ({\pi'}^{[d]})^{-1}(\bQ^{[d]}(X_{d-1}'))$, where $\theta$ and $\theta'$ are almost one-to-one extensions.
\end{theorem}

\cref{thm: saturation_cube_minimal_system} is the best result we can expect, meaning that the almost one-to-one modifications are necessary (see {\cite[Example 3.7]{Cai_Shao_Topological_characteristic_cubes:2019}}). Clearly, we have the same theorem for the maximal factor of order $\infty$.

\begin{theorem}\label{thm: saturation_cube_minimal_system_infty}
        Let $(X, S)$ be a minimal topological system. Let $\pi:(X, S)\to (X_{\infty}, S)$ be the extension to the maximal factor of order $\infty$. Then, there is a commutative diagram of extensions of minimal systems

        \[\begin{tikzcd}
	X && {X'} \\
	\\
	{X_{\infty}} && {X_{\infty}'}
	\arrow["\pi"', from=1-1, to=3-1]
	\arrow["{\theta'}"', from=1-3, to=1-1]
	\arrow["{\pi'}", from=1-3, to=3-3]
	\arrow["\theta", from=3-3, to=3-1]
    \end{tikzcd}\]

    such that $\bQ^{[d]}(X') = ({\pi'}^{[d]})^{-1}(\bQ^{[d]}(X_{\infty}'))$ for each $d\geq 1$, where $\theta$ and $\theta'$ are almost one-to-one extensions.
\end{theorem}

By \cref{thm: cube_is_minimal}, for a minimal system there exists a dense $G_{\delta}$ subset $X_{0}\subseteq X$ such that $\bQ^{[d]}(X)\cap (\{x\} \times X^{2^{d}-1})$ equals $\overline{\mathcal{F}^{[d]}(T)x^{[d]}}$ for each $x\in X_{0}$. From this, we can conclude the following theorems.

\begin{proposition}\label{prop: top_car_factor_d}
    Let $(X, S)$ be a minimal topological system and $d\geq 2$ be an integer. Let $\pi:(X, S)\to (X_{d-1}, S)$ be the extension to the maximal factor of order $d-1$. Then, there is a commutative diagram
    \[\begin{tikzcd}
	X && {X'} \\
	\\
	{X_{d-1}} && {X_{d-1}'}
	\arrow["\pi"', from=1-1, to=3-1]
	\arrow["{\theta'}"', from=1-3, to=1-1]
	\arrow["{\pi'}", from=1-3, to=3-3]
	\arrow["\theta", from=3-3, to=3-1]
\end{tikzcd}\]
such that $(X_{d-1}',S)$ is the topological cubic characteristic factor of order $d$ of $(X',S)$, where $\theta,\theta'$ are almost one-to-one extensions.
\end{proposition}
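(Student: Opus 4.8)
The plan is to read the statement off from \cref{thm: saturation_cube_minimal_system} and \cref{thm: cube_is_minimal}, the remaining work being essentially a change of language. First I would apply \cref{thm: saturation_cube_minimal_system} to the extension $\pi\colon(X,S)\to(X_{d-1},S)$ to produce the commutative AG-diagram with $\theta,\theta'$ almost one-to-one, $\pi'$ open, $X'$ and $Y'$ minimal, and the identity
\[
\bQ^{[d]}(X') = \bigl(({\pi'})^{[d]}\bigr)^{-1}\bigl(\bQ^{[d]}(Y')\bigr),
\]
which says precisely that $\bQ^{[d]}(X')$ is $({\pi'})^{[d]}$-saturated. It then only remains to convert this saturation of the full dynamical cube into the fiberwise saturation of $\rho_*(\overline{\mathcal{F}^{[d]}(S)x^{[d]}})$ demanded by the definition of the topological cubic characteristic factor of order $d$.

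For that, I would apply \cref{thm: cube_is_minimal} to the minimal system $(X',S)$ (it is minimal by construction, even though it need not be distal, being a proximal extension of the distal system $X$): item (4) furnishes a dense $G_\delta$ set $X'_0\subseteq X'$ with $\overline{\mathcal{F}^{[d]}(S)x^{[d]}}=Y^{[d]}_x=\bQ^{[d]}_x(X')=\bQ^{[d]}(X')\cap(\{x\}\times (X')^{2^d-1})$ for every $x\in X'_0$. Fixing $x\in X'_0$, I claim $\rho_*(\overline{\mathcal{F}^{[d]}(S)x^{[d]}})$ is $({\pi'})^{2^d-1}$-saturated: if $a_*\in\rho_*(\overline{\mathcal{F}^{[d]}(S)x^{[d]}})$, so $(x,a_*)\in\bQ^{[d]}(X')$, and $b_*\in(X')^{2^d-1}$ satisfies $({\pi'})^{2^d-1}(b_*)=({\pi'})^{2^d-1}(a_*)$, then keeping the first coordinate frozen gives $({\pi'})^{[d]}(x,b_*)=({\pi'})^{[d]}(x,a_*)\in({\pi'})^{[d]}(\bQ^{[d]}(X'))$, so the $({\pi'})^{[d]}$-saturation of $\bQ^{[d]}(X')$ forces $(x,b_*)\in\bQ^{[d]}(X')$, hence $(x,b_*)\in\bQ^{[d]}_x(X')=\overline{\mathcal{F}^{[d]}(S)x^{[d]}}$ and $b_*\in\rho_*(\overline{\mathcal{F}^{[d]}(S)x^{[d]}})$. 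Since this holds for every $x\in X'_0$, the system $(Y',S)$ is the topological cubic characteristic factor of order $d$ of $(X',S)$, which is the assertion.

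Since both main inputs are already available, I do not expect a serious obstacle; the only point demanding care is the bookkeeping in the last step — namely, applying \cref{thm: cube_is_minimal}(4) to $X'$ rather than to the distal base $X$ (it is the cube structure of $X'$, not that of $X$, that must be controlled), and making sure the first coordinate is genuinely frozen so that one is working with the projection $\rho_*$ onto the last $2^d-1$ coordinates and with the $x$-fiber $\bQ^{[d]}_x(X')$, rather than with the full cube.
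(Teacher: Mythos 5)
Your proposal is correct and follows exactly the route the paper intends: the paper derives this proposition directly from \cref{thm: saturation_cube_minimal_system} together with \cref{thm: cube_is_minimal}, and your argument simply spells out the bookkeeping (restricting the saturation identity $\bQ^{[d]}(X') = ({\pi'}^{[d]})^{-1}(\bQ^{[d]}(Y'))$ to the fiber over $x$ for $x$ in the dense $G_\delta$ set where $\overline{\mathcal{F}^{[d]}(S)x^{[d]}}=\bQ^{[d]}_x(X')$) that the paper leaves implicit. Your cautionary remarks about applying \cref{thm: cube_is_minimal}(4) to $X'$ rather than $X$, and about freezing the first coordinate, are exactly the right points to watch, and both are handled correctly.
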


\begin{proposition}\label{prop: top_car_factor_infty}
    Let $(X, S)$ be a minimal topological system. Let $\pi:(X, S)\to (X_{\infty}, S)$ be the extension to the maximal factor of order $\infty$. Then, there is a commutative diagram
    \[\begin{tikzcd}
	X && {X'} \\
	\\
	{X_{\infty}} && {X_{\infty}'}
	\arrow["\pi"', from=1-1, to=3-1]
	\arrow["{\theta'}"', from=1-3, to=1-1]
	\arrow["{\pi'}", from=1-3, to=3-3]
	\arrow["\theta", from=3-3, to=3-1]
\end{tikzcd}\]
such that $(X_{\infty}',S)$ is the topological cubic characteristic factor of order $d$ of $(X',S)$ for each $d\geq 1$, where $\theta,\theta'$ are almost one-to-one extensions.
\end{proposition}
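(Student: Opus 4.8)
The plan is to deduce the statement directly from the saturation theorem \cref{thm: saturation_cube_minimal_system_infty} together with the face-description of dynamical cubes in \cref{thm: cube_is_minimal}. First I would apply \cref{thm: saturation_cube_minimal_system_infty} to $(X,S)$ and the factor map $\pi\colon X\to X_{\infty}$: this produces precisely the commutative AG-diagram of the statement, with $\theta,\theta'$ almost one-to-one, and with the crucial property that $\bQ^{[d]}(X')=({\pi'}^{[d]})^{-1}(\bQ^{[d]}(Y'))$ holds for \emph{every} $d\geq 1$ relative to this one diagram; in particular each $\bQ^{[d]}(X')$ is ${\pi'}^{[d]}$-saturated. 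Distality of $(X,S)$ is not actually needed here, and is kept only to match the hypothesis of the companion statement concerning the maximal factor of order $d-1$.

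Next, fix $d\geq 1$. As $(X',S)$ is a minimal system, \cref{thm: cube_is_minimal}, item $(4)$, yields a dense $G_{\delta}$ set $X_{0}=X_{0}(d)\subseteq X'$ with $\overline{\mathcal{F}^{[d]}(S)x^{[d]}}=\bQ_{x}^{[d]}(X')=\bQ^{[d]}(X')\cap(\{x\}\times (X')^{2^{d}-1})$ for every $x\in X_{0}$. Since every upper facet of $\{0,1\}^{d}$ avoids $\emptyset$, the group $\mathcal{F}^{[d]}(S)$ fixes the $\emptyset$-coordinate, so $\overline{\mathcal{F}^{[d]}(S)x^{[d]}}\subseteq\{x\}\times (X')^{2^{d}-1}$ and $\rho_{*}$ carries it bijectively onto $\rho_{*}(\bQ_{x}^{[d]}(X'))=\{a_{*}\in (X')^{2^{d}-1}:(x,a_{*})\in\bQ^{[d]}(X')\}$.

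It remains to verify that this last set is ${\pi'}^{2^{d}-1}$-saturated for each $x\in X_{0}$. Take $a_{*}$ with $(x,a_{*})\in\bQ^{[d]}(X')$ and $b_{*}\in (X')^{2^{d}-1}$ with ${\pi'}^{2^{d}-1}(b_{*})={\pi'}^{2^{d}-1}(a_{*})$. Then ${\pi'}^{[d]}(x,b_{*})={\pi'}^{[d]}(x,a_{*})\in\bQ^{[d]}(Y')$, hence $(x,b_{*})\in({\pi'}^{[d]})^{-1}(\bQ^{[d]}(Y'))=\bQ^{[d]}(X')$; since the $\emptyset$-coordinate of $(x,b_{*})$ equals $x$, this gives $(x,b_{*})\in\bQ_{x}^{[d]}(X')$, i.e. $b_{*}\in\rho_{*}(\bQ_{x}^{[d]}(X'))$. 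This is exactly the condition defining $(Y',S)$ as a topological cubic characteristic factor of order $d$ of $(X',S)$, and since $d\geq 1$ was arbitrary, the proof is complete.

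Once \cref{thm: saturation_cube_minimal_system_infty} is available the argument is essentially bookkeeping, so I do not anticipate a genuine obstacle; the only points deserving care are that the dense $G_{\delta}$ set $X_{0}$ supplied by \cref{thm: cube_is_minimal} may depend on $d$ (which is harmless, since the definition of a topological cubic characteristic factor of order $d$ allows a $d$-dependent $G_{\delta}$) and that all the saturation identities $\bQ^{[d]}(X')=({\pi'}^{[d]})^{-1}(\bQ^{[d]}(Y'))$, $d\geq 1$, must hold for one and the same AG-diagram, which is exactly what \cref{thm: saturation_cube_minimal_system_infty} guarantees.
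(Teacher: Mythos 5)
Your argument is correct and is exactly the route the paper intends: the paper's own ``proof'' is the single line ``By \cref{thm: cube_is_minimal}, we can conclude the following theorems,'' i.e.\ combine the saturation identity of \cref{thm: saturation_cube_minimal_system_infty} with item $(4)$ of \cref{thm: cube_is_minimal}, and you have simply written out the intended bookkeeping (including the correct handling of the $d$-dependence of the dense $G_{\delta}$ set). Your side remark that distality of $(X,S)$ is not used is also accurate for this argument.
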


It is worth noting that the diagrams in \cref{thm: saturation_cube_minimal_system}, \cref{thm: saturation_cube_minimal_system_infty}, \cref{prop: top_car_factor_d} and \cref{prop: top_car_factor_infty} also satisfy conditions $(2)$ and $(3)$ of \cref{thm: AG_diagram}, although our primary focus was on condition $(1)$.

\subsection{Structure theorem of dynamical cubes} \label{Sec:structure_theorem} In this subsection, we will show that for abelian actions, a minimal system $(X,T)$ and its associated system of cubes $(\bQ^{[d]}(X), \mathcal{HK}(T))$ share a similar structural description, which we will clarify further below.

A minimal system $(X, T)$ is called a {\em strictly PI system} if there exists an ordinal $\eta$ (which is countable when $X$ is metrizable) and a family of systems $\{(W_{\iota}, T)\}_{\iota \leq \eta}$ satisfying the following conditions: (i) $W_{0}$ is the trivial system, (ii) for every $\iota < \eta$ there is an extension $\phi_{\iota}: W_{\iota + 1} \to W_{\iota}$, that is either proximal or equicontinuous, (iii) for a limit ordinal $\nu \leq \eta$, $W_{\nu}$ is the inverse limit of the systems $\{W_\iota\}_{\iota < \nu}$, and (iv) $W_{\eta} = X$. A minimal system $(X, T)$ is called a {\em PI-system} if there exists a strictly PI system $X_{\infty}$ and a proximal extension $\theta: X_{\infty} \to X$.

If, in the definition of PI systems,  the extensions  $\phi_{\iota}: W_{\iota + 1} \to W_{\iota}$  are either almost one-to-one or equicontinuous, we obtain the notion of {\em AI systems.} Note that any AI system is a PI system. If we further ask that all the $\phi_{\iota}$ are equicontinuous (meaning that no nontrivial proximal extensions are allowed),  we obtain the notion of {\em I systems}. In this terminology, the structure theorem for distal systems can be stated as follows:

\begin{theorem}[{\cite[Theorems 2.3 and 2.4]{Furstenberg_structure_distal_flows:1963}}]
    A metric minimal system is distal if and only if it is an $I$-system.
\end{theorem}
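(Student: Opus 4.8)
The plan is to prove the two implications separately. The direction ``$I$-system $\Rightarrow$ distal'' is a transfinite induction along the defining tower, using three closure properties: the one-point system is distal; an equicontinuous extension of a distal minimal system is distal; and an inverse limit of distal systems is distal. For the second of these, if $\pi\colon X\to Y$ is equicontinuous with $Y$ distal and $(x,x')$ is a proximal pair of $X$, then its image is a proximal pair of $Y$, hence $\pi(x)=\pi(x')$ and $(x,x')\in R_{\pi}$; the quantity $d^{*}(x,x')=\sup_{t\in T}d(tx,tx')$ is $T$-invariant on $R_{\pi}$ and, by equicontinuity of $\pi$, tends to $0$ as $d(tx,tx')\to 0$, so proximality of $(x,x')$ forces $d^{*}(x,x')=0$, i.e.\ $x=x'$. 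The third property is immediate since projections of proximal pairs are proximal. Running the induction along the tower $\{W_{\iota}\}_{\iota\le\eta}$ of an $I$-system (successors by the first two facts, limits by the third) then yields that $X=W_{\eta}$ is distal.

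For the converse, let $(X,T)$ be minimal distal with base point $x_{0}\in uX=X$ and Ellis group $A=\GG(X)$. I would build the required tower algebraically, by transfinite recursion on $\uptau$-closed subgroups of $G$: set $F_{0}=G$ (the trivial factor), $F_{\iota+1}=A\cdot H(F_{\iota})$, and $F_{\nu}=\bigcap_{\iota<\nu}F_{\iota}$ at limit ordinals. Since $H(F_{\iota})$ is $\uptau$-closed and normal in $F_{\iota}$, the product $A\cdot H(F_{\iota})$ is a $\uptau$-closed subgroup (relative version of \cref{lemma: AF_closed_subgroup}) lying between $A$ and $F_{\iota}$; by \cref{thm: A_subset_F_Ydistal_if and only if_X_ext_Y} it is the Ellis group of a distal factor $W_{\iota+1}$ of $X$, and $W_{\nu}=\varprojlim_{\iota<\nu}W_{\iota}$. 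Each extension $W_{\iota+1}\to W_{\iota}$ is distal (its total space is distal) and satisfies $H(\GG(W_{\iota}))=H(F_{\iota})\subseteq F_{\iota+1}=\GG(W_{\iota+1})$, so by \cref{thm: extensions_ellis_group}(3) it is equicontinuous. The chain $(F_{\iota})$ is decreasing, hence stabilizes at some ordinal $\eta$ with $A\cdot H(F_{\eta})=F_{\eta}$; for metrizable $X$ one can take $\eta$ countable. If $F_{\eta}=A$, then $W_{\eta}=X$ and the tower $\{W_{\iota}\}_{\iota\le\eta}$ exhibits $X$ as an $I$-system, finishing the proof.

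The heart of the matter, and the step I expect to be the real obstacle, is therefore the claim that $A\cdot H(F)=F$ forces $A=F$ for $\uptau$-closed subgroups $A\subseteq F$ of $G$ — equivalently, that every non-isomorphic extension of minimal distal systems admits a non-trivial intermediate equicontinuous extension. When $A$ is normal in $F$ this reduces to the fact that the non-trivial $\uptau$-compact $T_{1}$ group $F/A$ has a proper derived subgroup $H(F/A)$ (so that its preimage in $F$ is a proper $\uptau$-closed subgroup containing $A$ with Hausdorff topological-group quotient, and therefore containing $H(F)$, contradicting $A\cdot H(F)=F$). In general $A$ need not be normal in $F$, and here I would fall back on Furstenberg's argument: using that the enveloping semigroup of a distal system is a group, one averages a metric along the group action on the fibres of $\pi\colon X\to X_{F}$ to obtain a continuous $T$-invariant pseudometric on the fibre product $R_{\pi}$ that is not identically zero on the fibres whenever $\pi$ is non-trivial; the quotient of $X$ by the equivalence relation it defines is the desired non-trivial intermediate equicontinuous extension (cf.\ \cite[Chapters 7 and 14]{Auslander_minimal_flows_and_extensions:1988} and \cite{Furstenberg_structure_distal_flows:1963}). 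With this lemma in hand one gets $F_{\eta}=A$, and the transfinite tower above completes the argument.
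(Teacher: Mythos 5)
The paper does not prove this statement at all: it is quoted as a classical background result with a citation to Furstenberg's 1963 paper, so there is no in-paper argument to compare yours against. Judged on its own terms, your proposal gets the easy direction right (the transfinite induction showing that an $I$-system is distal is complete: the equicontinuous-extension step via the invariant quantity $\sup_{t}d(tx,tx')$ and the inverse-limit step are both correct), and the skeleton of the hard direction is the standard Ellis-group route, essentially the one in \cite[Chapter 14]{Auslander_minimal_flows_and_extensions:1988}: build the decreasing chain $F_{0}=G$, $F_{\iota+1}=A\cdot H(F_{\iota})$, identify each $F_{\iota}$ with a distal factor $W_{\iota}$, and use \cref{thm: extensions_ellis_group}(3) to see that each successor extension is equicontinuous.

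The genuine gap is exactly the step you flag yourself: the claim that $A\cdot H(F)=F$ forces $A=F$ for $\uptau$-closed $A\subseteq F$ arising from a distal extension. This is not a technical loose end --- it \emph{is} Furstenberg's theorem in its relative, algebraic form (equivalently: every nontrivial distal extension of minimal systems admits a nontrivial intermediate equicontinuous extension), so deferring it to ``Furstenberg's argument'' makes the proof of the hard direction circular as written. The one-sentence sketch you offer in its place does not close the gap: there is no invariant measure on $T$ or on the fibre group with which to ``average a metric along the group action on the fibres,'' and Furstenberg's actual construction (almost periodic functions on fibres) and the modern algebraic proofs (via the Hausdorff topological group $F/H(F)$ together with the fact that a distal extension is RIC and that a weakly mixing distal extension is trivial) both require substantially more than this. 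A secondary, fixable imprecision: \cref{thm: A_subset_F_Ydistal_if and only if_X_ext_Y} compares two \emph{given} flows and does not by itself produce a distal factor with Ellis group $F_{\iota+1}$; you need the construction $\{p\circ F_{\iota+1}:p\in M\}$ (as in the proof of \cref{prop: Ellis_group_nil_d_minimal_system}) or the Galois correspondence for distal flows to manufacture $W_{\iota+1}$ before that theorem applies.
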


A minimal system is called {\em point distal} if there exists a point that is proximal only to itself. The Veech's structure theorem for point distal systems then states:

\begin{theorem}[{\cite[Theorem 7.2]{Veech_point-distal_flows:1970}}]
    A metric minimal system is point distal if and only if it is an $AI$-system.
\end{theorem}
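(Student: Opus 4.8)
The plan is to prove the two implications separately; the backward direction ($AI\Rightarrow$ point distal) is a soft transfinite induction, while the forward direction is the substance of the theorem. For the backward direction, let $(X,T)$ be an $AI$-system with defining tower $\{(W_\iota,T)\}_{\iota\le\eta}$ and top almost-one-to-one extension $\theta\colon W_\eta\to X$. I would show by transfinite induction that each $W_\iota$ is point distal, strengthening the statement to: the fibre over a distal point of any distal or almost-one-to-one factor map onto $W_\iota$ contains a distal point. The base case $W_0$ (trivial) is clear. At a successor step $\phi_\iota\colon W_{\iota+1}\to W_\iota$: if $\phi_\iota$ is equicontinuous, hence distal, then for a distal point $w\in W_\iota$ the fibre $\phi_\iota^{-1}(w)$ consists of distal points, because a proximal pair in $W_{\iota+1}$ with one endpoint in $\phi_\iota^{-1}(w)$ pushes down to a proximal pair of $W_\iota$ with endpoint $w$, hence remains inside $\phi_\iota^{-1}(w)$, and a proximal pair inside a single fibre of a distal extension is diagonal. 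If $\phi_\iota$ is almost one-to-one I would invoke the classical facts that the distal points of a point-distal metric minimal system form a residual set and that the injectivity points of an almost-one-to-one extension form a residual $G_\delta$; intersecting, there is a distal $w\in W_\iota$ with $\phi_\iota^{-1}(w)=\{z\}$, and $z$ is then distal by the same fibre argument. At a limit ordinal, a coherent thread of distal points is a distal point of the inverse limit. Since $\theta$ is almost one-to-one the same argument (alternatively: $\theta$ is a proximal extension and, by the idempotent characterisation of proximality, a distal point $z_0$ satisfies $vz_0=z_0$ for every minimal idempotent $v$, so its image is distal) yields a distal point of $X$.

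For the forward direction I would run the canonical structure construction for minimal metric systems and use point-distality to collapse every weakly mixing step. Starting from the trivial factor of $X$, at stage $\iota$ set $W_{\iota+1}$ to be the maximal factor intermediate between $X$ and $W_\iota$ for which $W_{\iota+1}\to W_\iota$ is equicontinuous; if this is nontrivial, continue, taking inverse limits at limit stages. If at some stage the maximal equicontinuous extension of $W_\iota$ below $X$ is trivial, apply the AG-diagram (\cref{thm: AG_diagram}) to obtain an open extension $\pi'\colon X'\to Y'$ with $X'\to X$ and $Y'\to W_\iota$ almost one-to-one. Then $\pi'$ is RIC and, as its maximal equicontinuous subextension is still trivial (the modifications are proximal, so the relevant Ellis groups are unchanged), the structure theory of RIC extensions forces $\pi'$ to be weakly mixing. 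The key lemma is that \emph{a nontrivial RIC weakly mixing extension of minimal metric systems is never point distal in its total space}: if $\pi'$ is nontrivial then $(R_{\pi'},T)$ is point transitive (by the RIC property), the first-coordinate projection $R_{\pi'}\to X'$ is open (a base change of the open map $\pi'$) and hence maps the residual set of transitive points of $R_{\pi'}$ onto a residual subset of $X'$; every transitive point $(z,z')$ of $R_{\pi'}$ satisfies $z\ne z'$ (else $R_{\pi'}=\Delta$ and $\pi'$ is trivial) and has $\Delta_{X'}\subseteq\overline{O(z,z')}$, so $(z,z')$ is a proximal pair; thus residually many $z\in X'$ are proximal to a distinct point of their $\pi'$-fibre, so the distal points of $X'$ are meager and $X'$ is not point distal. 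But $X'$ is an almost-one-to-one, hence proximal, extension of the point-distal system $X$, and point-distality ascends through such extensions (as in the backward direction), so $X'$ is point distal --- a contradiction. Hence $\pi'$ is trivial, the construction terminates, and passing to the inverse limit of the almost-one-to-one modifications performed along the way exhibits $X$ as an $AI$-system.

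The main obstacle I anticipate is the bookkeeping of the forward direction rather than any isolated inequality: one must verify that the RIC extension produced by the AG-diagram genuinely obeys the equicontinuous-versus-weakly-mixing dichotomy, that point-distality ascends through every almost-one-to-one modification and through the inverse limit of these modifications (so that the key lemma applies at each stage), and that the resulting tower of modifications assembles into a system fitting the exact definition of an $AI$-system. The key lemma itself --- that a nontrivial RIC weakly mixing extension destroys point-distality --- is the conceptual heart; I would prove it first as a standalone statement, its only ingredients being the point-transitivity of $R_{\pi'}$ and the openness of the first-coordinate projection of $R_{\pi'}$.
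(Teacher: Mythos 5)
First, a framing remark: the paper does not prove this statement --- it is quoted as \cite[Theorem 7.2]{Veech_point-distal_flows:1970} purely as background for \cref{Sec:structure_theorem} --- so there is no in-paper proof to compare against, and I am assessing your sketch on its own terms. Your backward direction is essentially correct. In the forward direction, however, there is a structural gap around the word ``RIC''. The dichotomy you invoke (a RIC extension with no nontrivial intermediate equicontinuous factor is weakly mixing) is a theorem about RIC extensions, but \cref{thm: AG_diagram} only delivers an \emph{open} $\pi'$, and open does not imply RIC; you assert ``$\pi'$ is RIC'' with no justification. The construction that does produce a RIC $\pi'$ (the EGS shadow diagram used in the canonical PI-tower) yields side maps $\theta,\theta'$ that are a priori only \emph{proximal}, not almost one-to-one; upgrading proximal to almost one-to-one is a separate step of Veech's proof, and it is exactly where point-distality enters (a proximal extension whose range has a dense set of distal points is almost one-to-one, since the fibre over a distal point of a proximal extension is a singleton). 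Your sketch conflates these two diagrams and so never actually produces a RIC weakly mixing extension to which your key lemma applies.

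The second gap is inside the key lemma itself. Two steps fail as written: (i) an open continuous surjection pushes dense open sets to dense open sets, but not $G_\delta$'s to $G_\delta$'s, so ``the image of the residual set of transitive points of $R_{\pi'}$ is residual in $X'$'' needs either a Kuratowski--Ulam argument for open maps or, better, a pullback argument; and (ii) even granting it, ``the distal points of $X'$ are meager, hence $X'$ is not point distal'' is a non sequitur, because point-distality is a bare existence statement. Both are repaired by the one ingredient your argument leans on implicitly but never isolates as the engine of the whole theorem: \emph{in a metric minimal point-distal flow the set of distal points is residual}. Veech assumed this as a hypothesis; Ellis proved it is automatic \cite{Ellis_the_Veech_struct_thm:1974}, and the theorem as stated here is really Veech--Ellis. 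With it, one argues correctly: the preimage in $R_{\pi'}$ of the residual set of distal points of $X'$ under the open first-coordinate projection is residual, hence meets the residual set of transitive points, producing a distal point $z$ that is proximal to some $z'\neq z$ --- a genuine contradiction. The same residuality theorem is what makes your ``point-distality ascends through almost one-to-one extensions'' step work (a lone distal point need not be an injectivity point, so one must intersect two residual sets). You should state Ellis's theorem as a standalone prerequisite and route the key lemma, the ascent of point-distality, and the proximal-to-almost-one-to-one upgrade all through it; without it none of these steps closes.
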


Finally, we have the structure theorem for minimal systems:

\begin{theorem}[{\cite[Proposition 7.3]{Ellis_Glasner_Shapiro_PI-flows:1975},\cite[Theorem 1.3]{McMahon_Weak_mixing_structure_thm:1976},\cite[Theorem 2.1.3]{Veech_topological_dynamics:1977}}]
    Let $(X, T)$ be a minimal topological dynamical system. Then, there exists an ordinal $\eta$ and a canonically defined commutative diagram (the canonical PI-Tower)
    \[\begin{tikzcd}[cramped,column sep=small]
	X && {} && {X_{1}} & \cdots & {X_{\nu}} &&&& {X_{\nu+1}} & \cdots & {X_{\eta}} \\
	\\
	{\{\cdot\}} && {Z_{1}} && {Y_{1}} & \cdots & {Y_{\nu}} && {Z_{\nu+1}} && {Y_{\nu+1}} & \cdots & {Y_{\eta}}
	\arrow["{\theta^{*}_{1}}", from=1-5, to=1-1]
	\arrow["\pi"', from=1-1, to=3-1]
	\arrow["{\sigma_{1}}", from=1-1, to=3-3]
	\arrow["{\pi_{1}}"', from=1-5, to=3-5]
	\arrow["{\theta^{*}_{\nu+1}}", from=1-11, to=1-7]
	\arrow["{\pi_{\nu}}"', from=1-7, to=3-7]
	\arrow["{\sigma_{\nu+1}}", from=1-7, to=3-9]
	\arrow["{\pi_{\nu+1}}"', from=1-11, to=3-11]
	\arrow["{\pi_{\eta}}"', from=1-13, to=3-13]
	\arrow["{\rho_{1}}", from=3-3, to=3-1]
	\arrow["{\theta_{1}}", from=3-5, to=3-3]
	\arrow["{\rho_{\nu+1}}", from=3-9, to=3-7]
	\arrow["{\theta_{\nu+1}}", from=3-11, to=3-9]
\end{tikzcd}\]

    where for each $\nu\leq \eta$, $\pi_{\nu}$ is RIC, $\rho_{\nu}$ is equicontinuous, $\theta_{\nu},\theta^{*}_{\nu}$ are proximal and $\pi_{\eta}$ is RIC and weakly mixing. For a limit ordinal $\nu$, $X_{\nu},Y_{\nu},\pi_{\nu}$, etc. are the inverse limits (or joins) of $X_{\iota}, Y_{\iota},\pi_{\iota}$, etc for $\iota<\nu$.
\end{theorem}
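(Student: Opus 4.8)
The plan is to construct the canonical PI-tower by transfinite recursion, working throughout via the algebraic dictionary between pointed minimal systems and $\uptau$-closed subgroups of $G$: to a pointed system $(X,x_0)$ one attaches its Ellis group $\GG(X)\leq G$, factor maps correspond to reverse inclusions, and every type of extension appearing in the tower is read off algebraically. Proximality is detected by $\GG(X)=\GG(Y)$ (\cref{thm: extensions_ellis_group}(1) and \cref{prop: pi_u_homeo_if and only if_proximal}(2)), equicontinuity through the derived group via the condition $H(F)\subseteq \GG(X)$ (\cref{thm: extensions_ellis_group}(3)), and the RIC and weak-mixing properties through openness together with density of minimal points in the relations $R_\pi^n$ and topological transitivity of $R_\pi$ respectively. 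The whole argument is a bookkeeping of these subgroups along the two rows of the diagram.

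First I would reduce to open extensions. Given any extension $\pi\colon X\to Y$, \cref{thm: AG_diagram} produces, after almost one-to-one modifications $\theta,\theta'$, an open extension $\pi'\colon X'\to Y'$; combined with the density-of-minimal-points criterion this is upgraded to a RIC extension (the RIC-ification step). Since almost one-to-one extensions are proximal and proximal extensions preserve Ellis groups (\cref{thm: extensions_ellis_group}(1)), these modifications supply exactly the proximal maps $\theta_\nu,\theta_\nu^{*}$ of the tower without disturbing the algebraic accounting, and openness lets us compute preimages via the circle operation, $p\circ\pi^{-1}(y)=\pi^{-1}(py)$.

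The heart of the argument is the dichotomy for RIC extensions: if $\pi\colon X\to Y$ is RIC and not weakly mixing, then it admits a nontrivial intermediate equicontinuous extension $X\to Z\to Y$. Writing $A=\GG(X)$ and $F=\GG(Y)$, the maximal intermediate equicontinuous extension of $Y$ below $X$ has Ellis group $H(F)A$, so the content is precisely that RIC together with $H(F)\subseteq A$ (triviality of the relative regionally proximal relation $Q_\pi$) forces $R_\pi$ to be topologically transitive. Establishing this equivalence, through the relative regionally proximal relation and the quasifactor machinery of Ellis--Glasner--Shapiro, McMahon and Veech, is the main obstacle and the only genuinely nonformal ingredient; everything else is organization of the algebraic data.

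Granting the dichotomy, I would run the transfinite recursion. Set $W_0=\{\mathrm{pt}\}$; given the tower up to $X_\nu$ with a RIC extension $\pi_\nu\colon X\to X_\nu$ produced by RIC-ification, apply the dichotomy: if $\pi_\nu$ is weakly mixing, stop and set $\eta=\nu$; otherwise insert the equicontinuous extension $\rho_{\nu+1}$, perform an almost one-to-one modification $\theta_{\nu+1}$ to restore the RIC property, and iterate, taking inverse limits (joins) at limit ordinals. Termination is guaranteed because each successor step strictly decreases the Ellis group of the top intermediate factor, $\GG(X_{\nu+1})\subsetneq \GG(X_\nu)$, and a strictly decreasing well-ordered chain of distinct $\uptau$-closed subgroups of the fixed group $G$ must terminate by a cardinality bound. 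Canonicity follows by choosing at every stage the maximal equicontinuous extension, which is functorial in $\pi$. This yields the asserted diagram with each $\rho_\nu$ equicontinuous, each $\theta_\nu,\theta_\nu^{*}$ proximal, and the top extension $\pi_\eta$ RIC and weakly mixing; the AI and I refinements are obtained verbatim by using almost one-to-one (respectively trivial) modifications in place of general proximal ones.
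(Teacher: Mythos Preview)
The paper does not prove this theorem: it is quoted as background from the cited references of Ellis--Glasner--Shapiro, McMahon, and Veech, and is used only to set up the subsequent \cref{thm: Structure_thm_cube} on the structure of dynamical cubes. There is therefore no proof in the paper to compare your proposal against. Your outline is the standard classical argument found in those references, and you correctly isolate the RIC dichotomy (a RIC extension is weakly mixing or admits a nontrivial intermediate equicontinuous extension) as the one substantive analytic step, with the rest being bookkeeping through the Ellis-group dictionary.

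One concrete slip in your termination argument: you claim $\GG(X_{\nu+1})\subsetneq \GG(X_\nu)$, but this is false. The passage from $X_\nu$ to $X_{\nu+1}$ is the proximal modification $\theta^{*}_{\nu+1}$, and by \cref{thm: extensions_ellis_group}(1) proximal extensions preserve Ellis groups, so $\GG(X_{\nu+1})=\GG(X_\nu)$ for every $\nu$. What strictly decreases along the tower is $\GG(Y_\nu)$: one has $Y_{\nu+1}\xrightarrow{\theta_{\nu+1}} Z_{\nu+1}\xrightarrow{\rho_{\nu+1}} Y_\nu$ with $\theta_{\nu+1}$ proximal and $\rho_{\nu+1}$ a nontrivial equicontinuous extension, whence $\GG(Y_{\nu+1})=\GG(Z_{\nu+1})\subsetneq \GG(Y_\nu)$. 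The cardinality bound then applies to this chain. Your final sentence about obtaining the AI and I refinements ``verbatim'' by changing the type of modification is also too loose: those are characterizations of point-distal and distal systems (Veech and Furstenberg respectively), not variants of the same transfinite construction for a general minimal system.
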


Then, it naturally induces the following commutative diagram associated with $\bQ^{[d]}(X)$:

\[\adjustbox{width={\textwidth}}{\begin{tikzcd}[cramped,column sep=small]
	\bQ^{[d]}(X) && {} && {\bQ^{[d]}(X_{1})} & \cdots & {\bQ^{[d]}(X_{\nu})} &&&& {\bQ^{[d]}(X_{\nu+1})} & \cdots & {\bQ^{[d]}(X_{\eta})} \\
	\\
	{\{\cdot\}} && {\bQ^{[d]}(Z_{1})} && {\bQ^{[d]}(Y_{1})} & \cdots & {\bQ^{[d]}(Y_{\nu})} && {\bQ^{[d]}(Z_{\nu+1})} && {\bQ^{[d]}(Y_{\nu+1})} & \cdots & {\bQ^{[d]}(Y_{\eta})}
	\arrow["{{\theta^{*}_{1}}^{[d]}}", from=1-5, to=1-1]
	\arrow["\pi^{[d]}"', from=1-1, to=3-1]
	\arrow["{\sigma_{1}^{[d]}}", from=1-1, to=3-3]
	\arrow["{\pi_{1}^{[d]}}"', from=1-5, to=3-5]
	\arrow["{{\theta^{*}_{\nu+1}}^{[d]}}", from=1-11, to=1-7]
	\arrow["{\pi_{\nu}^{[d]}}"', from=1-7, to=3-7]
	\arrow["{\sigma_{\nu+1}^{[d]}}", from=1-7, to=3-9]
	\arrow["{\pi_{\nu+1}^{[d]}}"', from=1-11, to=3-11]
	\arrow["{\pi_{\eta}^{[d]}}"', from=1-13, to=3-13]
	\arrow["{\rho_{1}^{[d]}}", from=3-3, to=3-1]
	\arrow["{\theta_{1}^{[d]}}", from=3-5, to=3-3]
	\arrow["{\rho_{\nu+1}^{[d]}}", from=3-9, to=3-7]
	\arrow["{\theta_{\nu+1}^{[d]}}", from=3-11, to=3-9]
\end{tikzcd}}\]

In this section, we will show the following theorem.

\begin{theorem}\label{thm: Structure_thm_cube}
    Let $(X, T)$ be a minimal topological dynamical system and let $d\geq 1$ be an integer. Then, $(\bQ^{[d]}(X),\mathcal{HK}^{[d]}(T))$ has the same structure theorem as $(X, T)$. Precisely, for each $\nu\leq \eta$, $\pi_{\nu}^{[d]}$ is RIC, $\rho_{\nu}^{[d]}$ is equicontinuous, $\theta_{\nu}^{[d]},{\theta^{*}_{\nu}}^{[d]}$ are proximal and $\pi_{\eta}^{[d]}$ is RIC and weakly mixing.
\end{theorem}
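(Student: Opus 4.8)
The plan is to apply the functor $\bQ^{[d]}(\cdot)$ to every object and arrow of the canonical PI‑tower of $(X,T)$ and to check that the resulting commutative diagram satisfies all the defining conditions of the structure theorem for $(\bQ^{[d]}(X),\mathcal{HK}^{[d]}(T))$. Since $\bQ^{[d]}$ sends minimal systems to minimal systems (\cref{thm: cube_is_minimal}(1)), commutes with inverse limits, and sends isomorphisms to isomorphisms, it suffices to prove that $\bQ^{[d]}$ preserves each relevant class of single‑step extensions of minimal systems: proximal, equicontinuous, RIC, and weakly mixing (and, for the AI/I corollary, almost one‑to‑one). The central tool will be a description of the Ellis group of a cube: for minimal $(X,T)$ with base point $x_{0}\in uX$, writing $\pi_{\epsilon}\colon G(\mathcal{HK}^{[d]}(T))\to G$ for the $\epsilon$‑coordinate projection, one has $\GG(\bQ^{[d]}(X),x_{0}^{[d]})=\{q\in G(\mathcal{HK}^{[d]}(T)):\pi_{\epsilon}(q)\in\GG(X,x_{0})\text{ for all }\epsilon\subseteq[d]\}$ (because $qx_{0}^{[d]}=x_{0}^{[d]}$ amounts to $\pi_{\epsilon}(q)x_{0}=x_{0}$ for every $\epsilon$), and moreover each $\pi_{\epsilon}$ is a $\uptau$‑continuous group homomorphism, obtained by applying \cref{prop: pi_u_homeo_if and only if_proximal}(1) to the coordinate factor map $\bQ^{[d]}(M)\to M$ and using \cref{cor: universal_hk_cube}.

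With this in hand the proximal, distal and almost one‑to‑one cases are coordinate‑wise. If $\pi\colon X\to Y$ is proximal, then $\GG(X,x_{0})=\GG(Y,\pi(x_{0}))$ by \cref{thm: extensions_ellis_group}(1), so the coordinate description gives $\GG(\bQ^{[d]}(X))=\GG(\bQ^{[d]}(Y))$ and hence $\pi^{[d]}$ is proximal. If $\pi$ is distal and $(\bx,\bx')\in\bQ^{[d]}(X)^{2}$ is a proximal pair with $\pi^{[d]}\bx=\pi^{[d]}\bx'$, choose a minimal idempotent $w\in\beta\mathcal{HK}^{[d]}(T)$ with $w\bx=w\bx'$; projecting to coordinate $\epsilon$ yields a minimal idempotent $w_{\epsilon}\in\beta T$ with $w_{\epsilon}x_{\epsilon}=w_{\epsilon}x'_{\epsilon}$ and $\pi(x_{\epsilon})=\pi(x'_{\epsilon})$, so $x_{\epsilon}=x'_{\epsilon}$ by distality; thus $\bx=\bx'$ and $\pi^{[d]}$ is distal. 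If in addition $\pi$ is equicontinuous, I would then check $H(\GG(\bQ^{[d]}(Y)))\subseteq\GG(\bQ^{[d]}(X))$: since $\pi_{\epsilon}$ is $\uptau$‑continuous and carries $\GG(\bQ^{[d]}(Y))$ into $\GG(Y)$, it carries $H(\GG(\bQ^{[d]}(Y)))$ into $H(\GG(Y))\subseteq\GG(X)$, and the coordinate description then gives the claim; combined with the distality just proved, \cref{thm: extensions_ellis_group}(3) shows $\pi^{[d]}$ is equicontinuous. Finally, if $\pi$ is almost one‑to‑one and $X_{0}\subseteq X$ is the ($T$‑invariant, dense $G_{\delta}$) set where $\pi$ is injective, then each $\pi_{\epsilon}^{-1}(X_{0})$ is a nonempty $\mathcal{HK}^{[d]}(T)$‑invariant $G_{\delta}$ subset of the minimal system $\bQ^{[d]}(X)$, hence residual, so their intersection over $\epsilon\subseteq[d]$ is a dense $G_{\delta}$ on which $\pi^{[d]}$ is injective.

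The technical heart — and the step I expect to be the main obstacle — is preserving RIC extensions and the weak mixing of the top extension $\pi_{\eta}$. For RIC, writing $y_{0}=\pi(x_{0})$ and using that $\pi$ RIC gives $\pi^{-1}(p y_{0})=p\circ\GG(Y)x_{0}$ for all $p\in M$, one first checks (the $\circ$‑action distributes over coordinates and $\bQ^{[d]}(X)$ is $\mathcal{HK}^{[d]}(T)$‑invariant) that $u^{[d]}(\pi^{[d]})^{-1}(y_{0}^{[d]})=\GG(\bQ^{[d]}(Y))x_{0}^{[d]}=\bQ^{[d]}(X)\cap\prod_{\epsilon}\GG(Y)x_{0}$ and that $(\pi^{[d]})^{-1}(q\,y_{0}^{[d]})=\bQ^{[d]}(X)\cap\prod_{\epsilon}(\pi_{\epsilon}(q)\circ\GG(Y)x_{0})$ for $q\in M(\mathcal{HK}^{[d]}(T))$; the inclusion $q\circ\big(\bQ^{[d]}(X)\cap\prod_{\epsilon}\GG(Y)x_{0}\big)\subseteq\bQ^{[d]}(X)\cap\prod_{\epsilon}(\pi_{\epsilon}(q)\circ\GG(Y)x_{0})$ is routine, and the reverse inclusion — to which RIC of $\pi^{[d]}$ reduces — follows once $\pi^{[d]}$ is shown to be open, via \cref{thm: extensions_ellis_group}(4). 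Proving openness of $\pi^{[d]}$ is exactly where coordinate‑wise approximations must be ``glued'' into approximations by the proper subgroup $\mathcal{HK}^{[d]}(T)\subsetneq T^{[d]}$, and I expect this to require a compactness argument together with the density of minimal points in $R_{\pi}^{n}$ (the characterization of RIC from \cite{Akin_Glasner_Huang_Shao_Ye_conditions_transitive_chaotic:2010}) and the minimality of sub‑cubes from \cref{thm: cube_is_minimal}; the same ingredients should also yield density of minimal points in $R_{\pi^{[d]}}^{n}$, i.e.\ that $\pi^{[d]}$ is RIC. For weak mixing I would show $(R_{\pi^{[d]}},\mathcal{HK}^{[d]}(T))$ is transitive by relating $R_{\pi^{[d]}}$ to $\bQ^{[d]}(R_{\pi})$ and using that $R_{\pi}$ is transitive with dense minimal points; since $\pi^{[d]}$ is RIC, total weak mixing coincides with weak mixing, so controlling $R_{\pi^{[d]}}$ and its powers suffices.

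Putting these together, the diagram obtained by applying $\bQ^{[d]}$ to the canonical PI‑tower of $X$ is commutative and has, on each arrow, the property required in the structure theorem (proximal, equicontinuous, RIC, and RIC‑and‑weakly‑mixing on the top arrow), with inverse limits at limit ordinals; hence it is a PI‑tower of the required form for $(\bQ^{[d]}(X),\mathcal{HK}^{[d]}(T))$, establishing the structure theorem for it. In particular, since $\bQ^{[d]}$ preserves proximal extensions (resp.\ almost one‑to‑one extensions, resp.\ isomorphisms and equicontinuous extensions), the classes PI, AI and I pass from $X$ to $\bQ^{[d]}(X)$.
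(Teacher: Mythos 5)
Your overall strategy coincides with the paper's: apply $\bQ^{[d]}$ to the canonical PI-tower and verify that each class of extension is preserved, with the Ellis group of the cube system as the main algebraic tool. Your coordinate description of $\GG(\bQ^{[d]}(X))$ is equivalent to the paper's \cref{prop: Ellis_group_cube} (namely $\cltau{\HK{d}}\cap \GG(X)^{[d]}$), and your treatment of proximal extensions (equal Ellis groups plus \cref{thm: extensions_ellis_group}(1)), distal and equicontinuous extensions matches what the paper proves or leaves to the reader. For RIC, your reduction to the fiber identity at $y_{0}^{[d]}$ together with openness of $\pi^{[d]}$ is exactly the paper's route via \cref{lemma: RIC_iff_open_preim_is_ucircFx0}. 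Two cautions here: the inclusion you call ``routine'' is where the real work sits --- the paper shows $u^{[d]}\circ F^{[d]}x_{0}^{[d]}\cap\bQ^{[d]}(X)\subseteq u^{[d]}\circ(\cltau{\HK{d}}\cap F^{[d]})x_{0}^{[d]}$ by extracting, from nets $t_{\lambda}\to u^{[d]}$ and $t_{\lambda}f_{\lambda}\to p$ with $f_{\lambda}\in F^{[d]}$, the conclusion $u^{[d]}p\in F^{[d]}$ via \cref{lemma: conv_prod_tau_top}; and your displayed identity should be phrased with the circle operation ($\pi^{-1}(y_{0})=u\circ Fx_{0}$), not the pointwise product $u\,\pi^{-1}(y_{0})$, since these differ. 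As for openness of $\pi^{[d]}$ restricted to the cubes, the paper asserts it without argument, so you are not behind the paper there, but neither text supplies the gluing argument you rightly identify as the obstacle.

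The genuinely underdeveloped step is weak mixing. ``Relating $R_{\pi^{[d]}}$ to $\bQ^{[d]}(R_{\pi})$'' is not what the paper does, and it is not clear it can be carried out directly. The paper's argument is: since $\pi$ is RIC and weakly mixing, Shao--Ye's theorem provides a transitive point $(x_{1},\dots,x_{2^{d+1}})\in R_{\pi}^{2^{d+1}}$ \emph{all of whose coordinates lie in a single fiber} $\pi^{-1}(y_{0})$; by \cref{lemma: preimage_d_of_y_in_cube} its two halves already lie in $\bQ^{[d]}(X)$; then, given $(\bx,\bx')\in R_{\pi^{[d]}}$, openness of $\pi^{[d]}$ (continuity of the fiber map) produces nets $(a_{\lambda},b_{\lambda})$ in the fiber over $y_{0}^{[d]}$ with $t_{\lambda}a_{\lambda}\to\bx$, $t_{\lambda}b_{\lambda}\to\bx'$, and since $(a_{\lambda},b_{\lambda})$ lies in the $T$-orbit closure of the transitive point, $(\bx,\bx')$ lies in the $\mathcal{HK}^{[d]}(T)$-orbit closure of a single point of $R_{\pi^{[d]}}$. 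The fiber-transitive-point step is the essential idea missing from your sketch; the ingredients you list (density of minimal points, equivalence of weak and total weak mixing for RIC extensions) are the right supporting facts but do not by themselves yield transitivity of $R_{\pi^{[d]}}$.
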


\cref{thm: Structure_thm_cube} is an immediate consequence of the following proposition.

\begin{proposition}\label{prop: ext_P_then_cubes_ext_P}
    If $\pi:X\to Y$ is an extension between minimal systems with property $P$, then the extension $\pi^{[d]}:\bQ^{[d]}(X)\to \bQ^{[d]}(Y)$ also has property $P$. Here, $P$ can refer to being distal, proximal, equicontinuous, RIC or RIC weakly mixing.
\end{proposition}

An interesting question is whether \cref{prop: ext_P_then_cubes_ext_P} remains valid when $P$ is being weakly mixing. In our proof, the assumption of RIC weakly mixing is required to deduce that the extension between the dynamical cubes is weakly mixing, in fact, we need the factor to be totally weakly mixing. Currently, we do not know how to prove \cref{prop: ext_P_then_cubes_ext_P} only assuming that $P$ is weakly mixing. Weakly but not totally weakly mixing extensions are natural candidates for counterexamples to this property (for such examples, see, for instance, {\cite[Theorem 6.12]{Cao_Shao_almost_proximal:2023}}). 

We prove \cref{prop: ext_P_then_cubes_ext_P} case by case, and for that, we analyze the Ellis group of a system of dynamical cubes. The following lemma describes this object, and its proof follows directly from the definitions.

\begin{lemma}\label{prop: Ellis_group_cube}
    Let $(X, T)$ be a minimal topological dynamical system with $\GG(X)=A$. Then, $\GG(\bQ^{[d]}(X))=\cltau{\HK{d}}\cap A^{[d]}$, $d\geq 1$.
\end{lemma}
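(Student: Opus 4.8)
\textbf{Proof plan for \cref{prop: Ellis_group_cube} and \cref{thm: Structure_thm_cube}.}

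The plan is to first establish the description of the Ellis group of the cube system in \cref{prop: Ellis_group_cube}, and then use it together with the correspondence between factors and $\uptau$-closed subgroups to transport the canonical PI-tower of $(X,T)$ to $(\bQ^{[d]}(X),\mathcal{HK}^{[d]}(T))$. For \cref{prop: Ellis_group_cube}, fix a base point $x_0\in uX$ and take $x_0^{[d]}\in \bQ^{[d]}(X)$ as the base point of the cube system; by \cref{lem:min_idemp_cubo} the relevant idempotent is $u^{[d]}$, so $\GG(\bQ^{[d]}(X))=\{g\in G(\mathcal{HK}^{[d]}(T)): gx_0^{[d]}=x_0^{[d]}\}$. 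On the one hand, $g\in G(\mathcal{HK}^{[d]}(T))$ fixes $x_0^{[d]}$ iff each coordinate $g_\epsilon$ fixes $x_0$, i.e. iff $g\in \GG(X)^{[d]}=A^{[d]}$; on the other hand, $G(\mathcal{HK}^{[d]}(T))=\cltau{\HK{d}}$ when $X=M$, $x_0=u$, and in general the relevant group element that actually matters, after applying $u^{[d]}$, lies in $u^{[d]}\bQ^{[d]}(X)=\cltau{\HK{d}}x_0^{[d]}$ by \cref{thm: dyn_cube_equal_HK_cube}. Combining these two constraints gives $\GG(\bQ^{[d]}(X))=\cltau{\HK{d}}\cap A^{[d]}$, where one must be slightly careful: the intersection is taken inside $G(\mathcal{HK}^{[d]}(T))$, identifying $A^{[d]}$ with the corresponding subgroup via the coordinatewise action. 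The only subtlety is checking that membership in $\cltau{\HK{d}}$ is exactly the condition coming from the cube structure, which is precisely the content of \cref{thm: dyn_cube_equal_HK_cube}.

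With \cref{prop: Ellis_group_cube} in hand, the strategy for \cref{thm: Structure_thm_cube} is functorial: apply the cube construction to every system and every extension in the canonical PI-tower of $(X,T)$, obtaining the second displayed diagram; then verify that each arrow retains its type. Commutativity of the cube diagram is immediate since $\bx\mapsto \pi^{[d]}\bx$ is just coordinatewise application of $\pi$. For the types, I would argue as follows. Proximality of $\theta_\nu^{[d]}$ and ${\theta^*_\nu}^{[d]}$: if $\theta\colon X\to Y$ is proximal then $\GG(X)=\GG(Y)$ by \cref{thm: extensions_ellis_group}(1), hence $A^{[d]}$ is unchanged, hence $\GG(\bQ^{[d]}(X))=\cltau{\HK{d}}\cap A^{[d]}$ is unchanged; but $\cltau{\HK{d}}$ is the same $\uptau$-closed subgroup of $G(\mathcal{HK}^{[d]}(T))$ for $X$ and $Y$ (it depends only on $G$, not on the system), so $\GG(\bQ^{[d]}(X))=\GG(\bQ^{[d]}(Y))$ and $\theta^{[d]}$ is proximal, again by \cref{thm: extensions_ellis_group}(1). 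Equicontinuity of $\rho_\nu^{[d]}$: here one should use \cref{thm: G_nil_iff_G_nil_in_cube} (or rather its proof), or more directly the fact that equicontinuity of $\rho\colon W_{\iota+1}\to W_\iota$ is characterized via the derived group $H(-)$, and the derived group of $G(\mathcal{HK}^{[d]}(T))$ projects coordinatewise onto $H(G)$ as in the computation in the proof of \cref{thm: G_nil_iff_G_nil_in_cube}; so the relative equicontinuity condition $H(F)\subseteq \GG(X)$ lifts to the cube level. For the RIC and weak mixing properties of $\pi_\eta$, I would use the characterization of RIC recalled in the background (open plus density of minimal points in $R_\pi^n$) together with \cref{thm: cube_is_minimal}, which guarantees minimality of cube systems; weak mixing of $R_{\pi^{[d]}}$ should follow from weak mixing of $R_\pi$ by a coordinatewise/transitivity argument, using that for RIC extensions weak mixing equals total weak mixing.

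The main obstacle I anticipate is the RIC/weakly mixing step for $\pi_\eta^{[d]}$: proximal and equicontinuous extensions behave well under the cube functor because they are controlled by the Ellis group and the derived group, which transform transparently, but RIC-ness and weak mixing are transitivity statements about the relation $R_\pi$, and one must show that $\bQ^{[d]}$ of a RIC (resp. weakly mixing) extension is again RIC (resp. weakly mixing). The natural route is: openness of $\pi$ implies openness of $\pi^{[d]}$ (since $\bQ^{[d]}(X)=\cltau{\HK{d}}x_0^{[d]}$ and $p\circ$ commutes with preimages under open maps, cf. \cref{thm: extensions_ellis_group}(4)), and then show density of minimal points in $(R_{\pi^{[d]}})^n\cong$ a sub-cube-system inside $\bQ^{[d]}$ of $R_\pi^n$, invoking \cref{thm: cube_is_minimal} applied to the system $R_\pi$. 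For weak mixing, since the tower gives us that $\pi_\eta$ is RIC and weakly mixing, and $R_{\pi_\eta^{[d]}}$ can be identified with a dynamical-cube-like object over $R_{\pi_\eta}$, transitivity should propagate; I would make this precise by checking that $\bQ^{[d]}$ takes a transitive RIC extension to a transitive one, perhaps by reducing to the enveloping-semigroup description or by a direct Baire-category argument. Once these relative weak-mixing considerations are settled, assembling the full tower — including passage to inverse limits at limit ordinals, which is automatic since $\bQ^{[d]}$ commutes with inverse limits — completes the proof, and the concluding sentence about PI, I, AI systems follows by specializing the same arguments (only proximal/almost one-to-one/trivial and equicontinuous extensions appear in those towers).
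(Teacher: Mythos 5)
Your first paragraph is correct and matches the paper, which simply notes that this lemma ``follows directly from the definitions'': with base point $x_0^{[d]}$ and idempotent $u^{[d]}$ (via \cref{lem:min_idemp_cubo}), an element of $G(\mathcal{HK}^{[d]}(T))=\cltau{\HK{d}}$ fixes $x_0^{[d]}$ exactly when each coordinate (which acts as an element of $G$) fixes $x_0$, giving $\cltau{\HK{d}}\cap A^{[d]}$ under the coordinatewise identification you describe. The remaining paragraphs concern \cref{thm: Structure_thm_cube}, which is not the statement under review, so I do not assess them here.
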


By \cref{prop: Ellis_group_cube} and \cref{thm: extensions_ellis_group}, we get immediately the following result.

\begin{proposition}\label{prop: pi_prox_pid_prox_in_cubes}
    Let $\pi: (X, T) \to (Y, T)$ be a proximal extension of minimal systems. Then, $\pi^{[d]}: (\bQ^{[d]}(X),\mathcal{HK}^{[d]}(T)) \to (\bQ^{[d]}(Y),\mathcal{HK}^{[d]}(T))$ is a proximal extension.
\end{proposition}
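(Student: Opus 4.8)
The plan is to reduce the statement to the Ellis-group criterion for proximality in \cref{thm: extensions_ellis_group}(1), using the description of the Ellis group of a cube system provided by \cref{prop: Ellis_group_cube}.

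First I would fix a base point $x_{0}\in uX$ and set $y_{0}=\pi(x_{0})\in uY$. Since $u^{[d]}$ coincides with a minimal idempotent of $\beta\mathcal{HK}^{[d]}(T)$ on $\bQ^{[d]}(X)$ and on $\bQ^{[d]}(Y)$ (\cref{lem:min_idemp_cubo}) and $u^{[d]}x_{0}^{[d]}=(ux_{0})^{[d]}=x_{0}^{[d]}$, the diagonal points $x_{0}^{[d]}$ and $y_{0}^{[d]}$ lie in the corresponding $u^{[d]}$-fibers, so the Ellis groups $\GG(\bQ^{[d]}(X))$ and $\GG(\bQ^{[d]}(Y))$ are well defined with respect to them. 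Next I would record that $\pi^{[d]}\colon\bQ^{[d]}(X)\to\bQ^{[d]}(Y)$ is genuinely an extension of minimal systems: it is continuous and intertwines the $\mathcal{HK}^{[d]}(T)$-actions because $\pi$ acts coordinatewise, it is onto (apply $\pi^{[d]}$ to the dense orbit of $x_{0}^{[d]}$ and pass to closures), both $\bQ^{[d]}(X)$ and $\bQ^{[d]}(Y)$ are minimal by \cref{thm: cube_is_minimal}(1), and $\pi^{[d]}(x_{0}^{[d]})=y_{0}^{[d]}$.

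The core of the argument is the comparison of the two Ellis groups. Since $\pi$ is a proximal extension, \cref{thm: extensions_ellis_group}(1) gives $\GG(X)=\GG(Y)$; call this common $\uptau$-closed subgroup $A$. Applying \cref{prop: Ellis_group_cube} to $X$ and to $Y$ yields $\GG(\bQ^{[d]}(X))=\cltau{\HK{d}}\cap A^{[d]}=\GG(\bQ^{[d]}(Y))$, where one uses that the $\uptau$-closure $\cltau{\HK{d}}$ inside $G(\mathcal{HK}^{[d]}(T))$ depends only on the acting group $T$ (and the fixed data $M$, $u$), not on the phase space. With $\GG(\bQ^{[d]}(X))=\GG(\bQ^{[d]}(Y))$ in hand, another application of \cref{thm: extensions_ellis_group}(1), now to $\pi^{[d]}$, shows that $\pi^{[d]}$ is a proximal extension.

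I do not anticipate a real obstacle; the only delicate points are bookkeeping: choosing the base points inside the $u$-fibers so that all Ellis groups are simultaneously defined and related by $\pi$ in the standard way, and invoking \cref{prop: Ellis_group_cube} on the target with the correct base point $y_{0}^{[d]}$. Should one wish to bypass \cref{prop: Ellis_group_cube}, an alternative is to go through \cref{prop: pi_u_homeo_if and only if_proximal}: proximality makes $\pi_{u}\colon uX\to uY$ a $\uptau$-homeomorphism, and by \cref{thm: dyn_cube_equal_HK_cube} both $u^{[d]}\bQ^{[d]}(X)$ and $u^{[d]}\bQ^{[d]}(Y)$ are $\cltau{\HK{d}}$-orbits of a diagonal point, so the induced map between them is a $\uptau$-homeomorphism as well; but the Ellis-group route is shorter.
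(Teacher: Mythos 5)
Your proposal is correct and follows essentially the same route as the paper: the paper derives this proposition immediately from \cref{prop: Ellis_group_cube} (which gives $\GG(\bQ^{[d]}(X))=\cltau{\HK{d}}\cap A^{[d]}$) combined with the Ellis-group criterion for proximality in \cref{thm: extensions_ellis_group}(1). The bookkeeping you add about base points and surjectivity of $\pi^{[d]}$ is fine but is exactly what the paper leaves implicit.
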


First, we will show that if $\pi: (X, T) \to (Y, T)$ is a RIC extension (resp. RIC weakly mixing) between minimal systems, then so is $\pi^{[d]}: \bQ^{[d]}(X)\to \bQ^{[d]}(Y)$. We need the following results.

\begin{lemma}[see {\cite[Chapter 14]{Auslander_minimal_flows_and_extensions:1988}}]\label{lemma: RIC_iff_open_preim_is_ucircFx0}
    Let $\pi: (X, T) \to (Y, T)$ be an extension of minimal systems. Then, $\pi$ is RIC if and only if it is open and $\pi^{-1}(y_{0})=u\circ \GG(Y)x_{0}$.
\end{lemma}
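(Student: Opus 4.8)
The plan is to prove the two implications separately, after first recording a small reduction that makes the RIC condition transparent. Writing $F=\GG(Y)$ and $y_{0}=\pi(x_{0})$, I would first check that $u\pi^{-1}(y_{0})=Fx_{0}$: since $Gx_{0}=uX$, every point of $uX\cap\pi^{-1}(y_{0})$ is of the form $gx_{0}$ with $g\in G$, and $\pi(gx_{0})=gy_{0}=y_{0}$ forces $g\in F$; the reverse inclusion $Fx_{0}\subseteq uX\cap\pi^{-1}(y_{0})$ is immediate, and $uX\cap\pi^{-1}(y_{0})=u\pi^{-1}(y_{0})$. Thus the defining identity of RIC reads simply as $\pi^{-1}(py_{0})=p\circ Fx_{0}$ for every $p\in M$, and the goal is to show this is equivalent to openness together with the single instance $\pi^{-1}(y_{0})=u\circ Fx_{0}$.

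For the forward direction, suppose $\pi$ is RIC. By the characterization of RIC extensions recalled in \cref{Sec:background} (from \cite{Akin_Glasner_Huang_Shao_Ye_conditions_transitive_chaotic:2010}), $\pi$ is open. Since $x_{0}\in uX$ gives $ux_{0}=x_{0}$, the $\beta T$-equivariance of $\pi$ yields $uy_{0}=u\pi(x_{0})=\pi(ux_{0})=y_{0}$. Instantiating the RIC identity at $p=u$ then gives $\pi^{-1}(y_{0})=\pi^{-1}(uy_{0})=u\circ Fx_{0}$, which is exactly the asserted equality, so this direction costs essentially nothing once openness is granted.

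For the converse, assume $\pi$ is open and $\pi^{-1}(y_{0})=u\circ Fx_{0}$, and fix $p\in M$. By openness and item (4) of \cref{thm: extensions_ellis_group}, I have $\pi^{-1}(py_{0})=p\circ\pi^{-1}(y_{0})$. Substituting the hypothesis and using that the circle operation is an action of $\beta T$ on $2^{X}$, so that $p\circ(u\circ A)=(pu)\circ A$, I obtain $\pi^{-1}(py_{0})=p\circ(u\circ Fx_{0})=(pu)\circ Fx_{0}$. It then remains to simplify $pu$: because $M$ is a minimal left ideal containing the idempotent $u$, the left ideal $\beta Tu\subseteq M$ must equal $M$, so every $p\in M$ can be written $p=qu$, whence $pu=quu=qu=p$. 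Therefore $\pi^{-1}(py_{0})=p\circ Fx_{0}$ for all $p\in M$, which is precisely the RIC condition.

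The two genuinely structural inputs, and the steps I would be most careful about, are the identity $pu=p$ for $p\in M$ (equivalently that a minimal idempotent is a right identity on its ideal, $M=\beta Tu$) and the associativity $p\circ(u\circ A)=(pu)\circ A$ of the circle action; the latter must be applied to the closed set $A=\overline{Fx_{0}}$, since $Fx_{0}$ itself need not be closed and $u\circ Fx_{0}$ is defined as $u\circ\overline{Fx_{0}}$. Both are standard facts of the Ellis theory, but they are load-bearing here. The only external dependence is the use of \cite{Akin_Glasner_Huang_Shao_Ye_conditions_transitive_chaotic:2010} to deduce that RIC forces openness; everything else is a direct manipulation of the circle operation, so I do not anticipate a serious obstacle.
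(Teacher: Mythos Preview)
The paper does not provide a proof of this lemma; it is stated with a citation to Auslander's book and used as a black box. Your proposal is a correct and self-contained argument that relies only on facts already recorded in the paper's background section: the forward direction follows from the Akin--Glasner--Huang--Shao--Ye characterization (RIC $\Rightarrow$ open) and evaluating the RIC identity at $p=u$, while the converse combines \cref{thm: extensions_ellis_group}(4) with the associativity of the circle action and the right-identity property $pu=p$ on $M$. Both load-bearing steps you flag are standard and correctly justified, so there is no gap.
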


\begin{theorem}[{\cite[Theorem 4.3]{Shao_Ye_regionally_prox_orderd:2012}}]
    Let $\pi: (X, T) \to (Y, T)$ be a RIC weakly mixing extension of minimal systems. Then, for all $n\geq 1$ and $y\in Y$, there exists a transitive point $(x_{1},\dots,x_{n})\in R_{\pi}^{n}$ with $x_{1},\dots,x_{n}\in \pi^{-1}(y)$.
\end{theorem}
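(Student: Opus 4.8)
The plan is to deduce the statement from a \emph{fibrewise density} assertion: letting $q_{n}\colon R_{\pi}^{n}\to Y$ be the common-value map $(x_{1},\dots,x_{n})\mapsto\pi(x_{1})$, whose fibre over $y$ is exactly $(\pi^{-1}(y))^{n}$, it suffices to show that the transitive points of $(R_{\pi}^{n},T)$ are dense in $q_{n}^{-1}(y)$ for every $y$. Indeed, a transitive compact metric system has a dense $G_{\delta}$ of transitive points, so fibrewise density would give a transitive point in each (nonempty) fibre. I would first record the two structural inputs. Since $\pi$ is RIC it is open, and the $n$-fold fibre product projection $q_{n}$ is then open as well: a basic open set of $R_{\pi}^{n}$ projects to a finite intersection $\pi(U_{1})\cap\cdots\cap\pi(U_{n})$, which is open because $\pi$ is open. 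Moreover, because $\pi$ is RIC the fibres of $q_{n}$ are homogeneous under the $M$-action (each is an $n$-fold product of fibres $\pi^{-1}(py_{0})=p\circ\GG(Y)x_{0}$, cf.\ \cref{lemma: RIC_iff_open_preim_is_ucircFx0}). The second input is total weak mixing: since $\pi$ is RIC and weakly mixing it is totally weakly mixing, so $(R_{\pi}^{m},T)$ is transitive for every $m$. Applying this with $m=2n$ and observing that the fibre product of $R_{\pi}^{n}$ with itself over $Y$ is equivariantly homeomorphic to $R_{\pi}^{2n}$, I conclude that $q_{n}$ is an \emph{open weakly mixing} extension onto the minimal base $(Y,T)$.

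The theorem then reduces to the following general statement, which I would prove: \emph{if $q\colon Z\to Y$ is an open weakly mixing extension with $(Z,T)$ transitive and $(Y,T)$ minimal, then the transitive points of $Z$ are dense in every fibre $q^{-1}(y)$.} Write the transitive set as $\bigcap_{m}V_{m}$, where $V_{m}=\bigcup_{t\in T}t^{-1}U_{m}$ is open, dense and $T$-invariant for a countable base $\{U_{m}\}$. Since $q^{-1}(y)$ is compact metric, hence Baire, it is enough to show that each $V_{m}\cap q^{-1}(y)$ is dense in $q^{-1}(y)$. Given an open $O$ meeting $q^{-1}(y)$, one may replace $U_{m}$ by a $T$-translate (this changes neither $V_{m}$ nor the goal, by minimality of $Y$) so that $U_{m}$ also meets $q^{-1}(y)$; then $O$ and $U_{m}$ meet a common fibre, so $(O\times U_{m})\cap(Z\times_{Y}Z)$ is a nonempty open subset of the transitive system $Z\times_{Y}Z$. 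Transitivity produces orbit segments joining $O$ to $U_{m}$ \emph{over a single base fibre}, which is the mechanism forcing $V_{m}$ to meet fibres densely.

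The hard part — and the main obstacle — is that transitivity of $Z\times_{Y}Z$ only returns matchings over \emph{some} base point lying in an open set, whereas I must pin the base point to the prescribed $y$; transitivity cannot target the single, non-open fibre $q^{-1}(y)$ directly, and being a transitive point is not a closed property, so one cannot merely pass to a limit of transitive points lying over $y$. I would resolve this in two stages. First, a fibrewise Baire-category argument for the open map $q$ (in the spirit of the Fort--Kuratowski semicontinuity theorems) shows that the set $Y_{0}=\{y'\in Y:\ \text{transitive points of }Z\text{ are dense in }q^{-1}(y')\}$ is residual in $Y$. Second — and this is where the full force of RIC, rather than mere openness, is used — the homogeneity of the fibres of a RIC extension makes the fibrewise behaviour uniform across $Y$: the defining property of $Y_{0}$ is $T$-invariant and is transported between fibres by the circle action of $M$, so its validity on the dense residual set $Y_{0}$ upgrades to validity on every fibre. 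Feeding this back through the reduction of the first paragraph yields, for each $y$ and each $n$, a transitive point of $(R_{\pi}^{n},T)$ lying in $(\pi^{-1}(y))^{n}$, which is the assertion.
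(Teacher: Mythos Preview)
The paper does not prove this statement; it is quoted from \cite[Theorem~4.3]{Shao_Ye_regionally_prox_orderd:2012} and used as a black box, so there is no in-paper argument to compare your proposal against. I therefore assess the proposal on its own merits.

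Your reduction is sound: $q_{n}$ is open because $\pi$ is, total weak mixing of a RIC weakly mixing extension makes each $(R_{\pi}^{n},T)$ transitive, and the Kuratowski--Ulam step showing that $Y_{0}=\{y'\in Y:\text{transitive points of }R_{\pi}^{n}\text{ are dense in }q_{n}^{-1}(y')\}$ is residual in $Y$ is correct for open maps onto a Baire space. The genuine gap is your final step. You assert that the circle action of $M$ ``transports'' membership in $Y_{0}$ between fibres, but you give no mechanism, and the natural reading fails: if $z\in R_{\pi}^{n}$ is a transitive point and $p\in M$, then $pz$ need not be transitive --- for a minimal idempotent $p$, the point $pz$ is almost periodic and lies in a proper minimal subset whenever $R_{\pi}^{n}$ is not itself minimal. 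The RIC identity $\pi^{-1}(py_{0})=p\circ\GG(Y)x_{0}$ describes how the \emph{fibres} of $\pi$ move under $M$; it says nothing about how the (non-closed, non-$M$-invariant) set of transitive points of the non-minimal system $R_{\pi}^{n}$ meets those fibres. You flag exactly this obstacle two sentences earlier (``being a transitive point is not a closed property, so one cannot merely pass to a limit'') and then do not overcome it: invoking ``homogeneity'' restates the RIC hypothesis rather than supplying an argument. What is needed is a proof that each $T$-invariant dense open set $V_{m}=\bigcup_{t\in T}t^{-1}U_{m}$ is dense in \emph{every} fibre $q_{n}^{-1}(y)$; minimality of $Y$ and $T$-invariance of $V_{m}$ give $q_{n}(V_{m})=Y$, so $V_{m}$ meets every fibre, but upgrading ``meets'' to ``meets densely'' is the substantive point and requires more than the one line you offer. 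Consult the original Shao--Ye proof for how this is done.
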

    
\begin{lemma}[{\cite[Lemma 4.4]{Shao_Ye_regionally_prox_orderd:2012}}]\label{lemma: preimage_d_of_y_in_cube}
    Let $\pi: (X, T) \to (Y, T)$ be a RIC weakly mixing extension of minimal systems. Then, for each $y\in Y$ and $d\geq 1$, we have\begin{align*}
        (\pi^{-1}(y))^{[d]} = (\pi^{-1}(y))^{2^{d}} \subseteq \bQ^{[d]}(X).
    \end{align*}
\end{lemma}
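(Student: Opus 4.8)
The plan is to prove the only non-trivial content of the statement, the inclusion $(\pi^{-1}(y))^{2^{d}}\subseteq\bQ^{[d]}(X)$ (the displayed equality being merely the notational identity $X^{[d]}=X^{2^{d}}$), by induction on $d$. The engine is a reduction that extracts the whole fibre cube from a single well-chosen point. First I would record this reduction: suppose that for some $y$ we can produce one point $\mathbf{x}^{*}\in(\pi^{-1}(y))^{2^{d}}$ which is transitive in $(R_{\pi}^{2^{d}},T)$ under the diagonal action and which lies in $\bQ^{[d]}(X)$. Since $\bQ^{[d]}(X)$ is, by definition, a $\mathcal{HK}^{[d]}(T)$-orbit closure, it is closed and invariant under $\mathcal{HK}^{[d]}(T)$, hence under the diagonal subgroup $\Delta^{[d]}(T)=\{g^{[d]}:g\in T\}$. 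Thus $\overline{T\mathbf{x}^{*}}\subseteq\bQ^{[d]}(X)$, while transitivity gives $\overline{T\mathbf{x}^{*}}=R_{\pi}^{2^{d}}$. Therefore $R_{\pi}^{2^{d}}\subseteq\bQ^{[d]}(X)$, and since $(\pi^{-1}(y'))^{2^{d}}\subseteq R_{\pi}^{2^{d}}$ for every $y'$, the lemma follows. The existence of such a transitive point in a prescribed fibre (for $n=2^{d}$) is exactly the preceding transitivity theorem \cite[Theorem 4.3]{Shao_Ye_regionally_prox_orderd:2012}, so the real task is to place that point inside $\bQ^{[d]}(X)$.

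To do this I would first establish a gluing description of the cubes valid for abelian $T$: splitting off the last coordinate and writing a point of $X^{[d]}$ as $(\mathbf{a},\mathbf{b})$ with lower face $\mathbf{a}=(x_{\epsilon}:\epsilon_{d}=0)$ and upper face $\mathbf{b}=(x_{\epsilon}:\epsilon_{d}=1)$ in $X^{[d-1]}$, one has
\begin{align*}
\bQ^{[d]}(X)=\overline{\{(\mathbf{a},g^{[d-1]}\mathbf{a}):\mathbf{a}\in\bQ^{[d-1]}(X),\ g\in T\}}.
\end{align*}
This comes from inspecting the generators of $\mathcal{F}^{[d]}(T)$: the upper facets of $\{0,1\}^{d}$ are the ``vertical'' ones $\alpha\times\{0,1\}$, with $\alpha$ an upper facet of $\{0,1\}^{d-1}$, which act diagonally as $\mathcal{F}^{[d-1]}(T)$ on both faces, together with the single ``horizontal'' facet $\{\epsilon_{d}=1\}$, which acts as the diagonal $g^{[d-1]}$ on the upper face alone. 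Because $T$ is abelian, $\mathcal{HK}^{[d-1]}(T)\subseteq T^{[d-1]}$ is abelian, so the two families of generators commute; tracking an arbitrary word then shows that the lower face ranges over $\mathcal{F}^{[d-1]}(T)z^{[d-1]}$ (whose union over $z\in X$ is dense in $\bQ^{[d-1]}(X)$ for minimal $X$) while the upper face is precisely a diagonal $T$-translate of the lower one.

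With the gluing description the induction is short. The base $d=0$ is trivial. Assuming the statement at level $d-1$, fix $y$ and use \cite[Theorem 4.3]{Shao_Ye_regionally_prox_orderd:2012} to obtain a transitive $\mathbf{x}^{*}=(\mathbf{a},\mathbf{b})\in R_{\pi}^{2^{d}}$ with all coordinates in $\pi^{-1}(y)$. The lower-face projection $R_{\pi}^{2^{d}}\to R_{\pi}^{2^{d-1}}$ is a $T$-equivariant continuous surjection, so $\mathbf{a}$ is itself transitive, i.e.\ $\overline{T\mathbf{a}}=R_{\pi}^{2^{d-1}}$. By the inductive hypothesis $\mathbf{a},\mathbf{b}\in\bQ^{[d-1]}(X)$, and since $\mathbf{b}\in(\pi^{-1}(y))^{2^{d-1}}\subseteq R_{\pi}^{2^{d-1}}=\overline{T\mathbf{a}}$ there is a net $(g_{\lambda})\subseteq T$ with $g_{\lambda}^{[d-1]}\mathbf{a}\to\mathbf{b}$. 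By the gluing description each $(\mathbf{a},g_{\lambda}^{[d-1]}\mathbf{a})$ belongs to $\bQ^{[d]}(X)$, and as $\bQ^{[d]}(X)$ is closed the limit $\mathbf{x}^{*}=(\mathbf{a},\mathbf{b})$ belongs to $\bQ^{[d]}(X)$. The reduction of the first paragraph now applies and completes the induction.

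The main obstacle is the gluing description: it requires careful bookkeeping of the facet generators and, crucially, the observation that commutativity of $T$ forces the upper face to be a \emph{diagonal} $T$-translate of the lower face (for non-abelian $T$ one would only get a translate by the normal closure of $\Delta^{[d-1]}(T)$, and the clean recursion fails). Everything else is soft: the transitivity input is quoted, its descent to the lower face is functorial, and the spreading to the whole relation uses only that $\bQ^{[d]}(X)$ is closed and $\Delta^{[d]}(T)$-invariant. It is worth emphasizing that relative weak mixing enters at exactly one point, namely in guaranteeing $\overline{T\mathbf{a}}=R_{\pi}^{2^{d-1}}$, which is what allows the upper face $\mathbf{b}$ to be reached from the lower face $\mathbf{a}$ by diagonal translations.
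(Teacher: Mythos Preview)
The paper does not supply its own proof of this lemma; it is quoted verbatim from \cite[Lemma 4.4]{Shao_Ye_regionally_prox_orderd:2012}. Your argument is correct and is essentially the one Shao and Ye give: induction on $d$ through the face-splitting recursion $\bQ^{[d]}(X)=\overline{\{(\mathbf{a},g^{[d-1]}\mathbf{a}):\mathbf{a}\in\bQ^{[d-1]}(X),\,g\in T\}}$, combined with the transitivity input from \cite[Theorem 4.3]{Shao_Ye_regionally_prox_orderd:2012} to produce a transitive point of $R_{\pi}^{2^{d}}$ inside $\bQ^{[d]}(X)$, after which closure under the diagonal action yields all of $R_{\pi}^{2^{d}}$. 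Two minor remarks: you only need $\mathbf{a}\in\bQ^{[d-1]}(X)$ from the inductive hypothesis (the membership of $\mathbf{b}$ is not used), and for the gluing description only the inclusion $\{(\mathbf{a},g^{[d-1]}\mathbf{a}):\mathbf{a}\in\bQ^{[d-1]}(X),\,g\in T\}\subseteq\bQ^{[d]}(X)$ is actually invoked, which follows directly from continuity and density of the $\mathcal{F}^{[d-1]}(T)$-orbits of diagonals without needing the full equality.
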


\begin{proposition}
    Let $\pi: (X, T) \to (Y, T)$ be a RIC extension of minimal systems. Then, $\pi^{[d]}: (\bQ^{[d]}(X),\mathcal{HK}^{[d]}(T)) \to (\bQ^{[d]}(Y),\mathcal{HK}^{[d]}(T))$ is a RIC extension.
\end{proposition}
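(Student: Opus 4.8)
The plan is to verify the two defining conditions of a RIC extension from the characterization recalled just above (\cref{lemma: RIC_iff_open_preim_is_ucircFx0}): openness of $\pi^{[d]}$, and the identity $(\pi^{[d]})^{-1}(\pi^{[d]}(\bx_0))=u^{[d]}\circ \GG(\bQ^{[d]}(Y))\bx_0$, where we take the base point $\bx_0=x_0^{[d]}\in \bQ^{[d]}(X)$. For openness, I would first reduce to the case $Y=M_{dis}$-type situations is not needed; instead, since $\pi$ is RIC it is open, and openness of a map is inherited by finite products $\pi^{2^d}\colon X^{[d]}\to Y^{[d]}$; then I would argue that $\pi^{[d]}$ is the restriction of $\pi^{2^d}$ to the invariant closed set $\bQ^{[d]}(X)$, which maps onto $\bQ^{[d]}(Y)$, and use that for minimal systems an equivariant factor map between minimal systems whose ``ambient'' extension is open and which is a restriction to a saturated-enough subsystem is again open — more precisely I would invoke \cref{prop: cube_saturated_open_ext} or its proof technique to show $\bQ^{[d]}(X)$ is $\pi^{[d]}$-saturated inside $(\pi^{2^d})^{-1}(\bQ^{[d]}(Y))$, from which openness of $\pi^{[d]}$ follows from openness of $\pi^{2^d}$.

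For the Ellis-group identity, I would use \cref{prop: Ellis_group_cube}: $\GG(\bQ^{[d]}(X))=\cltau{\HK{d}}\cap A^{[d]}$ with $A=\GG(X)$, and $\GG(\bQ^{[d]}(Y))=\cltau{\HK{d}}\cap B^{[d]}$ with $B=\GG(Y)$. The RIC hypothesis on $\pi$ gives $\pi^{-1}(y_0)=u\circ Bx_0$, and I want to compute $(\pi^{[d]})^{-1}(\pi^{[d]}(x_0^{[d]}))$. Using \cref{thm: dyn_cube_equal_HK_cube}, $\pi^{[d]}(x_0^{[d]})$ has fiber (inside $X^{[d]}$) equal to $(\pi^{-1}(y_0))^{[d]}=(u\circ Bx_0)^{[d]}$ intersected with $\bQ^{[d]}(X)$; the key point is that this intersection equals $u^{[d]}\circ B^{[d]}x_0^{[d]}$ and that it is contained in $\bQ^{[d]}(X)$. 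Containment in the cube is exactly where \cref{lemma: Ellis_distal_group_cube_universal} (giving $D^{[d]}\subseteq\cltau{\HK{d}}$) and the decomposition $B=A D \Gtau{d+1}\cdots$ style arguments as in \cref{prop: cube_saturated_open_ext} come in: one writes an element of $B^{[d]}x_0^{[d]}$ using face transformations and diagonal elements landing in $\cltau{\HK{d}}$, plus $D^{[d]}$ and $H(G)^{[d]}$ pieces, all of which preserve $\cltau{\HK{d}}x_0^{[d]}=u^{[d]}\bQ^{[d]}(X)$. Then $u^{[d]}\circ B^{[d]}x_0^{[d]}=u^{[d]}\circ \GG(\bQ^{[d]}(Y))x_0^{[d]}$ follows from $\GG(\bQ^{[d]}(Y))=\cltau{\HK{d}}\cap B^{[d]}$ together with the fact that the extra $B^{[d]}$-directions not in $\cltau{\HK{d}}$ act trivially on $x_0^{[d]}\in\bQ^{[d]}(X)$ (again by \cref{thm: dyn_cube_equal_HK_cube} and \cref{lemma: Ellis_distal_group_cube_universal}).

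I expect the main obstacle to be the careful bookkeeping in showing $u^{[d]}\circ B^{[d]}x_0^{[d]}\subseteq \bQ^{[d]}(X)$ and that it equals $u^{[d]}\circ(\cltau{\HK{d}}\cap B^{[d]})x_0^{[d]}$: one must juggle the $\uptau$-closure, the circle operation $u^{[d]}\circ(-)$, and the parametrization of $B=\GG(Y)$ via $A$, $D$, $\Gtau{d+1}$ and $H(G)$, in the spirit of the computation in the proof of \cref{prop: cube_saturated_open_ext}. A secondary technical point is justifying that $(\pi^{[d]})^{-1}$ of a point can be computed as the circle operation $u^{[d]}\circ(-)$ of the fiber over $y_0$, which requires knowing $\pi^{[d]}$ is open (hence the two parts of the proof are intertwined and I would establish openness first). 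Once these are in place, \cref{lemma: RIC_iff_open_preim_is_ucircFx0} immediately yields that $\pi^{[d]}$ is RIC.
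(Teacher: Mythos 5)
Your overall frame agrees with the paper's: both arguments reduce to the characterization of RIC in \cref{lemma: RIC_iff_open_preim_is_ucircFx0} (openness plus the fiber formula over the base point) and both use \cref{prop: Ellis_group_cube} to identify $\GG(\bQ^{[d]}(Y))=\cltau{\HK{d}}\cap \GG(Y)^{[d]}$. However, the step you single out as the key point is false. You claim that $(u\circ \GG(Y)x_0)^{[d]}\cap\bQ^{[d]}(X)$ equals $u^{[d]}\circ \GG(Y)^{[d]}x_0^{[d]}$ and that the latter is \emph{contained in} $\bQ^{[d]}(X)$, to be proved by decomposing $\GG(Y)$ into $A$, $D$, $\Gtau{d+1}$ and $H(G)$ pieces. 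Take $X$ a nontrivial $2$-step nilsystem, $Y$ the trivial system and $d=2$: then $\pi$ is RIC (it is a distal extension), $\GG(Y)=G$, and $\GG(Y)^{[d]}x_0^{[d]}=X^{2^d}$, which strictly contains $\bQ^{[2]}(X)$ because $\RP^{[1]}(X)\neq X\times X$. The decomposition $\GG(Y)=AD\Gtau{d+1}$ is only available when $Y$ lies above the maximal factor of order $d$ (that is the content of \cref{prop: Ellis_group_nil_d}), and the containment $D^{[d]}\subseteq\cltau{\HK{d}}$ of \cref{lemma: Ellis_distal_group_cube_universal} is a special feature of $D$ that does not extend to an arbitrary $\uptau$-closed subgroup $\GG(Y)\supseteq\GG(X)$. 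Your auxiliary claim that the ``extra $\GG(Y)^{[d]}$-directions not in $\cltau{\HK{d}}$ act trivially on $x_0^{[d]}$'' also fails: $(g,u,\dots,u)$ with $g\in\GG(Y)\setminus\GG(X)$ visibly moves $x_0^{[d]}$. The paper's proof runs in the opposite direction: it takes a point of $u^{[d]}\circ \GG(Y)^{[d]}x_0^{[d]}$ that is \emph{already assumed} to lie in $\bQ^{[d]}(X)$, uses \cref{thm: dyn_cube_equal_HK_cube} to write it as $vgx_0^{[d]}$ with $v\in J(M(\mathcal{HK}^{[d]}(T)))$ and $g\in\cltau{\HK{d}}$, and then uses the coordinatewise stabilizer $A=\GG(X)\subseteq\GG(Y)$ together with \cref{lemma: conv_prod_tau_top} to conclude $g\in\cltau{\HK{d}}\cap\GG(Y)^{[d]}$. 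No containment of the full product fiber in the cube is needed or true.

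The same misconception affects your openness step. Showing that $\bQ^{[d]}(X)$ is $\pi^{[d]}$-saturated inside $X^{[d]}$ means exactly $\bQ^{[d]}(X)=(\pi^{2^{d}})^{-1}(\bQ^{[d]}(Y))$, which is the conclusion of \cref{prop: cube_saturated_open_ext} and requires its hypothesis that $X_{d-1}$ be a factor of $Y$; it is false for a general RIC $\pi$ by the same example. (To be fair, the paper merely asserts that $\pi^{[d]}$ is open when $\pi$ is RIC, so a detailed openness argument is missing on both sides; but the particular route you propose cannot work.) In short: correct skeleton, but both central claims you rely on -- containment of the product fiber in the cube, and saturation -- hold only in the saturated setting and not under the hypotheses of this proposition.
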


\begin{proof}
    Since $\pi$ is RIC, we have \begin{align*}
        (\pi^{[d]})^{-1}(y_{0}^{[d]}) = (u \circ Fx_{0})^{[d]} \cap \bQ^{[d]}(X) = u^{[d]} \circ F^{[d]}x_{0}^{[d]} \cap \bQ^{[d]}(X),
    \end{align*}

    where $y_{0}=\pi(x_{0})$ and $F=\GG(Y)$. Since $\bQ^{[d]}(X)$ is an $\mathcal{HK}^{[d]}(T)$-invariant closed set, we have $u^{[d]} \circ \bQ^{[d]}(X) = \bQ^{[d]}(X)$. It follows from \cref{thm: dyn_cube_equal_HK_cube} that $u^{[d]} \circ \cltau{\HK{d}}$ is contained in $u^{[d]}\circ \bQ^{[d]}(X)$. Then, \begin{align*}
        u^{[d]} \circ (F^{[d]}\cap \cltau{\mathcal{HK}^{[d]}(G)})x_{0}^{[d]} \subseteq u^{[d]} \circ F^{[d]}x_{0}^{[d]} \cap \bQ^{[d]}(X)
    \end{align*}

    Let $\bx\in u^{[d]} \circ F^{[d]}x_{0}^{[d]} \cap \bQ^{[d]}(X)$. Consider $\bp\in u^{[d]} \circ F^{[d]}$, $\bv\in J(M(\mathcal{HK}^{[d]}(T)))$ and $\bg \in\cltau{\HK{d}}$ such that $x=\bp x_{0}^{[d]}=\bv\bg x_{0}^{[d]}$. Then, we get that $\bg(\epsilon)\bp(\epsilon)^{-1} \in A \subseteq F$ for each $\epsilon\subseteq [d]$.

    Since $\bp\in u^{[d]} \circ F^{[d]}$, there are $(\bt_{\lambda})_{\lambda\in\Lambda}$ and $(\bold{f}_{\lambda})_{\lambda\in\Lambda}$ nets in $\mathcal{HK}^{[d]}(T)$ and $F^{[d]}$ respectively such that $\bt_{\lambda} \to u^{[d]}$ and $\bt_{\lambda}\bold{f}_{\lambda} \to \bp$. It follows from \cref{lemma: conv_prod_tau_top} that\begin{align*}
        \bold{f}_{\lambda}(\epsilon) \taulim uu^{-1}\bp(\epsilon) =u\bp(\epsilon)
    \end{align*}    

    for each $\epsilon\subseteq [d]$. Therefore, we get that $u^{[d]}\bp\in F^{[d]}$. Since $\bg(\epsilon)\bp(\epsilon)^{-1} \in F$, we have $\bg\in \cltau{\HK{d}} \cap F^{[d]}$. Thus,\begin{align*}
        \bx \in v(\cltau{\HK{d}} \cap F^{[d]})x_{0}^{[d]}\subseteq u^{[d]} \circ v(\cltau{\HK{d}} \cap F^{[d]})x_{0}^{[d]} = u^{[d]} \circ (\cltau{\HK{d}} \cap F^{[d]})x_{0}^{[d]}.
    \end{align*}

    Therefore, by \cref{prop: Ellis_group_cube}, we get that\begin{align*}
        (\pi^{[d]})^{-1}(y_{0}^{[d]}) = u^{[d]} \circ (\cltau{\HK{d}} \cap F^{[d]})x_{0}^{[d]} = u^{[d]} \circ \GG(\bQ^{[d]}(Y))x_{0}^{[d]}.
    \end{align*}

    Since $\pi$ is RIC, we have that $\pi^{[d]}$ is an open extension. Hence, by \cref{lemma: RIC_iff_open_preim_is_ucircFx0}, we conclude that $\pi^{[d]}: \bQ^{[d]}(X) \to \bQ^{[d]}(Y)$ is RIC.
\end{proof}

\begin{proposition}
    Let $\pi: (X, T) \to (Y, T)$ be a RIC weakly mixing extension of minimal systems. Then, $\pi^{[d]}: (\bQ^{[d]}(X),\mathcal{HK}^{[d]}(T)) \to (\bQ^{[d]}(Y),\mathcal{HK}^{[d]}(T))$ is a RIC weakly mixing extension.
\end{proposition}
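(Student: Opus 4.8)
The plan is to establish that the system $(R_{\pi^{[d]}},\mathcal{HK}^{[d]}(T))$ is transitive; since $\pi^{[d]}$ is RIC by the preceding proposition (so in particular open, and then weak mixing automatically upgrades to total weak mixing), this is exactly weak mixing of $\pi^{[d]}$. Throughout I identify $X^{[d+1]}$ with $X^{[d]}\times X^{[d]}$ by splitting the last binary coordinate, so that a point of $R_{\pi^{[d]}}$ is a pair $(\bx,\bx')$ with $\bx,\bx'\in\bQ^{[d]}(X)$ and $\pi^{[d]}\bx=\pi^{[d]}\bx'$, and under this identification the diagonal $\Delta^{[d]}(T)=\{t^{[d]}:t\in T\}\subseteq\mathcal{HK}^{[d]}(T)$ acts on $X^{[d]}\times X^{[d]}$ exactly as the diagonal $T$-action on $X^{2^{d+1}}$. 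Put $y_0=\pi(x_0)$. First I note that, by \cref{lemma: preimage_d_of_y_in_cube} applied to $\pi$ in dimension $d$, $(\pi^{-1}(y_0))^{2^d}\subseteq\bQ^{[d]}(X)$; hence $(\pi^{[d]})^{-1}(y_0^{[d]})=\bQ^{[d]}(X)\cap(\pi^{-1}(y_0))^{2^d}=(\pi^{-1}(y_0))^{2^d}$, and $(\pi^{-1}(y_0))^{2^{d+1}}=(\pi^{-1}(y_0))^{2^d}\times(\pi^{-1}(y_0))^{2^d}\subseteq\bQ^{[d]}(X)\times\bQ^{[d]}(X)$, so in fact $(\pi^{-1}(y_0))^{2^{d+1}}\subseteq R_{\pi^{[d]}}$ (all coordinates have $\pi$-image $y_0$).

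The key step is to prove
\[
R_{\pi^{[d]}}=\overline{\mathcal{HK}^{[d]}(T)\cdot(\pi^{-1}(y_0))^{2^{d+1}}}.
\]
The inclusion $\supseteq$ is immediate from the previous paragraph, since $R_{\pi^{[d]}}$ is closed and $\mathcal{HK}^{[d]}(T)$-invariant. For $\subseteq$, take $(\bx,\bx')\in R_{\pi^{[d]}}$ and set $\by=\pi^{[d]}\bx=\pi^{[d]}\bx'\in\bQ^{[d]}(Y)$. By minimality of $(\bQ^{[d]}(Y),\mathcal{HK}^{[d]}(T))$ (\cref{thm: cube_is_minimal}) choose $s_\lambda\in\mathcal{HK}^{[d]}(T)$ with $s_\lambda y_0^{[d]}\to\by$. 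Given open sets $U\ni\bx$ and $V\ni\bx'$ in $\bQ^{[d]}(X)$, openness of $\pi^{[d]}$ makes $\pi^{[d]}(U)$ and $\pi^{[d]}(V)$ open neighbourhoods of $\by$, so for $\lambda$ large $s_\lambda y_0^{[d]}\in\pi^{[d]}(U)\cap\pi^{[d]}(V)$, and there exist $\bu\in U$, $\bu'\in V$ with $\pi^{[d]}\bu=\pi^{[d]}\bu'=s_\lambda y_0^{[d]}$; then $s_\lambda^{-1}\bu,\ s_\lambda^{-1}\bu'\in(\pi^{[d]})^{-1}(y_0^{[d]})=(\pi^{-1}(y_0))^{2^d}$, hence $(s_\lambda^{-1}\bu,s_\lambda^{-1}\bu')\in(\pi^{-1}(y_0))^{2^{d+1}}$ and $s_\lambda\cdot(s_\lambda^{-1}\bu,s_\lambda^{-1}\bu')=(\bu,\bu')\in U\times V$. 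Thus $(\bx,\bx')$ lies in the displayed orbit closure.

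Finally, by \cite[Theorem 4.3]{Shao_Ye_regionally_prox_orderd:2012} (applied with $n=2^{d+1}$ and $y=y_0$) there is a transitive point $\mathbf{w}$ of $(R_\pi^{2^{d+1}},T)$ all of whose $2^{d+1}$ coordinates lie in $\pi^{-1}(y_0)$; so $\mathbf{w}\in(\pi^{-1}(y_0))^{2^{d+1}}\subseteq R_{\pi^{[d]}}$. Since $\Delta^{[d]}(T)\subseteq\mathcal{HK}^{[d]}(T)$ acts as the diagonal $T$-action, $\overline{\mathcal{HK}^{[d]}(T)\mathbf{w}}\supseteq\overline{\Delta^{[d]}(T)\mathbf{w}}=R_\pi^{2^{d+1}}\supseteq(\pi^{-1}(y_0))^{2^{d+1}}$, and therefore, by the key step, $\overline{\mathcal{HK}^{[d]}(T)\mathbf{w}}\supseteq R_{\pi^{[d]}}$; the reverse inclusion is clear since $\mathbf{w}\in R_{\pi^{[d]}}$ and the latter is closed and $\mathcal{HK}^{[d]}(T)$-invariant. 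Hence $\mathbf{w}$ is a transitive point, so $(R_{\pi^{[d]}},\mathcal{HK}^{[d]}(T))$ is transitive and $\pi^{[d]}$ is weakly mixing. The main obstacle is the key step, i.e.\ identifying $R_{\pi^{[d]}}$ with the $\mathcal{HK}^{[d]}(T)$-saturation of the single diagonal fibre product $(\pi^{-1}(y_0))^{2^{d+1}}$, which is precisely where the openness of $\pi^{[d]}$ (from the preceding proposition) and the description of diagonal fibres from \cref{lemma: preimage_d_of_y_in_cube} are used; the remaining verifications are routine.
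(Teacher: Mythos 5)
Your proof is correct and follows essentially the same route as the paper's: both arguments rest on Shao--Ye's theorem producing a transitive point of $(R_{\pi}^{2^{d+1}},T)$ with all coordinates in a single fibre $\pi^{-1}(y_0)$, on \cref{lemma: preimage_d_of_y_in_cube} to place the two halves of that point in $\bQ^{[d]}(X)$, and on the openness of $\pi^{[d]}$ together with minimality of $(\bQ^{[d]}(Y),\mathcal{HK}^{[d]}(T))$ to show that every pair of $R_{\pi^{[d]}}$ is approximated by $\mathcal{HK}^{[d]}(T)$-translates of points in the fibre over $y_0^{[d]}$. Your intermediate reformulation of $R_{\pi^{[d]}}$ as the $\mathcal{HK}^{[d]}(T)$-saturation of $(\pi^{-1}(y_0))^{2^{d+1}}$ is just a repackaging of the paper's direct net argument via the continuity of $y\mapsto(\pi^{[d]})^{-1}(y)$.
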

\begin{proof}
    Let $y_{0}\in Y$ and $(x_{1},\dots,x_{2^{d+1}})\in R_{\pi}^{2^{d+1}}$, with $x_{1},\dots,x_{2^{d+1}}\in \pi^{-1}(y_{0})$, be a transitive point. By \cref{lemma: preimage_d_of_y_in_cube}, we have that $(x_{1},\dots,x_{2^{d}}),(x_{2^{d}+1},\dots,x_{2^{d+1}}) \in \bQ^{[d]}(X)$.
    Let $(\bx,\bx')\in R_{\pi^{[d]}}$. Therefore, we have $\pi^{[d]}(\bx)=\pi^{[d]}(\bx')=\by$ for some $\by\in \bQ^{[d]}(Y)$. Then, there exist a net $(\bt_{\lambda})_{\lambda\in\Lambda} \subseteq \mathcal{HK}^{[d]}(T)$ such that $\bt_{\lambda}y_{0}^{[d]} \to \by$. Since $\pi$ is RIC, $\pi^{[d]}$ is open and $(\pi^{[d]})^{-1}$ is continuous. Thus, we have that\begin{align*}
        \bt_{\lambda}(\pi^{[d]})^{-1}(y_{0}^{[d]}) \to (\pi^{[d]})^{-1}(\by).
    \end{align*}

    Therefore, there exist $(\bold{a}_{\lambda})_{\lambda\in\Lambda},(\bold{b}_{\lambda})_{\lambda\in\Lambda}$ nets in $(\pi^{[d]})^{-1}(y_{0}^{[d]})$ such that $\bt_{\lambda}\bold{a}_{\lambda} \to \bx$ and $\bt_{\lambda}\bold{b}_{\lambda} \to \bx'$. It follows from $(\bold{a}_{\lambda,1},\dots, \bold{a}_{\lambda,2^{d}},\bold{b}_{\lambda,1},\dots,\bold{b}_{\lambda,2^{d}})\in R_{\pi}^{2^{d+1}}$, where $\bold{a}_{\lambda}=(\bold{a}_{\lambda,1},\dots, \bold{a}_{\lambda,2^{d}})$ and $\bold{b}_{\lambda}=(\bold{b}_{\lambda,1},\dots,\bold{b}_{\lambda,2^{d}})$, that $\bold{a}_{\lambda}\in \overline{T(x_{1},\dots,x_{2^{d}})}$ and $\bold{b}_{\lambda}\in \overline{T(x_{2^{d}+1},\dots, x_{2^{d+1}})}$. Therefore, we have\begin{align*}
        (\bx,\bx')\in \overline{\mathcal{HK}^{[d]}(T)((x_{1},\dots,x_{2^{d}})(x_{2^{d}+1},\dots,x_{2^{d+1}}))}.
    \end{align*}
    
    Hence, we get that $R_{\pi^{[d]}}$ is transitive, completing the proof.
\end{proof}

By definition, the same results are obtained for distal and equicontinuous extensions. 

\subsection{Maximal distal factor of dynamical cubes}

In this subsection, we will show that the maximal distal factor and maximal factor of order $\infty$ of $\bQ^{[d]}(X)$ is $\bQ^{[d]}(X_{dis})$, where $X_{dis}$ is the maximal distal factor of $X$, and $\bQ^{[d]}(X/\RP^{[\infty]}(X))$, respectively, for any minimal system $(X, T)$. This provides an alternative proof to the one given by Qiu and Zhao in \cite{Qiu_Zhao_Maximal_nil_d_cubes:2021} for the maximal factor of order $\infty$ of dynamical cubes.

We will first prove the theorem for the maximal distal factor of dynamical cubes. The proof of this is analogous to the proof of \cite[Theorem 1.2]{Wu_Xu_Ye_Structure_saturated:2023} for the system $N_{d}(X)$ (see \cite{Wu_Xu_Ye_Structure_saturated:2023} for the definition of $N_{d}(X)$), using the saturatedness of dynamical cubes proved in \cref{prop: cube_saturated_open_ext_nil_infty}. For completeness, we include the proof. The proof makes use of the capturing operation, a kind of reverse orbit closure, which was introduced in \cite{Auslander_Glasner_distal_order:2002} to characterize the distal and equicontinuous structure relations in minimal systems.

Let $(X, T)$ be a minimal topological dynamical system and $K\subseteq X$. The capturing set of $K$ is $C(K)=\{x\in X: \overline{Tx}\cap K \neq \emptyset\}$. Let $R$ be a symmetric and reflexive relation, and let $\mathcal{E}(R)$ be the equivalence relation generated by $R$. Thus, $\mathcal{E}(R)=\bigcup\{R^{n}:n=1,2,\dots\}$, where\begin{align*}
    R^{n}=\{(x,z):\exists y_{1},\dots,y_{n-1} \text{ s.t } (x,y_{1}),(y_{1},y_{2}),\dots,(y_{n-1},z)\in R \}.
\end{align*}

Every system $(X,T)$ has a maximal distal factor. Moreover, there exists a closed, invariant equivalence relation $S_{dis}(X)$ such that the quotient $X/S_{dis}(X)$ is the maximal distal factor of $X$, $S_{dis}(X)$ is called the {\em distal structure relation} of $X$. The following results show some properties of the distal structure relation.

\begin{theorem}[{\cite[Theorem 4.5]{Auslander_Glasner_distal_order:2002}}]\label{thm: char_Sdis_capturing_set}\label{thm: S_dis_capturing_set}
    Let $(X, T)$ be a minimal topological dynamical system. Then, $S_{dis}(X)=C(\overline{\mathcal{E}(\overline{\P(X)}}))$.
\end{theorem}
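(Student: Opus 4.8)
The plan is to establish the two inclusions in $S_{dis}(X)=C(\overline{\mathcal{E}(\overline{P(X)})})$, working throughout with the reformulation of the distal structure relation as the \emph{smallest} closed $T$-invariant equivalence relation $R$ on $X$ containing the proximal relation $P(X)$. That this coincides with $S_{dis}(X)$ (the smallest closed invariant equivalence relation with distal quotient) comes from the lifting of proximal pairs: if $R\supseteq P(X)$ is closed, invariant and transitive and $(\bar x,\bar y)\in P(X/R)$, write $\bar y=v\bar x$ with $v$ an idempotent of some minimal left ideal $M'$ of $\beta T$; for any $z$ over $\bar x$ the pair $(z,vz)$ satisfies $vz=vz$ with $v\in J(M')$, hence is proximal in $X$, hence lies in $R$, so $\bar x=\bar y$ and $X/R$ is distal; conversely proximal pairs collapse in any distal factor, so $P(X)\subseteq S_{dis}(X)$. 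Set $P=P(X)$, $\bar P=\overline{P(X)}$, $E=\mathcal{E}(\bar P)$, $\bar E=\overline{E}$, and $S=C(\bar E)$, the capturing set for the diagonal action of $T$ on $X\times X$.

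First I would prove the easy inclusion $S\subseteq S_{dis}(X)$. Since $S_{dis}(X)$ is a closed invariant equivalence relation containing $P$, it contains $\bar P$, hence $E=\mathcal{E}(\bar P)$, hence $\bar E$. Given $(x,y)\in S$, choose $(a,b)\in\overline{T(x,y)}\cap\bar E\subseteq S_{dis}(X)$ and let $\pi\colon X\to X_{dis}:=X/S_{dis}(X)$; then $(\pi a,\pi a)=(\pi a,\pi b)$ lies in $\overline{T(\pi x,\pi y)}$. As $X_{dis}$ is distal, so is $X_{dis}\times X_{dis}$, and in a distal system every orbit closure is minimal; thus $\overline{T(\pi x,\pi y)}$ is minimal, and since it meets the closed invariant set $\Delta$ it is contained in $\Delta$. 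Hence $\pi x=\pi y$, i.e. $(x,y)\in S_{dis}(X)$.

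For the reverse inclusion $S_{dis}(X)\subseteq S$, by minimality of $S_{dis}(X)$ it suffices to verify that $S=C(\bar E)$ is a closed $T$-invariant equivalence relation containing $P$. Four of these are routine: $P\subseteq\bar E\subseteq C(\bar E)$ gives containment of $P$; $T$-invariance of $\bar E$ and of orbit closures gives $T$-invariance of $S$; the coordinate flip is a $T$-equivariant homeomorphism preserving $\bar E$, so $S$ is symmetric; and $\Delta\subseteq P\subseteq S$ gives reflexivity. The genuine content is that $S$ is \emph{transitive} and \emph{closed}. Here the plan is to pass to the enveloping semigroup: every non-empty closed invariant subset of $X\times X$ contains an almost periodic point, so $(x,y)\in C(\bar E)$ iff $w(x,y)\in\bar E$ for some idempotent $w$ in some minimal left ideal of $\beta T$. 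Transitivity is then approached by capturing a triple $(x,y,z)\in X^{3}$ simultaneously and chaining the two captures, and closedness by a compactness argument in $\beta T$, using that $\bar E$ is closed and invariant and that evaluation at a fixed point is continuous on $\beta T$. Granting transitivity and closedness, the two inclusions give $S_{dis}(X)=C(\overline{\mathcal{E}(\overline{P(X)})})$ — and, a posteriori, $C(\overline{\mathcal{E}(\overline{P(X)})})$ is indeed a closed equivalence relation.

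The main obstacle is precisely this last step: that the capturing set $C(\bar E)$ is a closed equivalence relation. This is \emph{not} a soft topological fact — capturing sets of closed sets need not be closed or transitive (for instance $C(\Delta)=P(X)$, which in general is neither), so one must really exploit that $\bar E$ is not an arbitrary closed set but a closed invariant relation manufactured from the proximal relation over a \emph{minimal} base, and invoke the algebra of minimal ideals and idempotents of $\beta T$ (together with facts such as $\overline{J(M)}\subseteq J(M)D$, used elsewhere in the paper). This is where the delicate, possibly transfinite, analysis of the interplay between $\mathcal{E}(\cdot)$, $\overline{(\cdot)}$ and $C(\cdot)$ is needed; everything else in the argument is formal.
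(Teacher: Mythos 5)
This statement is quoted in the paper as \cite[Theorem 4.5]{Auslander_Glasner_distal_order:2002} and is not proved there, so the only question is whether your blind argument actually establishes it. It does not: there is a genuine gap exactly where you flag one. Your first inclusion $C(\overline{\mathcal{E}(\overline{P(X)})})\subseteq S_{dis}(X)$ is correct and complete (the reduction of $S_{dis}$ to the smallest closed invariant equivalence relation containing $P(X)$ via lifting of proximal pairs, and the minimality of orbit closures in the distal product, are both fine). But the reverse inclusion is made to rest entirely on the claim that $S=C(\overline{\mathcal{E}(\overline{P(X)})})$ is a closed, transitive relation, and for that you offer only a plan (``capturing a triple and chaining the two captures'', ``a compactness argument in $\beta T$'') followed by ``granting transitivity and closedness''. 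As you yourself observe with the example $C(\Delta)=P(X)$, the capturing operation does not preserve closedness or transitivity in general, and the obstruction is real: if $(x_n,y_n)\to(x,y)$ with $p_n(x_n,y_n)\in\overline{\mathcal{E}(\overline{P(X)})}$ for some $p_n\in\beta T$, one cannot pass to the limit because the $\beta T$-action is only separately continuous. So the entire content of the theorem --- the fact that one application of $C$ to $\overline{\mathcal{E}(\overline{P(X)})}$ already closes off the transfinite process --- is asserted rather than proved.

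To actually close this gap one needs the algebraic input that the rest of the paper records around this theorem: the group $D=\GG(M_{dis})$ is generated by $\{g\in G:(p,gp)\in\overline{P(M)}\text{ for all }p\in M\}$, and $\GG(X_{dis})=\GG(X)D$ (\cref{prop: Ellis_group_distal_factor}). With these, the reverse inclusion is proved directly rather than via the abstract minimality of $S_{dis}$: a pair $(x,y)\in S_{dis}(X)$ can be written as $y=vgx$ with $v\in J(M')$ and $g$ a $\uptau$-limit of products of generators of $D$; the products land in $\mathcal{E}(\overline{P(X)})$, the $\uptau$-limit is absorbed by the closure, and the idempotent $v$ is absorbed by the capturing operation (which is precisely what $C$ is there for). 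In other words, one does not show that $C(\overline{\mathcal{E}(\overline{P(X)})})$ is a closed equivalence relation as an intermediate step; that is obtained a posteriori from the equality with $S_{dis}(X)$. Your proposal has the right skeleton and correctly isolates the hard point, but as written it proves only one of the two inclusions.
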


\begin{theorem}[{\cite[Theorem 2.1]{Auslander_Glasner_distal_order:2002}}]\label{thm: image_of_Sdis}
    Let $\pi: (X, T) \to (Y, T)$ be an extension of minimal systems. Then, $\pi\times \pi (S_{dis}(X))=S_{dis}(Y)$.
\end{theorem}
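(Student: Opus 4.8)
The plan is to prove the two inclusions separately, the first being soft and the second carrying all the weight. For $\pi\times\pi(S_{dis}(X))\subseteq S_{dis}(Y)$, recall that $S_{dis}(Z)=R_{\sigma_Z}$ for $\sigma_Z\colon Z\to Z_{dis}$ the factor map onto the maximal distal factor. Since $\psi\circ\pi\colon X\to Y_{dis}$ (with $\psi\colon Y\to Y_{dis}$) exhibits $Y_{dis}$ as a distal factor of $X$, and $X_{dis}$ is the maximal distal factor of $X$, there is a factor map $q\colon X_{dis}\to Y_{dis}$ with $q\circ\pi_{dis}=\psi\circ\pi$. Hence if $(x,x')\in S_{dis}(X)=R_{\pi_{dis}}$, then $\psi(\pi(x))=q(\pi_{dis}(x))=q(\pi_{dis}(x'))=\psi(\pi(x'))$, so $(\pi(x),\pi(x'))\in R_{\psi}=S_{dis}(Y)$.

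For the reverse inclusion I would use the capturing-set description of \cref{thm: S_dis_capturing_set}, namely $S_{dis}(X)=C\big(\overline{\mathcal{E}(\overline{P(X)})}\big)$ and likewise for $Y$, and show that $\pi\times\pi$ intertwines the three operations used to build it. First, $\pi\times\pi(\overline{P(X)})=\overline{P(Y)}$: if $(y,y')\in P(Y)$ then $y'=vy$ for some minimal idempotent $v\in J(M')$, and for \emph{any} $x\in\pi^{-1}(y)$ the point $x':=vx$ satisfies $\pi(x')=v\pi(x)=y'$ and $(x,x')\in P(X)$; since $\pi\times\pi$ is a closed map carrying $P(X)$ into $P(Y)$, passing to closures yields equality. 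Note that this lift may be started at any prescribed $x$ over $y$. Second, this sharpened lifting allows one to lift finite $\overline{P}$-chains with prescribed first endpoint, which together with a compactness argument gives $\pi\times\pi\big(\overline{\mathcal{E}(\overline{P(X)})}\big)=\overline{\mathcal{E}(\overline{P(Y)})}$. Third, for a closed invariant $K\subseteq X\times X$ one has $\pi\times\pi(C(K))=C(\pi\times\pi(K))$, using that a factor map sends the orbit closure of a point onto the orbit closure of its image and that points of orbit closures lift. Chaining these identities with $K=\overline{\mathcal{E}(\overline{P(X)})}$ then gives $\pi\times\pi(S_{dis}(X))=S_{dis}(Y)$.

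The main obstacle is the lifting built into the second and third identities: $\overline{P}$ is in general neither transitive nor fiberwise invariant, and the orbit closure of a non-transitive point of $X\times X$ need not be minimal, so a naively chosen lift of a point of an orbit closure need not land in the set $K$ one wishes to capture. A clean way around this is to first pass, via the AG-diagram (\cref{thm: AG_diagram}), to an open (indeed RIC) extension $\pi'\colon X'\to Y'$ between almost one-to-one modifications of $X$ and $Y$; for an open extension the identity $p\circ\pi'^{-1}(y)=\pi'^{-1}(py)$ from \cref{thm: extensions_ellis_group} makes the required liftings routine, and one then transfers the conclusion back along the almost one-to-one (hence proximal) legs of the diagram, using that $S_{dis}$, the proximal relation, and the operations $\mathcal{E}(\cdot)$ and $C(\cdot)$ are insensitive to proximal extensions.
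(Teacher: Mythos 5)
A preliminary remark: the paper does not prove this statement at all --- it is imported verbatim as \cite[Theorem 2.1]{Auslander_Glasner_distal_order:2002} --- so your argument has to stand entirely on its own merits.

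Much of it does. The inclusion $\pi\times\pi(S_{dis}(X))\subseteq S_{dis}(Y)$ is correct; the identity $\pi\times\pi(\overline{P(X)})=\overline{P(Y)}$, with the idempotent lift started at a prescribed point of $\pi^{-1}(y)$, is correct; and the reduction to an open $\pi'$ via \cref{thm: AG_diagram} is legitimate, because for a proximal $\theta\colon X'\to X$ one has $S_{dis}(X')=(\theta\times\theta)^{-1}(S_{dis}(X))$ by \cref{lemma: X_dis_is_Y_dis_proximal extension}, hence $\theta\times\theta(S_{dis}(X'))=S_{dis}(X)$, and the conclusion for $\pi'$ transports around the commutative square. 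Your second identity can also be salvaged for open $\pi$: lower semicontinuity of $y\mapsto\pi^{-1}(y)$ lets you lift a pair of $\overline{P(Y)}$ with prescribed first coordinate, so finite chains and then the closure of their union lift by compactness. (Minor quibble: \cref{thm: AG_diagram} only gives $\pi'$ open, not RIC, but you only use openness.)

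The genuine gap is the third identity, $\pi\times\pi(C(K))\supseteq C\bigl(\pi\times\pi(K)\bigr)$, and openness does not make it routine. Given $(y,y')$ with $p(y,y')\in\pi\times\pi(K)$ for some $p\in\beta T$, you must exhibit a lift $(x,x')$ of $(y,y')$ whose orbit closure actually meets $K$. Openness gives $\pi^{-1}(py)=p\circ\pi^{-1}(y)$ (item (4) of \cref{thm: extensions_ellis_group}), but a point $z\in p\circ\pi^{-1}(y)$ is a limit $\lim t_{\lambda}x_{\lambda}$ with $x_{\lambda}$ \emph{varying} in the fiber; in general $p\circ\pi^{-1}(y)$ is strictly larger than $\{px: x\in\pi^{-1}(y)\}$, which is exactly why that theorem is stated with the circle operation. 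Consequently the witness $(z,z')\in K$ lying over $p(y,y')$ need not belong to $\overline{T(x,x')}$ for any lift $(x,x')$, and $p(x,x')$ for an arbitrarily chosen lift need not belong to $K$ unless $K=\overline{\mathcal{E}(\overline{P(X)})}$ is $(\pi\times\pi)$-saturated --- a saturation statement that is nowhere established and is of the same difficulty as \cref{prop: cube_saturated_open_ext}. So the step you call ``routine'' is precisely where the theorem's content sits. A way to close the argument inside the paper's own toolkit is to bypass the capturing description altogether and prove the pointwise formula $S_{dis}(X)=\{(x,vdx): x\in X,\ v\in J(M),\ d\in D\}$, which follows from \cref{prop: Ellis_group_distal_factor} together with the coset description of fibers over the distal factor; once this is in hand the lifting is a two-line computation, $\pi(vdx)=vd\,\pi(x)$, exactly as in the paper's proof of \cref{cor: lifting_RPd} from \cref{lemma: Rdis_is_RPd}.
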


Before we can prove the theorem, we need some preliminary results.

\begin{lemma}[{\cite[Lemma 4.6]{Wu_Xu_Ye_Structure_saturated:2023}}]\label{lemma: X_dis_is_Y_dis_proximal extension}
    Let $\pi: (X, T)\to (Y, T)$ be an extension of minimal systems. If $R_{\pi}\subseteq S_{dis}(X)$, then $X_{dis}=Y_{dis}$. In particular, this holds for proximal extensions.
\end{lemma}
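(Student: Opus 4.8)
The plan is to run a clean universal-property argument based on the inclusion $R_{\pi}\subseteq S_{dis}(X)$, without invoking the capturing-set description; \cref{thm: image_of_Sdis} could replace one half of the construction below if one prefers. Write $q_X\colon X\to X_{dis}$ and $q_Y\colon Y\to Y_{dis}$ for the canonical factor maps, so that $S_{dis}(X)=R_{q_X}$ and $S_{dis}(Y)=R_{q_Y}$, and recall the defining property of the maximal distal factor: any factor map from $X$ (resp. $Y$) onto a distal system factors uniquely through $q_X$ (resp. $q_Y$), because the kernel of such a map is a closed invariant equivalence relation with distal quotient and hence contains $S_{dis}$.

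First I would build a factor map $\theta\colon X_{dis}\to Y_{dis}$. The composite $q_Y\circ\pi\colon X\to Y_{dis}$ is a factor map onto a distal system, so by the universal property of $X_{dis}$ there is a unique $\theta$ with $\theta\circ q_X=q_Y\circ\pi$. Next I would build a map in the opposite direction. Since $R_{\pi}\subseteq S_{dis}(X)=R_{q_X}$, the map $q_X$ is constant on the fibres of $\pi$, so it descends to a factor map $\psi\colon Y\to X_{dis}$ with $\psi\circ\pi=q_X$. As $X_{dis}$ is distal, $\psi$ itself factors through $q_Y$, giving $\phi\colon Y_{dis}\to X_{dis}$ with $\phi\circ q_Y=\psi$.

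A short diagram chase then finishes the proof. From $\phi\circ\theta\circ q_X=\phi\circ q_Y\circ\pi=\psi\circ\pi=q_X$ and surjectivity of $q_X$ one gets $\phi\circ\theta=\mathrm{id}_{X_{dis}}$. From $\theta\circ\psi\circ\pi=\theta\circ q_X=q_Y\circ\pi$ and surjectivity of $\pi$ one gets $\theta\circ\psi=q_Y$, hence $\theta\circ\phi\circ q_Y=\theta\circ\psi=q_Y$, and surjectivity of $q_Y$ gives $\theta\circ\phi=\mathrm{id}_{Y_{dis}}$. Thus $\theta$ is an isomorphism and $X_{dis}=Y_{dis}$. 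For the final assertion, if $\pi$ is proximal then $R_{\pi}\subseteq P(X)$; proximal pairs are collapsed by every factor onto a distal system, so $P(X)\subseteq S_{dis}(X)$, and since $S_{dis}(X)$ is closed even $\overline{P(X)}\subseteq S_{dis}(X)$, so the hypothesis $R_{\pi}\subseteq S_{dis}(X)$ holds automatically.

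I do not expect a genuine obstacle here; the only points that need care, and that I would either cite or record as small lemmas, are (i) that $S_{dis}(X)=R_{q_X}$ is really the smallest closed invariant equivalence relation with distal quotient — equivalently, that every distal factor of $X$ is dominated by $X_{dis}$, which rests on the fact that an inverse limit of distal systems is distal — and (ii) the elementary observation that a continuous equivariant surjection which is constant on the fibres of another one descends to the quotient. These are exactly what legitimize the two factorizations and the diagram chase above.
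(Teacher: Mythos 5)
Your argument is correct, and it is worth noting that the paper itself gives no proof of this lemma at all: it is quoted verbatim from Wu--Xu--Ye, so there is nothing internal to compare against. Your route --- two applications of the universal property of the maximal distal factor plus a diagram chase showing $\theta$ and $\phi$ are mutually inverse --- is a clean, self-contained way to get the statement, and the two points you flag are exactly the ones that need to be on record: that $S_{dis}(X)=R_{q_X}$ is the \emph{smallest} closed invariant equivalence relation with distal quotient (which holds because a subsystem of a product of distal systems is distal, so the intersection of all such relations again has distal quotient), and that a continuous equivariant surjection of compact Hausdorff systems is a quotient map, so $q_X$ descends through $\pi$. The alternative you mention in passing is essentially the proof one would extract from the source: by \cref{thm: image_of_Sdis}, $\pi\times\pi(S_{dis}(X))=S_{dis}(Y)$, and combined with $R_{\pi}\subseteq S_{dis}(X)$ and transitivity of $S_{dis}(X)$ this gives $R_{q_Y\circ\pi}=S_{dis}(X)=R_{q_X}$ directly, identifying $Y_{dis}$ with $X_{dis}$ in one step. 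That version is shorter but leans on the Auslander--Glasner lifting theorem for $S_{dis}$, whereas yours uses only the universal property; either is acceptable. Your handling of the proximal case ($R_{\pi}\subseteq P(X)\subseteq S_{dis}(X)$ since distal factors collapse proximal pairs) is also correct.
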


\begin{lemma}[{\cite[Lemma 4.7]{Wu_Xu_Ye_Structure_saturated:2023}}]
    Let $\pi: (X, T)\to (Y, T)$ be an extension of minimal systems and $Z$ be a distal factor of $Y$. If the maximal distal factor of $X$ is $Z$, then $Z$ is also the maximal distal factor of $Y$.
\end{lemma}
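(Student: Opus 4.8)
The plan is to reduce the statement to the behaviour of the distal structure relation $S_{dis}$ under the factor map $\pi$, invoking \cref{thm: image_of_Sdis}. Write $\phi\colon Y\to Z$ for the factor map exhibiting $Z$ as a distal factor of $Y$. The hypothesis ``the maximal distal factor of $X$ is $Z$'' is to be read as: the composition $\phi\circ\pi\colon X\to Z$ is the maximal distal factor map of $X$, so that $S_{dis}(X)=R_{\phi\circ\pi}=\{(x,x')\in X\times X:\phi(\pi(x))=\phi(\pi(x'))\}$, since by definition $X/S_{dis}(X)$ is the maximal distal factor of $X$.

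First I would compute $\pi\times\pi(S_{dis}(X))$ by hand. The inclusion $\pi\times\pi(R_{\phi\circ\pi})\subseteq R_\phi$ is immediate: $\phi(\pi(x))=\phi(\pi(x'))$ forces $(\pi(x),\pi(x'))\in R_\phi$. For the reverse inclusion, take $(y,y')\in R_\phi$; since $\pi$ is onto, choose $x\in\pi^{-1}(y)$ and $x'\in\pi^{-1}(y')$, and then $\phi(\pi(x))=\phi(y)=\phi(y')=\phi(\pi(x'))$, so $(x,x')\in R_{\phi\circ\pi}=S_{dis}(X)$ and $\pi\times\pi(x,x')=(y,y')$. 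Hence $\pi\times\pi(S_{dis}(X))=R_\phi$.

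Then \cref{thm: image_of_Sdis} gives $S_{dis}(Y)=\pi\times\pi(S_{dis}(X))=R_\phi$, so the maximal distal factor of $Y$ is $Y/S_{dis}(Y)=Y/R_\phi=Z$, which is exactly the claim.

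I do not expect a serious obstacle; the only point requiring care is the reading of the hypothesis that lets us identify $S_{dis}(X)$ with $R_{\phi\circ\pi}$. If one only assumes $X_{dis}$ is abstractly isomorphic to $Z$, the argument still goes through after composing with the relevant isomorphism, since any surjective endomorphism of the minimal (distal) system $X_{dis}$ is an automorphism and therefore does not change the relation $S_{dis}(X)$. Alternatively, one can avoid \cref{thm: image_of_Sdis} altogether: the universal property of maximal distal factors supplies factor maps $\psi\colon Y_{dis}\to Z$ (from $\phi$) and $\chi\colon Z\to Y_{dis}$ (from $q\circ\pi$, where $q\colon Y\to Y_{dis}$), one checks $\psi\circ\chi=\mathrm{id}_Z$ by postcomposing the defining identities, whence $\chi$ is injective; since $\chi(Z)$ is a nonempty closed invariant subset of the minimal system $Y_{dis}$ it equals $Y_{dis}$, so $\chi$ is an isomorphism and $\psi=\chi^{-1}$, i.e.\ $Y_{dis}=Z$.
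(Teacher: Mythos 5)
Your argument is correct. The paper actually states this lemma without any proof, so there is nothing to compare against; either of your two routes fills the gap. The first route (identify $S_{dis}(X)$ with $R_{\phi\circ\pi}$, push forward, and invoke \cref{thm: image_of_Sdis} to get $S_{dis}(Y)=\pi\times\pi(S_{dis}(X))=R_{\phi}$) is the one most in the spirit of the surrounding section, where $S_{dis}$ and its image under factor maps are the working tools; note only that your parenthetical remark about the ``abstractly isomorphic'' reading leans on coalescence of minimal distal flows (every endomorphism of a minimal distal flow is an automorphism), which is true but is an extra classical input. Your second, purely formal argument via the universal property of the maximal distal factor --- producing $\psi\colon Y_{dis}\to Z$ and $\chi\colon Z\to Y_{dis}$ with $\psi\circ\chi=\mathrm{id}_Z$ and concluding from minimality of $Y_{dis}$ that $\chi$ is an isomorphism --- avoids both \cref{thm: image_of_Sdis} and coalescence, and is arguably the cleaner proof to record.
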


\begin{lemma}\label{lemma: P_cube_if_proximal_closed}
    Let $(X, T)$ be a minimal topological dynamical system with proximal relation closed, and let $d\geq 1$ be an integer. Then,\begin{align*}
        \P((\bQ^{[d]}(X))=\{(\bx,\by)\in \bQ^{[d]}(X)\times \bQ^{[d]}(X): (\bx_{\epsilon},\by_{\epsilon})\in \P(X), \epsilon\subseteq [d]\}.
    \end{align*}
\end{lemma}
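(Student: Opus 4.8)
The plan is to realise $P(\bQ^{[d]}(X))$ as the fibre relation of the map of cube systems induced by the factor map of $X$ onto its maximal distal factor.

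The inclusion ``$\subseteq$'' is immediate and does not use the hypothesis. If $(x,y)\in P(\bQ^{[d]}(X))$, choose a net $(t^{\lambda})$ in $\mathcal{HK}^{[d]}(T)$ and a point $z\in\bQ^{[d]}(X)$ with $t^{\lambda}x\to z$ and $t^{\lambda}y\to z$; writing $t^{\lambda}=(t^{\lambda}_{\epsilon})_{\epsilon\subseteq[d]}$ with $t^{\lambda}_{\epsilon}\in T$ and projecting on the coordinate $\epsilon$, we get $t^{\lambda}_{\epsilon}x_{\epsilon}\to z_{\epsilon}$ and $t^{\lambda}_{\epsilon}y_{\epsilon}\to z_{\epsilon}$, so $(x_{\epsilon},y_{\epsilon})\in P(X)$ for every $\epsilon\subseteq[d]$.

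For ``$\supseteq$'' I would first invoke the hypothesis: since $P(X)$ is closed, $P(X)$ is a closed invariant equivalence relation, the quotient map $\pi\colon X\to X/P(X)=X_{dis}$ onto the maximal distal factor is a proximal extension, and $R_{\pi}=P(X)$ (this standard fact about minimal systems is already used repeatedly in the preceding sections). By \cref{thm: cube_is_minimal}, both $\bQ^{[d]}(X)$ and $\bQ^{[d]}(X_{dis})$ are minimal, so \cref{prop: pi_prox_pid_prox_in_cubes} applies and the induced map $\pi^{[d]}\colon\bQ^{[d]}(X)\to\bQ^{[d]}(X_{dis})$ is a proximal extension. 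Next I would note that $\bQ^{[d]}(X_{dis})$ is distal: it is a closed $\mathcal{HK}^{[d]}(T)$-invariant subset of $(X_{dis})^{2^{d}}$, and the system $((X_{dis})^{2^{d}},\mathcal{HK}^{[d]}(T))$ is distal because, by the coordinatewise inspection of the previous paragraph, any proximal pair in it is proximal in each coordinate of the distal system $X_{dis}$, hence trivial (one may instead quote \cref{thm: distal_equi_chr_enveloping_semigroup}).

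With these two facts the conclusion is quick: $\pi^{[d]}$ is a factor map onto a distal system, so it collapses every proximal pair, giving $P(\bQ^{[d]}(X))\subseteq R_{\pi^{[d]}}$; and $\pi^{[d]}$ is a proximal extension, so every fibre of $\pi^{[d]}$ consists of proximal pairs, giving $R_{\pi^{[d]}}\subseteq P(\bQ^{[d]}(X))$. Hence $P(\bQ^{[d]}(X))=R_{\pi^{[d]}}$, and since $\pi^{[d]}(x)=\pi^{[d]}(y)$ exactly when $\pi(x_{\epsilon})=\pi(y_{\epsilon})$ for every $\epsilon\subseteq[d]$, that is, exactly when $(x_{\epsilon},y_{\epsilon})\in R_{\pi}=P(X)$ for every $\epsilon\subseteq[d]$, the asserted description follows. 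The hypothesis is used only through the identity $R_{\pi}=P(X)$, and it is genuinely needed: for $d\ge 2$ the coordinates of an element of $\mathcal{HK}^{[d]}(T)$ are linked, so ``$\supseteq$'' fails without closedness of $P(X)$. The point requiring the most care is thus the combination of distality of $\bQ^{[d]}(X_{dis})$ with proximality of $\pi^{[d]}$, i.e.\ the fact that $X\to X_{dis}$ is a proximal extension.
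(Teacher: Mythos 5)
Your proposal is correct and follows essentially the same route as the paper: both arguments reduce the hard inclusion to the fact that $\pi^{[d]}\colon\bQ^{[d]}(X)\to\bQ^{[d]}(X_{dis})$ is a proximal extension (via \cref{prop: pi_prox_pid_prox_in_cubes}) whose fibre relation is exactly the coordinatewise proximal relation, using that closedness of $P(X)$ gives $R_{\pi}=P(X)$. Your additional observation that $\bQ^{[d]}(X_{dis})$ is distal, packaging both inclusions as $P(\bQ^{[d]}(X))=R_{\pi^{[d]}}$, is a harmless (and slightly more self-contained) variant of the paper's direct coordinate-projection argument for the easy inclusion.
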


\begin{proof}
    Clearly, we have the following inclusion\begin{align*}
        \P((\bQ^{[d]}(X))\subseteq\{(\bx,\by)\in \bQ^{[d]}(X)\times \bQ^{[d]}(X): (\bx_{\epsilon},\by_{\epsilon})\in \P(X), \epsilon\subseteq [d]\}.
    \end{align*}

    Let $(\bx,\by) \in \bQ^{[d]}(X)\times \bQ^{[d]}(X)$ be such that $(\bx_{\epsilon},\by_{\epsilon}) \in \P(X)$. Consider $\pi:X \to X_{dis}$, where $X_{dis}$ is the maximal distal factor. Since $\pi$ is a proximal extension, by \cref{prop: pi_prox_pid_prox_in_cubes}, we have $\pi^{[d]}: \bQ^{[d]}(X)\to \bQ^{[d]}(X_{dis})$ is a proximal extension.  It follows from $\pi^{[d]}(\bx)=\pi^{[d]}(\by)$ that $(\bx,\by)\in \P(\bQ^{[d]}(X))$, completing the proof.
\end{proof}

\begin{theorem}\label{thm: distal_factor_cube}
    Let $(X, S)$ be a minimal topological dynamical system and let $d\geq 1$ be an integer. Then, $\bQ^{[d]}(X_{dis})$ is the maximal distal factor of $\bQ^{[d]}(X)$, where $X_{dis}$ is the maximal distal factor of $X$.
\end{theorem}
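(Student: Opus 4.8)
The plan is to prove the two inclusions $S_{dis}(\bQ^{[d]}(X))\subseteq R_{\pi^{[d]}}$ and $R_{\pi^{[d]}}\subseteq S_{dis}(\bQ^{[d]}(X))$, where $\pi\colon X\to X_{dis}$ is the factor map onto the maximal distal factor. The first is immediate: $X_{dis}$ is distal, hence so is $(X_{dis})^{[d]}$ and its subsystem $\bQ^{[d]}(X_{dis})$; as $\pi^{[d]}\colon \bQ^{[d]}(X)\to\bQ^{[d]}(X_{dis})$ is a factor map, $\bQ^{[d]}(X_{dis})$ is a distal factor of $\bQ^{[d]}(X)$, so $S_{dis}(\bQ^{[d]}(X))\subseteq R_{\pi^{[d]}}$. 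Since $R_\pi=S_{dis}(X)$, the theorem reduces to the reverse inclusion $R_{\pi^{[d]}}\subseteq S_{dis}(\bQ^{[d]}(X))$.

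I would first settle the case in which $X$ is a proximal extension of a distal system, i.e.\ $P(X)=R_\pi$ is closed. By \cref{lemma: P_cube_if_proximal_closed}, $P(\bQ^{[d]}(X))=\{(\mathbf{x},\mathbf{y})\in\bQ^{[d]}(X)^2:(\mathbf{x}_\epsilon,\mathbf{y}_\epsilon)\in P(X)\ \text{for all }\epsilon\}=R_{\pi^{[d]}}$, so $\pi^{[d]}$ is a proximal extension of the distal system $\bQ^{[d]}(X_{dis})$, which forces $(\bQ^{[d]}(X))_{dis}=\bQ^{[d]}(X_{dis})$ directly.

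For general minimal $X$, I would reduce to the previous case via the AG-diagram (\cref{thm: AG_diagram}) for $\pi\colon X\to X_{dis}$, producing almost one-to-one extensions $\theta'\colon X'\to X$, $\theta\colon Y'\to X_{dis}$ and an open extension $\pi'\colon X'\to Y'$ with $\pi\circ\theta'=\theta\circ\pi'$. Almost one-to-one extensions are proximal, so by \cref{prop: pi_prox_pid_prox_in_cubes} the induced maps ${\theta'}^{[d]},\theta^{[d]}$ are proximal, and by \cref{lemma: X_dis_is_Y_dis_proximal extension} proximal extensions leave the maximal distal factor unchanged; hence $(\bQ^{[d]}(X))_{dis}=(\bQ^{[d]}(X'))_{dis}$, $X'_{dis}=Y'_{dis}=X_{dis}$, and $(\bQ^{[d]}(Y'))_{dis}=\bQ^{[d]}(X_{dis})$, while the second paragraph applied to the proximal extension $Y'\to X_{dis}$ shows $\bQ^{[d]}(Y')$ has closed proximal relation $P(\bQ^{[d]}(Y'))=R_{\theta^{[d]}}$. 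Thus it suffices to prove $(\bQ^{[d]}(X'))_{dis}=(\bQ^{[d]}(Y'))_{dis}$, equivalently (by \cref{lemma: X_dis_is_Y_dis_proximal extension}) that $R_{{\pi'}^{[d]}}\subseteq S_{dis}(\bQ^{[d]}(X'))$. At this point \cref{prop: cube_saturated_open_ext_nil_infty} applies — $\pi'$ is open and $X'_\infty$, being a factor of $X'_{dis}=X_{dis}$, is a factor of $Y'$ — and provides the crucial saturation $\bQ^{[d]}(X')=({\pi'}^{[d]})^{-1}(\bQ^{[d]}(Y'))$.

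The core is then to show $R_{{\pi'}^{[d]}}\subseteq S_{dis}(\bQ^{[d]}(X'))$. Given $(\mathbf{x},\mathbf{y})\in R_{{\pi'}^{[d]}}$, since ${\pi'}^{[d]}(\mathbf{x})={\pi'}^{[d]}(\mathbf{y})$ I would join $\mathbf{x}$ to $\mathbf{y}$ inside $\bQ^{[d]}(X')$ by a chain altering one coordinate at a time — every intermediate point stays in $\bQ^{[d]}(X')$ because its ${\pi'}^{[d]}$-image is unchanged and $\bQ^{[d]}(X')$ is the full ${\pi'}^{[d]}$-preimage of $\bQ^{[d]}(Y')$ — so, $S_{dis}$ being an equivalence relation, one reduces to $\mathbf{z},\mathbf{z}'\in\bQ^{[d]}(X')$ differing in a single coordinate $\epsilon_0$ with $(\mathbf{z}_{\epsilon_0},\mathbf{z}'_{\epsilon_0})\in R_{\pi'}\subseteq S_{dis}(X')=C(\overline{\mathcal{E}(\overline{P(X')})})$. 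Capturing this coordinate pair along a common $T$-orbit and transporting the whole cube by the diagonal $\mathcal{HK}^{[d]}(T)$-action yields a pair $(\mathbf{c},\mathbf{c}')\in\overline{\mathcal{HK}^{[d]}(T)(\mathbf{z},\mathbf{z}')}$ still differing only in coordinate $\epsilon_0$ and with $(\mathbf{c}_{\epsilon_0},\mathbf{c}'_{\epsilon_0})\in\overline{\mathcal{E}(\overline{P(X')})}\subseteq S_{dis}(X')=R_{\theta\circ\pi'}$; hence ${\pi'}^{[d]}(\mathbf{c})$ and ${\pi'}^{[d]}(\mathbf{c}')$ differ only in coordinate $\epsilon_0$ by a pair in $P(Y')$, so by \cref{lemma: P_cube_if_proximal_closed} (applicable since $P(Y')$ is closed) they are proximal in $\bQ^{[d]}(Y')$. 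One then pushes this proximality up through the open map ${\pi'}^{[d]}$ and runs the capturing (reverse-orbit) argument behind \cref{thm: char_Sdis_capturing_set} inside $\bQ^{[d]}(X')$, placing $(\mathbf{c},\mathbf{c}')$ — hence $(\mathbf{z},\mathbf{z}')$ and $(\mathbf{x},\mathbf{y})$ — in $S_{dis}(\bQ^{[d]}(X'))$, using that the capturing operation is idempotent. I expect this final step to be the main obstacle: carrying the capturing argument through the rigid combinatorics of the cube is precisely where the saturatedness of $\bQ^{[d]}(X')$ over $\bQ^{[d]}(Y')$ and the closedness of $P(\bQ^{[d]}(Y'))$ are indispensable, and it mirrors the corresponding argument for $N_d(X)$ in \cite{Wu_Xu_Ye_Structure_saturated:2023}.
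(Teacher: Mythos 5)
Your proposal is correct and follows essentially the same route as the paper's proof: the AG-diagram reduction to the open extension $\pi'$, the saturation of $\bQ^{[d]}(X')$ supplied by \cref{prop: cube_saturated_open_ext_nil_infty}, the one-coordinate-at-a-time chain combined with the capturing-set description of $S_{dis}$ from \cref{thm: char_Sdis_capturing_set}, and \cref{lemma: P_cube_if_proximal_closed} are exactly the ingredients used there. The only (harmless) organizational difference is that you isolate the closed-proximal-relation case up front and prove $R_{{\pi'}^{[d]}}\subseteq S_{dis}(\bQ^{[d]}(X'))$ directly, whereas the paper works with $R_{\phi^{[d]}}$ for $\phi=\theta\circ\pi'$ and first replaces $(x,y)$ by $(vx,vy)$ for a minimal idempotent $v$ to land in $R_{{\pi'}^{[d]}}$, undoing this at the end via the fact that $S_{dis}$ contains the proximal relation.
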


\begin{proof}

    Let $\pi: X\to X_{dis}$ be the factor map. By \cref{thm: AG_diagram}, there is a commutative diagram of extensions of minimal systems
     \[\begin{tikzcd}
	X && {X'} \\
	\\
	{X_{dis}} && {X_{dis}'}
	\arrow["\pi"', from=1-1, to=3-1]
	\arrow["{\theta'}"', from=1-3, to=1-1]
	\arrow["{\pi'}", from=1-3, to=3-3]
	\arrow["\theta", from=3-3, to=3-1]
\end{tikzcd}\]

where $\pi'$ is an open extension and $\theta$ and $\theta'$ are almost one-to-one extensions. Thus, we have the following diagram.

 \[\begin{tikzcd}
	\bQ^{[d]}(X) && {\bQ^{[d]}(X')} \\
	\\
	{\bQ^{[d]}(X_{dis})} && {\bQ^{[d]}(X_{dis}')}
	\arrow["\pi^{[d]}"', from=1-1, to=3-1]
	\arrow["{\theta'}^{[d]}"', from=1-3, to=1-1]
	\arrow["{\pi'}^{[d]}", from=1-3, to=3-3]
	\arrow["\theta^{[d]}", from=3-3, to=3-1]
        \arrow["\phi^{[d]}", from=1-3, to=3-1]
\end{tikzcd}\]

Let $(\bx, \by) \in R_{\phi^{[d]}}$, and let $\bx'= {\pi'}^{[d]}(\bx)$ and $\by'={\pi'}^{[d]}(\by)$. Observe that $(\bx',\by')\in R_{\theta^{[d]}}$. Since $\theta$ is almost one-to-one and $X_{dis}$ is distal, we have that $\P(X_{dis}')$ is closed. Therefore, by \cref{lemma: P_cube_if_proximal_closed}, we get that $(\bx ',\by')\in \P(\bQ^{[d]}(X_{dis}'))$. Then, there exists $\bold{v}\in J(M(\mathcal{HK}^{[d]}(S)))$ such that $\bold{v}\by'=\bold{v}\bx'$. Let $\bold{a}=\bold{v}\bx$ and $\bold{b}=\bold{v}\by$. Note that $(\bold{a}, \bold{b})\in R_{{\pi'}^{[d]}}$.

It follows from that $\theta'$ is a proximal extension and \cref{lemma: X_dis_is_Y_dis_proximal extension} that $X_{dis}$ is the maximal distal factor of $X'$. Since $X/\RP^{[\infty]}(X)$ is a factor of $X_{dis}$ and $\pi'$ is an open extension, by \cref{prop: cube_saturated_open_ext_nil_infty}, we get that $\bQ^{[d]}(X')$ is ${\pi'}^{[d]}$-saturated. Writing $\bold{a}=(a_{1},\dots,a_{2^{d}})$ and $\bold{b}=(b_{1},\dots,b_{2^{d}})$, it follows from the saturated property of $\bQ^{[d]}(X')$ that \begin{align*}
(a_{1},\dots,a_{i},b_{i+1},\dots,b_{2^{d}}) \in \bQ^{[d]}(X')
\end{align*}
for each $0\leq i \leq 2^{d}$. Note that $(a_{1},b_{1})\in S_{dis}(X')$, since $(\bold{a}, \bold{b})\in R_{\phi^{[d]}}$. Thus, by \cref{thm: S_dis_capturing_set}, we have \begin{align*}
    \overline{S(a_{1},b_1)} \cap \overline{\mathcal{E}(\overline{\P(X')})} \neq \emptyset.
\end{align*}

Therefore,
\begin{align*}
    \overline{\mathcal{HK}^{[d]}(S)(\bold{a}, (b_{1},a_{2},\dots,a_{2^{d}}))} \cap \overline{\mathcal{E}(\overline{\P(\bQ^{[d]}(X')})} \neq \emptyset.
\end{align*}

Thus, $(\bold{a}, (b_{1},a_{2},\dots,a_{2^{d}})) \in S_{dis}(\bQ^{[d]}(X'))$. Consider $((b_{1},a_{2},a_{3},\dots, a_{2^{d}}),(b_{1},b_{2},a_{3},\dots,a_{2^{d}}) \in \bQ^{[d]}(X')$. Similarly, we get that
\begin{align*}
    ((b_{1},a_{2},a_{3},\dots, a_{2^{d}}),(b_{1},b_{2},a_{3},\dots,a_{2^{d}})) \in S_{dis}(\bQ^{[d]}(X')).
\end{align*}

Therefore, inductively, we have \begin{align*}
    ((b_{1},\dots,b_{i-1},a_{i},\dots,a_{2^{d}}),(b_{1},\dots,b_{i-1},b_{i},a_{i+1},\dots,a_{2^{d}})) \in S_{dis}(\bQ^{[d]}(X'))
\end{align*}
for each $2\leq i \leq 2^{d}$. Since $S_{dis}(\bQ^{[d]}(X'))$ is an equivalence relation, we have $(\bold{a},\bold{b})\in S_{dis}(\bQ^{[d]}(X'))$. Note that for all $\bold{w_1},\bold{w_2}\in J(M(\mathcal{HK}^{[d]}(S)))$, $(\bold{w_1},\bold{w_2})$ leaves $S_{dis}(\bQ^{[d]}(X'))$ invariant. Considering $\bold{w_1},\bold{w_2}\in J(M(\mathcal{HK}^{[d]}(S)))$ such that $\bold{w_1} \bold{x}=\bold{x}$ and  $\bold{w_2}\bold{y}=\bold{y}$, we have that $\bold{x}=\bold{w_1}\bold{a}$ and $\bold{y}= \bold{w_2}\bold{b}$. We deduce that $(\bold{x},\bold{y})\in S_{dis}(\bQ^{[d]}(X'))$. It follows from $R_{\phi^{[d]}} \subseteq S_{dis}(\bQ^{[d]}(X'))$ that the maximal distal factor of $\bQ^{[d]}(X')$ is $\bQ^{[d]}(X_{dis})$. Hence, by \cref{lemma: X_dis_is_Y_dis_proximal extension} and  \cref{prop: pi_prox_pid_prox_in_cubes}, we can conclude that $\bQ^{[d]}(X_{dis})$ is the maximal distal factor of $\bQ^{[d]}(X)$.
\end{proof}

Using the last theorem, we have the following relation between $D(\mathcal{HK}^{[d]}(T))$ and $D$, where $D(\mathcal{HK}^{[d]}(T))$ is the Ellis group of the universal distal minimal system for the action of $\mathcal{HK}^{[d]}(T)$.

\begin{corollary}\label{cor: Ellis_group_distal_and_distal_cube_group}
Let $(X, S)$ be a minimal topological dynamical system and $d\geq 1$ be an integer. Then, $D(\mathcal{HK}^{[d]}(S))\bx = D^{[d]} \bx$ for each $\bx\in\bQ^{[d]}(X)$.
\end{corollary}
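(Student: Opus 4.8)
The plan is to upgrade the statement to the group identity $D(\mathcal{HK}^{[d]}(S))=D^{[d]}$ inside $G(\mathcal{HK}^{[d]}(S))$, which immediately yields the orbit equality on any $\mathcal{HK}^{[d]}(S)$-system, in particular on $\bQ^{[d]}(X)$ (note $\bQ^{[d]}(X)$ is invariant under $D^{[d]}$ by \cref{lemma: Ellis_distal_group_cube_universal}). Since neither $D(\mathcal{HK}^{[d]}(S))$ nor $D^{[d]}$ depends on the particular system $X$, this reduces the whole problem to the universal system $M$, where \cref{thm: distal_factor_cube} does the work. Concretely, by \cref{cor: universal_hk_cube} we have $\bQ^{[d]}(M)=M(\mathcal{HK}^{[d]}(S))$, so $\bQ^{[d]}(M)$ is the universal minimal $\mathcal{HK}^{[d]}(S)$-system; hence its maximal distal factor is the universal minimal distal $\mathcal{HK}^{[d]}(S)$-system, and $D(\mathcal{HK}^{[d]}(S))$ is by definition the Ellis group (with respect to $u^{[d]}\in J(M(\mathcal{HK}^{[d]}(S)))$) of that factor.

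Next I would apply \cref{thm: distal_factor_cube} with $X=M$: the maximal distal factor of $\bQ^{[d]}(M)$ is $\bQ^{[d]}(M_{dis})$, where $M_{dis}$ is the universal minimal distal $S$-system and $\GG(M_{dis})=D$. Then I compute $\GG(\bQ^{[d]}(M_{dis}))$ with \cref{prop: Ellis_group_cube} applied to the minimal system $M_{dis}$ (whose Ellis group is $A=D$): this gives $\GG(\bQ^{[d]}(M_{dis}))=\cltau{\HK{d}}\cap D^{[d]}$, the $\uptau$-closure being taken in $G(\mathcal{HK}^{[d]}(S))$. By \cref{lemma: Ellis_distal_group_cube_universal} one has $D^{[d]}\subseteq \cltau{\HK{d}}$, so the intersection collapses to $D^{[d]}$. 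Chaining the three identifications yields $D(\mathcal{HK}^{[d]}(S))=\GG(\bQ^{[d]}(M_{dis}))=D^{[d]}$, and restricting the action to $\bQ^{[d]}(X)$ gives $D(\mathcal{HK}^{[d]}(S))x=D^{[d]}x$ for every $x\in\bQ^{[d]}(X)$. If one prefers to stay at the level of the fixed system $X$ instead of invoking the $X$-independence, the same conclusion follows by pushing the universal identity forward along the equivariant factor map $\bQ^{[d]}(M)\to\bQ^{[d]}(X)$ induced by $p\mapsto px_0$, using that this map intertwines both the $D(\mathcal{HK}^{[d]}(S))$-action and the coordinatewise $D^{[d]}$-action.

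There is no genuine obstacle here: the argument is essentially bookkeeping, and the only substantive input, \cref{thm: distal_factor_cube}, is already established. The point needing the most care is to keep every Ellis group referred to a compatible base point — the idempotent $u^{[d]}$ and its images under the relevant factor maps — and to observe that $\cltau{\HK{d}}$ denotes the same object, namely the $\uptau$-closure inside $G(\mathcal{HK}^{[d]}(S))$, in each of its three occurrences above; once this is granted the collapse of the intersection and the final identification are automatic.
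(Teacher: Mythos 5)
Your first inclusion, $D(\mathcal{HK}^{[d]}(S))\subseteq D^{[d]}$, is fine and matches the paper's first step: $\bQ^{[d]}(M_{dis})$ is distal, so $D(\mathcal{HK}^{[d]}(S))\subseteq \GG(\bQ^{[d]}(M_{dis}))=\cltau{\HK{d}}\cap D^{[d]}=D^{[d]}$ by \cref{prop: Ellis_group_cube} and \cref{lemma: Ellis_distal_group_cube_universal} (note this direction does not even need \cref{thm: distal_factor_cube}). The gap is in the reverse direction, which you obtain by applying \cref{thm: distal_factor_cube} to $X=M$ and identifying the maximal distal factor of $\bQ^{[d]}(M)$ with the universal minimal distal $\mathcal{HK}^{[d]}(S)$-system. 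Two things are unjustified there. First, \cref{thm: distal_factor_cube} is stated and proved for the paper's standing class of systems (compact \emph{metric}); its proof runs through the AG-diagram, the capturing-set description of $S_{dis}$, the saturation result \cref{prop: cube_saturated_open_ext_nil_infty} and \cref{lemma: P_cube_if_proximal_closed}, and none of this is shown to apply to the non-metrizable universal system $M$. Second, \cref{cor: universal_hk_cube} identifies $\bQ^{[d]}(M)$ with $M(\mathcal{HK}^{[d]}(S))$ only through the action on the base point $u^{[d]}$ (all such identifications in the paper are ``coincidence on'' a given space); it gives that $\bQ^{[d]}(M)$ is a quotient of the universal minimal $\mathcal{HK}^{[d]}(S)$-system, not that it \emph{is} that system. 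A quotient relationship only yields $D(\mathcal{HK}^{[d]}(S))\subseteq\GG\bigl((\bQ^{[d]}(M))_{dis}\bigr)$, i.e.\ again the easy inclusion; to get $D^{[d]}\subseteq D(\mathcal{HK}^{[d]}(S))$ you would need the maximal distal factor of $\bQ^{[d]}(M)$ to be exactly the universal minimal distal $\mathcal{HK}^{[d]}(S)$-system, which your argument does not establish. It is also telling that the paper never asserts the group identity $D(\mathcal{HK}^{[d]}(S))=D^{[d]}$: it proves only $D(\mathcal{HK}^{[d]}(S))\subseteq D^{[d]}$ together with the orbit equality.

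The paper's route for the missing direction stays with the given metric system $X$: applying \cref{thm: distal_factor_cube} to $X$ and computing $\GG(\bQ^{[d]}(X_{dis}))=\GG\bigl((\bQ^{[d]}(X))_{dis}\bigr)$ in two ways (via \cref{prop: Ellis_group_cube} and \cref{prop: Ellis_group_distal_factor}) gives
\begin{align*}
\cltau{\HK{d}}\cap A^{[d]}D^{[d]} \;=\; \bigl(\cltau{\HK{d}}\cap A^{[d]}\bigr)\,D(\mathcal{HK}^{[d]}(S)),
\end{align*}
and from this group equation, using normality of $D$ and of $D(\mathcal{HK}^{[d]}(S))$, one extracts $D^{[d]}x\subseteq D(\mathcal{HK}^{[d]}(S))x$ for $x\in\bQ^{[d]}(X)$. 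If you want to salvage your approach, you would either have to prove a version of \cref{thm: distal_factor_cube} valid for the universal minimal system, or replace that step by the paper's two-way Ellis group computation on $X$ itself.
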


\begin{proof}
    Note that $\bQ^{[d]}(M_{dis})$ is a distal system, so $D(\mathcal{HK}^{[d]}(S)) \subseteq \GG(\bQ^{[d]}(M_{dis}))$. By \cref{lemma: Ellis_distal_group_cube_universal} and \cref{prop: Ellis_group_cube}, we have $\GG(\bQ^{[d]}(M_{dis})) = D^{[d]}$. Therefore, $D(\mathcal{HK}^{[d]}(S)) \subseteq D^{[d]}$.

    Let $A=\GG(X)$. It follows from \cref{thm: distal_factor_cube}, \cref{prop: Ellis_group_cube} and \cref{prop: Ellis_group_distal_factor} that\begin{align*}
        \GG(\bQ^{[d]}(X_{dis}))=\GG((\bQ^{[d]}(X))_{dis}) = (\cltau{\HK{d}} \cap A^{[d]})D(\mathcal{HK}^{[d]}(S)).
    \end{align*}

    By \cref{prop: Ellis_group_cube}, \cref{prop: Ellis_group_distal_factor} and \cref{lemma: Ellis_distal_group_cube_universal}, we have\begin{align*}
        \GG(\bQ^{[d]}(X_{dis}))=\cltau{\HK{d}} \cap A^{[d]}D^{[d]}.
    \end{align*}

    Let $\bg\in D^{[d]}$, $\bx\in \bQ^{[d]}(X)$ and $\bp\in M(\mathcal{HK}^{[d]}(S))$ be such that $\bx=\bp x_{0}^{[d]}$. Since $D$ is a normal subgroup, there is a $\bg'\in D^{[d]}$ such that $\bg\bp=u\bp\bg'$. It follows from $\GG(\bQ^{[d]}(X_{dis}))=\GG((\bQ^{[d]}(X))_{dis})$ that there are $\bh\in D(\mathcal{HK}^{[d]}(S))$ and $\bold{a}\in \cltau{\HK{d}}\cap A^{[d]}$ such that $\bg'=\bh\bold{a}$. Thus,\begin{align*}
        \bg\bx = \bg\bp x_{0}^{[d]} = u^{[d]}\bp\bg'x_{0}^{[d]} = u^{[d]}\bp\bh \bold{a}x_{0}^{[d]} = u^{[d]}\bp\bh x_{0}^{[d]}. 
    \end{align*}
    
    Since $D(\mathcal{HK}^{[d]}(S))$ is a normal subgroup, there is a $\bh'\in D(\mathcal{HK}^{[d]}(S))$ such that $\bh'\bp=u^{[d]}\bp\bh$. Therefore, we get that $\bg\bx = \bh'\bx$ and hence $D^{[d]}\bx\subseteq D(\mathcal{HK}^{[d]}(S))\bx$.    
\end{proof}

In \cite{Qiu_Zhao_topnilpotent_enveloping_nil:2022}, Qiu and Zhao showed that for a minimal distal system $(X, T)$ the maximal factor of order $\infty$ of $\bQ^{[d]}(X)$ is $\bQ^{[d]}(X/\RP^{[\infty]}(X))$. Soon after, they extended this result for any minimal system in \cite{Qiu_Zhao_Maximal_nil_d_cubes:2021}. In this paper, we provide an alternative proof of this result using \cref{thm: distal_factor_cube}.

\begin{theorem}[{\cite[Theorem 3.3]{Qiu_Zhao_topnilpotent_enveloping_nil:2022}}]\label{thm: factor_nil_infty_cube_distal_case}
    Let $(X, T)$ be a minimal distal topological dynamical system and $d\geq 1$ be an integer. Then, the maximal factor of order $\infty$ of $\bQ^{[d]}(X)$ is $\bQ^{[d]}(X/\RP^{[\infty]}(X))$.
\end{theorem}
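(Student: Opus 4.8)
The plan is to deduce \cref{thm: factor_nil_infty_cube_distal_case} from the already-established description of the maximal distal factor of a dynamical cube, namely \cref{thm: distal_factor_cube}, together with the algebraic descriptions of Ellis groups obtained earlier. Since $(X,T)$ is distal, $\bQ^{[d]}(X)$ is distal, and the maximal factor of order $\infty$ of a distal system is the quotient by $\RP^{[\infty]}$, which is determined at the level of Ellis groups. So the real content is to compute $\GG$ of the maximal factor of order $\infty$ of $\bQ^{[d]}(X)$ and to check it matches $\GG(\bQ^{[d]}(X/\RP^{[\infty]}(X)))$.

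First I would set $A=\GG(X)$ and record $\GG(\bQ^{[d]}(X))=\cltau{\HK{d}}\cap A^{[d]}$ by \cref{prop: Ellis_group_cube}. By \cref{prop: Ellis_groups_nil_infty} applied inside the (distal) system $\bQ^{[d]}(X)$ with group action $\mathcal{HK}^{[d]}(T)$, the Ellis group of its maximal factor of order $\infty$ is $(\cltau{\HK{d}}\cap A^{[d]})\bigcap_{k\geq 1}G_k^{\uptau\text{-top}}(\mathcal{HK}^{[d]}(T))$. By \cref{thm: G_nil_iff_G_nil_in_cube}'s proof (the identity $G_k^{\uptau\text{-top}}(\mathcal{HK}^{[d]}(T))$ coincides with $G_k^{\uptau\text{-top}}$ on each coordinate $X_\epsilon$), together with the fact that $\cltau{\HK{d}}$ is invariant under $g^{(\alpha)}$ for $g\in G_j^{\uptau\text{-top}}$ and $\alpha$ of codimension $j$ (\cref{lemma: HK_invaraint_g_alpha}), one sees that $\bigcap_k G_k^{\uptau\text{-top}}(\mathcal{HK}^{[d]}(T))$ acts on $\bQ^{[d]}(X)$ through $\bigl(\bigcap_k G_k^{\uptau\text{-top}}\bigr)^{[d]}\cap\cltau{\HK{d}}$. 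Hence the Ellis group of the maximal factor of order $\infty$ of $\bQ^{[d]}(X)$ equals $\cltau{\HK{d}}\cap A^{[d]}\bigl(\bigcap_k G_k^{\uptau\text{-top}}\bigr)^{[d]}$.

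On the other hand, $X/\RP^{[\infty]}(X)$ is distal with $\GG(X/\RP^{[\infty]}(X))=A\bigcap_{k\geq 1}G_k^{\uptau\text{-top}}$ by \cref{prop: Ellis_groups_nil_infty}, so by \cref{prop: Ellis_group_cube} again $\GG(\bQ^{[d]}(X/\RP^{[\infty]}(X)))=\cltau{\HK{d}}\cap\bigl(A\bigcap_k G_k^{\uptau\text{-top}}\bigr)^{[d]}$. The two formulas coincide because $\bigl(A\bigcap_k G_k^{\uptau\text{-top}}\bigr)^{[d]}=A^{[d]}\bigl(\bigcap_k G_k^{\uptau\text{-top}}\bigr)^{[d]}$ inside $G^{[d]}$ (products are taken coordinatewise). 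To finish I would verify that $\bQ^{[d]}(X/\RP^{[\infty]}(X))$ is genuinely a factor of $\bQ^{[d]}(X)$ of order $\infty$, which is immediate since a factor map $X\to X/\RP^{[\infty]}(X)$ induces $\bQ^{[d]}(X)\to\bQ^{[d]}(X/\RP^{[\infty]}(X))$ and the latter is an inverse limit of cubes over systems of order $k$, each of which is of order $k$ by \cref{thm: Structure_thm_cube}-type reasoning (or directly: $\bQ^{[d]}$ of a system of order $k$ is of order $\max(k,d)$ — but here we only need order $\infty$, which follows from $\bQ^{[d]}(X_k)$ being distal with the stated nilpotent Ellis group). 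Equality of Ellis groups of two factors of the same distal minimal system forces the factors to coincide, giving the result.

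The main obstacle I anticipate is the bookkeeping in the second paragraph: justifying carefully that $\bigcap_{k\geq 1}G_k^{\uptau\text{-top}}(\mathcal{HK}^{[d]}(T))$ acts on $\bQ^{[d]}(X)$ exactly as $\bigl(\bigcap_k G_k^{\uptau\text{-top}}\bigr)^{[d]}\cap\cltau{\HK{d}}$. One direction uses that each $G_k^{\uptau\text{-top}}(\mathcal{HK}^{[d]}(T))$ restricts to $G_k^{\uptau\text{-top}}$ on coordinates (as in the proof of \cref{thm: G_nil_iff_G_nil_in_cube}); the reverse containment — that every coordinatewise-intersection element lands inside the cube group's lower central term — is exactly what \cref{lemma: HK_invaraint_g_alpha} and \cref{lemma: Ellis_distal_group_cube_universal} are designed to give, by realizing such elements as products of $g^{(\alpha)}$ with $\alpha$ of high codimension. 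Assembling these facts into the clean identity $\GG\bigl((\bQ^{[d]}(X))_\infty\bigr)=\GG(\bQ^{[d]}(X_\infty))$ is routine once the coordinatewise structure is in place, and the rest is a standard ``equal Ellis groups implies isomorphic factors in the distal category'' argument via \cref{thm: A_subset_F_Ydistal_if and only if_X_ext_Y}.
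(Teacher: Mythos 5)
The first thing to note is that the paper does not prove this statement at all: it is imported verbatim from Qiu--Zhao \cite[Theorem 3.3]{Qiu_Zhao_topnilpotent_enveloping_nil:2022}, and the ``alternative proof'' the authors advertise concerns only the subsequent general (non-distal) version, \cref{thm: factor_nil_infty_cube}, which \emph{reduces to} this cited distal case via \cref{thm: distal_factor_cube}. So your argument is necessarily a new route and must be judged on its own merits, and as written it has two genuine gaps.

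First, your central step applies \cref{prop: Ellis_groups_nil_infty} (hence \cref{prop: Ellis_group_nil_d_minimal_system}) to the system $(\bQ^{[d]}(X),\mathcal{HK}^{[d]}(T))$. Those results are established in the paper only for $\Z$-actions: they rest on \cref{thm: nil_iff_enveloping_nil} via \cref{prop: Rdtau_subset_RPd}, and the introduction explicitly remarks that the enveloping-semigroup characterization of systems of order $d$ is \emph{open} for $\Z^k$-actions with $k>1$. The cube system is an action of $\mathcal{HK}^{[d]}(T)$ (for $T=\Z$, a free abelian group of rank $d+1$), not of $\Z$, so the citation does not apply. For the order-$\infty$ statement one could try to rescue this using the general-group fact quoted from Qiu--Zhao (a topologically nilpotent enveloping semigroup forces order $\infty$), but that requires exhibiting the candidate quotient as an inverse limit of systems with nilpotent enveloping semigroups, which you do not do. Second, the identity you yourself flag as the main obstacle --- that $\bigcap_k \Gtau{k}(\mathcal{HK}^{[d]}(T))$ acts on $\bQ^{[d]}(X)$ exactly as $\bigl(\bigcap_k\Gtau{k}\bigr)^{[d]}\cap\cltau{\HK{d}}$ --- is asserted rather than proved. \cref{lemma: HK_invaraint_g_alpha} and the computation in \cref{thm: G_nil_iff_G_nil_in_cube} give one containment (coordinatewise restriction, and $g^{(\alpha)}\in\cltau{\HK{d}}$ for $g\in\Gtau{j}$ with $\mathrm{codim}(\alpha)=j$), but the reverse containment, and in particular the passage from the coordinatewise action of each $\Gtau{k}(\mathcal{HK}^{[d]}(T))$ to that of their intersection (the sets $\Gtau{k}\bx$ are nested compacta, but evaluation is not $\uptau$-continuous, so $\bigcap_k\bigl(\Gtau{k}\bx\bigr)=\bigl(\bigcap_k\Gtau{k}\bigr)\bx$ itself needs an argument), is precisely the missing content. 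Until these two points are supplied, the proposal is a plausible strategy rather than a proof.
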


Observe that we have the following lemma.
\begin{lemma}\label{lemma: Rpi_contain_RP_infty_nil_infty_coincide}
    Let $\pi: (X, T)\to (Y, T)$ be an extension of minimal systems. If $R_{\pi}\subseteq \RP^{[\infty]}(X)$, then $X/\RP^{[\infty]}(X)=Y/\RP^{[\infty]}(Y)$.
\end{lemma}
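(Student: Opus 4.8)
The plan is a diagram chase in the category of minimal systems, resting on three inputs. First, $\RP^{[\infty]}(X)=\bigcap_{d\ge 1}\RP^{[d]}(X)$ is a closed, invariant equivalence relation, being an intersection of such relations (each $\RP^{[d]}$ is one by \cref{lemma: Rdis_is_RPd}); hence $X_{\infty}:=X/\RP^{[\infty]}(X)$ and $Y_{\infty}:=Y/\RP^{[\infty]}(Y)$ are genuine factors, with quotient maps $q_{X},q_{Y}$. Second, the lifting property $(\pi\times\pi)(\RP^{[d]}(X))=\RP^{[d]}(Y)$ from \cref{cor: lifting_RPd}, valid for every $d$. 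Third, the following $\infty$-analogue of \cref{thm: shao_ye_thm}(4): if $\psi\colon W\to Z$ is a factor map of minimal systems and $Z$ is a system of order $\infty$, then $\RP^{[\infty]}(W)\subseteq R_{\psi}$. This holds because $Z=\varprojlim_{d}Z_{d}$ with $Z_{d}:=Z/\RP^{[d]}(Z)$ of order $d$ (a fact recalled in the paper), so $R_{\psi}=\bigcap_{d}R_{(Z\to Z_{d})\circ\psi}$, while \cref{thm: shao_ye_thm}(4) gives $\RP^{[d]}(W)\subseteq R_{(Z\to Z_{d})\circ\psi}$; intersecting over $d$ yields the claim. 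In particular $X_{\infty}$, being $\varprojlim_{d}X/\RP^{[d]}(X)$ with each factor of order $d$, is itself a system of order $\infty$.

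First I would produce $\sigma\colon Y\to X_{\infty}$ with $\sigma\circ\pi=q_{X}$: this exists (and is a factor map, and onto) precisely because $R_{\pi}\subseteq\RP^{[\infty]}(X)=R_{q_{X}}$. Next, since $X_{\infty}$ is of order $\infty$, the third input gives $\RP^{[\infty]}(Y)\subseteq R_{\sigma}$, so $\sigma$ descends to a factor map $\tau\colon Y_{\infty}\to X_{\infty}$ with $\tau\circ q_{Y}=\sigma$. Symmetrically, from the lifting property $R_{\pi}\subseteq\RP^{[\infty]}(X)$ forces, after intersecting the inclusions $\RP^{[d]}(X)\subseteq(\pi\times\pi)^{-1}(\RP^{[d]}(Y))$ over $d$, that $\RP^{[\infty]}(X)\subseteq(\pi\times\pi)^{-1}(\RP^{[\infty]}(Y))=R_{q_{Y}\circ\pi}$; hence $q_{Y}\circ\pi$ descends to a factor map $\rho\colon X_{\infty}\to Y_{\infty}$ with $\rho\circ q_{X}=q_{Y}\circ\pi$.

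Finally I would check that $\rho$ and $\tau$ are mutually inverse. From $\rho\circ\sigma\circ\pi=\rho\circ q_{X}=q_{Y}\circ\pi$ and surjectivity of $\pi$ we get $\rho\circ\sigma=q_{Y}$, so $\rho\circ\tau\circ q_{Y}=\rho\circ\sigma=q_{Y}$, and surjectivity of $q_{Y}$ gives $\rho\circ\tau=\mathrm{id}_{Y_{\infty}}$; symmetrically, $\tau\circ\rho\circ q_{X}=\tau\circ q_{Y}\circ\pi=\sigma\circ\pi=q_{X}$ and surjectivity of $q_{X}$ give $\tau\circ\rho=\mathrm{id}_{X_{\infty}}$. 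Thus $\tau$ is an isomorphism of topological dynamical systems, i.e. $X/\RP^{[\infty]}(X)=Y/\RP^{[\infty]}(Y)$.

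The only step that is not purely formal is the third input, the $\infty$-version of the maximality of $X/\RP^{[\infty]}(X)$; it uses the standard inverse-limit description of systems of order $\infty$ that the paper has already recalled, and the rest is a routine diagram chase entirely parallel in spirit to the argument behind \cref{lemma: X_dis_is_Y_dis_proximal extension}. (Alternatively, one could first prove the finite-level statement "$R_{\pi}\subseteq\RP^{[d]}(X)\Rightarrow X/\RP^{[d]}(X)\cong Y/\RP^{[d]}(Y)$" by the same chase with \cref{thm: shao_ye_thm}(4) in place of the $\infty$-version, and then pass to the inverse limit over $d$; I would pick whichever is shorter to typeset.)
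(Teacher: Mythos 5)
Your argument is correct and complete. The paper actually states this lemma as a bare observation with no proof at all, so there is no argument to compare against; what you have written is exactly the kind of routine justification the authors are implicitly relying on, parallel to \cref{lemma: X_dis_is_Y_dis_proximal extension} for the distal structure relation. You correctly isolate the one non-formal ingredient, namely that $X/\RP^{[\infty]}(X)$ is itself a system of order $\infty$ and is maximal as such (the $\infty$-analogue of \cref{thm: shao_ye_thm}(4)), and your derivation of it from the inverse-limit description $X/\RP^{[\infty]}(X)=\varprojlim_{d}X/\RP^{[d]}(X)$ --- using that the maps to the order-$d$ factors jointly separate points precisely when $\bigcap_{d}\RP^{[d]}=\Delta$ --- is sound; the remaining diagram chase with $\sigma$, $\tau$, $\rho$ and the surjectivity cancellations is airtight. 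The only cosmetic remark is that the existence of $\rho\colon X_{\infty}\to Y_{\infty}$ does not require the hypothesis $R_{\pi}\subseteq\RP^{[\infty]}(X)$ at all (it holds for any extension, since $Y_{\infty}$ is an order-$\infty$ factor of $X$); the hypothesis is only needed to produce $\sigma$, and hence $\tau$.
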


\begin{theorem}\label{thm: factor_nil_infty_cube}
    Let $(X, S)$ be a minimal topological dynamical system and $d\geq 1$ be an integer. Then, the maximal factor of order $\infty$ of $\bQ^{[d]}(X)$ is $\bQ^{[d]}(X/\RP^{[\infty]}(X))$.
\end{theorem}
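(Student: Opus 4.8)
The plan is to reduce to the already-established distal case \cref{thm: factor_nil_infty_cube_distal_case} by passing to maximal distal factors on both sides and invoking \cref{thm: distal_factor_cube}. Throughout, recall that $\bQ^{[d]}(X)$ is a minimal system by \cref{thm: cube_is_minimal}, so all the objects below make sense.

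First I would record an elementary observation valid for an arbitrary minimal system $Z$: the maximal factor of order $\infty$ of $Z$ coincides with the maximal factor of order $\infty$ of its maximal distal factor $Z_{dis}$. Indeed, since $\P(Z)\subseteq \RP^{[d]}(Z)$ for every $d$, we have $\P(Z)\subseteq \RP^{[\infty]}(Z)$, so the proximal relation of $Z/\RP^{[\infty]}(Z)$ (the image of $\P(Z)$) is trivial, i.e. $Z/\RP^{[\infty]}(Z)$ is distal. By maximality of $Z_{dis}$ it is therefore a factor of $Z_{dis}$, which means the quotient map $Z\to Z/\RP^{[\infty]}(Z)$ factors through $Z\to Z_{dis}$; hence $R_{Z\to Z_{dis}}\subseteq \RP^{[\infty]}(Z)$, and \cref{lemma: Rpi_contain_RP_infty_nil_infty_coincide} gives $Z/\RP^{[\infty]}(Z)=Z_{dis}/\RP^{[\infty]}(Z_{dis})$. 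Applying this with $Z=X$ in particular yields $X/\RP^{[\infty]}(X)=X_{dis}/\RP^{[\infty]}(X_{dis})$.

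Next I would apply the same observation with $Z=\bQ^{[d]}(X)$: its maximal factor of order $\infty$ equals that of $(\bQ^{[d]}(X))_{dis}$. By \cref{thm: distal_factor_cube} we have $(\bQ^{[d]}(X))_{dis}=\bQ^{[d]}(X_{dis})$, and $X_{dis}$ is distal, so \cref{thm: factor_nil_infty_cube_distal_case} applies and tells us the maximal factor of order $\infty$ of $\bQ^{[d]}(X_{dis})$ is $\bQ^{[d]}(X_{dis}/\RP^{[\infty]}(X_{dis}))$. Chaining these identifications and substituting $X_{dis}/\RP^{[\infty]}(X_{dis})=X/\RP^{[\infty]}(X)$ from the previous step shows that the maximal factor of order $\infty$ of $\bQ^{[d]}(X)$ is $\bQ^{[d]}(X/\RP^{[\infty]}(X))$, which is the claim.

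This is essentially a bookkeeping reduction, so I do not anticipate a serious obstacle. The only points requiring mild care are: (i) the first-paragraph observation that the maximal factor of order $\infty$ descends through the maximal distal factor, which relies on systems of order $\infty$ being distal together with \cref{lemma: Rpi_contain_RP_infty_nil_infty_coincide}; and (ii) using that \cref{thm: distal_factor_cube} is a genuine equality of systems $(\bQ^{[d]}(X))_{dis}=\bQ^{[d]}(X_{dis})$ (the factor map there being a restriction of $\pi^{[d]}$ for $\pi\colon X\to X_{dis}$), so that the ``maximal factor of order $\infty$ of'' operator may be applied to it directly.
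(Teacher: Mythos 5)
Your proposal is correct and follows essentially the same route as the paper: both reduce to the distal case by showing the maximal factor of order $\infty$ descends through the maximal distal factor (via \cref{lemma: Rpi_contain_RP_infty_nil_infty_coincide}), then invoke $(\bQ^{[d]}(X))_{dis}=\bQ^{[d]}(X_{dis})$ from \cref{thm: distal_factor_cube} and conclude with \cref{thm: factor_nil_infty_cube_distal_case}. The only (immaterial) difference is that you justify $R_{Z\to Z_{dis}}\subseteq\RP^{[\infty]}(Z)$ by noting that $Z/\RP^{[\infty]}(Z)$ is distal and using maximality of $Z_{dis}$, whereas the paper cites the characterization of $S_{dis}$ as the smallest closed invariant equivalence relation containing the proximal relation.
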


\begin{proof}
    Since $S_{dis}(\bQ^{[d]}(X))$ is the smallest closed invariant equivalence relation containing $P(\bQ^{[d]}(X))$, we deduce that $S_{dis}(\bQ^{[d]}(X)) \subseteq \RP^{[\infty]}(\bQ^{[d]}(X))$. It follows from  \cref{lemma: Rpi_contain_RP_infty_nil_infty_coincide} that the maximal distal factor of $\bQ^{[d]}(X)$ has the same maximal factor of order $\infty$ as $\bQ^{[d]}(X)$. Similarly, we get $X/\RP^{[\infty]}(X) = X_{dis}/\RP^{[\infty]}(X_{dis})$. By \cref{thm: distal_factor_cube}, we have $(\bQ^{[d]}(X))_{dis} = \bQ^{[d]}(X_{dis})$. Thus, by \cref{thm: factor_nil_infty_cube_distal_case}, we conclude that the maximal factor of order $\infty$ of $\bQ^{[d]}(X_{dis})$ is $\bQ^{[d]}(X/\RP^{[\infty]}(X))$.
\end{proof}

 It is worth mentioning that in \cite{Qiu_Zhao_Maximal_nil_d_cubes:2021} the authors also showed that the maximal factor of order $j$ of  $\bQ^{[d]}(X)$ is $\bQ^{[d]}(X_j)$, where $X_j$ is the maximal factor of order $j$ of $X$. Currently, we are not able to give a proof for this fact using our methods. It would be interesting to provide a proof using the algebraic methods developed in this article.  Moreover, to provide an algebraic proof, it would suffice to show that $\GG(\bQ^{[d]}(X)/\RP^{[j]}(\bQ^{[d]}(X))) = \GG(\bQ^{[d]}(X_{j}))$ for any minimal system $(X,T)$ and any integers $d,j \geq 1$. Furthermore, it would also be of interest to provide an algebraic proof of \cref{thm: factor_nil_infty_cube_distal_case} and \cref{thm: factor_nil_infty_cube}. As before, to give an algebraic proof, it is sufficient to prove that $\GG(\bQ^{[d]}(X)/\RP^{[\infty]}(\bQ^{[d]}(X))) = \GG(\bQ^{[d]}(X/\RP^{[\infty]}(X)))$ for any minimal system $(X,T)$ and any integer $d \geq 1$. The difficulty in both problems lies in using the results of this paper to prove these algebraic equalities.

\end{document}